\newtheorem{theorem}{Theorem}[section]
\newtheorem{lemma}[theorem]{Lemma}
\newtheorem{proposition}[theorem]{Proposition}
\newtheorem{corollary}[theorem]{Corollary}
\theoremstyle{definition}
\newtheorem{definition}[theorem]{Definition}
\newtheorem{example}[theorem]{Example}
\theoremstyle{remark}
\newtheorem{remark}[theorem]{Remark}
\newtheorem*{notation*}{{\bf Notation}}
\newtheorem*{theorem*}{{\bf Theorem}}
\newtheorem*{assumption*}{{\bf Assumption}}
\let\phi=\varphi
\def\N{\mathbb{N}}
\def\R{\mathbb{R}}
\def\H{\mathbb{H}}
\def\O{\mathbb{O}}
\def\C{\mathbb{C}}
\def\Ci{C^\circ}
\def\eps{\varepsilon}
\def\ol{\overline}
\newcommand{\comment}[1]{}
\newcommand{\norm}[1]{\left\Vert #1 \right\Vert}
\newcommand{\ip}[2]{\left\langle #1 , #2 \right\rangle}
\newcommand{\mat}[4]{\begin{pmatrix} #1 & #2 \\ #3 & #4 \\ \end{pmatrix} }
\def\sqr#1#2{{\,\vcenter{\vbox{\hrule height.#2pt\hbox{\vrule width.#2pt
height#1pt \kern#1pt\vrule width.#2pt}\hrule height.#2pt}}\,}}
\def\square{\sqr52\,}
\numberwithin{equation}{section}
\let\epsilon=\varepsilon
\def\@maketitle{%
  \newpage
  \null
  \vskip 2em%
  \begin{center}%
  \let \footnote \thanks
    {\Large\bfseries \@title \par}%
    \vskip 1.5em%
    {\normalsize
      \lineskip .5em%
      \begin{tabular}[t]{c}%
        \@author
      \end{tabular}\par}%
    \vskip 1em%
    {\normalsize \@date}%
  \end{center}%
  \par
  \vskip 1.5em}
\title{\sc \huge Infinite dimensional symmetric cones and gauge-reversing maps}
\author{Bas Lemmens%
\thanks{Email: \texttt{B.Lemmens@kent.ac.uk}, supported by the EPSRC (grant EP/R044228/1)}}
\affil{School of Mathematics, Statistics \& Actuarial Science, University of Kent, Canterbury, Kent
CT2 7NX, UK}
\author{Mark Roelands%
\thanks{Email: \texttt{m.roelands@math.leidenuniv.nl}, partially supported by the EPSRC (grant EP/R044228/1)}}
\affil{Mathematical Institute, Leiden University, 2300 RA Leiden,
The Netherlands}
\author{Marten Wortel%
\thanks{Email: \texttt{marten.wortel@up.ac.za}}}
\affil{Department of Mathematics and Applied Mathematics, University of Pretoria, Private Bag X20
Hatfield, 0028 Pretoria, South Africa}
\begin{document}

\maketitle
\vspace{-4mm}
\date{}
\vspace{-10mm}
\begin{abstract}
  The famous Koecher–Vinberg theorem characterises the finite dimensional formally real Jordan algebras among the finite dimensional order unit spaces as the ones that have a symmetric cone. An alternative characterisation of symmetric cones was obtained by Walsh who showed that the symmetric cones correspond exactly to the finite dimensional order unit spaces with cone $C$ for which there exists a bijective map $\Psi\colon C^\circ\to C^\circ$ with the property that $\Psi$ is an order-antimorphism and homogeneous of degree $-1$. In this paper we prove an infinite dimensional version of this characterisation of symmetric cones.
\end{abstract}

{\small {\bf Keywords:} Jordan algebras, symmetric cones, order unit spaces, order-antimorphisms} 

{\small {\bf Subject Classification: 58B20, 46B40, 17C36} }
\vspace{-4mm}
{\footnotesize\tableofcontents}

\section{Introduction}

A {\em real Jordan algebra} is a real vector space $A$ equipped with a commutative bilinear product $x \bullet y$ satisfying the Jordan identity $ x^2 \bullet( x\bullet y) = x\bullet (x^2 \bullet y)$. It is said to be {\em formally real} if $x_1^2+\cdots +x_k^2=0$ implies $x_1=\ldots =x_k=0$ for $k\in\mathbb{N}$. The concept of a formally real Jordan algebra has a rich history in mathematics. It was  originally introduced by P. Jordan, J. von Neumann and E. Wigner \cite{JvNW}  as an algebraic model for quantum mechanics, but soon after unexpected connections with Lie algebras, differential geometry and analysis were discovered (cf.\,\cite{AS2,Chu,FK,HO,Loos,Mc,Up}).

A fundamental connection between formally real Jordan algebras and the geometry of cones was discovered independently by Koecher \cite{Koe} and Vinberg \cite{Vin}. They showed that any real finite dimensional Hilbert space $A$ with a {\em symmetric cone} $A^\circ_+$, i.e., the closed cone $A_+$ is self-dual and the group of linear automorphisms of $A_+$ acts transitively on  $A_+^\circ$, is a formally real Jordan algebra where $A_+$ corresponds to the cone of squares.  Conversely, a finite dimensional formally real Jordan algebra $A$ can be endowed with an inner product so that the interior of the cone of squares is a symmetric cone.

The Koecher-Vinberg Theorem provides a striking link between real Jordan algebras and differential geometry. More specifically, symmetric cones are prime examples of Riemannian symmetric spaces of non-compact type. In fact, the (Riemannian) symmetry at $x\in A_+^\circ$ is given by 
 $S_x(y) := Q_x(y^{-1})$ for $y\in A_+^\circ$, where $Q_x$ is the {\em quadratic representation of $x$} given by  $Q_x(z):= 2x\bullet (x\bullet z) - x^2\bullet z$.  The symmetry $S_e$ at the unit $e$ is the inverse map $\iota \colon x\mapsto x^{-1}$ on $A_+^\circ$, which  has  a special order theoretic property.  Indeed, the cone $A_+$ induces a partial ordering $\leq$ on $A$ by $x\leq y$
if $y-x\in A_+$.  With respect to this partial ordering the inverse map $\iota$ is an order-antimorphism, i.e., $x\leq y$  if and only if  $\iota(y)\leq \iota(x)$.   Moreover, $\iota$ is homogeneous of degree $-1$ in the sense that  $\iota (\lambda x) =\lambda^{-1}\iota(x)$ for all $\lambda >0$ and $x\in A^\circ_+$.  

In finite dimensions there is a remarkable result due to Walsh \cite{Wa1}, which says that given a closed cone $C$ (with non-empty interior) in a real vector space, $C^\circ$ is a symmetric cone (with respect to some inner product) if and only if there exists a homogeneous degree $-1$ bijective map $\Psi\colon C^\circ\to C^\circ$ that is an order-antimorphism. This result provides a completely order theoretic characterisation of finite dimensional symmetric cones. Note that in possibly infinite dimensional Hilbert spaces, the definition of a symmetric cone given above makes perfect sense, and so the main goal of this paper is to establish an infinite dimensional version of Walsh' result, see Theorem \ref{T:char of JH-algebras}.

The natural infinite dimensional generalisation of formally real Jordan algebras are so-called JB-algebras, which were introduced by Alfsen, Schultz and St\o rmer \cite{ASS}. A {\em JB-algebra} $A$ is a real Banach space with a Jordan algebra product $x \bullet y$ and unit $e$ (note that we define a JB-algebra to be unital, and some authors do not make this assumption), where the norm satisfies:
\[ \|x \bullet y\|\leq \|x\|\|y\|,\quad \|x^2\|=\|x\|^2,\mbox{\quad and\quad  }\|x^2\|\leq \|x^2+y^2\|.\]
Important examples include the self-adjoint parts of  $C^*$-algebras and the self-adjoint matrices $M_n(\R)_{sa}$, $M_n(\H)_{sa}$ or $M_3(\O)_{sa}$ all equipped with the Jordan product $x \bullet y := (xy + yx)/2$. 
JB-algebras are very special examples of so-called complete order unit spaces, see \cite{AS1}. 
(Definitions will be recalled in the next section.) However, not all infinite dimensional  JB-algebras $A$ can be equipped with an inner-product that turns it into a Hilbert space and with respect to which the interior of the cone of squares $A_+$ becomes a symmetric cone.  It was shown by Chu \cite[Theorem~3.1]{chu1} that Hilbert spaces with a symmetric cone are precisely the so-called unital JH-algebras. A JH-algebra is a real Jordan algebra $A$ that can be endowed with an inner-product $\langle \cdot,\cdot\rangle$, making it a Hilbert space, such that $\langle x\bullet y, z \rangle = \langle y, x\bullet z \rangle$ for all $x,y,z \in A$.  As we shall see in the next section these unital JH-algebras are JB-algebras for the equivalent order unit norm.

Thus, to characterise the symmetric cones among the possibly infinite dimensional complete order unit spaces in a similar way as Walsh's finite dimensional one, additional assumptions need to be made on the order unit space or on its cone. In this paper we establish such an infinite dimensional characterisation if one assumes either reflexivity or a condition on the cone closely related to reflexivity, cf.\ \Cref{T:char of JH-algebras}. We mention that in an earlier work by the first two authors and H. van Imhoff \cite{LRvI} the case where the cone in the complete order unit space is strictly convex was considered. In that case it was shown  that there exists a homogeneous degree $-1$ bijective map that is an order-antimorphism if and only if the order unit space is a spin factor, which form a particular class of JH-algebras.

The proof contains two important steps. Firstly, we show in Section \ref{sec:hom} that the cone is homogeneous, i.e., the group $\mathrm{Aut}(C)$ of linear automorphisms of $C$ acts transitively on $C^\circ$. In fact, to establish that result we only need to assume that the order unit space is spanned by the extreme vectors of the cone. Subsequently we will use ideas from metric geometry, involving horofunctions, in Section \ref{sec:sym} to construct an inner-product on the order unit space, and show that the cone is self-dual. Basically, the horofunctions allow one to study the action of the order-antimorphism on the boundary at infinity, which can be connected with the dual space of the order unit space. The relevant results about horofunctions will be recalled in Sections \ref{sec:hor} and \ref{sec:funk}, many of which were established by Walsh in \cite{Wa1,Wa2}. But we begin by collecting the basic concepts concerning order unit spaces.

\section{Preliminaries}
\subsection{Order unit spaces}
Firstly, we fix the terminology and recall the basic concepts concerning order unit spaces. 

Let $V$ be a real vector space. A subset $C$ of $V$ is called a {\em cone} if $C$ is convex, $\lambda C \subseteq C$ for all $\lambda\geq 0$, and $C\cap -C =\{0\}$. The cone $C$ induces a partial ordering on $V$ by, $v\leq w$ if $w-v\in C$, which turns it into a partially ordered vector space. The cone is said to be {\em Archimedean} if for each $v\in V$ and $w\in C$ we have that $nv\leq w$ for all $n \in \N$ implies that $v\leq 0$.  It is said to be {\em almost Archimedean} if $-x/n\leq y\leq x/n$ for all $n \in \N$ and $x,y\in V$ implies that $y=0$. A straightforward verification shows that every Archimedean cone is almost Archimedean.

An element $u\in C$ is said to be an {\em order unit} if for each $v\in V$ there exists $\lambda\geq 0$ such that $-\lambda u\leq v\leq\lambda u$.  
The triple $(V,C,u)$ is said to be an {\em order unit space} if $C$ is an Archimedean cone in $V$ with order unit $u$.  In that case $V$ can be equipped with the so-called {\em order unit norm}, 
\[
\|v\|_u := \inf\{\lambda\geq 0\colon -\lambda u\leq v\leq\lambda u\}\mbox{\quad for }v\in V.
\]
With respect to the order unit norm the cone $C$ is closed and has a non-empty interior, denoted $C^\circ$, and $u\in C^\circ$. Moreover, each $w\in C^\circ$ is an order unit for $(V,C)$. Furthermore, the boundary of $C$ is denoted by $\partial C$.
Throughout the paper we almost always work with order unit spaces. 

The dual space $V^*$ of an order unit space $V$ is an ordered vector space with {\em dual cone} $C^* := \{\phi\in V^*\colon \phi(v)\geq 0\mbox{ for all }v\in C\}$. The elements of $C^*$ are called {\em positive} linear functionals. It is known \cite[Lemma 1.16]{AS1} that a linear functional $\phi$ on an order unit space $V$ is positive if and only if it is bounded with $\|\phi\| = \phi(u)$.  A positive linear functional $\phi$ on an order unit space $V$ is called a {\em state} if $\phi(u) =1$. The set of all states is called the {\em state space} of $V$ and will be denoted by $S=S(V)$.  The state space $S$ is a $w^*$-compact convex set, and hence it is the $w^*$-closure of the convex hull of its extreme points by the Krein-Milman theorem. The extreme points  of $S$ are called {\em pure} states. A useful fact that will be used several times in the paper is that the states determine the norm and ordering on $V$, see \cite[Lemma~1.18]{Alfsen}.

Given $v\leq w$ in $V$ we write $[v,w] := \{z\in V\colon v\leq z\leq w\}$ to denote the {\em order interval}. A subset $A$ of $V$ is said to be {\em full } if $[v,w]\subseteq A$ for all $v,w\in A$ with $v\leq w$. We call a non-zero vector $r\in C$ an {\em extreme vector of} $C$ if $0 \leq v\leq r$ implies that $v=\lambda r$ for some $\lambda\geq 0$. 

We will also need the following lemma.

\begin{lemma}\label{L:totally ordered extreme}
Let $(V,C)$ be a partially ordered vector space and $x\leq y$ be distinct points in $V$. Consider the following statements.
\begin{itemize}
\item[$(i)$] $[x,y]$ is totally ordered;
\item[$(ii)$] $[x,y]=\{ (1-t)x + ty  \colon 0\le t\le 1\}$;
\item[$(iii)$] $y-x$ is an extreme vector of $C$.
\end{itemize}
Then $(iii)\Leftrightarrow(ii)\Rightarrow (i)$, and $(i)\Rightarrow (iii)$ if $(V,C)$ is almost Archimedean.
\end{lemma}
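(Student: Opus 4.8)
The equivalence $(ii)\Leftrightarrow(iii)$ should be the algebraic core, and it holds without any Archimedean hypothesis. First I would reduce to the case $x=0$ by translating: $[x,y]=x+[0,y-x]$, and the segment $\{(1-t)x+ty\}$ translates to $\{t(y-x)\}$, so $(ii)$ for the pair $x\le y$ is equivalent to $[0,r]=\{tr:0\le t\le 1\}$ where $r:=y-x\in C$. Now $(iii)$ says precisely that every $v$ with $0\le v\le r$ equals $\lambda r$ for some $\lambda\ge 0$; and if $0\le v\le r$ then also $0\le r-v\le r$, so $v=\lambda r$ forces (adding) that the corresponding scalars sum to something with $r\ge v$ giving $\lambda\le 1$ (here one uses $C\cap-C=\{0\}$ to cancel $r$, which is legitimate since $r\ne 0$ as $x\ne y$). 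Conversely $[0,r]=\{tr:0\le t\le 1\}\subseteq\{\lambda r:\lambda\ge 0\}$ is exactly the defining property of an extreme vector. So $(ii)\Leftrightarrow(iii)$ is a short direct argument.

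Next, $(ii)\Rightarrow(i)$: the segment $\{(1-t)x+ty:0\le t\le1\}$ is visibly totally ordered, since for $s\le t$ we have $(t-s)(y-x)\in C$, hence $(1-s)x+sy\le(1-t)x+ty$. So if $[x,y]$ coincides with that segment it is totally ordered. This direction is immediate.

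The one implication that genuinely uses almost Archimedeanness is $(i)\Rightarrow(iii)$. Again translate to $x=0$, so assume $[0,r]$ is totally ordered and $r\ne0$; I want to show $r$ is an extreme vector. Take $v$ with $0\le v\le r$. By total ordering, $v$ and $r-v$ are comparable, say $v\le r-v$, i.e.\ $2v\le r$; the other case is symmetric. Iterating — $v\le r-v$ and both lie in $[0,r]$ which is totally ordered — one shows by induction that $2^n v\le r$ is not quite what is needed; instead the cleaner route is: among $v$ and $\lambda r$ for scalars $\lambda$, use comparability to pin down $v$. Concretely, for each $n$ consider whether $v\le \tfrac1n r$ or $\tfrac1n r\le v$. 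Define $\lambda:=\inf\{\,q\in\mathbb{Q}_{\ge0}: v\le qr\,\}$ (the set is nonempty since $v\le r$, so $\lambda\le1$). For rationals $q>\lambda$ we have $v\le qr$, and for rationals $q<\lambda$ comparability gives $qr\le v$ (if instead $v\le qr$ then $q$ would be in the infimum set, contradicting $q<\lambda$). Hence for every $n$, $-\tfrac1n r\le v-\lambda r\le \tfrac1n r$ by choosing rationals within $1/n$ of $\lambda$ on each side. Almost Archimedeanness then forces $v-\lambda r=0$, i.e.\ $v=\lambda r$ with $\lambda\ge0$, which is exactly extremality of $r$.

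The main obstacle is organizing the $(i)\Rightarrow(iii)$ argument cleanly: one must use total ordering of $[0,r]$ to get the right side of each comparison $v$ vs.\ $qr$ for rationals $q$, and then squeeze $v-\lambda r$ between $\pm\tfrac1n r$ to invoke the almost Archimedean property. The subtlety is making sure $qr\in[0,r]$ so that comparability with $v$ applies — this needs $0\le q\le1$, which is fine since $\lambda\le1$ and we only use rationals near $\lambda$. Everything else is routine bookkeeping with the cone axioms, in particular repeated use of $C\cap-C=\{0\}$ to cancel multiples of the nonzero vector $r$.
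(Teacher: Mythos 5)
Your proposal is correct and follows essentially the same route as the paper: translate to $x=0$, verify $(iii)\Leftrightarrow(ii)\Rightarrow(i)$ directly, and for $(i)\Rightarrow(iii)$ use total ordering to compare $v$ with the multiples $qr\in[0,r]$, define $\lambda$ as the relevant infimum, squeeze $v-\lambda r$ between $\pm\tfrac1n r$, and invoke almost Archimedeanness. The only cosmetic difference is your restriction to rational scalars $q$, where the paper works with all $\lambda\in[0,1]$; this changes nothing.
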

\begin{proof}
 Since translation by $-x$ is an order-isomorphism, we may assume that $x=0$. If $y$ is an extreme vector of $C$, then $[0,y]=\{ty \colon 0\le t\le 1\}$, which yields $(ii)$. If $[0,y]=\{ty\colon 0\le t\le 1\}$, then $[0,y]$ is totally ordered, as comparison of elements now only depends on the parameter $0\le t\le 1$, and so $y$ is an extreme vector of $C$. This shows  $(iii)\Leftrightarrow(ii)\Rightarrow (i)$.

For the last implication, suppose that $(V,C)$ is almost Archimedean and $[0,y]$ is totally ordered. Let $z\in[0,y]$. For each $0\le \lambda\le 1$ it follows that $z$ and $\lambda y$ are comparable and define $\mu := \sup\{\lambda\ge 0\colon \lambda y \le z\} = \inf\{\lambda\le 1\colon \lambda y \ge z\}$. Then $(\mu - \frac{1}{n})y \leq z \leq (\mu + \frac{1}{n})y$ and so $-\frac{1}{n}y \leq z- \mu y \leq \frac{1}{n}y$. Since $(V,C)$ is almost Archimedean, $z- \mu y = 0$. So $z = \mu y$ showing that $y$ is an extreme vector of $C$.
\end{proof}

\begin{definition}
Given $x\in C^\circ$ and $w\in V$ we define $\ell_x^w := \{x+tw\colon t\in\mathbb{R}\} \cap C^\circ$. A subset $L$ of $C^\circ$ is called an {\em extreme half-line} if there exists an $x \in C^\circ$ and and extreme vector $r$ of $C$ such that $L = \ell_x^r$.    
\end{definition}

Note that, given $x\in C^\circ$ and $w\in V$, we have $\ell_x^w = \ell_x^{-w}$.  The following lemma gives an order-theoretic characterisation of extreme half-lines and is similar to \cite[Proposition 3.1]{LvIvG}. 

\begin{lemma}\label{L:extreme lines}
Let $(V,C,u)$ be an order unit space and $L\subseteq C^\circ$. Then $L$ is an extreme half-line if and only if $L$ contains at least two points and is maximal among the subsets of $C^\circ$ that are totally ordered and full.
\end{lemma}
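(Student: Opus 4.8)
The plan is to prove the equivalence by showing that extreme half-lines are precisely the maximal totally-ordered full subsets of $C^\circ$, using Lemma \ref{L:totally ordered extreme} as the main engine to pass between "totally ordered" and "extreme vector" (the order unit space is Archimedean, hence almost Archimedean, so that lemma applies in full).

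For the forward direction, suppose $L = \ell_x^r$ for some $x \in C^\circ$ and extreme vector $r$ of $C$. It clearly contains at least two points (a neighbourhood of $x$ lies in $C^\circ$, so $x + tr \in C^\circ$ for small $|t|$). I would first check $L$ is totally ordered: any two points of $L$ are of the form $x + s r$ and $x + t r$, and their difference is a multiple of $r \in C \cup -C$, so they are comparable. Next, $L$ is full: if $y_1, y_2 \in L$ with $y_1 \le y_2$, write $y_i = x + t_i r$ with $t_1 \le t_2$; then for $z \in [y_1, y_2]$ both $z - y_1$ and $y_2 - z$ lie in $C$ and sum to $(t_2 - t_1) r$, so since $r$ is extreme, $z - y_1 = \lambda r$ for some $\lambda \ge 0$, giving $z = x + (t_1 + \lambda) r \in \ell_x^r = L$ (note $z \in C^\circ$ since $z \ge y_1 \in C^\circ$... more carefully, $z \in [y_1,y_2] \subseteq C^\circ$ because $C^\circ$ is convex and contains its endpoints, or because $z \ge y_1$ with $y_1 \in C^\circ$). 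For maximality, suppose $M \supseteq L$ is totally ordered and full in $C^\circ$; take any $w \in M$. Since $M$ is totally ordered, $w$ is comparable to $x$; by fullness of $M$ together with Lemma \ref{L:totally ordered extreme} applied to the order interval between $w$ and $x$ (which is totally ordered, being a subset of the totally ordered $M$, hence equals the line segment, hence $w - x$ or $x - w$ is an extreme vector), the segment $[x \wedge w, x \vee w]$ lies in $M$ and is an extreme segment; I then argue this extreme direction must be (a positive multiple of) $r$, because the whole of $M$ is totally ordered and already contains the segment $L$ in direction $r$ through $x$ — two distinct extreme directions through $x$ would produce incomparable points. Hence $w \in \ell_x^r = L$, so $M = L$.

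For the converse, suppose $L \subseteq C^\circ$ has at least two points and is maximal among totally ordered full subsets. Pick distinct $x, y \in L$ with, say, $x \le y$. The order interval $[x,y]$ is totally ordered (it sits inside the totally ordered $L$) and, since $V$ is almost Archimedean, Lemma \ref{L:totally ordered extreme} gives that $r := y - x$ is an extreme vector of $C$ and $[x,y] = \{x + tr : 0 \le t \le 1\}$. Now consider $\ell_x^r \subseteq C^\circ$: by the forward direction just proved, $\ell_x^r$ is an extreme half-line, hence totally ordered and full; moreover $[x,y] \subseteq L \cap \ell_x^r$. The plan is to show $L = \ell_x^r$. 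Since $\ell_x^r$ is itself a maximal totally ordered full set (forward direction), it suffices to show $L \subseteq \ell_x^r$; equivalently, by maximality of $L$, it suffices to show $L \cup \ell_x^r$ is totally ordered and full — then maximality of $L$ forces $\ell_x^r \subseteq L$, and maximality of $\ell_x^r$ forces $L \subseteq \ell_x^r$, so $L = \ell_x^r$, which is an extreme half-line.

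**The main obstacle** will be that last merging step: showing $L \cup \ell_x^r$ is totally ordered. Given $w \in L$ and $z = x + tr \in \ell_x^r$, I must show $w$ and $z$ are comparable. Since $L$ is totally ordered and full and contains $x$, the point $w$ is comparable to $x$; WLOG $w \ge x$, and by fullness plus Lemma \ref{L:totally ordered extreme} (using almost Archimedean) the segment $[x, w]$ lies in $L$, is totally ordered, and $w - x$ is an extreme vector. Now I need that $w - x$ is a positive multiple of $r$: this follows because $x + \tfrac{1}{2} r \in [x,y] \subseteq L$ and $w \in L$ with $L$ totally ordered, so $w$ and $x + \tfrac12 r$ are comparable, and a short argument with extreme vectors (two comparable points both dominating $x$, whose differences from $x$ are extreme vectors, must have those differences proportional — otherwise $C$ would contain a two-dimensional face contradicting extremality of one of them) forces $w - x \in \R_{>0}\, r$. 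Hence $w \in \ell_x^r$, so in fact $L \subseteq \ell_x^r$ directly, and comparability within $L \cup \ell_x^r = \ell_x^r$ is immediate. I would isolate this "two extreme directions from a common point in a totally ordered set must coincide" fact as the crux, proving it by: if $x + ar$ and $x + bs$ are comparable ($a, b > 0$, $r, s$ extreme) and $r, s$ not proportional, examine the cases $x + ar \le x + bs$ and $\ge$; in either case one of $r, s$ is forced to be dominated in $C$ by a non-multiple of itself, contradicting its being extreme. This reduces everything to Lemma \ref{L:totally ordered extreme} and elementary cone arguments.
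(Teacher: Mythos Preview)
Your proposal is correct and follows essentially the same approach as the paper: both directions hinge on Lemma~\ref{L:totally ordered extreme} to pass between ``totally ordered interval'' and ``extreme vector,'' and the heart of both arguments is showing that any point of a totally ordered full superset must lie on the line $x+\mathbb{R}r$. The only organisational difference is that you isolate the ``two extreme directions through a common point must coincide'' fact as a standalone crux, whereas the paper absorbs it into a direct case analysis on the relative ordering of the new point with $x$ and $x+r$; your ``WLOG $w\ge x$'' and the choice of $x+\tfrac12 r$ should be accompanied by the symmetric case $w\le x$ and a remark that $r$ may be rescaled so the comparison point lies in $C^\circ$, but these are routine.
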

\begin{proof}
Suppose that $L\subseteq C^\circ$ contains distinct points $x$ and $y$ and is maximal among the sets in $C^\circ$ that are totally ordered and full. We first show that $L\subseteq \ell_x^{y-x}$. If $z\in L\setminus\{x,y\}$, then there are six options describing how $x,y$, and $z$ are ordered. Since it is analogous to show that $z\in \ell_x^{y-x}$ for all these cases, we shall only cover the situation where $z\le y\le x$. As $[y,x]\subseteq L$ is totally ordered, it follows from \Cref{L:totally ordered extreme} that $x-y$ is an extreme vector of $C$, so $\ell_x^{y-x}$ is an extreme half-line. Furthermore, as $[z,x]\subseteq L$ is totally ordered as well, we have that $y=sz+(1-s)x$ for some $0<s\le 1$ by Lemma~\ref{L:totally ordered extreme}. Hence $z=x+\frac{1}{s}(y-x)\in\ell_x^{y-x}$. As any extreme half-line in $C^\circ$ is totally ordered and full by Lemma~\ref{L:totally ordered extreme}, it follows from the maximality of $L$ that $L=\ell_x^{y-x}$. 

For the converse, suppose $L$ is an extreme half-line. Then there exists $x \in C^\circ$ and an extreme vector $r$ of $C$ such that $L=\ell_x^r$, and so $L$ contains two distinct points. Let $S \subseteq C^\circ$ be a full and totally ordered subset that contains $\ell_x^r$. If $y\in S$ is different from $x$, then $y\leq x$, $x\leq y\le x+r$, or $x+r\le y$.  If $y\leq x$, then $x\in [y,x+r]$ and $[y,x+r]$ is a subset of $S$, as $S$ is full, so it is totally ordered. So, by Lemma~\ref{L:totally ordered extreme}, $x = \lambda y +(1-\lambda)(x+r)$ for some $0<\lambda \leq 1$, and hence $y = \lambda^{-1}x +(1-\lambda^{-1})(x+r) = x+(1-\lambda^{-1})r$.   On the other hand, if $x\leq y\le x+r$, then $y=\sigma x+(1-\sigma)(x+r)=x+(1-\sigma)r$ for some $0\le \sigma \le 1$ by Lemma~\ref{L:totally ordered extreme}, as $[x,x+r]\subseteq S$ is totally ordered. Likewise, if $x+r\le y$, then $x+r\in [x,y]$ and we have $x+r=\tau x+(1-\tau)y$ for some $0\le \tau<1$, which implies that $y=x+(1-\tau)^{-1}r$. We conclude that $S=\ell_x^r=L$ and so $L$ is maximal among the sets in $C^\circ$ that are full and totally ordered.
\end{proof}
We conclude this subsection with the following useful fact. 

\begin{lemma}\label{L:M function is continuous}
Let $(V,C,u)$ be an order unit space with state space $S$. Then $T\colon V\times C^\circ\to \mathbb{R}_+$ given by $T(x,y)=M(x/y)$ is continuous.
\end{lemma}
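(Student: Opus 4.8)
The plan is to rewrite $T$ in terms of the state space $S$ and then appeal to a maximum-theorem argument. Recall that for $x\in V$ and $y\in C^\circ$ one has $M(x/y)=\inf\{\lambda>0\colon x\leq\lambda y\}$, which is finite since $y$ is an order unit. First I would invoke the fact that the states determine the ordering on $V$: for $\lambda>0$ we have $x\leq\lambda y$ exactly when $\phi(x)\leq\lambda\phi(y)$ for every $\phi\in S$. Moreover $\phi(y)>0$ for each $\phi\in S$ because $y\in C^\circ$ (otherwise, choosing $v$ with $\phi(v)>0$ and rescaling so that $y-v\in C$ contradicts positivity of $\phi$). Combining these observations gives
\[
T(x,y)=M(x/y)=\max\Bigl(0,\ \sup_{\phi\in S}\frac{\phi(x)}{\phi(y)}\Bigr).
\]

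Since $t\mapsto\max(0,t)$ is continuous, it then suffices to prove that $N\colon V\times C^\circ\to\R$ given by $N(x,y):=\sup_{\phi\in S}\phi(x)/\phi(y)$ is continuous. The next step is to check that $g(\phi,x,y):=\phi(x)/\phi(y)$ is jointly continuous on $S\times V\times C^\circ$, where $S$ carries the $w^*$-topology. Along a net $(\phi_\alpha,x_\alpha,y_\alpha)\to(\phi_0,x_0,y_0)$ one estimates, using $\|\phi_\alpha\|=\phi_\alpha(u)=1$,
\[
|\phi_\alpha(x_\alpha)-\phi_0(x_0)|\leq\|x_\alpha-x_0\|+|\phi_\alpha(x_0)-\phi_0(x_0)|\longrightarrow 0,
\]
and similarly for the denominator; the denominator moreover stays bounded below by a positive constant for $(x,y)$ near $(x_0,y_0)$, because $\phi\mapsto\phi(y_0)$ is $w^*$-continuous and strictly positive on the $w^*$-compact set $S$, hence bounded below by some $\delta>0$, and then $\phi(y)\geq\delta-\|y-y_0\|$ for $\|y-y_0\|$ small. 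In particular the supremum defining $N(x,y)$ is attained.

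Finally, $N(x,y)=\max_{\phi\in S}g(\phi,x,y)$ is the maximum of a jointly continuous function over the fixed compact set $S$, and I would conclude that $N$ is continuous in $(x,y)$ by Berge's maximum theorem: upper semicontinuity by passing to convergent subnets of maximisers, lower semicontinuity by evaluating a maximiser for the limit point at nearby arguments. Hence $T=\max(0,N)$ is continuous, as claimed.

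The only point requiring care is that the $w^*$-topology on $S$ need not be metrizable (when $V$ is non-separable), so the compactness and maximiser arguments must be carried out with nets rather than sequences; the estimates themselves are routine. If one prefers to avoid $S$ altogether, the same conclusion follows directly: $T$ is upper semicontinuous because $\lambda>M(x_0/y_0)$ forces $\lambda y_0-x_0\in C^\circ$, an open condition in $(x,y)$, and $T$ is lower semicontinuous because $0<\lambda<M(x_0/y_0)$ forces $\lambda y_0-x_0\in V\setminus C$, which is again open since $C$ is closed.
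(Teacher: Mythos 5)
Your proof is correct. The main route is essentially the paper's: both reduce the claim to the identity $M(x/y)=\sup_{\phi\in S}\phi(x)/\phi(y)$ (valid because the states determine the order and $\phi(y)>0$ on $S$ for $y\in C^\circ$) and then prove continuity of that supremum. The paper does this by viewing $x\mapsto\hat x$ as an isometric embedding of $V$ into $(C(S),\|\cdot\|_\infty)$ and writing the map as a composition of continuous operations (inversion of a strictly positive function, multiplication, and the supremum functional), whereas you argue via joint continuity of $(\phi,x,y)\mapsto\phi(x)/\phi(y)$ on $S\times V\times C^\circ$ and a Berge-type maximum argument over the $w^*$-compact set $S$; these are interchangeable in difficulty and both handle non-metrizability of $S$ correctly. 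Two small remarks. First, the paper defines $M(v/w)=\inf\{\mu\in\R\colon v\le\mu w\}$, not $\inf\{\lambda>0\colon\cdots\}$, so under the paper's convention the formula is $M(x/y)=\sup_S\phi(x)/\phi(y)$ with no $\max(0,\cdot)$; this only matters when $\sup_S\phi(x)/\phi(y)<0$ and does not affect continuity (the stated codomain $\R_+$ in the lemma is the source of the ambiguity, not your argument). Second, your closing alternative — upper semicontinuity from $\lambda y_0-x_0\in C^\circ$ being an open condition and lower semicontinuity from $\lambda y_0-x_0\in V\setminus C$ being open — is a genuinely different and more elementary proof that bypasses the state space entirely; it is arguably cleaner than either version and worth keeping.
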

\begin{proof}
If we equip $S$ with the $w$*-topology, the map $\psi\colon (V,\|\cdot\|_u)\to (C(S),\|\cdot\|_\infty)$ defined by $\psi(x)(\varphi) = \varphi(x)$ is a
bi-positive isometric embedding by \cite[Lemma~1.18]{Alfsen}. Note that the composition of continuous functions
\begin{align*}
(x, y) &\mapsto (\psi(x), \psi(y)) \mapsto (\psi(x), \psi(y)^{-1}) \mapsto \psi(x)\psi(y)^{-1}
\\&\mapsto \|\psi(x)\psi(y)^{-1}\|_\infty=\sup_{S}\frac{\varphi(x)}{\varphi(y)}= M(x/y)
\end{align*}
implies that $T$ is continuous as required. 
\end{proof}

\subsection{Gauge-reversing maps on cones in order unit spaces}
Let $(V,C,u)$ and $(W,K,e)$ be order unit spaces. A map $\Psi\colon C^\circ\to K^\circ$ is said to be {\em antitone} if  $v,w\in C^\circ$ with $v\leq w$ implies  $\Psi(w)\leq \Psi(v)$. It is said to be an {\em order-antimorphism} if $\Psi\colon C^\circ\to K^\circ$ is a bijective antitone map whose inverse is antitone.  The map $\Psi\colon C^\circ\to K^\circ$ is said to be {\em homogeneous of degree $-1$} if $\Psi(\lambda v) =\lambda^{-1}\Psi(v)$ for all $v\in C^\circ$ and $\lambda>0$.  Order-antimorphisms $\Psi\colon C^\circ\to K^\circ$ which are homogeneous of degree $-1$ correspond to so-called gauge-reversing maps. 

Given $v\in V$ and $w\in C^\circ$ we define 
\begin{equation}\label{gauge}
M(v/w) := \inf\{\mu \in\mathbb{R}\colon v\leq \mu w\}.
\end{equation}
Note that $M(v/w)<\infty$, as $w$ is an order unit. The $M$-function is called a {\em gauge}. A bijective map $\Psi\colon C^\circ\to K^\circ$ is said to be {\em gauge-reversing} if $M(v/w) = M(\Psi(w)/\Psi(v))$ for all $v,w\in C^\circ$. 
The following result is well-known \cite{NS}. 
\begin{lemma}\label{L:antitone M-functions}
Let $(V,C,u)$ and $(W,K,e)$ be order unit spaces and $\Psi\colon C^\circ\to K^\circ$ a bijection. Then $\Psi$ is an order-antimorphism and homogeneous of degree $-1$ if and only if $\Psi$ is gauge-reversing. 
\end{lemma}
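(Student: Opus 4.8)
The plan is to prove the equivalence by establishing the two implications separately, using the basic properties of the gauge $M(\cdot/\cdot)$ as the main tool. The key observation to record first is the ``sandwich'' reformulation: for $v\in V$ and $w\in C^\circ$ one has $v\le M(v/w)\,w$ (the infimum in \eqref{gauge} is attained because $C$ is closed), and moreover $M(v/w)$ is the \emph{least} scalar with this property. From this one extracts the homogeneity relations $M(\lambda v/w)=\lambda M(v/w)$ and $M(v/\lambda w)=\lambda^{-1}M(v/w)$ for $\lambda>0$, the monotonicity relations (if $v\le v'$ then $M(v/w)\le M(v'/w)$, and if $w\le w'$ then $M(v/w')\le M(v/w)$ for $v\in C^\circ$), and the crucial order-characterisation: for $v,w\in C^\circ$, $v\le w$ if and only if $M(v/w)\le 1$. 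The ``only if'' of this last equivalence is immediate; the ``if'' direction uses that $M(v/w)\le 1$ gives $v\le M(v/w)w\le w$ since $w\in C$.

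For the direction ``gauge-reversing $\Rightarrow$ order-antimorphism and homogeneous of degree $-1$'': assume $M(v/w)=M(\Psi(w)/\Psi(v))$ for all $v,w\in C^\circ$. Homogeneity follows by computing $M(\Psi(\lambda v)/\Psi(w)) = M(w/\lambda v) = \lambda^{-1}M(w/v) = \lambda^{-1}M(\Psi(v)/\Psi(w))$ for every $w\in C^\circ$; since the states determine the ordering (by \cite[Lemma~1.18]{Alfsen}, or directly from the sandwich property applied with $w$ ranging over $C^\circ$ together with the fact that $C$ is Archimedean), the equality $M(y/\Psi(w))=\lambda^{-1}M(x/\Psi(w))$ for all $w$ forces $\Psi(\lambda v)=\lambda^{-1}\Psi(v)$. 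For the antitone property, if $v\le w$ in $C^\circ$ then $M(v/w)\le 1$, hence $M(\Psi(w)/\Psi(v))\le 1$, hence $\Psi(w)\le\Psi(v)$; applying the same argument to $\Psi^{-1}$ (which is gauge-reversing with the roles of the two cones swapped, since the defining identity is symmetric) shows the inverse is antitone as well, so $\Psi$ is an order-antimorphism.

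For the converse ``order-antimorphism and homogeneous of degree $-1$ $\Rightarrow$ gauge-reversing'': fix $v,w\in C^\circ$ and set $\mu := M(v/w)$, so $\mu>0$ and $v\le\mu w$, i.e.\ $\mu^{-1}v\le w$ with $\mu^{-1}v\in C^\circ$. Applying the antitone map $\Psi$ and then homogeneity gives $\Psi(w)\le\Psi(\mu^{-1}v)=\mu\,\Psi(v)$, hence $M(\Psi(w)/\Psi(v))\le\mu = M(v/w)$. For the reverse inequality, apply the same reasoning to the order-antimorphism $\Psi^{-1}$, which is again homogeneous of degree $-1$ (from $\Psi(\lambda x)=\lambda^{-1}\Psi(x)$ one gets $\Psi^{-1}(\lambda y)=\lambda^{-1}\Psi^{-1}(y)$): this yields $M(\Psi(w)/\Psi(v)) \ge M(\Psi^{-1}(\Psi(v))/\Psi^{-1}(\Psi(w))) = M(v/w)$ after unwinding. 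Combining the two inequalities gives the claim.

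The step I expect to be the main obstacle is verifying that the defining gauge-reversing identity and the antitone/homogeneity data transfer cleanly to $\Psi^{-1}$, and — more subtly — the use of ``states determine the ordering'' to pass from an equality of gauges $M(\Psi(\lambda v)/\Psi(w))=\lambda^{-1}M(\Psi(v)/\Psi(w))$ holding for all $w$ to the pointwise equality $\Psi(\lambda v)=\lambda^{-1}\Psi(v)$; this needs the Archimedean property of $K$ to conclude that two elements of $K^\circ$ with the same gauge against every element of $K^\circ$ coincide. Everything else is a routine manipulation of infima.
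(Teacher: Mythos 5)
Your proposal is correct and follows essentially the same route as the paper's proof: both directions are handled by the same gauge manipulations, with the reverse inequalities obtained by applying the argument to $\Psi^{-1}$. The only cosmetic difference is in the homogeneity step of the ``gauge-reversing $\Rightarrow$'' direction, where you test $M(\Psi(\lambda v)/z)$ against all $z\in K^\circ$ and invoke a determination principle, whereas the paper simply evaluates the two gauges $M(\Psi(\lambda v)/\Psi(v))$ and $M(\Psi(v)/\Psi(\lambda v))$ and sandwiches -- which is exactly the special case $z\in\{\Psi(v),\Psi(\lambda v)\}$ of your argument, so the ``obstacle'' you flag is not one.
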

\begin{proof}
Suppose that $ \Psi\colon C^\circ\to K^\circ$  is a bijective order-antimorphism and  homogeneous of degree $-1$. If  $v\le \lambda w$ in $C^\circ$, then $\lambda^{-1}\Psi(w)= \Psi(\lambda w) \leq \Psi(v)$, so that $\Psi(w) \leq \lambda \Psi(v)$. This implies that $M(\Psi(w)/\Psi(v))\le M(v/w)$. On the other hand, $\Psi^{-1}$ is also an order-antimorphism and  homogeneous of degree $-1$, so $M(w/v)\le M(\Psi(v)/\Psi(w))$, and hence $\Psi$ is gauge-reversing. 

Conversely, suppose $\Psi\colon C^\circ\to K^\circ$ is a gauge-reversing bijection. If $v \le w$ in $C^\circ$, then $M(\Psi(w)/\Psi(v)) =
M(v/w) \le 1$, so that $\Psi(w) \le \Psi(v)$. Likewise $\Psi(w) \le \Psi(v)$ implies $M(v/w) = M(\Psi(w)/\Psi(v)) \le 1$,
so that $v\le w$, which shows that $\Psi$ and $\Psi^{-1}$ are antitone.  To see that $\Psi$ is homogeneous of degree $-1$,
let $v \in C^\circ$ and $\lambda>0$ and consider $w = \lambda v$. It follows that $M(\Psi(w)/\Psi(v)) = M(v/w) = \lambda^{-1}$ and 
$M(\Psi(v)/\Psi(w)) = M(w/v) = \lambda$. Hence $\lambda \Psi(w) \le \Psi(v) \le \lambda \Psi(w)$, so $\Psi(\lambda v) = \Psi(w)= \lambda^{-1}\Psi(v)$.
\end{proof}

We point out that  in the same vein it can be shown that a bijective map $f\colon C^\circ\to K^\circ$ is an order-isomorphism and homogeneous of degree $1$ if and only if $f$ is gauge-preserving, see \cite{NS}. In fact, it was shown by Sch\"affer  \cite[Theorem B]{Schaf} that if $V$ and $W$ are order unit spaces, every bijective map $f\colon C^\circ\to K^\circ$ which is an order-isomorphism and is homogeneous of degree $1$ must be linear.  We will use this result later in the paper. 

Lemma~\ref{L:extreme lines} has the following direct corollary for gauge-reversing maps.
\begin{corollary}\label{C:extreme lines -> extreme lines}
If $(V,C,u)$ and $(W,K,e)$ are order unit spaces and $\Psi\colon C^\circ\to K^\circ$ is a gauge-reversing map, then $\Psi$ maps extreme half-lines onto extreme half-lines.  
\end{corollary}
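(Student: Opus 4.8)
The plan is to reduce everything to the order-theoretic description of extreme half-lines provided by Lemma~\ref{L:extreme lines}. By Lemma~\ref{L:antitone M-functions} a gauge-reversing map $\Psi\colon C^\circ\to K^\circ$ is a bijective order-antimorphism, and that same lemma (applied to $\Psi^{-1}$, which is clearly gauge-reversing since $M(v/w)=M(\Psi(w)/\Psi(v))$ is symmetric in this sense) shows $\Psi^{-1}\colon K^\circ\to C^\circ$ is again a bijective order-antimorphism. So it suffices to show that a bijective order-antimorphism sends subsets of $C^\circ$ that are totally ordered and full to subsets of $K^\circ$ with the same two properties, and that it preserves both maximality among such subsets and the property of containing at least two points; applying this to $\Psi$ and to $\Psi^{-1}$ then yields that $\Psi$ maps extreme half-lines onto extreme half-lines.

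First I would check that if $A\subseteq C^\circ$ is totally ordered then $\Psi(A)$ is totally ordered: for $a,b\in A$, either $a\le b$ or $b\le a$, and since $\Psi$ is antitone this forces $\Psi(b)\le\Psi(a)$ or $\Psi(a)\le\Psi(b)$. Next, if $A$ is full and $\Psi(a)\le z\le\Psi(b)$ with $a,b\in A$ and $z\in K^\circ$, write $z=\Psi(c)$ using surjectivity of $\Psi$; applying the antitone map $\Psi^{-1}$ gives $b\le c\le a$, so $c\in[b,a]\subseteq A$ and hence $z=\Psi(c)\in\Psi(A)$, proving $\Psi(A)$ is full. For maximality, suppose $L\subseteq C^\circ$ is maximal among totally ordered full subsets of $C^\circ$ and that $\Psi(L)\subseteq B$ with $B\subseteq K^\circ$ totally ordered and full; then $L\subseteq\Psi^{-1}(B)$, and $\Psi^{-1}(B)$ is totally ordered and full by the two previous steps applied to $\Psi^{-1}$, so maximality of $L$ forces $L=\Psi^{-1}(B)$, i.e.\ $\Psi(L)=B$. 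Finally, injectivity of $\Psi$ gives that $\Psi(L)$ has at least two points whenever $L$ does.

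Combining these observations with Lemma~\ref{L:extreme lines}: if $L$ is an extreme half-line in $C^\circ$, then $L$ contains at least two points and is maximal among the totally ordered full subsets of $C^\circ$, hence $\Psi(L)$ contains at least two points and is maximal among the totally ordered full subsets of $K^\circ$, so $\Psi(L)$ is an extreme half-line by Lemma~\ref{L:extreme lines}; running the same argument for the gauge-reversing bijection $\Psi^{-1}$ shows that every extreme half-line in $K^\circ$ is of the form $\Psi(L)$ for an extreme half-line $L$ in $C^\circ$, so $\Psi$ maps extreme half-lines onto extreme half-lines. I do not expect a genuine obstacle here; the only points requiring care are keeping track of the order reversal in the definition of an order-antimorphism and invoking bijectivity when pulling elements back through $\Psi$.
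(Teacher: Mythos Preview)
Your proof is correct and is exactly the argument the paper has in mind: the paper presents this as a direct corollary of Lemma~\ref{L:extreme lines} without writing out a proof, and you have simply spelled out the routine verification that a bijective order-antimorphism preserves the properties ``totally ordered'', ``full'', ``maximal among such subsets'', and ``having at least two points''. There is nothing to add.
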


\subsection{JBW-algebras}

A \emph{JBW-algebra} can be defined as a JB-algebra that is a dual space; it is the Jordan analogue of a von Neumann algebra. A finite type $I$ JBW-factor is a JBW-algebra isomorphic to the self-adjoint matrices $M_n(\R)_{sa}$, $M_n(\C)_{sa}$, $M_n(\H)_{sa}$, $M_3(\O)_{sa}$, or a spin factor $\R \oplus H$ where $H$ is a real Hilbert space and $(\lambda, x) \bullet (\mu,y) := (\lambda \mu + \ip{x}{y}_H, \lambda y + \mu x)$; all these algebras are also JH-algebras, where the matrix algebras are equipped with the inner product $\ip{A}{B} := \mathrm{Tr}(A \bullet B)$. For more details about these concepts and unexplained terminology in the next proof we refer to \cite[Chapters~2~and~3]{AS2}. 

Recall that a JH-algebra $A$ is a real Jordan algebra with an inner-product, for which it is a Hilbert space, such that $\ip{x \bullet y}{z} = \ip{y}{x \bullet z}$ for all $x,y,z \in A$. It turns out that unital JH-algebras are JB-algebras for the equivalent order unit norm. This relies on the spectral decomposition for elements in JH-algebras, and on page 41 in \cite{No} a sketch of the proof for this result is given. For sake of clarity, we will work this out in more detail. 

\begin{lemma}\label{L:orthogonal basis in associtive JH-algebras}
    Let $A$ be an associative JH-algebra such that the map $L \colon A \to B(A)$ given by $L(x)y := x \bullet y$ is injective. Then there exists an orthogonal basis of idempotents $(p_k)_k$ such that $A = \R p_k \oplus A_0$ as an orthogonal algebra direct sum for all $k$ and $A = \ell_2 - \bigoplus_k \R p_k$.
\end{lemma}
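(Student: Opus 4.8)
The plan is to mimic the classical construction of a Jordan frame / spectral resolution for a single operator, but carried out Hilbert-space-theoretically using that $A$ is associative and that $L$ is injective. First I would observe that associativity of $A$ means the Jordan product is genuinely associative, so $A$ is a commutative associative real algebra, and the operators $\{L(x) : x \in A\}$ form a commuting family of self-adjoint bounded operators on the Hilbert space $A$ (self-adjointness is exactly the JH-identity $\ip{x \bullet y}{z} = \ip{y}{x \bullet z}$, and boundedness should follow from the closed graph theorem together with separate continuity of the product, or can be taken from the JB-structure already available). Injectivity of $L$ then makes $x \mapsto L(x)$ an (algebraic) isomorphism of $A$ onto a commutative $*$-subalgebra $\mathcal{A} \subseteq B(A)$ of self-adjoint operators; I would take $\mathcal{B}$ to be the closure of $\mathcal{A}$ in an appropriate topology (norm, or the WOT closure to get a commutative von Neumann algebra) so as to have a functional calculus available.

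Next I would exploit a cyclic-type vector. The natural candidate playing the role of the unit is obtained by noting that, since $A$ is a Hilbert space and the product is nondegenerate, there is a vector behaving like a trace-defining element; more robustly, I would pick any $e$ and decompose the identity of $\mathcal{B}$ via its spectral projections. The key point is that in an associative JH-algebra each idempotent $p$ satisfies $\|p\|^2 = \ip{p}{p} = \ip{p \bullet p}{p} $ wait — more usefully, for orthogonal idempotents $p \perp q$ (meaning $p \bullet q = 0$) one gets $\ip{p}{q} = \ip{p \bullet q}{e}$-type identities forcing Hilbert-space orthogonality to coincide with algebraic orthogonality, so an orthogonal family of idempotents is automatically an orthonormal-up-to-scaling family and hence at most countable (as $A$ is a Hilbert space, here implicitly separable, or in any case the family is summable). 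I would then show each minimal idempotent $p_k$ spans a one-dimensional ideal: if $0 \ne x \in p_k \bullet A$ then $x = \lambda p_k$, because $p_k \bullet A$ is an associative JH-algebra with unit $p_k$ whose only idempotents are $0$ and $p_k$, and a unital associative JH-algebra with no nontrivial idempotents and a functional calculus must be $\R$. This gives the orthogonal algebra direct sum $A = \R p_k \oplus A_0$ with $A_0 = (1-L(p_k))A$, and iterating (using Zorn's lemma to get a maximal orthogonal family of minimal idempotents) yields $A = \ell_2\text{-}\bigoplus_k \R p_k$ once one checks the family is maximal, i.e.\ its closed span is all of $A$.

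The main obstacle I expect is precisely this last completeness/maximality step: showing that a maximal orthogonal family of minimal idempotents actually spans a dense subspace of $A$. In the finite-dimensional case this is immediate from dimension counting, but in infinite dimensions one must rule out the existence of a nonzero "continuous part" — an orthogonal complement on which $\mathcal{B}$ acts without minimal projections. This is where I would use injectivity of $L$ most seriously, together with the Hilbert-space inner product: a non-atomic commutative von Neumann algebra acting on the corresponding piece would force, via the functional calculus, elements $x$ with $x \bullet x$ arbitrarily small in norm but $x$ not small, or more precisely would contradict the fact that in a JH-algebra the map $x \mapsto x \bullet x$ controls the norm ($\|x\|^2 = \ip{x \bullet x}{e}$ for a suitable unit $e$, hence $x \bullet x = 0 \Rightarrow x = 0$, and a scaling/spectral argument upgrades this to the absence of a diffuse part). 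I would package this as: the spectral measure of each $L(x)$ is purely atomic, so $\mathcal{B}$ is atomic, its atoms are the minimal idempotents $p_k$, and $\sum_k p_k = I_{\mathcal{B}}$ in the strong operator topology, which transported back through $L^{-1}$ gives the claimed $\ell_2$-direct sum decomposition of $A$.
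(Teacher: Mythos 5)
Your route is genuinely different from the paper's: you try to prove the lemma by classical operator theory applied to the commuting self-adjoint family $\{L(x)\colon x\in A\}$ and the von Neumann algebra it generates, which amounts to re-deriving Ambrose's structure theorem for proper commutative H*-algebras, whereas the paper obtains the one crucial nonzero idempotent from Neher's structure theory of flat JH-triples (the starred remark in \cite[Theorem~IV.2.4]{Ne}) and then runs a Zorn's lemma argument on maximal orthogonal families, using that the residual subalgebra $A_0=\bigcap_k A_k$ is again an associative JH-algebra with injective $L$. Several of your ingredients are sound: associativity does make $L$ an algebra homomorphism onto a commuting family of self-adjoint operators; algebraic orthogonality of idempotents does imply Hilbert-space orthogonality via $\ip{p}{q}=\ip{p\bullet p}{q}=\ip{p}{p\bullet q}$ (no unit needed); and your maximality scaffolding matches the paper's.

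There is, however, a genuine gap at the central step, and it is the one your argument cannot bypass: producing a single nonzero idempotent \emph{of $A$}. The spectral projections you invoke live in $B(A)$ (in the von Neumann algebra generated by $L(A)$), not in $L(A)$ itself; an idempotent of $A$ corresponds to a projection of the special form $L(p)$, and nothing in your sketch returns a spectral projection to an element of $A$. The standard way to do this is to show that a positive operator $T=L(b^{2})=L(b)^{2}\neq 0$ has an eigenvalue $\lambda>0$ and to set $p:=\lambda^{-1}E(\{\lambda\})b^{2}$, which requires proving that the relevant spectral measures are purely atomic. You flag exactly this ("rule out a continuous part"), but the justification offered leans on identities such as $\|x\|^{2}=\ip{x\bullet x}{e}$ and on "the identity of $\mathcal{B}$", both of which presuppose a unit $e$ that the lemma does not assume and that typically does not exist (e.g.\ $A=\ell_{2}$ with the pointwise product satisfies all hypotheses and has no unit, the candidate $(1,1,\dots)$ lying outside $\ell_{2}$). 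The subsequent steps (minimal idempotents span one-dimensional ideals; a maximal orthogonal family has dense span) all feed off this unproved existence statement. Until the atomicity of the spectrum and the passage from projections in $B(A)$ back to idempotents in $A$ are actually carried out --- this is precisely the content of Ambrose's argument, or of the result of Neher that the paper cites --- the proof is incomplete.
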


\begin{proof}
    If $A$ is an associative JH-algebra and $L$ is injective, then $A$ is a flat JH-triple such that $L(A,A) \neq 0$, which meets the requirements of the starred remark in \cite[Theorem~IV.2.4]{Ne}. Here $L(x,y) = x\square y := L(x\bullet y) +[L(x),L(y)]$, where $[T,S] := TS-ST$, and the triple product is given by $\{x,y,z\} := (x\square y)(z)$. Hence there exists a nonzero generalised tripotent $c$ in $A$, that is $c^3 = \pm c$, such that $A = \R c \oplus A'$ as an orthogonal algebra direct sum. Note that $0 < \ip{c}{c} = \pm \ip{c^3}{c} = \pm \ip{c^2}{c^2}$ forces $c^3 = c$, and $p := c^2$ is a nonzero idempotent for which we also have $A = \R p \oplus A'$ as an orthogonal algebra direct sum. Now consider a maximal set of orthogonal idempotents $(p_k)_k$ such that $A = \R p_k \oplus A_{k}$ for all $k$. It follows that $A_0 := \cap_{k}A_k$ is an associative JH-algebra, and if $A_0 \neq \{0\}$, then it would contain a nonzero idempotent by the argument above as $L$ is also injective when restricted to $A_0$, which contradicts the maximality of $(p_k)_k$. We conclude that $A_0 = \{0\}$, and if $x \in A$ is such that $\ip{x}{p_k} = 0$ for all $k$, then $x \in A_0$ so $x = 0$. Hence $(p_k)_k$ is an orthogonal basis for $A$ and we have that $A = \ell_2 - \bigoplus_k\R p_k$.  
\end{proof}

\begin{theorem}\label{T: JH-algebra is JB-algebra}
    Let $A$ be a JH-algebra with unit $e$. Then $e$ is an order unit and the order unit norm is an equivalent norm that turns $A$ into a JB-algebra. 
\end{theorem}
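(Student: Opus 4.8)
The plan is to use the spectral theory available in JH-algebras to show that the order unit norm $\|\cdot\|_e$ is equivalent to the Hilbert norm, and then to verify the JB-norm axioms. First I would establish that $e$ is an order unit. Given $x\in A$, the maximal associative subalgebra $A(x,e)$ generated by $x$ and $e$ is an associative JH-algebra (closing up in the Hilbert norm if necessary), and one must check that $L$ restricted to it — or rather a suitable quotient making $L$ injective — satisfies the hypothesis of \Cref{L:orthogonal basis in associtive JH-algebras}. That lemma then yields an orthogonal basis of idempotents $(p_k)_k$ with $A(x,e) = \ell_2\text{-}\bigoplus_k \R p_k$; writing $x = \sum_k \lambda_k p_k$ and $e = \sum_k p_k$ (one should check the unit decomposes this way, using that $e$ acts as the identity), the coefficients $(\lambda_k)_k$ are bounded because $x\bullet\,\cdot$ is a bounded operator on the Hilbert space, so $x = \sum \lambda_k p_k$ with $\sup_k|\lambda_k| < \infty$, whence $-\|(\lambda_k)\|_\infty e \le x \le \|(\lambda_k)\|_\infty e$. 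This shows $e$ is an order unit and simultaneously identifies $\|x\|_e$ with $\sup_k|\lambda_k|$, the "spectral radius" of $x$.

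Next I would prove the two norms are equivalent. One inequality, $\|x\|_e \le C\|x\|$ for some constant, should come from the fact that $\|x\|_e = \sup_k |\lambda_k| = \|L(x)|_{A(x,e)}\|$ is dominated by the operator norm $\|L(x)\|$ on $A$, which in turn is at most $\|x\|$ up to a constant by boundedness of the Jordan product on the Hilbert space (here one uses $\|x\bullet y\| \le \|L(x)\|\,\|y\|$ and that $L$ is bounded — possibly after noting $\|L(x)\| = \|x\|_e$ exactly via the spectral decomposition, since $L(x)$ restricted to each $\R p_k$ is multiplication by $\lambda_k$ and these exhaust $A$ when $x$ runs through... more carefully, one argues on $A(x,e)$). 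For the reverse inequality $\|x\| \le C'\|x\|_e$, I would use that the positive cone is closed in the Hilbert norm and that $-\|x\|_e e \le x \le \|x\|_e e$ together with $\|e\| < \infty$, giving $\|x\| \le \|x\|_e\|e\|$ after controlling $\|x\|$ by the order interval $[-\|x\|_e e, \|x\|_e e]$; since each such order interval is Hilbert-norm bounded (it embeds in the ball of radius $\|x\|_e \|e\|$ once we know $\|z\| \le \|z^+\| + \|z^-\| $ type estimates, or more simply $0 \le y \le z \Rightarrow \|y\| \le \|z\|$ is false in general, so one instead writes $x = x^+ - x^-$ from the spectral decomposition with $\|x^\pm\| \le \|x^\pm\|_e \le \|x\|_e$ and $\|x^\pm\| = \|\sum_{\pm\lambda_k>0}\lambda_k p_k\| \le (\sum \lambda_k^2 \|p_k\|^2)^{1/2}$ — this needs the $p_k$ to have controlled norm, which follows since each $p_k$ is an idempotent in a Hilbert-Jordan algebra so $\ip{p_k}{p_k} = \ip{p_k^2}{p_k^2}^{?}$... ). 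In any case the essential point is that a norm-bounded set of pairwise orthogonal idempotents summing appropriately keeps the element's Hilbert norm comparable to its spectral radius.

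Finally, with norm equivalence in hand, $A$ with $\|\cdot\|_e$ is a complete normed space (completeness transfers across equivalent norms), it is a Jordan algebra, and it has an order unit with Archimedean cone (the cone of squares is Hilbert-closed, hence $\|\cdot\|_e$-closed, hence the order unit space is complete and Archimedean). It then remains to check the three JB-axioms $\|x\bullet y\|_e \le \|x\|_e\|y\|_e$, $\|x^2\|_e = \|x\|_e^2$, and $\|x^2\|_e \le \|x^2 + y^2\|_e$. The middle identity is immediate from the spectral picture ($x^2 = \sum \lambda_k^2 p_k$, so $\|x^2\|_e = \sup_k \lambda_k^2 = (\sup_k|\lambda_k|)^2 = \|x\|_e^2$). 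The third follows because $x^2 \le x^2 + y^2$ (as $y^2 \ge 0$) and $0 \le a \le b$ implies $\|a\|_e \le \|b\|_e$ in any order unit space. For the submultiplicativity $\|x \bullet y\|_e \le \|x\|_e \|y\|_e$, I would use that $\|x\|_e = \|L(x)\|$ as operators (or at least $\|x\bullet y\|_e \le \|L(x)\|\,\|y\|_e$ via the operator picture of the $M$-function) combined with $\|L(x)\| \le \|x\|_e$; showing $\|L(x)\| = \|x\|_e$ is the natural route and follows from the spectral decomposition by diagonalising $L(x)$ on associative subalgebras. The main obstacle I anticipate is the careful handling of the spectral decomposition: one must pass from the abstract existence of an orthogonal idempotent basis in \emph{associative} JH-algebras to a genuine functional calculus for a single element of a possibly non-associative JH-algebra, controlling convergence of $\sum \lambda_k p_k$ in both norms simultaneously, and in particular establishing that $\|L(x)\|$ equals the spectral radius — this is where the bulk of the technical work lies.
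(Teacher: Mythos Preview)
Your overall strategy is right and matches the paper's, but you miss the key technical step: the idempotent decomposition in $\mathrm{JH}(x,e)$ is \emph{finite}. You explicitly allow infinitely many $p_k$ and only argue that $(\lambda_k)_k$ is bounded. The paper instead observes that $p_k = L(p_k)p_k$ forces $\|p_k\| \ge \|L\|^{-1}$, while orthogonality of the $p_k$ and the bound $\langle P_n,P_n\rangle = \langle P_n,e\rangle \le \|e\|^2$ for $P_n = p_1+\cdots+p_n$ together force the family to be finite. Without finiteness your subsequent arguments become delicate or break: showing that $\sum_k(M-\lambda_k)p_k$ is a square (needed for $x \le M e$), identifying $\|x\|_e$ with $\sup_k|\lambda_k|$, and your norm-equivalence estimates all want finite sums. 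Indeed your own reverse-inequality paragraph trails off precisely where you would need to control $\sum_k \|p_k\|^2$, which is exactly the quantity the finiteness argument bounds.

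A second, smaller divergence: the paper does not verify the three JB-axioms directly. After establishing finiteness, norm equivalence, and that the squares form an Archimedean cone (using that $\langle p,q\rangle \ge 0$ for any idempotents), it checks the single implication $-e \le x \le e \Rightarrow 0 \le x^2 \le e$ and invokes \cite[Theorem~1.11]{AS2}. Your direct route would also work once finiteness is in hand --- $\|x^2\|_e = \|x\|_e^2$ and $\|x^2\|_e \le \|x^2+y^2\|_e$ are then immediate --- but submultiplicativity $\|x\bullet y\|_e \le \|x\|_e\|y\|_e$ via $\|L(x)\| = \|x\|_e$ is not as straightforward as you suggest, since $L(x)$ acts on all of $A$, not just on $\mathrm{JH}(x,e)$, and diagonalising it requires a Peirce decomposition argument you do not supply. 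The paper's use of \cite[Theorem~1.11]{AS2} sidesteps this entirely.
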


\begin{proof}
    Since $A$ is unital, the map that sends $x \in A$ to the left multiplication operator $L(x)y := x \bullet y$ is injective, since if $L(x) = 0$, then $0 = L(x)e = x$. Hence the map $L \colon A \to B(A)$ defined by $x \mapsto L(x)$ is nonzero and bounded by \cite[Lemma~1.1]{No}. Let $x \in A$ and consider the JH-algebra generated by $x$ and $e$ denoted by JH$(x,e)$. Since $L$ is bounded, it follows that JH$(x,e)$ is an associative JH-algebra. Furthermore, the restriction of $L$ to JH$(x,e)$ is injective, so by \Cref{L:orthogonal basis in associtive JH-algebras} there is an orthogonal basis of idempotents $(p_k)_k$ such that $\mathrm{JH}(x,e) = \ell_2 - \bigoplus_k \R p_k$. For each $p_k$ it follows that 
    \[
    \|p_k\| = \|L(p_k)p_k\| \le \|L(p_k)\|\|p_k\| \le \|L\|\|p_k\|^2,
    \]
    so that $\|p_k\| \ge \|L\|^{-1}$. If $(p_k)_k$ would contain a countably infinite set $p_1, p_2, \dots$, then for any $n \in \N$ we would have for $P_n := p_1 + \dots + p_n$ that 
    \[
    \ip{P_n}{P_n} = \ip{P_n}{P_n} + \ip{P_n}{e-P_n} = \ip{P_n}{e} \le \ip{e}{e}, 
    \]
    so that $ n\|L\|^{-2}\le \sum_{k=1}^n \|p_k\|^2 = \|P_n\|^2 \le \|e\|^2$ for all $n \in \N$, a contradiction. Hence there is an $n \in \N$ such that $(p_k)_k = \{p_1, \dots, p_n\}$, $p_1 + \dots + p_n = e$, and $x = \lambda_1 p_1 + \dots + \lambda_n p_n$ for some $\lambda_k \in \R$. By \cite[Lemma~1.4]{No} we have that $p \bullet q = 0$ for any two orthogonal idempotents and the identity in the proof of this statement also shows that $\ip{p}{q} \ge 0$ for any two idempotents $p$ and $q$ in $A$. This implies that $x \in A$ is a square if and only if $\ip{x}{y^2} \ge 0$ for all $y \in A$. Hence the set of squares is an Archimedean cone in $A$ and $e$ is an order unit, as 
    \[
    -\Bigl(\max_{1 \le k \le N} \lambda_k\Bigr) e \le x \le \Bigl(\max_{1 \le k \le N} \lambda_k\Bigr) e.
    \]
    If we denote the order unit norm by $\|\cdot\|_e$, observe that 
    \[
    \|x\|^2 = \sum_{k=1}^N \lambda_k^2\|p_k\|^2 \le \left(\max_{1 \le k \le N}\lambda_k^2 \right)\sum_{k=1}^N\|p_k\|^2 = \|e\|^2 \max_{1 \le k \le N}\lambda_k^2 = \|e\|^2 \norm{x}_e^2,
    \]
    so that $\|x\| \le \|e\|\|x\|_e$. On the other hand, by the Cauchy-Schwarz inequality we have that
    \[
    \|x\|_e = \max_{1 \le k \le N}|\lambda_k| = \max_{1 \le k \le N}|\ip{x}{p_k}| \le \|x\|\max_{1 \le k \le N}\|p_k\| \le \|e\|\|x\|.
    \]
    Hence the norms are equivalent. Moreover, if $-e \le x \le e$, then $-1 \le \lambda_k \le 1$ for all $k$, and so $\lambda_k^2 \le 1$ for all $k$, thus $0 \le x^2 \le e$. By \cite[Theorem~1.11]{AS2} $A$ is a JB-algebra for the order unit norm.   
\end{proof} 

\Cref{T: JH-algebra is JB-algebra} is used in the following characterisation of unital JH-algebras.

\begin{theorem}\label{T:char JH-algebras as reflexive JB}
Let $A$ be a unital real Jordan algebra with unit $e$. Then the following are equivalent:
\begin{enumerate}
\item $A$ can be equipped with an inner product under which it is a JH-algebra.
\item $(A, \norm{\cdot}_e)$ is a reflexive JB-algebra.
\item $(A, \norm{\cdot}_e)$ is a JBW-algebra not containing an infinite sequence $(p_n)_n$ of orthogonal projections.
\item $A$ is a finite direct sum of finite type $I$ JBW-factors.
\end{enumerate}
\end{theorem}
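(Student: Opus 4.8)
The plan is to run the cycle of implications $(1)\Rightarrow(2)\Rightarrow(3)\Rightarrow(4)\Rightarrow(1)$. For $(1)\Rightarrow(2)$: a JH-algebra is by definition a Hilbert space, hence reflexive as a Banach space, and by \Cref{T: JH-algebra is JB-algebra} the order unit norm $\norm{\cdot}_e$ is equivalent to the Hilbert space norm and makes $A$ a JB-algebra; since reflexivity is an isomorphic invariant, $(A,\norm{\cdot}_e)$ is a reflexive JB-algebra.

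For $(2)\Rightarrow(3)$: a reflexive Banach space is the dual of its own dual, so a reflexive JB-algebra is a JB-algebra that is a dual space, that is, a JBW-algebra. Suppose $(p_n)_n$ were an infinite sequence of pairwise orthogonal nonzero projections in $A$. Orthogonal projections operator commute, so the closed Jordan subalgebra they generate is associative, hence isometrically a $C_0(X)$-space in which the $p_n$ become indicators of pairwise disjoint nonempty compact open sets. Then $\norm{\sum_n\lambda_n p_n}_e=\sup_n|\lambda_n|$, so the closed linear span of $\{p_n:n\in\N\}$ is isometrically isomorphic to $c_0$, contradicting that closed subspaces of a reflexive space are reflexive.

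For $(3)\Rightarrow(4)$, which is the step I expect to be the main obstacle, one invokes the structure theory of JBW-algebras \cite[Chapters~2~and~3]{AS2}. The centre $Z(A)$ is a commutative JBW-algebra; if it were infinite dimensional it would contain an infinite orthogonal sequence of nonzero projections, and so would $A$. Hence $Z(A)\cong\R^m$ and $A=A_1\oplus\dots\oplus A_m$ is a finite direct sum of JBW-factors, none of which contains an infinite orthogonal sequence of nonzero projections. A JBW-factor of type $\mathrm{II}$ or $\mathrm{III}$ does contain such a sequence — repeatedly halve the unit into equivalent projections — and so does a type $\mathrm{I}_\infty$ factor via the rank-one projections onto an orthonormal basis; hence each $A_i$ is a type $\mathrm{I}$ factor whose unit is a finite orthogonal sum of minimal projections. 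The classification of such finite type $\mathrm{I}$ JBW-factors then identifies each $A_i$ with one of $M_n(\R)_{sa}$, $M_n(\C)_{sa}$, $M_n(\H)_{sa}$, $M_3(\O)_{sa}$, or a spin factor $\R\oplus H$; marshalling this classification together with the type and capacity decompositions is the delicate part (note that a spin factor, though possibly infinite dimensional, admits only orthogonal families of at most two nontrivial projections, so is not excluded by the hypothesis of (3)).

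Finally, for $(4)\Rightarrow(1)$: since the orthogonal direct sum of inner products turns a finite direct sum of JH-algebras into a JH-algebra, it suffices to put an admissible inner product on each listed factor. On the matrix factors one uses $\ip{a}{b}:=\mathrm{Tr}(a\bullet b)$, for which $\ip{a\bullet b}{c}=\ip{b}{a\bullet c}$ holds by the trace identity $\mathrm{Tr}((a\bullet b)\bullet c)=\mathrm{Tr}(a\bullet(b\bullet c))$; on the spin factor $\R\oplus H$ one uses $\ip{(\lambda,x)}{(\mu,y)}:=\lambda\mu+\ip{x}{y}_H$, which makes $\R\oplus H$ a Hilbert space, and $\ip{a\bullet b}{c}=\ip{b}{a\bullet c}$ is then a one-line computation from the product formula. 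Thus $A$ is a JH-algebra and the cycle closes.
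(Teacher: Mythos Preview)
Your proof is correct and follows the same cycle of implications as the paper, with essentially identical arguments for $(1)\Rightarrow(2)$, $(2)\Rightarrow(3)$, and $(4)\Rightarrow(1)$. The only notable difference is in $(3)\Rightarrow(4)$: the paper argues via the atomic/nonatomic decomposition (a nonzero nonatomic part can be repeatedly split to produce an infinite orthogonal sequence, so $A$ is atomic and hence a direct sum of type~I factors), whereas you go through the centre first (finite-dimensional centre gives finitely many factor summands) and then eliminate types II, III, and I$_\infty$ separately. Both routes are standard consequences of the JBW structure theory in \cite[Chapters~2~and~3]{AS2}; the paper's is slightly more economical since ``nonatomic $\Rightarrow$ infinite orthogonal sequence'' handles types II and III in one stroke, while your approach makes the type-by-type elimination explicit and correctly flags that infinite-dimensional spin factors survive the hypothesis of (3).
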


\begin{proof}
$(i) \Rightarrow (ii)$: By \Cref{T: JH-algebra is JB-algebra}, the order unit norm is a JB-algebra norm equivalent with the JH-norm.

$(ii) \Rightarrow (iii)$: Since $A$ has a predual, it is a JBW-algebra. Suppose that $A$ contains an infinite sequence $(p_n)_n$ of orthogonal projections. Then 
$$(\lambda_n)_n \mapsto \sum_{n=1}^\infty \lambda_n p_n $$
is an isometric embedding of $c_0$ into $A$, contradicting the reflexivity of $A$.

$(iii) \Rightarrow (iv)$: If the central projection onto the nonatomic part of $A$ is nonzero, then we can keep splitting it up as a sum of two orthogonal projections, yielding an infinite sequence of orthogonal projections. Hence $A$ is atomic, so it is a direct sum of type $I$ factors. The restriction on the projections yields that there can only be finitely many direct summands, and that each direct summand has to be a finite type $I$ JBW-factor.

$(iv) \Rightarrow (i)$: Take $\ip{x}{y}$ to be the sum of the inner products on each component.
\end{proof}

\section{Finsler metric and symmetries} 
One can use the gauge function $M(x/y)$ to define a metric on $C^\circ$ in an order unit space $V$ as follows:
For $x,y\in C^\circ$ let 
\[
d_T(x,y) :=\max\{\log M(x/y), \log M(y/x)\}.
\] 
This metric is called the {\em Thompson metric} \cite{Thom}, and has a Finsler structure. More precisely,  if one defines on the tangent space $T_wC^\circ\cong V$ at $w\in C^\circ$ the Finsler metric by $F(w,x) := \|x\|_w$, where $\|\cdot\|_w$ is the order unit norm with respect to $w$, then $d_T(x,y)$ is the infimum of lengths, 
\[
L(\gamma) = \int_0^1 F(\gamma(t),\gamma'(t)) \,\mathrm{d} t,
\] 
 over all piecewise $C^1$-smooth paths $\gamma\colon [0,1]\to C^\circ$ with $\gamma(0)=x$ and $\gamma(1)=y$, see \cite{Nu}. 
It should be pointed out that in this setting $F$ does not satisfy the usual smoothness and strong convexity conditions used in the theory of Riemann-Finsler manifolds \cite{BCS}.

It is well-known that the metric balls of $d_T$ are convex and the topology of $d_T$ coincides with the order unit norm topology on $C^\circ$ if $V$ is an order unit space, see \cite[Chapter 2]{LNBook}. Clearly all gauge-preserving maps and gauge-reversing maps $\Psi\colon C^\circ\to K^\circ$ are $d_T$-isometries.   

One of the main results of this paper is that the existence of a single gauge-reversing map $\Psi\colon C^\circ\to K^\circ$ turns $(C^\circ,d_T)$ (and hence also $(K^\circ,d_T)$) into a symmetric Finsler space, see Theorem \ref{T:symmetry from antitone}.
\begin{definition} Given $(C^\circ, d_T)$ and $x\in C^\circ$ we call a surjective $d_T$-isometry $\sigma_x\colon C^\circ\to C^\circ$ a {\em $d_T$-symmetry at $x$} if $\sigma_x$ has $x$ as a unique fixed point and satisfies $\sigma^2_x(y)=y$ for all $y\in C^\circ$. Moreover, $(C^\circ,d_T)$ is said to be a {\em symmetric Finsler space}, if there exists a $d_T$-symmetry at each $x\in C^\circ$.
\end{definition}
The interior of the cone of squares in a JB-algebra $(A_+^\circ,d_T)$ is a symmetric Finsler space where the symmetry at $x\in A_+^\circ$ is given by $y\mapsto Q_x(y^{-1})$. 

It turns out that $d_T$-symmetries $\sigma_x\colon C^\circ\to C^\circ$  are gauge-reversing maps. To show this it is convenient to first recall some facts about the geodesics in $(C^\circ,d_T)$.  Recall that a map $\gamma\colon I\to C^\circ$, where $I$ is a possibly unbounded interval in $\mathbb{R}$, is called a {\em geodesic path} if 
\[
d_T(\gamma(t),\gamma(s))=|t-s|\qquad \mbox{ for all }t,s\in I.
\]
The image $\gamma(I)$ will be called a {\em geodesic} in $(C^\circ,d_T)$. 

Of particular interest in this work are unique geodesics of $d_T$, which were studied in \cite{LR}. To analyse them, it is useful to note that if $x,y\in C^\circ$ are linearly independent, then the distance $d_T(x,y)$ is  completely determined by their distance in the $2$-dimensional subcone $C(x,y) :=\mathrm{Span}\{x,y\}\cap C$.  Unique geodesics in $2$-dimensional cones can be characterised as follows, see  \cite[Lemma~3.3]{LR}. 
\begin{lemma}\label{L:unique geod in 2d cones}
Let $C$ be a $2$-dimensional cone and let $x,y\in C^\circ$. Then there is a unique geodesic in $(C^\circ,d_T)$ connecting $x$ and $y$ if and only if either 
\begin{itemize}
\item[$(i)$] $M(x/y)=M(y/x)$, or,
\item[$(ii)$] $M(x/y)=M(y/x)^{-1}$, in which case $x=\lambda y$ for some $\lambda>0$.
\end{itemize}
\end{lemma}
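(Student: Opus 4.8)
The plan is to work entirely inside the two-dimensional cone $C = C(x,y)$, so that we may assume $V = \R^2$ and $C$ is a closed cone with nonempty interior spanned by two extreme rays. The key structural fact is that a $2$-dimensional cone has exactly two extreme rays, and that through any point $w \in C^\circ$ there pass exactly two extreme half-lines, one parallel to each extreme ray. I would first treat case $(ii)$: if $M(x/y) = M(y/x)^{-1}$, set $\lambda := M(x/y)$; then $x \le \lambda y$ and $y \le \lambda^{-1} x$, i.e.\ $\lambda y \le x$, forcing $x = \lambda y$, so $x$ and $y$ lie on a common ray through the origin and the straight-line segment (a ray through $0$) is a geodesic; uniqueness here is immediate since $d_T(x,\lambda y) = |\log \lambda|$ is achieved only along that ray. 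So the substance is case $(i)$.

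For case $(i)$, write $\alpha := M(x/y) = M(y/x)$, so $d_T(x,y) = \log\alpha$ and neither $x \le y$ nor $y \le x$ (else $\alpha = 1$ and $x = y$). The two supporting relations $x \le \alpha y$ and $y \le \alpha x$ mean that $\alpha y - x \in \partial C$ and $\alpha x - y \in \partial C$, and since these two boundary points lie on the two different extreme rays of $C$, one checks that $x = \tfrac{1}{\alpha+1}(\alpha y - x)' $-type convex combinations place $x$ and $y$ on a common extreme half-line through the point $p := \tfrac{1}{\alpha-1}\big((\alpha x - y) \wedge \text{(the other boundary point)}\big)$ — more cleanly: the points $\alpha x - y$ and $\alpha y - x$ are nonzero boundary vectors on the two distinct extreme rays $r_1, r_2$, and $x = \tfrac{\alpha}{\alpha^2-1}(\alpha x - y) + \tfrac{1}{\alpha^2-1}(\alpha y - x)$ while $y$ has the symmetric expression, so $x, y$ both lie on $\ell_{x}^{r_1 - r_2}$; along an extreme half-line $\ell$, parametrising by $w(t)$ with $w \le w(t)$ one has $d_T$ additive, so the segment of $\ell$ between $x$ and $y$ is a geodesic. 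For uniqueness, I would invoke that any geodesic segment from $x$ to $y$ must have both endpoints realising the gauge bounds simultaneously at every interior point, and use the Finsler-length characterisation: a competing path would have to leave the extreme half-line, but the strict convexity of $C$ transverse to the extreme directions — i.e.\ any point of $C^\circ$ off $\ell_x^{r_1-r_2}$ satisfies a strict gauge inequality with one of the two extreme rays — forces its Thompson length to exceed $\log\alpha$. Conversely, if neither $(i)$ nor $(ii)$ holds, then $\log M(x/y) \ne \log M(y/x)$ and $x, y$ are not proportional; the broken geodesic built by going first along one extreme half-line through $x$ and then along the other to $y$, versus going the other way around, gives two distinct geodesics of the common length $\tfrac12(\log M(x/y) + \log M(y/x))$ — this is exactly where the "corner" of the $2$-dimensional cone produces non-uniqueness.

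The main obstacle I anticipate is the uniqueness half of case $(i)$: showing no geodesic other than the extreme-half-line segment exists. The clean way is to reduce to the explicit model where $C = \R_{\ge 0}^2$ (every $2$-dimensional cone is linearly isomorphic to it, and Thompson's metric is invariant under linear cone isomorphisms), in which $M(x/y) = \max_i x_i/y_i$, the Thompson metric becomes $d_T(x,y) = \|\log x - \log y\|_\infty$ under the coordinatewise logarithm, and $C^\circ$ with $d_T$ is isometric to $(\R^2, \|\cdot\|_\infty)$. Then the statement reduces to the classical fact that in $(\R^2,\|\cdot\|_\infty)$ the segment from $a$ to $b$ is the unique geodesic iff $a - b$ has a strictly dominant coordinate (case $(i)$, equal gauges $\Leftrightarrow$ one coordinate of $\log x - \log y$ strictly dominates — wait, one must check the correspondence of cases carefully) or $a = b$ up to the relevant degeneracy; non-uniqueness occurs precisely when $|a_1 - b_1| = |a_2 - b_2|$, where a whole "staircase" family of geodesics appears. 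Matching "$M(x/y) = M(y/x)$" and "$M(x/y) = M(y/x)^{-1}$" to the two coordinates of $\log x - \log y$ is the one genuinely fiddly bookkeeping step, and writing it in the explicit model makes it routine; I would present the proof in that model after recording the reduction.
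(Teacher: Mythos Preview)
The paper does not prove this lemma; it simply cites \cite[Lemma~3.3]{LR}. So there is no in-paper argument to compare your proposal against. Your overall strategy---reduce to the model cone $\R_{\ge 0}^2$ and use the coordinatewise logarithm to transport the problem to $(\R^2,\|\cdot\|_\infty)$---is exactly the right one and is the standard route (the paper itself invokes this isometry later, citing \cite[Proposition~2.2.1]{LNBook}).

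However, your proposal contains a genuine error in the $\ell_\infty$ step. You assert that in $(\R^2,\|\cdot\|_\infty)$ the geodesic from $a$ to $b$ is unique when one coordinate of $a-b$ strictly dominates, and that non-uniqueness (``staircase'' geodesics) occurs when $|a_1-b_1|=|a_2-b_2|$. This is exactly backwards. If, say, $a-b=(2,0)$, then $0\to(1,1)\to(2,0)$ and $0\to(1,-1)\to(2,0)$ are both geodesics of length $2$; whereas if $a-b=(1,1)$, any point $z$ with $\|z\|_\infty+\|a-b-z\|_\infty=1$ must satisfy $z_1=z_2$, forcing $z$ onto the segment. Uniqueness holds precisely when $|a_1-b_1|=|a_2-b_2|$, i.e.\ when $a-b$ points toward a vertex of the $\ell_\infty$ ball.

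Your correspondence between conditions $(i)$--$(ii)$ and the $\ell_\infty$ picture is also inverted. Writing $c_i=\log x_i-\log y_i$, one has $\log M(x/y)=\max(c_1,c_2)$ and $\log M(y/x)=-\min(c_1,c_2)$; hence $(i)$ is $c_1=-c_2$ and $(ii)$ is $c_1=c_2$, both of which give $|c_1|=|c_2|$. So $(i)$ or $(ii)$ holds iff $|c_1|=|c_2|$, which by the preceding paragraph is exactly the uniqueness condition. Once you correct these two sign/direction errors, the $\ell_\infty$ reduction gives a clean proof.

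Your direct attack on case $(i)$ before the reduction is also off: you try to place $x$ and $y$ on a common extreme half-line, but the type~I geodesic $t\mapsto e^t v+e^{-t}w$ is a hyperbola in the cone, not a straight segment parallel to an extreme ray. Drop that paragraph and work entirely in the $\ell_\infty$ model.
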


Given $x$ and $y$ linearly independent vectors in $C^\circ$ with $M(x/y)=M(y/x)$, we call the unique geodesic passing through $x$ and $y$ in $(C(x,y)^\circ,d_T)$ a \emph{type I} geodesic, and the unique geodesic defining the ray through $x$ is called a \emph{type II} geodesic in $(C(x,y)^\circ,d_T)$. These geodesics also define geodesics in $(C^\circ,d_T)$ and the type II geodesic is of the form $t\mapsto e^{-t}x$, and the type I geodesic is of the form $t\mapsto \alpha(e^tv+e^{-t}w)$ for some linearly independent vectors $v,w\in \partial C$ and $\alpha>0$ by \cite[Lemma~3.7]{LR}. See Figure~\ref{F:types of geodesics} below. 

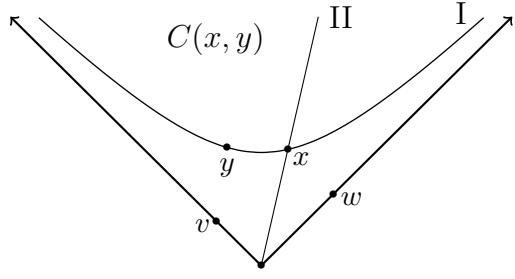
\begin{figure}[H]
\centering
\begin{tikzpicture}[scale=1.5]
\draw (0,0)--(0.5,2.2);
\draw[->,thick] (0,0)--(-2.2,2.2);
\draw[->,thick] (0,0)--(2.2,2.2);      
\draw[domain=-1.95:1.95,smooth,variable=\t,line width=.5pt] plot ({\t},{sqrt(\t*\t+1)});
\node at (0,0) {\tiny{$\bullet$}};
\node at (0.235,1.025) {\tiny{$\bullet$}};
\node at (0.63,0.63) {\tiny{$\bullet$}};
\node at (-0.395,0.395) {\tiny{$\bullet$}};
\node at (-0.3,1.044) {\tiny{$\bullet$}};
\node[below] at (-0.3,1.044) {\small{$y$}};
\node[below] at (0.35,1.1) {\small{$x$}};
\node[below right] at (0.6,0.74) {\small{$w$}};
\node[below left] at (-0.35,0.5) {\small{$v$}};
\node at (-0.4,2) {$C(x,y)$};
\node[right] at (0.5,2.2) {II};
\node[left] at (1.9,2.236) {I};
\end{tikzpicture}
\caption{Type I and type II geodesics}\label{F:types of geodesics}
\end{figure}
\noindent
Every type II geodesic is unique in $(C^\circ,d_T)$ by \cite[Proposition~4.1]{LR}. Determining whether type I geodesics are unique in $(C^\circ,d_T)$ is more subtle and the following theorem, which is \cite[Theorem~4.3]{LR} for order unit spaces, provides a useful characterisation.

\begin{theorem}\label{T:char unique geodesics d_T}
Let $(V,C,u)$ be an order unit space and let $x,y\in C^\circ$ be linearly independent. Suppose that $M(x/y)=M(y/x)$ and define $x',y'\in\partial C$ to be the endpoints of the straight line segment connecting $x$ and $y$ such that $x$ is between $x'$ and $y$, and $y$ is between $x$ and $y'$. Then the type I geodesic passing through $x$ and $y$ is unique in $(C^\circ,d_T)$ if and only there exist no $z\in V\setminus\{0\}$ and $\eps>0$ such that 
\[
x'+tz\in\partial C(x,y,z)\qquad\mbox{and}\qquad y'+tz\in\partial C(x,y,z)
\]
for all $|t|<\varepsilon$. Here $C(x,y,z) :=\mathrm{Span}\{x,y,z\}\cap C$. 
\end{theorem}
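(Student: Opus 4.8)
The plan is to prove the two implications separately, in both cases exploiting the explicit form of the type I geodesic. Denote by $\gamma$ the type I geodesic through $x$ and $y$; by \cite[Lemma~3.7]{LR} it is of the form $t\mapsto\alpha(e^tv+e^{-t}w)$ with $v,w\in\partial C$ linearly independent. After rescaling $v,w$ and shifting the parameter we may assume $\gamma(0)=x=v+w$ and $\gamma(L)=y=e^Lv+e^{-L}w$, where $L=d_T(x,y)$. Since $M(x/y)=M(y/x)=e^L$ and both gauges are attained, $e^Ly-x=(e^{2L}-1)v$ and $e^Lx-y=(e^L-e^{-L})w$ lie on $\partial C$, so $v$ and $w$ span the two extreme rays of the $2$-dimensional cone $C(x,y)$; extending the chord $[x,y]$ to $\partial C(x,y)$ one reads off that $y'$ is a positive multiple of $v$ and $x'$ a positive multiple of $w$. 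A short case analysis also shows that any $z$ as in the statement must lie outside $\mathrm{Span}\{x,y\}$, since otherwise $x'+tz$ or $y'+tz$ would leave $\partial C(x,y,z)=\partial C(x,y)$ for all small $t\ne0$.

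\textbf{No such $z$ implies uniqueness.} I would argue by contraposition: suppose $\eta\colon[0,L]\to C^\circ$ is a second geodesic from $x$ to $y$. Submultiplicativity of the gauge forces $M(x/\eta(s))M(\eta(s)/y)\ge M(x/y)=e^L$, while $M(x/\eta(s))\le e^s$ and $M(\eta(s)/y)\le e^{L-s}$; hence equality holds throughout, and likewise for the reversed ratios. Thus $e^{-s}x\le\eta(s)\le e^sx$ and $e^{s-L}y\le\eta(s)\le e^{L-s}y$. Put $z_s:=\eta(s)-\gamma(s)$ and subtract the corresponding identities for $\gamma$ (all of which are explicit multiples of $v$ or $w$, e.g.\ $e^sx-\gamma(s)=(e^s-e^{-s})w$): the four relations become $z_s\le c_1w$, $-z_s\le c_2v$, $z_s\le c_3v$, $-z_s\le c_4w$ with $c_1,\dots,c_4>0$ for $s\in(0,L)$. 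Combining the first with the last, and the second with the third, yields $w+tz_s\in C$ and $v+tz_s\in C$ for all $t$ in a neighbourhood of $0$, hence $x'+tz_s\in C$ and $y'+tz_s\in C$ for $|t|$ small. Pick $s\in(0,L)$ with $\eta(s)\ne\gamma(s)$, which exists since $\eta\ne\gamma$, and set $z:=z_s\ne0$. Because $x'\in\partial C$ and $x'\pm tz\in C$ for small $t$, no such point can lie in $C^\circ$ (a proper convex combination of a point of $C$ and a point of $C^\circ$ is interior), so $x'+tz\in\partial C$ for $|t|$ small, and similarly for $y'+tz$. Finally, every state is strictly positive on $x\in C^\circ$, so any supporting state of $C$ at $x'+tz$ restricts to a nonzero functional on $\mathrm{Span}\{x,y,z\}$; hence $x'+tz$ lies on the relative boundary $\partial C(x,y,z)$, and likewise $y'+tz$. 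This contradicts the hypothesis.

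\textbf{Existence of such $z$ implies non-uniqueness.} Here I would construct a competing geodesic directly. Given $z$ and $\varepsilon$ as in the statement, after rescaling $z$ we have $v+tz\in C$ and $w+tz\in C$ for $|t|<\varepsilon$. Fix a $C^1$ bump $\beta\colon[0,L]\to[0,\delta]$ with $\beta(0)=\beta(L)=0$, $\beta>0$ on $(0,L)$ and $|\beta'|\le K\delta$, and set $\eta(t):=e^t\bigl(v+\beta(t)z\bigr)+e^{-t}\bigl(w+\beta(t)z\bigr)$. Then $\eta(0)=x$, $\eta(L)=y$, and $\eta(t)\in C^\circ$ for all $t$ since $\eta$ is a small perturbation of the interior path $\gamma$. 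The substance is to show $\eta$ is $1$-Lipschitz once $\delta$ is small: for $0\le s\le t\le L$ one rewrites $e^{t-s}\eta(s)-\eta(t)$ and $e^{t-s}\eta(t)-\eta(s)$ as positive multiples of $w+\theta(s,t)z$ and $v+\theta'(s,t)z$ respectively, and checks that $\sup|\theta|$ and $\sup|\theta'|$ over the triangle are $O(\delta)$; granting this, both vectors lie in $C$ for $\delta$ small, so $d_T(\eta(s),\eta(t))\le t-s$, and combined with $d_T(x,y)=L$ this forces $d_T(\eta(s),\eta(t))=|t-s|$, i.e.\ $\eta$ is a geodesic from $x$ to $y$. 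Since $z\notin\mathrm{Span}\{x,y\}$ we have $\eta(t)=\gamma(t)+(e^t+e^{-t})\beta(t)z\notin\gamma(\mathbb{R})$ for $t\in(0,L)$, so $\eta$ is genuinely a new geodesic and $\gamma$ is not unique.

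\textbf{Main obstacle.} I expect the delicate point to be the uniform estimate $\sup_{0\le s\le t\le L}|\theta(s,t)|=O(\delta)$ in the last paragraph: the ratios $\theta,\theta'$ have numerator and denominator both vanishing on the diagonal $s=t$, and one must verify that the limiting value there (which works out to $\beta(t)-\tfrac12(e^{2t}+1)\beta'(t)$ and its analogue) is still $O(\delta)$, so that $\theta,\theta'$ extend continuously to the closed triangle with supremum $O(\delta)$. The reductions in the first two paragraphs and the extraction of $z$ from a competing geodesic are comparatively routine once the identification of $x'$ with a multiple of $w$ and $y'$ with a multiple of $v$ is available.
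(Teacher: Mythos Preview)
The paper does not supply its own proof of this statement; it is quoted as \cite[Theorem~4.3]{LR} and used as a black box. So there is no in-paper argument to compare against, and your proposal has to be judged on its own.

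Your proof is sound. In the first implication, the chain
\[
e^L=M(x/y)\le M(x/\eta(s))\,M(\eta(s)/y)\le e^s\cdot e^{L-s}
\]
and its reverse pin down all four gauges, and subtracting the explicit identities for $\gamma$ yields exactly the two-sided bounds $-c_4w\le z_s\le c_1w$ and $-c_2v\le z_s\le c_3v$. Your passage from $x'+tz_s\in C\cap\partial C$ to $x'+tz_s\in\partial C(x,y,z_s)$ via a supporting state is correct: the state is strictly positive on the interior point $x$, hence restricts nontrivially to $\mathrm{Span}\{x,y,z_s\}$.

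For the second implication, your bump-function construction works and the ``main obstacle'' you flag is genuine but benign. Writing $u=t-s$, both numerator and denominator of $\theta$ vanish at $u=0$; by Hadamard's lemma (using $\beta\in C^1$) one has $\theta(s,t)=\tilde N(s,u)/\tilde A(s,u)$ with $\tilde A$ continuous and bounded below by $2e^{-L}$, while $\tilde N(s,u)=\int_0^1\partial_u N(s,s+\tau u)\,d\tau$ with
\[
|\partial_u N|\le \delta\,e^u(e^s+e^{-s})+K\delta\,(e^{s+u}+e^{-s-u})+\delta\,|e^{s+u}-e^{-s-u}|=O(\delta).
\]
Hence $\sup_{0\le s\le t\le L}|\theta|=O(\delta)$ uniformly, and the analogous estimate for $\theta'$ follows by symmetry. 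This closes the argument cleanly; your diagonal value $\beta(t)-\tfrac12(e^{2t}+1)\beta'(t)$ is the right limiting expression.

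Two minor clarifications worth making explicit in a write-up: the rescaling of $z$ to pass from $x',y'$ to $v,w$ uses that the hypothesis is invariant under $z\mapsto cz$ for $c\ne 0$, and the assertion $\eta([0,L])\subset C^\circ$ for small $\delta$ relies on the (norm-)compactness of $\gamma([0,L])$ and openness of $C^\circ$, which holds in any order unit space.
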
  

In particular, we have the following consequence, which will be useful in the sequel. 
\begin{corollary}\label{C:unique geod with extreme vector}
Let $(V,C,u)$ be an order unit space and $x\in C^\circ$. If $r,s\in\partial C$ are such that $r+s=x$, then the type I geodesic $\gamma(t)=e^tr+e^{-t}s$ passing through $x$ is unique in $(C^\circ,d_T)$ if either $r$ or $s$ is an extreme vector.
\end{corollary}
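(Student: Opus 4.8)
The plan is to apply \Cref{T:char unique geodesics d_T} to two conveniently chosen points of $\gamma$. First I would note that $r$ and $s$ are linearly independent: if $r=\lambda s$ then $x=r+s$ is a positive multiple of $s\in\partial C$, contradicting $x\in C^\circ$. Work in the $2$-plane $W:=\Span\{r,s\}$ with the linear coordinates $ar+bs\leftrightarrow(a,b)$. The key geometric observation is that, since $r,s\in\partial C$ while $r+s\in C^\circ$, the $2$-dimensional cone $C\cap W$ is exactly the cone generated by $r$ and $s$, i.e.\ the first quadrant in these coordinates: picking a positive functional of $V$ that vanishes at $r$ (a supporting functional at $r\in\partial C$), it is strictly positive at the interior point $x=r+s$, hence strictly positive at $s$, which forces $\R_{\ge 0}r$ to be an extreme ray of $C\cap W$, and symmetrically for $\R_{\ge 0}s$. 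In these coordinates $\gamma$ traces the branch $\{ab=1,\ a,b>0\}$ of a hyperbola inside $C\cap W$.

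Next I would take $p:=\gamma(0)=x$ and $q:=\gamma(1)=er+e^{-1}s$. Since for vectors of $W$ the gauge of $C$ coincides with that of the subcone $C\cap W$, a direct computation in the coordinates above shows that $p$ and $q$ are linearly independent with $M(p/q)=M(q/p)=e$, so \Cref{T:char unique geodesics d_T} applies to the pair $p,q$, and (as $\gamma$ is a type I geodesic path through $p$ and $q$ in the $2$-dimensional cone $C\cap W$) the type I geodesic through $p$ and $q$ is precisely $\gamma$. Moreover $\Span\{p,q\}=W$, and the straight line through $p$ and $q$ — being a chord of $\{ab=1\}$, hence of negative slope — exits the first quadrant through one point on each positive semi-axis; therefore the boundary points $x'$ and $y'$ appearing in \Cref{T:char unique geodesics d_T} are a positive multiple of $s$ and a positive multiple of $r$ (in one order or the other).

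It then remains to rule out the existence of $z\in V\setminus\{0\}$ and $\eps>0$ with $x'+tz\in\partial C(p,q,z)$ and $y'+tz\in\partial C(p,q,z)$ for all $|t|<\eps$, where $C(p,q,z)=\Span\{p,q,z\}\cap C=\Span\{r,s,z\}\cap C=:D$. Since $D$ is a closed cone, its relative boundary $\partial D$ is invariant under positive scaling, so — using that $x'$ and $y'$ are positive multiples of $s$ and $r$ — the above condition is equivalent to the existence of $z\neq0$ and $\eps'>0$ with $r+tz\in\partial D$ and $s+tz\in\partial D$ for all $|t|<\eps'$. Suppose such a $z$ exists; by the symmetry of the hypothesis and of this condition under interchanging $r$ and $s$, I may assume $r$ is an extreme vector of $C$. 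Taking $t=\pm\eps'/2$ gives $r\pm\frac{\eps'}{2}z\in\partial D\subseteq C$, so $0\le r+\frac{\eps'}{2}z\le 2r$, and extremeness of $r$ forces $r+\frac{\eps'}{2}z\in\R r$, i.e.\ $z=cr$ for some $c\neq0$. But then $\Span\{r,s,z\}=W$, so $D=C\cap W$ is the first quadrant with $\partial D=\R_{\ge 0}r\cup\R_{\ge 0}s$, while $s+tz=tc\,r+s$ has both coordinates nonzero for $0<|t|<\eps'$ and so lies off the axes, contradicting $s+tz\in\partial D$. Hence no such $z$ exists, and \Cref{T:char unique geodesics d_T} yields that $\gamma$ is unique in $(C^\circ,d_T)$.

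I expect the main obstacle to be the geometric bookkeeping of the first two paragraphs: correctly identifying $C\cap W$ as the cone generated by $r$ and $s$, and consequently placing the chord endpoints $x'$ and $y'$ on the extreme rays $\R_{\ge 0}s$ and $\R_{\ge 0}r$. Once the criterion of \Cref{T:char unique geodesics d_T} is rephrased as the impossibility of the segments $r+tz$ and $s+tz$ both lying in $\partial D$, the extreme-vector hypothesis pins $z$ down to a multiple of $r$ and the condition fails immediately, so there is no serious difficulty beyond that point.
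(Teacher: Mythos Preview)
Your argument is correct and follows essentially the same route as the paper: reduce via \Cref{T:char unique geodesics d_T} to ruling out a $z$ with $r+tz,s+tz\in\partial C(r,s,z)$, use extremeness of $r$ to force $z\in\R r$, and then observe that $s+tz$ cannot stay on $\partial C(r,s)$. The paper's proof simply writes $r=\tfrac12(r+\tfrac{\eps}{2}z)+\tfrac12(r-\tfrac{\eps}{2}z)$ and invokes extremeness directly, whereas you spell out more carefully why the endpoints $x',y'$ from \Cref{T:char unique geodesics d_T} are positive multiples of $r$ and $s$ and why $C\cap W$ is the cone generated by $r,s$; this extra bookkeeping is sound and makes explicit what the paper takes for granted.
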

\begin{proof}
Suppose without loss of generality that $r$ is an extreme vector and in view of Theorem~\ref{T:char unique geodesics d_T}, assume for sake of a contradiction, that there is a $z\in V\setminus\{0\}$ and an $\varepsilon>0$ such that $r+tz,s+tz\in\partial C(z,r,s)$ whenever $|t|<\varepsilon$. But then 
\[
r=\textstyle{\frac{1}{2}}(r+\textstyle{\frac{1}{2}}\varepsilon z)+\textstyle{\frac{1}{2}}(r-\textstyle{\frac{1}{2}}\varepsilon z),
\]
and so by the extremality of $r$, it follows that $r+\textstyle{\frac{1}{2}}\varepsilon z$ and $r-\textstyle{\frac{1}{2}}\varepsilon z$ are positive multiples of $r$. Hence $z$ is a multiple of $r$ and by our assumption $s-\lambda r\in\partial C(r,s)$ for some $\lambda>0$ which is impossible.
\end{proof}

Using the knowledge of the unique geodesics we now show that each $d_T$-symmetry is a gauge-reversing map.
\begin{theorem}\label{T:d_T-symmetry implies order antimorphism}
Let $(V,C,u)$ be an order unit space. Every $d_T$-symmetry $\sigma_x\colon C^\circ\to C^\circ$  is a gauge-reversing map.
\end{theorem}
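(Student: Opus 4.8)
The plan is to show that a $d_T$-symmetry $\sigma_x$ preserves the gauge $M(\cdot/\cdot)$ in the reversed sense, and then invoke \Cref{L:antitone M-functions} to conclude it is an order-antimorphism that is homogeneous of degree $-1$. Since by \Cref{L:antitone M-functions} being gauge-reversing is equivalent to being an order-antimorphism and homogeneous of degree $-1$, it suffices to show $M(v/w) = M(\sigma_x(w)/\sigma_x(v))$ for all $v,w \in C^\circ$. I would first reduce to the case $x = u$: conjugating $\sigma_x$ by a Thompson isometry that moves $x$ to $u$ changes neither the property of being a $d_T$-symmetry nor (after appropriate bookkeeping) the gauge-reversing condition, but in fact it is cleaner to work directly with the given $x$. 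The key geometric fact I would exploit is that $\sigma_x$ fixes $x$, squares to the identity, and is a $d_T$-isometry, so it reverses every geodesic through $x$: if $\gamma$ is a geodesic line with $\gamma(0) = x$, then $t \mapsto \sigma_x(\gamma(t))$ is again a geodesic through $x$ at the same speed, and by uniqueness (when available) it must be $t \mapsto \gamma(-t)$.

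The heart of the argument is therefore to identify which geodesics through $x$ are unique, and to read off the gauge from the behaviour of $\sigma_x$ along them. Given $v, w \in C^\circ$, I want to relate $M(v/w)$ to data along type I and type II geodesics emanating from $x$. First handle the type II (ray) geodesics: every type II geodesic $t \mapsto e^{-t}x$ is unique in $(C^\circ, d_T)$ by \cite[Proposition~4.1]{LR}, so $\sigma_x$ must send the ray through $x$ to itself with $\sigma_x(e^{-t}x) = e^{t}x$; in particular $\sigma_x(\lambda x) = \lambda^{-1}x$, which already pins down homogeneity of degree $-1$ at the fixed point and suggests that $\sigma_x$ is globally homogeneous of degree $-1$ after the usual normalisation. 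For the gauge itself, I would use \Cref{C:unique geod with extreme vector}: write a general point near the boundary in the form $x = r + s$ with $r$ an extreme vector, so the type I geodesic $\gamma(t) = e^t r + e^{-t} s$ through $x$ is unique; then $\sigma_x(\gamma(t)) = \gamma(-t) = e^{-t}r + e^t s$, so $\sigma_x$ swaps the two "endpoints at infinity" $r$ and $s$ of this geodesic. Since $M(v/w)$ for $v, w$ on a common extreme half-line (or more generally in a $2$-dimensional subcone) is computed from exactly these endpoint data via \Cref{L:unique geod in 2d cones} and the explicit form $t \mapsto \alpha(e^t v + e^{-t} w)$ of type I geodesics from \cite[Lemma~3.7]{LR}, the endpoint-swap translates directly into $M(v/w) = M(\sigma_x(w)/\sigma_x(v))$ for such pairs.

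The remaining, and I expect hardest, step is to propagate the gauge-reversing identity from these special configurations (pairs of points on a common unique type I geodesic through $x$, or scalar multiples of $x$) to \emph{all} pairs $v, w \in C^\circ$. The natural route is a density/continuity argument: the set of pairs $(v,w)$ for which a unique type I geodesic through some translate is available is large, and $T(v,w) = M(v/w)$ is continuous by \Cref{L:M function is continuous}, as is $\sigma_x$ (being a $d_T$-isometry, hence continuous for the order unit topology by \cite[Chapter~2]{LNBook}). So if I can show the identity $M(v/w) = M(\sigma_x(w)/\sigma_x(v))$ holds on a dense subset of $C^\circ \times C^\circ$, it holds everywhere. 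Establishing density is the delicate part, because whether a type I geodesic is unique depends on the local structure of $\partial C$ (via \Cref{T:char unique geodesics d_T}), and in infinite dimensions one cannot simply perturb into general position as freely. I would try to circumvent this by first proving that $\sigma_x$ maps each $2$-dimensional subcone $C(x, v)^\circ$ into a $2$-dimensional subcone (using that it reverses the unique type II geodesic through $x$ and $v$ when $v$ is a scalar multiple of $x$, and otherwise reducing to the plane spanned by $x$ and $v$), thereby reducing the whole problem to the $2$-dimensional case, where \Cref{L:unique geod in 2d cones} gives a complete classification of geodesics and the endpoint-swap computation goes through cleanly. Once the $2$-dimensional reduction is in place, the gauge-reversing identity on each plane through $x$ follows, and since $M(v/w)$ only depends on the pair $(v,w)$ through the subcone $C(v,w)$, combined with the homogeneity already established, one gets the global identity and hence, by \Cref{L:antitone M-functions}, that $\sigma_x$ is gauge-reversing.
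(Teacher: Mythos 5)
Your outline has genuine gaps at both of its load-bearing points. First, the inference ``the type II geodesic through $x$ is unique, so $\sigma_x$ must send the ray through $x$ to itself'' is a non sequitur: an isometry sends unique geodesics to unique geodesics, but the image of $R_x=\{\lambda x\colon\lambda>0\}$ could a priori be a unique \emph{type I} geodesic through the fixed point $x$. Ruling this out is the longest part of the paper's argument: one passes to the $2$-dimensional subcone $C_x$ spanned by $R_x$ and its image, shows $\sigma_x$ preserves $C_x^\circ$ by a parallel-geodesic foliation argument, transports everything to $(\mathbb{R}^2,\|\cdot\|_\infty)$, and checks that the only candidate isometries swapping the two diagonal lines fix a whole axis, contradicting uniqueness of the fixed point. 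Without this step you do not even obtain $\sigma_x(\lambda x)=\lambda^{-1}x$.

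Second, your propagation step is not carried out, and both routes you sketch fail. A general order unit space need have no extreme vectors at all (e.g.\ $C[0,1]$ with the pointwise cone), so decompositions $x=r+s$ with $r$ extreme, and hence \Cref{C:unique geod with extreme vector}, are unavailable; the theorem is stated without any spanning hypothesis on the extreme vectors. Moreover $\sigma_x$ does not map $2$-dimensional subcones through $x$ into $2$-dimensional subcones: already for $V=\mathbb{R}^3$ with the standard cone and the symmetry $\sigma_u(y)=y^{-1}$ (coordinatewise inverse), taking $v=(1,2,3)$ one checks that $(u+v)^{-1}=(\tfrac12,\tfrac13,\tfrac14)$ does not lie in $\mathrm{Span}\{u,v\}$, so the proposed reduction to the planar case is false. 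The paper avoids all of this by a different mechanism: it first proves \emph{global} homogeneity of degree $-1$, i.e.\ $\sigma_x(\lambda y)=\lambda^{-1}\sigma_x(y)$ for every $y\in C^\circ$ (which requires showing, via an asymptotic norm estimate, that $\sigma_x$ maps every type II geodesic to a type II geodesic, not just the one through $x$), and then observes that for $\lambda$ large one has $d_T(\lambda y,z)=\log M(\lambda y/z)$ and $d_T(\sigma_x(\lambda y),\sigma_x(z))=\log M(\sigma_x(z)/\sigma_x(\lambda y))$, so the isometry property combined with homogeneity yields $M(y/z)=M(\sigma_x(z)/\sigma_x(y))$ directly, with no appeal to type I geodesics, extreme vectors, or density arguments.
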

\begin{proof}
Let $\gamma_x(t)=e^{-t} x$ be the unique type II geodesic path in $(C^\circ,d_T)$ that passes through the fixed point $x$ and write $R_x=\gamma(\mathbb{R})$. We first  show by contradiction that $\sigma_x(R_x)=R_x$. So, suppose that $\sigma_x$ does not map $R_x$ onto itself. Since $\sigma_x$ is an isometry for $d_T$, the unique geodesic $R_x$ must be mapped to a unique geodesic in $(C^\circ,d_T)$ again. As there is only one type II geodesic passing through $x$, it follows that $\sigma_x$ maps $R_x$ to the type I geodesic $\mu_x$ passing through $x$. 
By \cite[Lemma~3.7]{LR} we know that there exists $v,w\in\partial C$ with $v+w=x$ such that $R_x$ is the image of the geodesic path $t\mapsto e^t v+e^{-t}w$ for $t\in\mathbb{R}$. 

Let $V_x :=\mathrm{Span} \left( R_x\cup \sigma_x(R_x)\right)$ and consider the subcone $C_x := V_x\cap C$. We claim that $\sigma_x$ leaves its relative  interior $C_x^\circ = V_x\cap C^\circ$ invariant. Indeed, for $\lambda>0$ there is a unique type I geodesic path $\gamma_\lambda$ in $C_x^\circ$ that passes through $\lambda x$. In fact, $\gamma_\lambda(t) = \lambda(e^t v+e^{-t}w)=\lambda\gamma_x(t)$ for $t\in\mathbb{R}$ by \cite[Lemma~3.7]{LR}.  See Figure~\ref{F:geodesics and foliation} below.
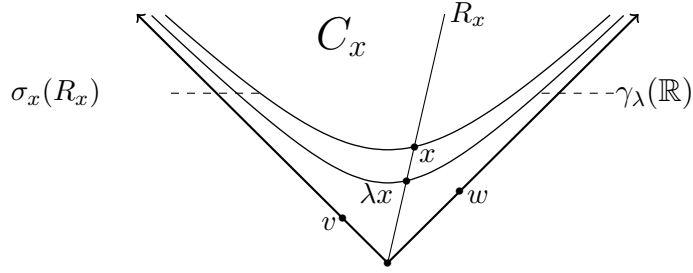
\begin{figure}[H]
\centering
\begin{tikzpicture}[scale=1.5]
\draw (0,0)--(0.5,2.2);
\draw[->,thick] (0,0)--(-2.2,2.2);
\draw[->,thick] (0,0)--(2.2,2.2);      
\draw[domain=-1.95:1.95,smooth,variable=\t,line width=.5pt] plot ({\t},{sqrt(\t*\t+1)});
\draw[domain=-2.07:2.07,smooth,variable=\t,line width=.5pt] plot ({\t},{sqrt(\t*\t+0.5)});
\node at (0,0) {\tiny{$\bullet$}};
\node at (0.235,1.025) {\tiny{$\bullet$}};
\node at (0.63,0.63) {\tiny{$\bullet$}};
\node at (-0.395,0.395) {\tiny{$\bullet$}};
\node at (0.165,0.725) {\tiny{$\bullet$}};
\node[below] at (0.35,1.1) {\small{$x$}};
\node[below] at (-0.1,0.8) {\small{$\lambda x$}};
\node[below right] at (0.6,0.74) {\small{$w$}};
\node[below left] at (-0.35,0.5) {\small{$v$}};
\node[right] at (0.45,2.2) {\small{$R_x$}};
\node[right] at (-3.4,1.5) {\small{$\sigma_x(R_x)$}};
\node[right] at (1.9,1.5) {$\gamma_\lambda(\mathbb{R})$};
\draw[dashed] (-1.9,1.5)--(-1.1,1.5);
\draw[dashed] (1.35,1.5)--(2,1.5);
\node at (-0.4,2) {\Large{$C_x$}};
\end{tikzpicture}
\caption{Geodesics in $C_x$}\label{F:geodesics and foliation}
\end{figure}
\noindent
The geodesics $\sigma_x(R_x)$ and $\gamma_\lambda(\mathbb{R})$ are parallel in the sense that $d_T(\gamma_\lambda(t),\sigma_x(\gamma_x(t)))=|\log\lambda|$ for all $t\in\R$. Hence the images of these geodesics must be parallel as well. Furthermore, we have that $\gamma_x(t)\to 0$ as $t\to\infty$ and the inequality
\[
\log M(\sigma_x(\gamma_\lambda(t))/\gamma_x(t))\le d_T(\sigma_x(\gamma_\lambda(t)),\gamma_x(t))=d_T(\gamma_\lambda(t),\sigma_x(\gamma_x(t)))=|\log\lambda|
\]
implies that $\norm{\sigma_x(\gamma_\lambda(t))}_u\le\exp(|\log\lambda|)\norm{\gamma_x(t)}_u$ for all $t\in\mathbb{R}$. Therefore $\sigma_x(\gamma_\lambda(t))\to 0$ as $t\to\infty$. It follows that $\sigma_x\circ\gamma_\lambda$ is a type II geodesic in $(C^\circ,d_T)$ that passes through $\sigma_x(\lambda x)\in \sigma_x(R_x)\subset C_x^\circ$. Hence $\sigma_x(\gamma_\lambda(t))\in C_x^\circ$ for all $t\in\mathbb{R}$. This implies that $\sigma_x(C_x^\circ) =C^\circ_x$, since $C^\circ_x$ is the disjoint union of type I geodesics passing through $\lambda x$ for $\lambda>0$, which are mapped to type II geodesics passing through $\sigma_x(\lambda x)$ for $\lambda >0$. 

As the $2$-dimensional cone $C_x =\mathrm{Span}_+\{v,w\}$ is isomorphic to the standard $2$-dimensional cone 
$\mathbb{R}^2_+=\{(x_1,x_2)\colon x_1,x_2\geq 0\}$, we know from \cite[Proposition 2.2.1]{LNBook} that there is a surjective isometry $T\colon (C_x^\circ,d_T)\to (\mathbb{R}^2,\|\cdot\|_\infty)$ such that $T(x)=0$.  Moreover, the map $A=T\circ \sigma_x\circ T^{-1}$ is a surjective linear isometry on $\mathbb{R}^2$ for $\|\cdot\|_\infty$ that maps the line $y=x$ onto the line $y=-x$, maps the line $y=-x$ onto the line $y=x$, and satisfies $A^2=\mathrm{Id}$. Hence 
\[
A=\pm\begin{bmatrix}
1 & 0 \\ 0 & -1
\end{bmatrix}
\]
are the only two choices for $A$. However, in this case, either the $x$-axis or the $y$-axis is left invariant by $A$, which contradicts the fact that $\sigma_x$ has a unique fixed point. We conclude that $\sigma_x(R_x)=R_x$. 

To show that $\sigma_x$ is homogeneous of degree $-1$ we let $\lambda>0$ and note that $\sigma_x(\lambda x)=\mu x$ for some $\mu>0$. As 
\[
|\log\lambda|=d_T(x,\lambda x)=d_T(\sigma_x(x),\sigma_x(\lambda x))=d_T(x,\mu x)=|\log\mu|,
\]
$\mu=\lambda$ or $\mu=\lambda^{-1}$. Again, since $\sigma_x$ has a unique fixed point, we see that $\sigma_x(\lambda x)=\lambda^{-1}x$. 

Now let $y\in C^\circ$ and $\lambda >0$. Consider the unique type II geodesic $\gamma_y(t)=e^{-t}y$ in $(C^\circ,d_T)$ passing through $y$. Note that $d_T(\sigma_x(\gamma_y(t)), \sigma_x(\gamma_x(t))  =d_T(\gamma_x(t),\gamma_y(t))=d_T(x,y)$ for all $t\in\R$. Suppose that  $\sigma_x\circ\gamma_y$ is a unique type I geodesic path in $(C^\circ,d_T)$. Then it can be given by $t\mapsto e^tv'+e^{-t}w'$ for some $v',w'\in\partial C$ with $v'+w'=\sigma_x(y)$. From the inequality
\[
e^{-t}w'\le e^tv'+e^{-t}w'\le M(e^tv'+e^{-t}w'/e^t x)e^tx=M(\sigma_x(\gamma_y(t))/\sigma_x(\gamma_x(t)))e^t x,
\]
we infer, after taking norms and logarithms, that 
\[
-2t+\log\frac{\|w'\|_u}{\|x\|_u}\le \log M(\sigma_x(\gamma_y(t))/\sigma_x(\gamma_x(t)))\le d_T(\sigma_x(\gamma_y(t)),\sigma_x(\gamma_x(t)))=d_T(x,y)
\]
for all $t\in\R$,  which is absurd. Thus, $\sigma_x\circ\gamma_y$ must be a type II geodesic path  in $(C^\circ,d_T)$ passing through $\sigma_x(y)$. 

Let $\alpha>0$ be such that $\sigma_x(\lambda y)=\alpha \sigma_x(y)$. As before, it follows from
\[
|\log\alpha|=d_T(\sigma_x(y),\alpha \sigma_x(y))=d_T(\sigma_x(y),\sigma_x(\lambda y))=d_T(y,\lambda y)=|\log\lambda|,
\]
that $\alpha=\lambda$ or $\alpha=\lambda^{-1}$.  As 
\[
\log M(\alpha \sigma_x(y)/\lambda^{-1}x) =\log M(\sigma_x(\lambda y)/\sigma_x(\lambda x))\le d_T(\sigma_x(\lambda y),\sigma_x(\lambda x))=d_T(\lambda y,\lambda x)= d_T(x,y),
\]
we find that $\alpha \norm{\sigma_x(y)}_u =\norm{\sigma_x(\lambda y)}_u\le \lambda^{-1}\exp(d_T(x,y))\norm{x}_u\to 0$ as $\lambda\to \infty$. Hence $\alpha=\lambda^{-1}$, showing that $\sigma_x$ is homogeneous of degree $-1$.

Using the previous observation we now show that $\sigma_x$ is gauge-reversing. Let $y,z\in C^\circ$ and $\lambda>0$ be large enough so that $\sigma_x(y)\le\lambda \sigma_x(z)$ and $z\le \lambda y$. Then $M(\sigma_x(\lambda y)/\sigma_x(z))= M(\sigma_x(y)/\lambda \sigma_x(z))\leq 1$ and $M(z/\lambda y)\le 1$, so that 
\begin{align*}
\log \lambda +\log M(y/z)&=\log M(\lambda y/z)=d_T(\lambda y,z)=d_T(\sigma_x(\lambda y),\sigma_x(z))=\log M(\sigma_x(z)/\sigma_x(\lambda y))\\&\qquad =\log M(\sigma_x(z)/\lambda^{-1}\sigma_x(y))=\log\lambda +\log M(\sigma_x(z)/\sigma_x(y)),
\end{align*}
which shows that $M(y/z)=M(\sigma_x(z)/\sigma_x(y))$.  Thus, $\sigma_x$ is a gauge-reversing map. 
\end{proof}

\begin{remark}
It should be noted that not every gauge-reversing map is a $d_T$-symmetry as is illustrated in the example below. On the $2\times 2$  symmetric real matrices, consider the gauge-reversing map given by 
\[
A \mapsto \mat{0}{-1}{2}{0} A^{-1} \mat{0}{2}{-1}{0}.
\]
Expressing this in terms of the coefficients yields
$$
\mat{a}{b}{b}{c} \mapsto \frac{1}{ac-b^2} \mat{a}{2b}{2b}{4c}.
$$
This map has no positive-definite fixed points, since its square is the map
$$
\mat{a}{b}{b}{c} \mapsto \mat{\frac{1}{4}a}{b}{b}{4c}
$$
which does not have positive-definite fixed points as $a$ and $c$ must be nonzero for the matrix to be positive-definite. 
\end{remark}

\section{Horofunctions and the detour distance} \label{sec:hor}
In our analysis horofunctions will play an important role. In particular, we will use  the so-called Funk and reverse-Funk horofunctions, see \cite{Wa1}.  Recall that on $C^\circ$ in an order unit space $V$ the \emph{Funk metric} is defined by 
\[
F(x,y) :=\log M(x/y),
\]
and the \emph{reverse-Funk metric} is given by
\[
RF(x,y) :=\log M(y/x) \mbox{\qquad (so $RF(x,y)=F(y,x)$)}.
\]
It should be noted that $d_T(x,y) =\max\{F(x,y),RF(x,y)\}$ for $x,y\in C^\circ$. 

The functions $F$ and $RF$ are not metrics on $C^\circ$, but are  quasi-metrics. Recall that a {\em quasi-metric} on a space $X$ is a function $\rho\colon X\times X\to \R$ satisfying: 
\begin{enumerate}[(1)]
\item $\rho(x,x)=0$ for all $x\in X$. 
\item $\rho(x,z)\leq \rho(x,y)+\rho(y,z)$ for all $x,y,z\in X$ (triangle inequality). 
\item $\rho(x,y) = 0 = \rho(y,x)$ for $x,y\in X$ implies $x=y$. 
\end{enumerate} 
So, a quasi-metric $\rho$ need not be symmetric nor nonnegative. It is straightforward to check that if $\rho$ is a quasi-metric on $X$, then the reverse function, $\rho_R(x,y) = \rho(y,x)$, is also a quasi-metric on $X$. Moreover, 
$\rho_\infty(x,y) := \max\{\rho(x,y),\rho_R(x,y)\}$ is a metric on $X$, and for each $y\in X$ the functions $x\mapsto \rho(x,y)$ and $x\mapsto \rho_R(x,y)$  are $1$-Lipschitz on $(X,\rho_\infty)$. 

This allows one to construct the so-called horofunction boundary of $(X,\rho)$ as follows. Fix a basepoint $b\in X$ and consider the space $C(X)$ of continuous functions on $(X,\rho_\infty)$ equipped with the pointwise convergence topology. Let $\mathrm{Lip}_b(X)$ be the set of $1$-Lipschitz functions $f$ on $(X,\rho_\infty)$ with $f(b)=0$. Then it follows from Tychonov's compactness theorem that $\mathrm{Lip}_b(X)$ is a compact subset of $C(X)$, as $f(x)\in  [-\rho_\infty(x,b), \rho_\infty(x,b)]$ for all $x\in X$, and $\mathrm{Lip}_b(X)$ is a closed subset. 

We like to point out that on $\mathrm{Lip}_b(X)$ the topology of compact convergence coincides with the topology of pointwise convergence. Indeed, the sets $B_C(h,\epsilon) := \{g\in  \mathrm{Lip}_b(X)\colon |g(x)-h(x)|<\epsilon\mbox{ for all }x\in C\}$, where $C\subseteq X$ is compact  and $\epsilon>0$, generate the compact convergence topology. As $C$ is compact, we can find $x_1,\ldots,x_n\in C$ such that $C\subseteq B(x_1)\cup\ldots\cup B(x_n)$, where $B(x_i) =\{y\in S\colon \rho_\infty(x_i,y) <\epsilon/3\}$.  Then $U :=\{g\in  \mathrm{Lip}_b(X)\colon |g(x_i)-h(x_i)|<\epsilon/3 \mbox{ for }i=1,\ldots,n\}$ is a neighbourhood of $h$ in the pointwise convergence topology. But for $g\in U$ and $x\in C$, there exists a $k$ with $x\in B(x_k)$, and  
\[
|g(x)-h(x)|\leq |g(x)-g(x_k)|+|g(x_k)-h(x_k)|+|h(x)-h(x_k)|<\epsilon. 
\]
It follows that $U\subseteq B_C(g,\epsilon)$. As the compact convergence topology is finer than the pointwise convergence topology, we conclude the topologies are the same on $\mathrm{Lip}_b(X)$. 

Define $i_\rho\colon X\to C(X)$ by    
\begin{equation}\label{eq:intpoint}
i_\rho(y)(x) := \rho(x,y)-\rho(b,y)\mbox{\qquad for $x\in X$}. 
\end{equation}
Then $i_\rho(y) \in \mathrm{Lip}_b(X)$, and hence $i_\rho(X)$ has a compact closure in $C(X)$, which is denoted $\ol{i_\rho(X)}$ and called the {\em horofunction compactification} of $(X,\rho)$.  The boundary  $\mathcal{H}_\rho(X) :=  \ol{i_\rho(X)}\setminus i_\rho(X)$ is called the {\em horofunction boundary} of $(X,\rho)$, and its elements are called {\em horofunctions}.  It should be noted that in this generality the horofunction compactification need not be a compactification in the usual topological sense, which will not be required here.  

Furthermore, if $f\in \ol{i_\rho(X)}$, then there exists a net $(y_\alpha)_\alpha$ in $X$ such that $(i_\rho(y_\alpha))_\alpha$ converges to $f$. Indeed, if $V_f$ denotes the collection of all neighbourhoods of $f$ in $\ol{i_\rho(X)}$, then $A=\{ (V,y)\in V_f\times X\colon i_\rho(y)\in V\}$ can be preordered  by setting $(V_1,y_1)\leq (V_2,y_2)$ if $V_2\subseteq V_1$, which forms a directed set. By letting $y_\alpha$ be the second coordinate of $\alpha \in A$, we obtain a net $(y_\alpha)_\alpha$ in $X$ such that $(i_\rho(y_\alpha))_\alpha$ converges to $f$.  

In particular, we get horofunction boundaries of the Funk metric  and reverse-Funk metric:   
 \[
 \mathcal{H}_F = \mathcal{H}_F(C^\circ) = \ol{i_F(C^\circ)}\setminus i_F(C^\circ)\mbox{\qquad and \qquad } 
 \mathcal{H}_{RF} = \mathcal{H}_{RF}(C^\circ) = \ol{i_{RF}(C^\circ)}\setminus i_{RF}(C^\circ),
 \] 
 which are respectively called the {\em Funk horofunction boundary} and {\em reverse-Funk horofunction boundary}. 
 
In general it is complicated to compute the horofunctions explicitly.  In case $C^\circ$ is a finite dimensional symmetric cone, however, there exists the following characteristation, see \cite{LLNW,L}: 
\[\mathcal{H}_{RF}=\{ x\in C^\circ\mapsto \log M(y/x)\colon y\in\partial C \mbox{ with }\|y\|_e=1\}\]
and 
\[\mathcal{H}_{F}=\{ x\in C^\circ\mapsto \log M(z/x^{-1})\colon z\in\partial C \mbox{ with }\|z\|_e=1\}.\]

For our purposes we need to consider the so called Busemann points in the horofunction boundary. Let us recall the definition of these points.  
A net $(y_\alpha)_\alpha$ in a quasi-metric space $(X,\rho)$ is called an \emph{almost geodesic for $s\in X$} if for all $\varepsilon>0$ there is a $\beta$ such that  
\[
\rho(s,y_{\alpha'})\ge \rho(s,y_\alpha)+\rho(y_\alpha,y_{\alpha'})-\varepsilon
\]
for all $\beta\le\alpha\le\alpha'$.  A net $(y_\alpha)_\alpha$  in $(X,\rho)$ is called an \emph{almost geodesic} if it is an almost geodesic for some $s\in X$.

The next lemma was proved in \cite[Proposition 2.3]{Wa2} in the context of metric spaces. It provides a characterisation of almost geodesics and shows that an almost geodesic net in a  quasi-metric space $(X,\rho)$ yields a function in the horofunction compactification. To formulate it we need the following notion. A net of real-valued functions $(f_\alpha)_\alpha$ on $S$ is said to be \emph{almost non-increasing} if for each $\varepsilon>0$ there is a $\beta$ such that $f_\alpha\ge f_{\alpha'}-\varepsilon$ for all $\beta\le\alpha\le\alpha'$. 

\begin{lemma}\label{L:almost geod. <-> almost non-dec.}
A net $(y_\alpha)_\alpha$ in a  quasi-metric space $(X,\rho)$ is an almost geodesic for $s$ if and only if the net of functions $f_\alpha(x) := \rho(x,y_\alpha)-\rho(s,y_\alpha)$ is an almost non-increasing net. Moreover, the net of functions $(i_\rho(y_\alpha))_\alpha$ given by (\ref{eq:intpoint}) converges to a function in the horofunction compactification $\ol{i_\rho(X)}$ with basepoint $b$. 
\end{lemma}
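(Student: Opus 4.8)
The plan is to prove the equivalence by directly manipulating the two defining inequalities, and then to read off the convergence statement from the almost non-increasing property together with compactness of a fixed Lipschitz space. First I would record two preliminary observations: each $f_\alpha$ is $1$-Lipschitz on $(X,\rho_\infty)$ (because $\rho(x,y_\alpha)-\rho(x',y_\alpha)\le\rho(x,x')\le\rho_\infty(x,x')$ and symmetrically), and $f_\alpha(s)=\rho(s,y_\alpha)-\rho(s,y_\alpha)=0$; hence every $f_\alpha$ lies in the set $\mathrm{Lip}_s(X)$ of $1$-Lipschitz functions vanishing at $s$, which is compact for pointwise convergence by the same Tychonov argument used for $\mathrm{Lip}_b(X)$, since $|f_\alpha(x)|=|f_\alpha(x)-f_\alpha(s)|\le\rho_\infty(x,s)$.

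For ``almost geodesic $\Rightarrow$ almost non-increasing'', fix $\varepsilon>0$, take $\beta$ from the almost geodesic condition, and for $\beta\le\alpha\le\alpha'$ and $x\in X$ write $f_\alpha(x)-f_{\alpha'}(x)=\bigl(\rho(x,y_\alpha)-\rho(x,y_{\alpha'})\bigr)+\bigl(\rho(s,y_{\alpha'})-\rho(s,y_\alpha)\bigr)$. The almost geodesic inequality bounds the second bracket below by $\rho(y_\alpha,y_{\alpha'})-\varepsilon$, while the triangle inequality $\rho(x,y_{\alpha'})\le\rho(x,y_\alpha)+\rho(y_\alpha,y_{\alpha'})$ shows the first bracket plus $\rho(y_\alpha,y_{\alpha'})$ is $\ge 0$; adding gives $f_\alpha(x)\ge f_{\alpha'}(x)-\varepsilon$. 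For the converse, given the almost non-increasing property with its $\beta$, just substitute $x=y_\alpha$: since $f_\alpha(y_\alpha)=-\rho(s,y_\alpha)$ and $f_{\alpha'}(y_\alpha)=\rho(y_\alpha,y_{\alpha'})-\rho(s,y_{\alpha'})$, the inequality $f_\alpha(y_\alpha)\ge f_{\alpha'}(y_\alpha)-\varepsilon$ rearranges to exactly $\rho(s,y_{\alpha'})\ge\rho(s,y_\alpha)+\rho(y_\alpha,y_{\alpha'})-\varepsilon$.

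For the ``moreover'' part I would first show that a pointwise bounded, almost non-increasing net of functions converges pointwise. Fix $x$ and set $a_\alpha:=f_\alpha(x)\in[-\rho_\infty(x,s),\rho_\infty(x,s)]$, so $L:=\liminf_\alpha a_\alpha\le\limsup_\alpha a_\alpha=:U$ are finite. Given $\varepsilon>0$, choose $\beta$ as in the definition; then for every $\alpha\ge\beta$ we have $a_{\alpha'}\le a_\alpha+\varepsilon$ for all $\alpha'\ge\alpha$, hence $\sup_{\alpha'\ge\alpha}a_{\alpha'}\le a_\alpha+\varepsilon$ and so $U\le a_\alpha+\varepsilon$; taking the infimum over $\alpha\ge\beta$ gives $U-\varepsilon\le L$, and letting $\varepsilon\downarrow 0$ yields $U=L$, so $(a_\alpha)$ converges. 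Therefore $f_\alpha\to f$ pointwise for some $f\in\mathrm{Lip}_s(X)$ (the set being closed in $C(X)$). Since $i_\rho(y_\alpha)(x)=\rho(x,y_\alpha)-\rho(b,y_\alpha)=f_\alpha(x)-f_\alpha(b)$, the net $(i_\rho(y_\alpha))_\alpha$ converges pointwise to $f-f(b)$, which lies in $\ol{i_\rho(X)}$ because each $i_\rho(y_\alpha)$ does and $\ol{i_\rho(X)}$ is closed in $C(X)$.

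I do not anticipate a serious obstacle: both implications of the equivalence reduce to a single substitution once the (directed, possibly non-symmetric) triangle inequalities are written in the right order, and the only mildly technical ingredient is the net-theoretic fact that a pointwise bounded almost non-increasing net converges, which is handled by the $\limsup\le\liminf$ estimate above. The points requiring the most care are checking $f_\alpha(s)=0$ so that all the $f_\alpha$ sit inside one fixed compact space, and keeping track of which argument of $\rho$ plays the role of the basepoint in each triangle inequality, since $\rho$ need not be symmetric.
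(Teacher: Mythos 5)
Your proof is correct and follows essentially the same route as the paper's: the forward implication via the triangle inequality $\rho(x,y_{\alpha'})\le\rho(x,y_\alpha)+\rho(y_\alpha,y_{\alpha'})$, the converse by substituting $x=y_\alpha$, and the convergence via the $\limsup\le\liminf+\varepsilon$ estimate for an almost non-increasing, pointwise bounded net, followed by $i_\rho(y_\alpha)(x)=f_\alpha(x)-f_\alpha(b)$. The extra observations about $\mathrm{Lip}_s(X)$ are harmless but not needed beyond the pointwise boundedness they supply.
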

\begin{proof}
Suppose $(y_\alpha)_\alpha$ is an almost geodesic for $s$ and let $\varepsilon>0$. Then there exists a $\beta$ such that
\[
\rho(s,y_{\alpha'})\ge \rho(s,y_\alpha)+\rho(y_\alpha,y_{\alpha'})-\varepsilon
\]
for all $\beta\le\alpha\le\alpha'$. It follows that for each $x\in X$,  
\[
f_\alpha(x) + \varepsilon =\rho(x,y_\alpha)-\rho(s,y_\alpha)+\varepsilon\ge \rho(x,y_\alpha)+\rho(y_\alpha,y_{\alpha'})-\rho(s,y_{\alpha'})\ge \rho(x,y_{\alpha'})-\rho(s,y_{\alpha'})=f_{\alpha'}(x),
\]
hence the net $(f_\alpha)_\alpha$ is an almost non-increasing net. 

Conversely, suppose that $(f_\alpha)_\alpha$ is an almost non-increasing net, $\varepsilon>0$, and $\beta$ is such that $f_\alpha\ge f_\alpha'-\varepsilon$ for all  $\beta\le\alpha\le\alpha'$. Then for $x=y_\alpha$ we have that
\[
-\rho(s,y_\alpha)=\rho(y_\alpha,y_\alpha)-\rho(s,y_\alpha)=f_\alpha(y_\alpha)\ge f_{\alpha'}(y_\alpha)-\varepsilon=
\rho(y_\alpha,y_{\alpha'})-\rho(s,y_{\alpha'})-\varepsilon,
\]
 showing that $(y_\alpha)_\alpha$ is an almost geodesic for $s$.
 
 To show the second assertion, we need to show that if $(y_\alpha)_\alpha$ is an almost geodesic for $s$, then $i_\rho(y_\alpha)$ converges pointwise to a function $h$ on $X$.   We first note that if  $x\in X$ and $\epsilon >0$, then there exists a $\beta$ such that $ f_\alpha(x) \geq f_{\alpha'}(x) -\epsilon$ for all $\alpha'\geq \alpha\geq \beta$. 
 Thus, 
 $f_\alpha(x)\geq \sup_{\alpha'\geq \alpha} f_{\alpha'}(x)-\epsilon$ for all $\alpha\geq \beta$, hence
 \[
 \liminf_\alpha f_\alpha(x)\geq \inf_{\beta\leq \alpha} f_\alpha(x)\geq \limsup_{\alpha} f_\alpha(x)-\epsilon.
 \] 
 We conclude that $ \liminf_\alpha f_\alpha(x) = \limsup_\alpha f_\alpha(x)$, and therefore $f_\alpha(x)$ converges to $h_s(x)$, as $f_\alpha(x)\geq -\rho(s,x)>-\infty$ for all $\alpha$. 
 
 Now using the identity 
 \[
 i_\rho(y_\alpha)(x) = f_\alpha(x) - f_\alpha(b), 
 \]
 we see that $ i_\rho(y_\alpha)(x)$ converges to $h(x)=h_s(x) -h_s(b)$. 
 \end{proof}

A horofunction $h\in \ol{i_\rho(X)}$ is called a {\em Busemann point} if there exists an almost geodesic $(y_\alpha)_\alpha$ such that $i_\rho(y_\alpha)_\alpha$ converges to $h$. The set of all Busemann points is denoted by $\mathcal{B}_\rho(X)$.  

It is known, see e.g., \cite{LW}, that on $\mathcal{B}_\rho(X)$ one can define a metric known as the detour metric, which we will exploit. Let us recall the necessary definitions. 
Given horofunctions $g$ and $h$ in $\overline{i_\rho(X)}$, let $V_g$ denote the collection of all neighbourhoods of $g$ in $\overline{i_\rho(X)}$ and define the \emph{detour cost} of $g$ and $h$ by
\begin{align}\label{E:detour cost}
\delta(g,h) := \sup_{V\in V_g}\left(\inf_{i_\rho(x)\in V}\rho(b,x)+h(x)\right).
\end{align} 
We note that $\delta(g,h) \in [0,\infty]$. Indeed, if $(w_\beta)_\beta$ is a net in $X$ with $i_\rho(w_\beta)\to  h$, then for each $x\in X$ and each $\beta$ we have that 
\[
\rho(b,x) +i_\rho(w_\beta)(x) = \rho(b,x) +\rho(x,w_\beta)-\rho(b,w_\beta) \geq 0,
\]
so $\rho(b,x) +h(x)\geq 0$ for all $x\in X$. 

There is an alternative definition of $\delta(g,h)$ using nets, which is useful in practice. To see this we need the following observation from \cite[Lemma 2.6]{Wa2}. 

\begin{lemma}\label{L:detour cost as limit}
Let $(X,\rho)$ be a quasi-metric spaces. If $g$ and $h$ are horofunctions in $\overline{i_\rho(X)}$, then there is a net $(y_\alpha)_\alpha$ in $X$ such that $(i_\rho(y_\alpha))_\alpha$ converges to $g$ and $\delta(g,h)=\lim_\alpha\rho(b,y_\alpha)+h(y_\alpha)$.
\end{lemma}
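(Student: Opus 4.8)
The plan is to argue directly from the definition~\eqref{E:detour cost} of the detour cost and to extract a suitable net by the same kind of diagonal/directed-set construction that was used earlier in the excerpt to show that every point of $\overline{i_\rho(X)}$ is a limit of a net $(i_\rho(y_\alpha))_\alpha$. Write $\ell := \delta(g,h) \in [0,\infty]$. The key point is that, for each neighbourhood $V \in V_g$, the quantity $c(V) := \inf\{\rho(b,x) + h(x) : i_\rho(x) \in V\}$ satisfies $c(V) \le \ell$, and that $V \mapsto c(V)$ is non-decreasing as $V$ shrinks, with $\sup_{V} c(V) = \ell$. So for every $V$ and every $\varepsilon > 0$ there is an $x = x(V,\varepsilon)$ with $i_\rho(x) \in V$ and $\rho(b,x) + h(x) < c(V) + \varepsilon \le \ell + \varepsilon$; and conversely, whenever $i_\rho(x) \in V$ we have $\rho(b,x) + h(x) \ge c(V)$.

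The construction of the net goes as follows. Let the index set be $A := \{(V,\varepsilon) \in V_g \times (0,\infty)\}$, directed by $(V_1,\varepsilon_1) \le (V_2,\varepsilon_2)$ iff $V_2 \subseteq V_1$ and $\varepsilon_2 \le \varepsilon_1$; this is a directed set since $V_g$ is closed under finite intersections. For each $\alpha = (V,\varepsilon) \in A$ pick $y_\alpha := x(V,\varepsilon)$ as above, so that $i_\rho(y_\alpha) \in V$ and $\rho(b,y_\alpha) + h(y_\alpha) \le \ell + \varepsilon$ (when $\ell = \infty$ this last inequality is vacuous, and one instead picks $y_\alpha$ with $\rho(b,y_\alpha)+h(y_\alpha) > 1/\varepsilon$, say, and with $i_\rho(y_\alpha) \in V$, using $\sup_V c(V) = \infty$). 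Since the $y_\alpha$ are chosen so that $i_\rho(y_\alpha)$ eventually lies in any prescribed neighbourhood of $g$, the net $(i_\rho(y_\alpha))_\alpha$ converges to $g$ in $\overline{i_\rho(X)}$.

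It remains to check $\lim_\alpha \big(\rho(b,y_\alpha) + h(y_\alpha)\big) = \ell$. For the upper bound: given $\varepsilon_0 > 0$, for all $\alpha = (V,\varepsilon) \ge (X, \varepsilon_0)$ (i.e.\ with $\varepsilon \le \varepsilon_0$) we have $\rho(b,y_\alpha) + h(y_\alpha) \le \ell + \varepsilon \le \ell + \varepsilon_0$, so $\limsup_\alpha (\rho(b,y_\alpha)+h(y_\alpha)) \le \ell$. For the lower bound: given $\varepsilon_0 > 0$, by definition of $\ell$ as a supremum choose $V_0 \in V_g$ with $c(V_0) > \ell - \varepsilon_0$ (when $\ell = \infty$, choose $V_0$ with $c(V_0) > 1/\varepsilon_0$); then for all $\alpha = (V,\varepsilon) \ge (V_0,\varepsilon_0)$ we have $V \subseteq V_0$, so $i_\rho(y_\alpha) \in V \subseteq V_0$, whence $\rho(b,y_\alpha) + h(y_\alpha) \ge c(V_0) > \ell - \varepsilon_0$. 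Hence $\liminf_\alpha (\rho(b,y_\alpha)+h(y_\alpha)) \ge \ell$, and the two bounds give the claimed limit. (In the case $\ell = \infty$ the same two steps show the limit is $+\infty$.)

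The only genuine subtlety — and the step I would be most careful about — is bookkeeping the two roles of the index $\varepsilon$: it must simultaneously force $i_\rho(y_\alpha) \to g$ (via the shrinking $V$'s) and pin down the value of the limit from both sides. Using the product directed set $V_g \times (0,\infty)$ handles this cleanly, but one has to make sure the selection $x(V,\varepsilon)$ is always possible, which is exactly the content of $c(V) \le \ell$ together with $c(V)$ being a genuine infimum (so approximable from above) and $\sup_V c(V) = \ell$ (so approximable from below); these are immediate from~\eqref{E:detour cost}. One should also note at the outset that $c(V) > -\infty$, equivalently $\rho(b,x) + h(x) \ge 0$ for all $x$, which was already observed in the paragraph following~\eqref{E:detour cost}, so the infima defining $c(V)$ are real (or $+\infty$ if the constraint set is empty, which does not happen for $V$ small enough since $g \in \overline{i_\rho(X)}$).
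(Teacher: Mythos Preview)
Your proof is correct and takes a genuinely different route from the paper's. The paper builds the ``universal'' net converging to $g$: it takes the directed set $A=\{(V,y)\in V_g\times X: i_\rho(y)\in V\}$ preordered by reverse inclusion on the first coordinate only, sets $y_\alpha$ equal to the second coordinate, and then shows that $\delta(g,h)$ is a \emph{cluster point} of $(\rho(b,y_\alpha)+h(y_\alpha))_\alpha$ by a neighbourhood argument; the desired net is then obtained by passing to a subnet. Your construction instead uses the index set $V_g\times(0,\infty)$ and makes an explicit choice of $y_\alpha$ (an $\varepsilon$-approximate minimiser of $c(V)$) so that the full net converges without any subnet extraction. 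The paper's approach is slightly slicker in that it avoids the case distinction on whether $\ell$ is finite, while yours is more direct and makes the mechanism transparent. Two minor comments: your notation $(X,\varepsilon_0)$ in the upper-bound step should really be $(\overline{i_\rho(X)},\varepsilon_0)$; and in the $\ell=\infty$ case your special choice of $y_\alpha$ with $\rho(b,y_\alpha)+h(y_\alpha)>1/\varepsilon$ is in fact unnecessary, since the lower-bound argument (which uses only $i_\rho(y_\alpha)\in V\subseteq V_0$) already forces $\liminf_\alpha(\rho(b,y_\alpha)+h(y_\alpha))=\infty$ regardless of how $y_\alpha$ is selected within $V$.
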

\begin{proof}
Let $V_g$ denote the set of neighbourhoods of $g$ in $\overline{i_\rho(X)}$. Let 
$
A := \{(V,y)\in V_g\times X\colon i_\rho(y)\in V\} $
be preordered by the relation $(V_1,y_1)\le (V_2,y_2)$ if $V_2\subseteq V_1$, and note that this preorder defines a directed set. Now given $\alpha=(V,y)\in A$, we set  $y_\alpha := y$, so  $(y_\alpha)_\alpha$ is a net in $X$ such that  $i_\rho(y_\alpha)\to g$. 

Suppose that $W$ is an open neighbourhood of $\delta(g,h)$ in $[0,\infty]$ and $\alpha=(V_0,z) \in A$.  Then there is a neighbourhood $V\in V_g$ such that 
\[
\inf_{i_\rho(y)\in V}\rho(b,y)+h(y)\in W. 
\]
Note that   $V\cap V_0\in V_g$ and  
\[
\inf_{i_\rho(y)\in V}\rho(b,y)+h(y)\leq \inf_{i_\rho(y)\in V\cap V_0}\rho(b,y)+h(y)\leq \delta(g,h),
\]
hence 
\[
\inf_{i_\rho(y)\in V\cap V_0}\rho(b,y)+h(y)\in W.
\]
Thus, we may choose $\beta =(V\cap V_0,w) \in A$ such that $\rho(b,w)+h(w)\in W$. As $\beta\geq \alpha$, we conclude that $\delta(g,h)$ is a cluster point of the net $(\rho(b,y_\alpha)+h(y_\alpha))_\alpha$, hence there exists  a subnet such that $\delta(g,h)=\lim_{\alpha}\rho(b,y_\alpha)+h(y_\alpha)$. 
\end{proof}
The following lemma gives an alternative way to define the detour cost. 
\begin{lemma}\label{L:liminf detour cost}
Let $g$ and $h$ be horofunctions and $N_g$ be the collection of all nets $(y_\alpha)_\alpha$ in  a quasi-metric space $(X,\rho)$ such that $(i_\rho(y_\alpha))_\alpha$ converges to $g$. Then 
\[
\delta(g,h)=\inf_{(y_\alpha)\in N_g}(\liminf_\alpha\rho(b,y_\alpha)+h(y_\alpha)).
\]
\end{lemma}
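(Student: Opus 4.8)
The plan is to prove the two inequalities $\delta(g,h) \le \inf_{(y_\alpha)\in N_g}(\liminf_\alpha \rho(b,y_\alpha)+h(y_\alpha))$ and $\delta(g,h) \ge \inf_{(y_\alpha)\in N_g}(\liminf_\alpha \rho(b,y_\alpha)+h(y_\alpha))$ separately, leaning heavily on \Cref{L:detour cost as limit}. For the direction $\delta(g,h) \ge \inf$, I would simply invoke \Cref{L:detour cost as limit}: there exists a net $(y_\alpha)_\alpha \in N_g$ with $\delta(g,h) = \lim_\alpha \rho(b,y_\alpha)+h(y_\alpha)$, and since this limit exists it equals the $\liminf$, so the infimum over $N_g$ of the $\liminf$ is at most $\delta(g,h)$. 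This direction is essentially immediate.

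For the reverse inequality $\delta(g,h) \le \inf$, I would show that for \emph{every} net $(y_\alpha)_\alpha \in N_g$ one has $\delta(g,h) \le \liminf_\alpha \rho(b,y_\alpha)+h(y_\alpha)$. Fix such a net and fix any neighbourhood $V \in V_g$. Since $i_\rho(y_\alpha) \to g$, there is $\beta$ such that $i_\rho(y_\alpha) \in V$ for all $\alpha \ge \beta$; hence $\inf_{i_\rho(x)\in V}\rho(b,x)+h(x) \le \rho(b,y_\alpha)+h(y_\alpha)$ for all $\alpha \ge \beta$, and taking $\liminf_\alpha$ on the right gives $\inf_{i_\rho(x)\in V}\rho(b,x)+h(x) \le \liminf_\alpha \rho(b,y_\alpha)+h(y_\alpha)$. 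Since this holds for every $V \in V_g$, taking the supremum over $V \in V_g$ yields $\delta(g,h) \le \liminf_\alpha \rho(b,y_\alpha)+h(y_\alpha)$. Taking the infimum over all nets in $N_g$ completes this direction.

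Combining the two inequalities gives the claimed equality. The main (minor) point to be careful about is that $N_g$ is nonempty and that the quantities $\rho(b,y_\alpha)+h(y_\alpha)$ are bounded below by $0$ (established in the discussion preceding \Cref{L:detour cost as limit}), so the $\liminf$'s and the infimum are well-defined elements of $[0,\infty]$; there is no real obstacle here, as the argument is a routine unwinding of the definition of $\delta(g,h)$ together with the already-proved \Cref{L:detour cost as limit}.
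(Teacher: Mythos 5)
Your proposal is correct and follows essentially the same argument as the paper: the lower bound on $\delta(g,h)$ is obtained by fixing a neighbourhood $V\in V_g$ and an arbitrary net in $N_g$, using eventual membership in $V$ to bound the infimum over $V$ by the $\liminf$, and the upper bound is immediate from Lemma~\ref{L:detour cost as limit}. No issues.
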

\begin{proof}
Let $V$ be an open neighbourhood of $g$ and  $(y_\alpha)_\alpha$ be a net in $N_g$. There exists a $\beta$ such that $i_\rho(y_\alpha)\in V$ for all $\alpha\ge\beta$. Hence $\inf_{i_\rho(x)\in V}\rho(b,x)+h(x)\le\inf_{\alpha\ge\beta}\rho(b,y_\alpha)+h(y_\alpha)$ and so $\inf_{i_\rho(x)\in V}\rho(b,x)+h(x)\le\liminf_\alpha\rho(b,y_\alpha)+h(y_\alpha)$. Since $V$ and $(y_\alpha)_\alpha$ were arbitrary, it follows that $\delta(g,h)\le\inf_{N_g}(\liminf_\alpha \rho(u,y_\alpha)+g(y_\alpha))$.

On the other hand, there is a net $(y_\alpha)_\alpha$ in $N_g$ such that $\lim_\alpha \rho(b,y_\alpha)+h(y_\alpha)=\delta(g,h)$ by Lemma~\ref{L:detour cost as limit}, from which we conclude that other inequality $\inf_{N_g}(\liminf_\alpha\rho(b,y_\alpha)+h(y_\alpha))\le\delta(g,h)$ also holds.
\end{proof}
To compute the detour cost between Busemann points the following observations are useful, see \cite[Lemma 3.1]{LW} or \cite[Lemma 2.6]{Wa2}.
\begin{lemma}\label{L:detour limits}
If $(y_\alpha)_\alpha$ is an almost geodesic in a quasi-metric space $(X,\rho)$ such that $(i_\rho(y_\alpha))_\alpha$ converges to a Busemann point $g$, then 
\[
\lim_\alpha \rho(b,y_\alpha)+g(y_\alpha)=0.
\]
Moreover, for any horofunction $h$, we have
\[
\delta(g,h)=\lim_\alpha \rho(b,y_\alpha)+h(y_\alpha).
\]
\end{lemma}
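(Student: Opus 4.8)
The plan is to establish the two assertions in turn, the first being the main input for the second.

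For the first assertion, suppose $(y_\alpha)_\alpha$ is an almost geodesic for some $s\in X$. By \Cref{L:almost geod. <-> almost non-dec.} the functions $f_\alpha(x):=\rho(x,y_\alpha)-\rho(s,y_\alpha)$ form an almost non-increasing net, and since $i_\rho(y_\alpha)(x)=f_\alpha(x)-f_\alpha(b)$ converges pointwise to $g$, the pointwise limit $h_s(x):=\lim_\alpha f_\alpha(x)$ exists with $g(x)=h_s(x)-h_s(b)$ for all $x\in X$. The point is to transfer the almost-geodesic estimate based at $s$ into an estimate based at the basepoint $b$. Evaluating at $x=y_\alpha$ gives $f_\alpha(y_\alpha)=-\rho(s,y_\alpha)$; combining the defining inequality $\rho(s,y_{\alpha'})\ge\rho(s,y_\alpha)+\rho(y_\alpha,y_{\alpha'})-\eps$ with the triangle inequality yields $-\rho(s,y_\alpha)\le f_{\alpha'}(y_\alpha)\le-\rho(s,y_\alpha)+\eps$ for all $\alpha'\ge\alpha$ beyond an index depending on $\eps$; letting $\alpha'$ run out gives $0\le h_s(y_\alpha)+\rho(s,y_\alpha)\le\eps$, so $h_s(y_\alpha)+\rho(s,y_\alpha)\to0$. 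On the other hand $\rho(b,y_\alpha)-\rho(s,y_\alpha)=f_\alpha(b)\to h_s(b)$ by the very definition of $h_s$. Adding these and using $g(y_\alpha)=h_s(y_\alpha)-h_s(b)$ gives
\[
\rho(b,y_\alpha)+g(y_\alpha)=\bigl(\rho(b,y_\alpha)-\rho(s,y_\alpha)-h_s(b)\bigr)+\bigl(h_s(y_\alpha)+\rho(s,y_\alpha)\bigr)\longrightarrow 0.
\]

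Turning to the second assertion, since $(y_\alpha)_\alpha\in N_g$, \Cref{L:liminf detour cost} gives at once $\delta(g,h)\le\liminf_\alpha\bigl(\rho(b,y_\alpha)+h(y_\alpha)\bigr)$. For the reverse inequality I would first record that every horofunction $h$ satisfies $h(x)-h(x')\le\rho(x,x')$ for all $x,x'\in X$, obtained by passing to the limit in $i_\rho(w)(x)-i_\rho(w)(x')=\rho(x,w)-\rho(x',w)\le\rho(x,x')$. Now fix $x\in X$ and a net $(z_\beta)_\beta\in N_g$, so that $g(x)=\lim_\beta(\rho(x,z_\beta)-\rho(b,z_\beta))$; since $h(x)-\rho(x,z_\beta)\le h(z_\beta)$ we obtain $h(x)-g(x)\le\liminf_\beta(\rho(b,z_\beta)+h(z_\beta))$, and taking the infimum over $N_g$ and invoking \Cref{L:liminf detour cost} again shows $h(x)-g(x)\le\delta(g,h)$ for every $x\in X$. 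Applying this with $x=y_\alpha$ and combining with the first assertion,
\[
\rho(b,y_\alpha)+h(y_\alpha)=\bigl(\rho(b,y_\alpha)+g(y_\alpha)\bigr)+\bigl(h(y_\alpha)-g(y_\alpha)\bigr)\le\bigl(\rho(b,y_\alpha)+g(y_\alpha)\bigr)+\delta(g,h),
\]
whence $\limsup_\alpha(\rho(b,y_\alpha)+h(y_\alpha))\le\delta(g,h)$. Together with the lower bound this forces the limit to exist and equal $\delta(g,h)$; the special case $h=g$ is then consistent with the first assertion, as it yields $\delta(g,g)=0$.

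The main obstacle is the first assertion: the almost-geodesic hypothesis is stated relative to the reference point $s$, not the basepoint $b$ of the horofunction compactification, so $\rho(b,y_\alpha)+g(y_\alpha)$ cannot be estimated from it directly. The resolution is that the relation $f_\alpha(b)\to h_s(b)$ is built into the construction of $g$ through \Cref{L:almost geod. <-> almost non-dec.}, and this is precisely what lets one swap $s$ for $b$. Everything else is routine manipulation with the triangle inequality and the elementary $1$-Lipschitz bound for horofunctions.
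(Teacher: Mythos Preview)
Your proof is correct and follows essentially the same approach as the paper's. In the first assertion you work with the auxiliary limit $h_s$ and the relation $g=h_s-h_s(b)$, whereas the paper writes the same estimate directly in terms of $g$ (obtaining $\lim_\alpha \rho(s,y_\alpha)+g(y_\alpha)=g(s)$ and then swapping $s$ for $b$); in the second assertion your derivation of the key inequality $h(x)\le g(x)+\delta(g,h)$ via the one-sided Lipschitz bound is exactly the paper's argument, and the conclusion is reached identically by combining this with \Cref{L:liminf detour cost} and the first assertion.
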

\begin{proof}
Suppose that $(y_\alpha)_\alpha$ is an almost geodesic for $s\in X$ and let $\varepsilon>0$. Then there is a $\beta$ such that 
\[
0\le\rho(s,y_\alpha)+\rho(y_\alpha,y_{\alpha'})-\rho(s,y_{\alpha'})\leq \epsilon
\]
for all $\beta\le\alpha\le\alpha'$. So, 
\[
0\leq \rho(s,y_\alpha) +\rho(y_\alpha,y_{\alpha'}) - \rho(b,y_{\alpha'}) - (\rho(s,y_{\alpha'}) - \rho(b,y_{\alpha'}))\leq \varepsilon
\]
for all $\beta\le\alpha\le\alpha'$.
Now fixing $\alpha'\ge\alpha$ and taking the limit over the subnet indexed by $\gamma\ge\alpha'$, we find that $0\leq \rho(s,y_\alpha)+g(y_\alpha) -g(s)\le\varepsilon$ for all $\alpha\geq \beta$.  Thus, $\lim_\alpha \rho(s,y_\alpha)+g(y_\alpha) = g(s)$. 
The first statement now follows from
\[
\lim_\alpha \rho(b,y_\alpha)+g(y_\alpha)= \lim_\alpha (\rho(b,y_\alpha)-\rho(s,y_\alpha))+\lim_\alpha (\rho(s,y_\alpha)+g(y_\alpha))
= -g(s)+g(s)=0.
\]

For the second part of the statement, we first note that for each $x,z\in X$ we have that 
\begin{equation}\label{h(x)}
h(x) \leq (\rho(x,z)-\rho(b,z)) +(\rho(b,z) +h(z)).
\end{equation}
Indeed, if $(w_\beta)_\beta$ is a net in $X$ such that $i_\rho(w_\beta)\to h$, then  for each $\beta$, 
\[
\rho(x,w_\beta) -\rho(b,w_\beta) \leq \rho(x,z) + \rho(z,w_\beta) -\rho(b,w_\beta)  = (\rho(x,z) -\rho(b,z))+(\rho(b,z) + \rho(z,w_\beta) -\rho(b,w_\beta)),   
\]
hence (\ref{h(x)}) holds. 

It follows that for each $x\in X$ we have that 
\begin{equation}\label{equivalence}
h(x)\leq \inf_{(z_\gamma)\in N_g}\left( \liminf_\gamma (\rho(x,z_\gamma)-\rho(b,z_\gamma)) +(\rho(b,z_\gamma) +h(z_\gamma))\right) = g(x) +\delta(g,h),
\end{equation}
where $N_g$ is the set of  all nets $(z_\gamma)_\gamma$ such that $i_\rho(z_\gamma) \to g$ by Lemma \ref{L:liminf detour cost}. 

This implies that 
\[
\rho(b,y_\alpha) +h(y_\alpha) \leq \rho(b,y_\alpha) +g(y_\alpha) +\delta(g,h)
\]
for all $\alpha$. Now using the first part of the lemma we find that 
\[
\limsup_\alpha \rho(b,y_\alpha) +h(y_\alpha)  \leq \delta(g,h).
\]
On the other hand, 
\[
\delta(g,h) \leq \liminf_\alpha \rho(b,y_\alpha) +h(y_\alpha),
\]
which completes the proof.
\end{proof}
Note that the identities in Lemma~\ref{L:detour limits} do not depend on the almost geodesic converging to the Busemann point.

On $\mathcal{B}_\rho(X)$ the {\em detour metric}, $\Delta \colon \mathcal{B}_\rho(X)\times \mathcal{B}_\rho(X)\to [0,\infty]$, is defined by 
\[
\Delta(g,h) := \delta(g,h) +\delta(h,g).
\]
It is known \cite[Proposition 3.1]{LW} that $\Delta$ satisfies all the usual properties of a metric where the distance can attain the value $\infty$. 
\begin{theorem}\label{T:detour metric}
The function  $\Delta\colon \mathcal{B}_\rho(X)\times \mathcal{B}_\rho(X)\to [0,\infty]$ is a (possibly $\infty$-valued) metric.
\end{theorem}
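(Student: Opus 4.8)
The plan is to verify, for $\Delta(g,h)=\delta(g,h)+\delta(h,g)$ on $\mathcal{B}_\rho(X)$, the three defining properties of a (possibly $\infty$-valued) metric: that $\Delta(g,g)=0$, that $\Delta(g,h)=\Delta(h,g)$, that $\Delta(g,h)=0$ forces $g=h$, and the triangle inequality. Symmetry is immediate from the definition. The two ingredients I would rely on are already in hand: \Cref{L:detour limits}, which for an almost geodesic $(y_\alpha)_\alpha$ converging to a Busemann point $g$ gives $\lim_\alpha \rho(b,y_\alpha)+g(y_\alpha)=0$ and $\delta(g,h)=\lim_\alpha\rho(b,y_\alpha)+h(y_\alpha)$ for every horofunction $h$; and the pointwise estimate $h(x)\le g(x)+\delta(g,h)$ valid for all $x\in X$ and all horofunctions $g,h$, which is precisely \eqref{equivalence} (established inside the proof of \Cref{L:detour limits}).

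For the vanishing on the diagonal and the separation axiom I would argue as follows. Fixing a Busemann point $g$ and an almost geodesic $(y_\alpha)_\alpha\to g$, \Cref{L:detour limits} with target $g$ gives $\delta(g,g)=\lim_\alpha\rho(b,y_\alpha)+g(y_\alpha)=0$, hence $\Delta(g,g)=0$. Conversely, if $\Delta(g,h)=0$, then since $\delta(g,h)$ and $\delta(h,g)$ both lie in $[0,\infty]$ (noted just after \eqref{E:detour cost}), both must vanish; applying \eqref{equivalence} twice yields $h(x)\le g(x)+\delta(g,h)=g(x)$ and $g(x)\le h(x)+\delta(h,g)=h(x)$ for every $x\in X$, so $g=h$ as functions on $X$.

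For the triangle inequality the plan is to first isolate a one-sided statement for the detour cost: if $g$ is a Busemann point and $h,k$ are arbitrary horofunctions, then $\delta(g,k)\le\delta(g,h)+\delta(h,k)$. (If the right-hand side is $\infty$ there is nothing to prove, so assume it is finite.) Taking an almost geodesic $(y_\alpha)_\alpha\to g$, \Cref{L:detour limits} gives $\delta(g,k)=\lim_\alpha\rho(b,y_\alpha)+k(y_\alpha)$ and $\delta(g,h)=\lim_\alpha\rho(b,y_\alpha)+h(y_\alpha)$, while \eqref{equivalence} applied at the point $y_\alpha$ gives $k(y_\alpha)\le h(y_\alpha)+\delta(h,k)$; adding $\rho(b,y_\alpha)$ and letting $\alpha$ run proves the claim. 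Applying this twice for Busemann points $g,h,k$,
\[
\Delta(g,h)=\delta(g,h)+\delta(h,g)\le\bigl(\delta(g,k)+\delta(k,h)\bigr)+\bigl(\delta(h,k)+\delta(k,g)\bigr)=\Delta(g,k)+\Delta(k,h).
\]

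I expect the proof to be largely bookkeeping, with the real analytic content already located in \Cref{L:detour limits} and \eqref{equivalence}. The points deserving attention are: the nonnegativity of $\delta$, which is what makes the separation argument work; the handling of the value $\infty$ in the triangle inequality (trivial when a summand is infinite, a genuine limit computation otherwise); and keeping straight which statements need a \emph{Busemann} point versus merely a horofunction — in particular \Cref{L:detour limits} requires the limit point $g$ of the almost geodesic to be Busemann, whereas the target appearing in the detour cost may be any horofunction, and \eqref{equivalence} needs nothing beyond both arguments being horofunctions.
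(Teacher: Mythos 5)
Your proof is correct and follows essentially the same route as the paper's: each axiom is verified in turn, with both the separation axiom and the triangle inequality reduced to the pointwise bound $h(x)\le g(x)+\delta(g,h)$ combined with the limit formulas of Lemma~\ref{L:detour limits}. The only difference is one of economy — where you invoke \eqref{equivalence} directly (it is established for arbitrary horofunctions inside the proof of Lemma~\ref{L:detour limits}), the paper re-derives the needed estimates from the triangle inequality for $\rho$ by passing to nested limits along two or three almost geodesics; your packaging is slightly cleaner, but the content is identical.
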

\begin{proof}
The fact that $\Delta$ is symmetric is clear from the definition, and $\Delta(g,h)\geq 0$, as $\delta(g,h)\geq 0$. 
Suppose $g$ and $h$ are Busemann points with $(y_\alpha)_\alpha$ and $(z_\beta)_\beta$ almost geodesics such that $i_\rho(y_\alpha)\to g$ and $i_\rho(z_\beta)\to h$.  It  follows from Lemma \ref{L:detour limits} that $\Delta(g,g)=0$. Now suppose $\Delta(g,h)=0$. Let $x\in X$, and note that the triangle inequality together with the inequality $\rho(b,y_\alpha)+h(y_\alpha)\ge 0$ yield
\begin{align*}
\rho(x,y_\alpha)-\rho(b,y_\alpha)&\le\rho(x,z_\beta)+\rho(z_\beta,y_\alpha)+h(y_\alpha)\\&=\rho(x,z_\beta)+\left(\rho(z_\beta,y_\alpha)-\rho(b,y_\alpha)\right)+\left(\rho(b,y_\alpha)+h(y_\alpha)\right).
\end{align*}
By taking the limit for $\alpha$, it follows from Lemma~\ref{L:detour limits} that $g(x)\le \rho(x,z_\beta)+g(z_\beta)+\delta(g,h)$. Hence
\[
g(x)\le \left(\rho(x,z_\beta)-\rho(b,z_\beta)\right)+\left(\rho(b,z_\beta)+g(z_\beta)\right)+\delta(g,h).
\]
Subsequently taking the limit for $\beta$ we deduce that  
\[
g(x)\le h(x)+\delta(h,g)+\delta(g,h)=h(x)+\Delta(h,g)=h(x).
\]Interchanging the roles of $g$ and $h$ now shows that $g=h$. 

To show the triangle inequality, let $f$ be a Busemann point with almost geodesic $(x_\gamma)_\gamma$, so $i_\rho(x_\gamma)\to f$.  From the triangle inequality for $\rho$, we have
\[
\rho(b,y_\alpha)+\rho(y_\alpha,z_\beta)-\rho(b,z_\beta)\le\rho(b,y_\alpha)+\rho(y_\alpha,x_\gamma)-\rho(b,x_\gamma)+\rho(b,x_\gamma)+\rho(x_\gamma,z_\beta)-\rho(b,z_\beta).
\]
Taking the limit for $\beta$ first yields
\[
\rho(b,y_\alpha)+h(y_\alpha)\le \rho(b,y_\alpha)+(\rho(y_\alpha,x_\gamma)-\rho(b,x_\gamma))+(\rho(b,x_\gamma)+h(x_\gamma)).
\]
Subsequently taking the limit for $\gamma$ gives
\[
\rho(b,y_\alpha)+h(y_\alpha)\le \rho(b,y_\alpha)+f(y_\alpha)+\delta(f,h)
\]
by Lemma~\ref{L:detour limits}. Finally, taking the limit for $\alpha$ gives $\delta(g,h)\le\delta(g,f)+\delta(f,h)$. By swapping the roles of $g$ and $h$, we see that $\delta(h,g)\le\delta(h,f)+\delta(f,g)$, so 
$\Delta(g,h)\leq \Delta(g,f)+\Delta(f,h)$. 
\end{proof}
The relation $g\sim h$ if $\Delta(g,h)<\infty$ is an equivalence relation on $\mathcal{B}_\rho(X)$ and its equivalence classes are called {\em parts}. So, the set of Busemann points is partitioned into parts, each of which is a metric space. 

\begin{remark}\label{remequiv} It should be noted that on the horofunction boundary 
$\mathcal{H}_\rho(X)$ there is a natural equivalence relation where horofunctions $g$ and $h$ are equivalent if 
\[
\sup_{x\in X} |h(x)-g(x)|<\infty.
\]
On $\mathcal{B}_\rho(X)$  this equivalence relation coincides with the $\sim$ relation. Indeed, it follows from  (\ref{equivalence}) that 
\[
\sup_{x\in X} h(x)-g(x)\leq \delta(g,h).
\] 
On the other hand, if $(y_\alpha)_\alpha$ is almost geodesic  for $g$, then by  Lemma~\ref{L:detour limits}, 
\[
h(y_\alpha) -g(y_\alpha) =  \rho(b,y_\alpha)+h(y_\alpha) - (\rho(b,y_\alpha) +g(y_\alpha))  \to \delta(g,h),  
\]
hence $\sup_{x\in X} h(x) - g(x) \geq \delta(g,h)$. This implies that $\sup_{x\in X} h(x) - g(x)=\delta(g,h)$, which shows that $\sup_{x\in X} |h(x)-g(x)|<\infty$ for $g,h\in \mathcal{B}_\rho(X)$ if and only if $g$ and $h$ are in the same part. 
\end{remark}
In the sequel the parts  of $\mathcal{B}_\rho(X)$ consisting of a single Busemann point will play a key role.  We will refer to these parts as the {\em singleton parts}.
\begin{remark} We  note that the detour metric is independent of the basepoint $b\in X$. Indeed, if we change the basepoint from $b$ to say $s$ in $X$, then the horofunctions with respect to the basepoint $s$ are of the form $h_s(x) = h(x) -h(s)$, where $h$ is a horofunction with respect to basepoint $b$. So, the detour cost $\delta(g_s,h_s)$ between two Busemann points $g_s$ and $h_s$ with respect to  $s$ satisfies
\[
\delta(g_s,h_s) = \inf_{(y_\alpha)}\left(\liminf_{\alpha} \rho(s,y_\alpha) +h_s(y_\alpha)\right) = \delta(g,h) +g(s)-h(s).
\]
Here the infimum is taken over all nets $(y_\alpha)_\alpha$ such that the function $x\in X\mapsto \rho(x,y_\alpha) -\rho(s,y_\alpha)$ converges to $g_s$. This shows that $\Delta$ is independent of the basepoint. 
\end{remark}

It turns out that any surjective isometry between quasi-metric spaces induces a detour metric isometry between the Busemann points. Indeed, if $(X_1,\rho_1)$ and $(X_2,\rho_2)$ are quasi-metric spaces and  
$\varphi\colon X_1\to X_2$ is a surjective isometry, then  we can consider the map  $\Phi\colon \mathcal{H}_{\rho_1}(X_1)\to \mathcal{H}_{\rho_2}(X_2)$ defined by  
\begin{equation}\label{isomboundary}
\Phi(h)(x) := h(\phi^{-1}(x))-h(\phi^{-1}(b_2)),
\end{equation}
for $x\in X_1$, where $b_2$ is the basepoint in $X_2$.  
We note that $\Phi(h)$ is in $\mathcal{H}_{\rho_2}(X_2)$. Indeed, if  $(y_\alpha)_\alpha$ is a net in $X_1$ such that $(i_{\rho_1}(y_\alpha))_\alpha$ converges to $h$, then for each $x\in X_2$ we have that  
\begin{align*}
i_{\rho_2}(\phi(y_\alpha))(x)&=\rho_2(x,\phi(y_\alpha))-\rho_2(b_2,\phi(y_\alpha))
=\rho_1(\varphi^{-1}(x),y_\alpha)-\rho_1(\varphi^{-1}(b_2),y_\alpha)\\
&\qquad=(\rho_1(\varphi^{-1}(x),y_\alpha)-\rho_1(b_1,y_\alpha))-(\rho_1(\varphi^{-1}(b_2),y_\alpha)-\rho_1(b_1,y_\alpha))\\
&\qquad\qquad \to h(\varphi^{-1}(x))-h(\varphi^{-1}(b_2))=\Phi(h)(x).
\end{align*}
It is straightforward to check that $\Phi$ has an inverse given by $\Phi^{-1}(g)(x)=g(\phi(x))-g(\phi(b_1))$, hence $\Phi$ is a bijection.  Furthermore, if $(h_\alpha)_\alpha$ is a net that converges (pointwise) to $h$ in $\mathcal{H}_{\rho_1}(X_1)$, then for any $x\in X_2$, 
\[
\Phi(h_\alpha)(x)=h_\alpha(\phi^{-1}(x))-h_\alpha(\phi^{-1}(b_2))\to h(\phi^{-1}(x))-h(\phi^{-1}(b_2))=\Phi(h)(x),
\]
so $\Phi$ is continuous. Similarly, we find that $\Phi^{-1}$ is continuous, so $\Phi$ is  a homeomorphism. 

Clearly, a net $(y_\alpha)_\alpha$ is an almost geodesic at $s_1\in X_1$ if and only if $(\phi(y_\alpha))_\alpha$ is an almost geodesic at $\phi(s_1)\in X_2$, thus $\Phi$ also induces a bijection from  $\mathcal{B}_{\rho_1}(X_1)$ onto $\mathcal{B}_{\rho_2}(X_2)$. The next theorem shows that $\Phi$ is actually  a detour metric isometry, see \cite[Lemma 3.2]{LW}.  
\begin{theorem}\label{T: d_T isom extends to Busemann}
If  $\phi\colon (X_1,\rho_1)\to (X_2,\rho_2)$ is a surjective isometry, then for all $g,h\in\mathcal{B}_{\rho_1}(X_1)$ we have $\Delta(\Phi(g),\Phi(h))=\Delta(g,h)$.
\end{theorem}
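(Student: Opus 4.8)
The plan is to reduce the statement to the asymmetric detour cost and to exploit the basepoint-independence of the detour metric established in the remark above. Since $\Delta(\Phi(g),\Phi(h)) = \delta(\Phi(g),\Phi(h)) + \delta(\Phi(h),\Phi(g))$ and likewise $\Delta(g,h) = \delta(g,h)+\delta(h,g)$, it suffices to compare the detour costs. The detour metric on $\mathcal{B}_{\rho_2}(X_2)$ does not depend on the choice of basepoint, so we are free to compute it using the basepoint $\phi(b_1)$ in $X_2$ rather than $b_2$. With this choice, for a horofunction $h$ on $(X_1,\rho_1)$ (normalised so that $h(b_1)=0$) the image horofunction becomes simply $\Phi(h) = h\circ\phi^{-1}$, because the normalising constant $h(\phi^{-1}(\phi(b_1))) = h(b_1)$ vanishes.

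Next I would take a Busemann point $g \in \mathcal{B}_{\rho_1}(X_1)$ and an almost geodesic $(y_\alpha)_\alpha$ in $X_1$ with $i_{\rho_1}(y_\alpha) \to g$. Since $\phi$ is a surjective isometry, $(\phi(y_\alpha))_\alpha$ is an almost geodesic in $X_2$ (immediate from $\rho_2(\phi(x),\phi(y))=\rho_1(x,y)$), and the computation preceding the theorem shows $i_{\rho_2}(\phi(y_\alpha)) \to \Phi(g)$, so $\Phi(g)\in\mathcal{B}_{\rho_2}(X_2)$. Applying \Cref{L:detour limits} in $(X_2,\rho_2)$ with basepoint $\phi(b_1)$, together with $\rho_2(\phi(b_1),\phi(y_\alpha))=\rho_1(b_1,y_\alpha)$ and $\Phi(h)(\phi(y_\alpha))=h(\phi^{-1}(\phi(y_\alpha)))=h(y_\alpha)$, gives
\begin{align*}
\delta(\Phi(g),\Phi(h)) &= \lim_\alpha \bigl(\rho_2(\phi(b_1),\phi(y_\alpha)) + \Phi(h)(\phi(y_\alpha))\bigr) \\
&= \lim_\alpha \bigl(\rho_1(b_1,y_\alpha) + h(y_\alpha)\bigr) = \delta(g,h),
\end{align*}
where the last equality is \Cref{L:detour limits} applied in $(X_1,\rho_1)$ (recall its conclusions are independent of the chosen almost geodesic). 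Symmetrically $\delta(\Phi(h),\Phi(g)) = \delta(h,g)$, and adding the two identities yields $\Delta(\Phi(g),\Phi(h)) = \Delta(g,h)$.

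I do not expect a genuine obstacle here; the argument is essentially careful bookkeeping with basepoints. The one subtlety worth flagging is that the asymmetric detour cost $\delta$ is itself basepoint-dependent, so $\delta(\Phi(g),\Phi(h)) = \delta(g,h)$ should \emph{not} be expected with the original basepoint $b_2$: carrying it out there produces extra terms $g(\phi^{-1}(b_2))-h(\phi^{-1}(b_2))$ which cancel only after summing the two asymmetric costs. Invoking the basepoint-independence of $\Delta$ at the outset, so that $\Phi$ becomes precomposition with $\phi^{-1}$, is what keeps the computation clean. The remaining facts used — that $(\phi(y_\alpha))_\alpha$ is an almost geodesic and that $\Phi$ restricts to a bijection $\mathcal{B}_{\rho_1}(X_1)\to\mathcal{B}_{\rho_2}(X_2)$ — were already recorded before the statement.
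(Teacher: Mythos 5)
Your proof is correct. It follows the same core idea as the paper's — push a net realising the detour cost through $\phi$ and translate the quantity $\rho_2(\cdot,\phi(y_\alpha))+\Phi(h)(\phi(y_\alpha))$ back to $X_1$ — but the bookkeeping is organised differently, and arguably more cleanly. The paper keeps the original basepoint $b_2$, uses \Cref{L:detour cost as limit} and \Cref{L:liminf detour cost} to obtain only the inequality $\delta(\Phi(g),\Phi(h))\le \delta(g,h)+g(\phi^{-1}(b_2))-h(\phi^{-1}(b_2))$, lets the correction terms cancel in the sum $\Delta$, and then runs the symmetric argument with $\Phi^{-1}$ to upgrade $\Delta(\Phi(g),\Phi(h))\le\Delta(g,h)$ to an equality. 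You instead invoke the basepoint-independence of $\Delta$ up front so that $\Phi$ becomes plain precomposition with $\phi^{-1}$, and then use the stronger \Cref{L:detour limits} — which gives an exact limit formula for $\delta(g,h)$ along \emph{any} almost geodesic representing the Busemann point $g$ — to get the equality $\delta(\Phi(g),\Phi(h))=\delta(g,h)$ of the asymmetric costs in one step, with no need for the $\Phi^{-1}$ argument. This exploits the Busemann hypothesis slightly more directly (the paper's one-sided estimate would work for arbitrary horofunctions $g$), and your closing caveat about why $\delta$ itself is not preserved with mismatched basepoints is exactly the right point to flag. No gaps.
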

\begin{proof}
By Lemma \ref{L:detour cost as limit} there exists a net $(y_\alpha)_\alpha$ in $X_1$ such that $(i_{\rho_1}(y_\alpha))_\alpha$ converges to $g$ and 
\[
\lim_\alpha\rho_1(b_1,y_\alpha)+h(y_\alpha)=\delta(g,h). 
\]
It follows that 
\[
\rho_2(b_2,\phi(y_\alpha))+\Phi(h)(\phi(y_\alpha)) = (\rho_1(\phi^{-1}(b_2), y_\alpha) - \rho_1(b_1,y_\alpha)) + (\rho_1(b_1,y_\alpha) + h(y_\alpha))-h(\phi^{-1}(b_2)).
\]
Taking the limit over $\alpha$ yields $\lim_\alpha \rho_2(b_2,\phi(y_\alpha))+\Phi(h)(\phi(y_\alpha)) = g(\phi^{-1}(b_2))-h(\phi^{-1}(b_2))+\delta(g,h)$.
Now from Lemma~\ref{L:liminf detour cost} it follows that $\delta(\Phi(g),\Phi(h))\le g(\phi^{-1}(b_2))-h(\phi^{-1}(b_2))+\delta(g,h)$, hence 
\[ \Delta(\Phi(g),\Phi(h))\leq \Delta(g,h).\] 
Applying the same argument to $\Phi^{-1}$ gives $\Delta(\Phi(g),\Phi(h))=\Delta(g,h)$.
\end{proof}

We see that $\Phi$ maps parts of $\mathcal{B}_{\rho_1}(X_1)$ onto parts of $\mathcal{B}_{\rho_2}(X_2)$. In particular singleton parts are mapped to singleton parts. This fact will be exploited in the proof of the main result in this paper. 

\section{Funk and reverse-Funk Busemann points of the cone} \label{sec:funk}
The singletons for the Funk and reverse-Funk horofunction boundary of the cone in an order unit space were analysed by Walsh in \cite{Wa2}. We recall, and slightly refine, the results in that paper here. In particular, we shall characterise the singletons in the reverse-Funk horofunction boundary in case $(V,C,u)$ is reflexive. We will need the representation of an order unit space and its bi-dual in terms of affine functions on the state space, which is discussed in  \cite{AS1} and \cite{AS2}. In what follows, we will choose the base point for the reverse-Funk and the Funk geometry to be the order unit $u$.

The following theorem is attributed to Kadison, see \cite[Theorem 1.20]{AS1}, 
\begin{theorem}\label{T:rep of o.u.s.}
Let $(V,C,u)$ be a complete order unit space with state space $S$. For $x\in V$ let  $\hat{x}$ be the function on $S$ given by $\hat{x}(\phi) := \phi(x)$. The map $x\mapsto \hat{x}$ is an order- and norm-preserving isomorphism from $V$ onto $A_
{w*}(S)$, the space of $w^*$-continuous affine functions on $S$. 
\end{theorem}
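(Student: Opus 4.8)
The plan is to check, in order, that $x\mapsto\hat x$ is well-defined into $A_{w*}(S)$, linear, isometric, bipositive, and finally surjective, with only the last point requiring genuine work. For $x\in V$ the function $\hat x$ is affine and $w^*$-continuous by the very definition of the $w^*$-topology, so $\hat x\in A_{w*}(S)$, and $x\mapsto\hat x$ is obviously linear. Writing $\widehat V:=\{\hat x\colon x\in V\}$, the fact that the states determine the norm and the ordering on $V$ (see \cite[Lemma~1.18]{Alfsen}) immediately yields two things: first, $\|\hat x\|_\infty=\sup_{\phi\in S}|\phi(x)|=\|x\|_u$, so the map is an isometry, hence injective; second, $x\in C$ if and only if $\phi(x)\ge 0$ for every state $\phi$ — one direction is trivial, the other follows from the bipolar theorem $C=C^{**}$ (recall $C$ is norm-closed) together with the observation that every nonzero $\psi\in C^*$ equals $\psi(u)\,\phi$ for the state $\phi=\psi/\psi(u)$. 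Combined with linearity this shows $x\le y\iff\hat x\le\hat y$, so $x\mapsto\hat x$ is an order isomorphism onto its range. Finally, since $S$ is $w^*$-compact Hausdorff, $A_{w*}(S)$ is a closed subspace of $C(S)$, hence a Banach space in the supremum norm, and as $V$ is complete its isometric image $\widehat V$ is a \emph{closed} subspace of $A_{w*}(S)$. This is the one place completeness is used.

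It then remains to show $\widehat V$ is dense in $A_{w*}(S)$; together with closedness this gives surjectivity. I would argue by duality: suppose $\Lambda\in A_{w*}(S)^*$ vanishes on $\widehat V$. Extend $\Lambda$ by Hahn--Banach to $C(S)^*$ and represent it by a signed Radon measure $\mu$ on $S$, so that $\int_S\hat x\,d\mu=0$ for all $x\in V$; taking $x=u$ shows the Hahn--Jordan parts $\mu^\pm$ have a common total mass $t:=\mu^+(S)=\mu^-(S)$. If $t=0$ then $\mu=0$. If $t>0$, the probability measures $\mu^\pm/t$ have barycenters $b^\pm\in S$, and $b^+(x)=\tfrac1t\int\hat x\,d\mu^+=\tfrac1t\int\hat x\,d\mu^-=b^-(x)$ for all $x\in V$, whence $b^+=b^-$ in $V^*$. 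Since $\int g\,d\nu=g(b(\nu))$ for every probability measure $\nu$ on $S$ and every $g\in A_{w*}(S)$, it follows that $\Lambda(g)=\int g\,d\mu=t\bigl(g(b^+)-g(b^-)\bigr)=0$ for all $g\in A_{w*}(S)$, i.e.\ $\Lambda=0$. Hence no nonzero functional kills $\widehat V$, so $\widehat V$ is dense and therefore $\widehat V=A_{w*}(S)$.

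The genuinely nontrivial ingredient — and the main obstacle — is this density step, which relies on the barycenter calculus for the compact convex set $S$: existence of the barycenter in $S$ of each probability measure on $S$, and the identity $\int g\,d\nu=g(b(\nu))$ for $w^*$-continuous affine $g$. These are standard facts of Choquet theory; everything else in the proof is bookkeeping built on the single principle that the state space determines the norm and order of $V$. (An alternative route to surjectivity avoids measures: given $f\in A_{w*}(S)$, extend it first to a positively homogeneous additive map on the dual cone $C^*=\bigcup_{t\ge 0}tS$, then, using that $V^*=C^*-C^*$ for an order unit space, to a linear functional $F$ on $V^*$; checking that $F$ is $w^*$-continuous on the $w^*$-compact sets $C^*\cap rB_{V^*}$ — via the quotient map $[0,r]\times S\to C^*\cap rB_{V^*}$, $(t,\phi)\mapsto t\phi$ — and invoking the Krein--\v{S}mulian theorem identifies $F$ with evaluation at some $x\in V$, so $\hat x=f$.)
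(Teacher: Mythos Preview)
The paper does not prove this theorem; it simply attributes it to Kadison and cites \cite[Theorem 1.20]{AS1}. Your argument is correct and complete. The only step that deserves a second look is the barycenter identity $\int_S g\,d\nu=g(b(\nu))$ for \emph{all} $g\in A_{w*}(S)$: at that point you do not yet know that every such $g$ is of the form $\hat x$, so one must be sure this formula is not circular. It is not --- for any compact convex set $K$ in a locally convex space, every continuous affine function on $K$ respects barycenters (lift $\nu$ to the graph $\{(x,g(x)):x\in K\}\subset E\times\R$, which is again compact convex, and observe its barycenter must lie on the graph). So your appeal to ``standard Choquet theory'' is legitimate, and both the measure-theoretic route and the Krein--\v{S}mulian alternative you sketch are valid.
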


The bi-dual of a complete order unit space has the following characterisation, see \cite[Proposition 1.11]{AS1}.
\begin{theorem}\label{T:rep bidual o.u.s.}
Let $(V,C,u)$ be a complete order unit space with state space $S$. The restriction of the functionals in $V^{**}$ to $S$ is an order- and norm-preserving isomorphism from $V^{**}$ onto $A_b(S)$, the space of all bounded affine functions on $S$, equipped with the pointwise ordering and the supremum norm. 
\end{theorem}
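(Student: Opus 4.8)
The plan is to show that the restriction map $R(\rho):=\rho|_{S}$ is a well-defined linear bijection from $V^{**}$ onto $A_b(S)$ that preserves both the order and the norm. Everything rests on describing $V^{*}$ through the state space. Two facts are needed. First, by \cite[Lemma~1.16]{AS1} a functional is positive iff $\|\phi\|=\phi(u)$, so every nonzero $\phi\in C^{*}$ has $\phi(u)>0$ and $\phi/\phi(u)\in S$; hence $C^{*}=\R_{\ge 0}S$. Second, the closed unit ball $B^{*}$ of $V^{*}$ equals $\mathrm{co}(S\cup -S)=\{\lambda\psi_{1}-(1-\lambda)\psi_{2}\colon \psi_{1},\psi_{2}\in S,\ 0\le\lambda\le 1\}$. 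To see this I would compute the polar of $S\cup -S$ in $V$, namely $\{x\colon|\phi(x)|\le 1\text{ for all }\phi\in S\}$, which by \cite[Lemma~1.18]{Alfsen} (states determine the norm) and the fact that the unit ball of $V$ is the order interval $[-u,u]$ equals $B_{V}$; the bipolar theorem then gives $B^{*}=\overline{\mathrm{co}}^{\,w^{*}}(S\cup -S)$, and the $w^{*}$-closure is superfluous because $\mathrm{co}(S\cup -S)$ is the continuous image of the $w^{*}$-compact set $[0,1]\times S\times S$ under $(\lambda,\psi_{1},\psi_{2})\mapsto\lambda\psi_{1}-(1-\lambda)\psi_{2}$, hence already $w^{*}$-compact. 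In particular $V^{*}=C^{*}-C^{*}$, and every $\phi\in V^{*}$ has a decomposition $\phi=\phi^{+}-\phi^{-}$ with $\phi^{\pm}\in C^{*}$ and $\|\phi\|=\|\phi^{+}\|+\|\phi^{-}\|$.

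Granting this, the verification splits into routine pieces. For $\rho\in V^{**}$ the function $\rho|_{S}$ is affine (as $\rho$ is linear and $S$ convex) and bounded by $\|\rho\|$ (as $\|\phi\|=1$ on $S$), so $R$ lands in $A_b(S)$; and if $\rho|_{S}=0$ then $\rho$ vanishes on $\R_{\ge 0}S=C^{*}$, hence on $C^{*}-C^{*}=V^{*}$, so $R$ is injective. Since $\rho\ge 0$ in $V^{**}$ means exactly $\rho\ge 0$ on $C^{*}=\R_{\ge 0}S$, which is the same as $\rho\ge 0$ pointwise on $S$, the map $R$ preserves and reflects the ordering. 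For the norm, $\|R(\rho)\|_{\infty}\le\|\rho\|$ is immediate; conversely, for $\phi$ with $\|\phi\|\le 1$ I would write $\phi=\phi^{+}-\phi^{-}$ as above and $\phi^{\pm}=\|\phi^{\pm}\|\psi^{\pm}$ with $\psi^{\pm}\in S$, so that $|\rho(\phi)|\le(\|\phi^{+}\|+\|\phi^{-}\|)\,\|R(\rho)\|_{\infty}\le\|R(\rho)\|_{\infty}$, giving $\|\rho\|\le\|R(\rho)\|_{\infty}$.

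For surjectivity, given $f\in A_b(S)$ I would define $\tilde f$ on $C^{*}$ by $\tilde f(0):=0$ and $\tilde f(\phi):=\phi(u)\,f\!\left(\phi/\phi(u)\right)$ for $\phi\ne 0$. Positive homogeneity on $C^{*}$ is clear, and additivity follows by writing, for nonzero $\phi_{1},\phi_{2}$, the state $(\phi_{1}+\phi_{2})/(\phi_{1}(u)+\phi_{2}(u))$ as the convex combination of $\phi_{1}/\phi_{1}(u)$ and $\phi_{2}/\phi_{2}(u)$ with weights $\phi_{i}(u)/(\phi_{1}(u)+\phi_{2}(u))$ and using that $f$ is affine. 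Then $\tilde f$ extends uniquely to a linear functional on $V^{*}=C^{*}-C^{*}$; the decomposition bound of the first paragraph shows $|\tilde f(\phi)|\le\|f\|_{\infty}$ whenever $\|\phi\|\le 1$, so $\tilde f\in V^{**}$, and by construction $R(\tilde f)=\tilde f|_{S}=f$. It is also worth recording that $R$ is compatible with Kadison's representation \Cref{T:rep of o.u.s.}: if $\kappa\colon V\to V^{**}$ is the canonical embedding then $R(\kappa(x))(\phi)=\phi(x)=\hat x(\phi)$, so $R\circ\kappa$ is the inclusion $V\cong A_{w^{*}}(S)\hookrightarrow A_b(S)$.

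The one genuinely non-routine ingredient is the first paragraph: identifying $B^{*}$ with $\mathrm{co}(S\cup -S)$ and extracting the norm-additive Jordan decomposition in $V^{*}$. Once the dual cone and the dual ball are understood through the state space, the bipositivity, isometry, and surjectivity of $R$ are all bookkeeping. (Completeness of $V$ is not actually used; it appears in the hypothesis only so that the statement sits alongside the cited representation theorems.)
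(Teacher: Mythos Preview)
Your argument is correct. The paper does not actually prove this theorem; it simply quotes it from \cite[Proposition~1.11]{AS1}, so there is no ``paper's own proof'' to compare against. What you have written is a clean, self-contained derivation of the cited result.

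The key structural insight you isolate---that $B^{*}=\mathrm{co}(S\cup -S)$, proved via the bipolar theorem and $w^{*}$-compactness of the image of $[0,1]\times S\times S$---is exactly what drives everything: it simultaneously yields $V^{*}=C^{*}-C^{*}$, the norm-additive Jordan decomposition, and hence both the isometry and the boundedness of the inverse. The surjectivity step (positive-homogeneous extension of $f$ to $C^{*}$, then linear extension to $C^{*}-C^{*}$) is standard and handled correctly, including the well-definedness check. Your remark that completeness is not used is accurate: only the $w^{*}$-compactness of $S$ (Alaoglu plus $S$ being $w^{*}$-closed in $B^{*}$) is needed, and that holds for any order unit space.
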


In \Cref{T:rep of o.u.s.,T:rep bidual o.u.s.} the ordering on the spaces of affine functions is pointwise, so that $C$ corresponds to the cone $A_{w*}(S)^+$ of nonnegative functions in $A_{w*}(S)$, and similarly, $C^{**}$ corresponds to $A_b(S)^+$. We will also need an intermediate cone between $C$ and $C^{**}$, which we will discuss next.

\paragraph{Affine semi-continuous functions}

We denote by $A_{usc}(S)$ the bounded affine upper semi-continuous functions on $S$, and the cone $C_{sc}$ denotes $(A_{usc}(S)^+ - A_{usc}(S)^+)^+$, i.e., the cone of nonnegative bounded affine functions that can be written as a difference of two nonnegative affine upper semi-continuous functions. Note that $C \subseteq C_{sc} \subseteq C^{**}$, and also note that if $V$ is reflexive, the cones are all equal. The equality $C_{sc} = C$ plays an important role in the sequel. There is, however, an order theoretical characterisation for order unit spaces where $C = C_{sc}$, which we will state below. For this characterisation we need the following two notions. An order unit space $V$ is said to be \emph{monotone complete} if every monotone decreasing net $(x_\alpha)_\alpha$ that is bounded from below has an infimum in $C$. A state $\phi$ is called \emph{normal} if for every monotone decreasing net $(x_\alpha)_\alpha$ with infimum $x$ we have $\phi(x_\alpha) \to \phi(x)$. We will also need the following results, which can be found in \cite{Alfsen}, cf.\,\cite[Proposition 1.1.2]{Alfsen} and the proof of \cite[Corollary 1.1.4]{Alfsen}. Here, we write $f<g$ for  two real  functions $f$ and $g$ if  $f(x)<g(x)$ for all $x$ in the domain. 

\begin{proposition}\label{P:lsc ptw lower limit}
Let $(V,C,u)$ be a complete order unit space with state space $S$.  If $f\colon S\to(-\infty,\infty]$ is a $w^*$-lower semi-continuous affine function on the state space, then for each $\varphi\in S$ we have
\[
f(\varphi)=\sup\bigl\{g(\varphi)\colon f>g|_S,\ g\colon V^*\to \mathbb{R}\ \mbox{affine and $w^*$-continuous}\bigr\}.
\]
\end{proposition}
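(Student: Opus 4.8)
The plan is to establish this as a standard result about lower semi-continuous affine functions on a compact convex set, adapted to the order unit space setting via the Kadison representation (Theorem~\ref{T:rep of o.u.s.}). First I would fix $\varphi \in S$ and note that the inequality ``$\geq$'' is immediate: if $g\colon V^*\to\mathbb{R}$ is affine, $w^*$-continuous, and $g|_S < f$, then in particular $g(\varphi) < f(\varphi)$, so the supremum on the right is at most $f(\varphi)$ (and, since $f$ takes values in $(-\infty,\infty]$, the supremum is a genuine real number bounded above by $f(\varphi)$, with the convention that it could be $-\infty$ only if no such $g$ exists, which the argument below rules out). The substance is the reverse inequality: given any real $t < f(\varphi)$, I must produce an affine $w^*$-continuous $g\colon V^* \to \mathbb{R}$ with $g|_S < f$ and $g(\varphi) > t$.

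The key step is a Hahn–Banach separation argument in the locally convex space $V^* \times \mathbb{R}$ equipped with the $w^* \times$ usual topology. Consider the epigraph-type set $E := \{(\psi, s) : \psi \in S,\ s \geq f(\psi)\}$ (or rather its closure / a suitable convex hull). Since $f$ is affine and $w^*$-lower semi-continuous and $S$ is $w^*$-compact convex, $E$ is a closed convex subset of $S \times \mathbb{R} \subseteq V^* \times \mathbb{R}$; one should also check $E$ is nonempty, using that $f$ is bounded below on $S$ (which follows because a $w^*$-lsc function on a compact set attains its infimum, hence is bounded below — or one can simply note the statement is vacuous otherwise). The point $(\varphi, t)$ lies outside $E$ because $t < f(\varphi)$. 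By the Hahn–Banach separation theorem there is a $w^*$-continuous linear functional on $V^* \times \mathbb{R}$, i.e.\ a pair $(x, c)$ with $x \in V$ (using that $w^*$-continuous functionals on $V^*$ are exactly evaluations at elements of $V$) and $c \in \mathbb{R}$, and a constant $\gamma$, such that $\psi(x) + c s > \gamma > \varphi(x) + c t$ for all $(\psi, s) \in E$. Letting $s \to \infty$ in the first inequality forces $c \geq 0$; a short argument (using that $(\varphi, f(\varphi))$ is a limit point of $E$ if $f(\varphi) < \infty$, or that one can still separate strictly when $f(\varphi) = \infty$) upgrades this to $c > 0$. Then $g(\psi) := \gamma/c - \psi(x)/c$ defines an affine $w^*$-continuous function on $V^*$ (extend to all of $V^*$ by the same formula), and the first inequality with $s = f(\psi)$ gives $g(\psi) < f(\psi)$ for all $\psi \in S$, while the second gives $g(\varphi) > t$. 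This completes the reverse inequality and hence the proposition.

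I would then remark that taking the supremum over all such $g$ and all $t < f(\varphi)$ yields $f(\varphi) = \sup\{g(\varphi) : f > g|_S\}$ exactly, which is the claimed identity. One small bookkeeping point to handle carefully: the proposition asks for $g$ defined on all of $V^*$, not just $S$; since the separating functional is already defined on all of $V^* \times \mathbb{R}$, the resulting $g$ is automatically defined (and affine, $w^*$-continuous) on $V^*$, so nothing extra is needed there.

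The main obstacle I expect is the passage from $c \geq 0$ to $c > 0$, i.e.\ ruling out that the separating hyperplane is ``vertical.'' If $c = 0$ the separation only says $\psi(x) > \gamma > \varphi(x)$ for all $\psi \in S$, which is impossible since $\varphi \in S$ — so in fact $c = 0$ is immediately contradictory here, and the genuine subtlety is instead the case $f(\varphi) = +\infty$, where one wants strict separation of the point $(\varphi,t)$ from the closed convex set $E$; this is fine because $(\varphi,t) \notin E$ and $E$ is closed, so strict separation applies directly, but one should double-check that $E$ is closed when $f$ is only lower semi-continuous (it is: the epigraph of a lsc function over a compact domain is closed) and that it is nonempty (needs $f \not\equiv +\infty$, or one restricts attention to the face where $f$ is finite; since $f$ is bounded below and real-valued somewhere this is not an issue, and if $f \equiv +\infty$ the identity is trivially $+\infty = +\infty$ only if there are no competing $g$, a degenerate case worth a one-line remark). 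I would therefore structure the write-up to dispatch the case $f(\varphi) < \infty$ first via closure-of-epigraph plus separation, and then observe the $f(\varphi) = \infty$ case follows by the same separation applied to an arbitrary height $t$.
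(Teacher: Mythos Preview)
The paper does not actually prove this proposition; it is stated with a reference to Alfsen's monograph (Proposition~I.1.2 and Corollary~I.1.4 there), so there is no in-paper argument to compare against. Your Hahn--Banach separation argument on the epigraph in $(V^*,w^*)\times\mathbb{R}$ is exactly the classical proof one finds in that reference, and the overall architecture is correct.

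There is one loose end you flag but do not close. Your dismissal of $c=0$ reads: ``the separation only says $\psi(x)>\gamma>\varphi(x)$ for all $\psi\in S$, which is impossible since $\varphi\in S$.'' That is not quite what the separation gives: it gives $\psi(x)\geq\gamma_2>\gamma_1\geq\varphi(x)$ only for those $\psi$ with $f(\psi)<\infty$, i.e.\ for $\psi$ in the projection of $E$ to $S$. When $f(\varphi)<\infty$ this is indeed contradictory (since $\varphi$ lies in that projection), but when $f(\varphi)=\infty$ it is not, and your subsequent remark that ``strict separation applies directly'' does not resolve this, because strict separation alone does not force $c>0$. The standard fix is a perturbation: since $f$ is $w^*$-lower semi-continuous on the $w^*$-compact set $S$, it is bounded below, say $f\geq m$. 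If the separating functional has $c=0$, replace it by $(\psi,s)\mapsto\psi(x)+\epsilon s$ for small $\epsilon>0$; on $E$ one has $\psi(x)+\epsilon s\geq\gamma_2+\epsilon m$, while at $(\varphi,t)$ one has $\varphi(x)+\epsilon t\leq\gamma_1+\epsilon t$, and for $\epsilon$ small enough (using $\gamma_2>\gamma_1$) this still separates with strictly positive second coefficient. With this patch your proof is complete.
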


\begin{corollary}\label{C:f lsc is upper lim net}
Let $(V,C,u)$ be a complete order unit space with state space $S$.  If $f\colon S\to(-\infty,\infty]$ is a $w^*$-lower semi-continuous affine function on the state space, then the set of affine $w^*$-continuous functions $g\colon V^*\to\mathbb{R}$ such that $g|_S <f$ is directed upwards, hence is a net indexed by itself converging pointwise to $f$ when restricted to $S$.
\end{corollary}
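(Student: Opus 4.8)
The plan is to deduce the statement entirely from \Cref{P:lsc ptw lower limit} plus one separation (sandwich) argument. Write $\mathcal{G}$ for the set of all affine $w^*$-continuous functions $g\colon V^*\to\mathbb{R}$ with $g|_S<f$; restricting to $S$ and using that $\varphi(u)=1$ for $\varphi\in S$ together with Kadison's representation (\Cref{T:rep of o.u.s.}), we may identify $\mathcal{G}$ with a subset of $A_{w*}(S)$. First I would note $\mathcal{G}\neq\emptyset$: since $f$ is $w^*$-lsc and real-valued on the $w^*$-compact set $S$, it attains its infimum, which is $>-\infty$, so any sufficiently small constant function belongs to $\mathcal{G}$. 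By \Cref{P:lsc ptw lower limit}, $f(\varphi)=\sup\{g(\varphi)\colon g\in\mathcal{G}\}$ for every $\varphi\in S$. Consequently, once $\mathcal{G}$ is shown to be directed upward for the pointwise ordering on $S$, the net $(g)_{g\in\mathcal{G}}$ indexed by itself is monotone along this directed set with pointwise supremum $f$, hence converges pointwise to $f$ on $S$, which is exactly the claim. So the real content is directedness.

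To prove directedness, take $g_1,g_2\in\mathcal{G}$ and set $h:=\max(g_1,g_2)$, a $w^*$-continuous \emph{convex} function on $S$ with $h<f$ pointwise. I would first upgrade this to a uniform gap: $f-h$ is $w^*$-lsc on the $w^*$-compact $S$, takes values in $(0,\infty]$, and attains its infimum $m$; and $m>0$, since otherwise a $w^*$-convergent subnet of a minimizing net would, by continuity of $h$ and lower semicontinuity of $f$, yield a point with $f\le h$, contradicting $h<f$. (If $m=\infty$, i.e.\ $f\equiv\infty$, then $\mathcal{G}=A_{w*}(S)$, which is trivially directed because $g_1+\norm{g_2-g_1}_\infty u$ dominates both $g_1$ and $g_2$; so assume $m<\infty$, whence $f\ge h+m$ on $S$.) Now put $c:=\norm{h}_\infty+1$ and $f':=\min(f-\tfrac{m}{2},\,c)$; this is a bounded, $w^*$-lsc, concave function on $S$ with $h<f'$ (indeed $h\le f-m<f-\tfrac m2$ and $h\le\norm{h}_\infty<c$). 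Applying a version of Edwards' separation theorem for compact convex sets (cf.\ \cite{Alfsen}) to the convex usc function $h$ and the concave lsc function $f'$ with $h\le f'$, we obtain a $w^*$-continuous affine function $a$ on $S$ with $h\le a\le f'$; extending $a$ to an affine $w^*$-continuous function $g_3$ on $V^*$, we have $g_3\le f'\le f-\tfrac m2<f$ on $S$ so $g_3\in\mathcal{G}$, while $g_3\ge h\ge g_1,g_2$. Hence $\mathcal{G}$ is directed upward, and combined with \Cref{P:lsc ptw lower limit} the proof is complete.

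I expect the one genuinely delicate point to be securing the \emph{strict} inequality $g_3|_S<f$: plain Edwards' theorem only gives $h\le a\le f$, which may hold with equality at points where $f$ is finite, so the uniform-gap estimate $f\ge h+m$ (and the truncation by the constant $c$, needed because the cleanest form of Edwards' theorem applies to bounded functions) is precisely what forces the separating affine function strictly below $f$. A secondary, minor point is handling the degenerate case $f\equiv\infty$ and observing that one only needs directedness with respect to the ordering on $S$, which is immediate once $g_3|_S$ dominates $g_1|_S$ and $g_2|_S$.
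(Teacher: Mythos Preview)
Your proof is correct. The paper gives no argument for this corollary and simply cites \cite{Alfsen} (specifically the proof of Corollary~I.1.4 there), where the proof is exactly the one you outline: sandwich a continuous affine function between the convex $\max(g_1,g_2)$ and a suitable concave lsc truncation of $f$ via Edwards' separation theorem, with the uniform-gap argument securing the strict inequality $g_3|_S<f$. So your approach coincides with the intended reference, and you have in addition made explicit the two points a bare citation hides---the need for the gap $m>0$ to force strictness, and that directedness is only required for the pointwise order on $S$.
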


\begin{theorem}\label{T:char C=C_sc}
    Let $(V,C,u)$ be a complete order unit space. Then $C = C_{sc}$ if and only if $V$ is monotone complete and all states are normal.
\end{theorem}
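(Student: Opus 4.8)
The plan is to reason entirely in the Kadison representation of Theorem~\ref{T:rep of o.u.s.}, identifying $V$ with $A_{w*}(S)$ and $C$ with $A_{w*}(S)^+$, so that $C_{sc}$ becomes the cone $(A_{usc}(S)^+-A_{usc}(S)^+)^+$ inside $A_b(S)$ and $C\subseteq C_{sc}$. Since $u$ is an order unit, $A_{w*}(S)^+-A_{w*}(S)^+=A_{w*}(S)$, so $A_{usc}(S)^+\subseteq A_{w*}(S)$ already forces $C_{sc}=(A_{usc}(S)^+-A_{usc}(S)^+)^+\subseteq A_{w*}(S)^+=C$; conversely $C=C_{sc}$ gives $A_{usc}(S)^+\subseteq C_{sc}=C=A_{w*}(S)^+$. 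Hence the theorem is equivalent to the statement that \emph{$V$ is monotone complete with all states normal if and only if every nonnegative bounded affine upper semi-continuous function on $S$ is $w^*$-continuous}, and this is the form in which I would use the hypotheses on either side.

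For the ``only if'' direction, I would take a nonnegative bounded affine upper semi-continuous function $h$ on $S$, apply Corollary~\ref{C:f lsc is upper lim net} to the lower semi-continuous affine function $-h$, and negate, producing a \emph{decreasing} net of $w^*$-continuous affine functions on $V^*$ whose restrictions to $S$ decrease pointwise to $h$. Each such restriction equals $\hat x_\alpha$ for some $x_\alpha\in V$ --- a $w^*$-continuous affine function on $V^*$ is a $w^*$-continuous linear functional plus a constant, the constant being absorbed into $u$ --- and since $\hat x_\alpha>h\ge 0$ the net $(x_\alpha)_\alpha$ lies in $C$ and is decreasing. Monotone completeness yields $x:=\inf_\alpha x_\alpha\in C$; for any state $\phi$, normality gives $\phi(x_\alpha)\to\phi(x)$, while also $\phi(x_\alpha)=\hat x_\alpha(\phi)\to h(\phi)$, so $\hat x=h$ on $S$ and therefore $h\in A_{w*}(S)^+$, as wanted.

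For the ``if'' direction, assume $C=C_{sc}$ and let $(x_\alpha)_\alpha$ be a decreasing net in $V$ bounded below; after a translation I may assume it lies in $C$ with lower bound $0$. I would set $f:=\inf_\alpha\hat x_\alpha$ pointwise on $S$: this exists since the net is bounded below, is affine because it is the pointwise limit of the affine functions $\hat x_\alpha$, is bounded, and is upper semi-continuous as a pointwise infimum of $w^*$-continuous functions. Thus $f\in A_{usc}(S)^+\subseteq C_{sc}=C$, so $f=\hat x$ for some $x\in C$, and since the Kadison map is an order isomorphism one checks directly that $x=\inf_\alpha x_\alpha$ in $V$; this gives monotone completeness. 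Applying the same construction to a decreasing net with infimum $x$, translated to $x=0$, gives $f=\hat 0=0$, i.e.\ $\inf_\alpha\phi(x_\alpha)=0$ for \emph{every} state $\phi$; since $(\phi(x_\alpha))_\alpha$ is decreasing this means $\phi(x_\alpha)\to\phi(x)$, so every state is normal.

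I expect no real conceptual obstacle; the two steps needing care are the identification of the approximating functions furnished by Corollary~\ref{C:f lsc is upper lim net} with elements $\hat x_\alpha$ of $A_{w*}(S)^+$, and the observation that a pointwise-convergent (here, decreasing) net of affine functions has an affine limit, which is precisely what keeps $f$ inside $A_{usc}(S)$ rather than merely among the concave upper semi-continuous functions. Everything else is the routine dictionary between the order of $V$ and the pointwise order on $A_{w*}(S)$ provided by Theorem~\ref{T:rep of o.u.s.}.
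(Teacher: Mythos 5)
Your proposal is correct and follows essentially the same route as the paper: both directions rest on the Kadison identification of $V$ with $A_{w^*}(S)$, on \Cref{C:f lsc is upper lim net} to approximate an upper semi-continuous affine function by a decreasing net $(\hat x_\alpha)_\alpha$ with $x_\alpha\in C^\circ$, and on matching the pointwise infimum on $S$ with the order infimum in $V$ via monotone completeness and normality. The only additions are your explicit preliminary reduction to the statement about $A_{usc}(S)^+$ and the translation arguments, which are harmless elaborations of what the paper leaves implicit.
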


\begin{proof}
    We start by identifying $V$ with $A_{w^*}(S)$ where $S$ is the state space of $V$ under the isometric order isomorphism $x \mapsto \hat{x}$, where $\hat{x}(\phi) := \phi(x)$. Suppose that $C = C_{sc}$. Let $(x_\alpha)_\alpha$ be a nonnegative decreasing net in $V$. Then $f(\phi) := \inf_\alpha \hat{x}_\alpha(\phi)$ defines an upper semi-continuous affine function on $S$ and by assumption $f \in C$. Furthermore, if $y \in V$ is such that $y \le x_\alpha$ for all $\alpha$, then $y \le f$, so $f$ is the infimum of $(x_\alpha)_\alpha$. This shows that $V$ is monotone complete. Also note that for any state $\phi$ we have $\phi(x_\alpha) \to \phi(f)$, so $\phi$ is normal.

    Conversely, suppose $V$ is monotone complete and that every state is normal. Let $f$ be a nonnegative upper semi-continuous affine function on $S$. Then there exists a decreasing net of $w$*-continuous affine functions $(\hat{x}_\alpha)_\alpha$ such that $f(\phi) = \inf_\alpha\hat{x}_\alpha(\phi)$ for all states $\phi$ by \Cref{C:f lsc is upper lim net}. Since $V$ is monotone complete, this net has an infimum $x_0$ in $V$, and since all states are assumed to be normal, $x_0$ and $f$ are equal as they are the same pointwise limit. Hence $C = C_{sc}$.
\end{proof}

\begin{remark}
    For a JBW-algebra $M$ the unit ball is $\sigma$-weakly compact by \cite[Corollary~2.56]{AS2}, so the cone $M_+$ equals $(M_+)_{sc}$ if and only if all states are normal, which is equivalent to the unit ball being weakly compact. By Kakutani's theorem \cite[Theorem~V.4.2]{Con} this is equivalent to $M$ being reflexive. Since the reflexive JBW-algebras are precisely the JH-algebras by \Cref{T:char JH-algebras as reflexive JB}, a JBW-algebra $M$ is a JH-algebra if and only if $M_+ = (M_+)_{sc}$.
\end{remark}

Below, we give two examples of non-reflexive order unit spaces where both equality and strict inclusion occur between $C$ and $C_{sc}$. In particular, the fact that $C = C_{sc}$ does not imply that $V$ is reflexive.

\begin{example}\label{E:spin C=C_sc}
    Let $X$ be a Banach space and equip $V := \R \oplus X$ with the cone generated by $B_X$, the unit ball of $X$, at level $1$, i.e., $C := \{\lambda(1, x) \colon \lambda \geq 0, \ x \in B_X\}$. We choose $(1,0)\in V$ as the order unit. Then $V^* = \R \oplus X^*$, $S = \{(1,x^*) \colon x^* \in B_{X^*}\}$, and $V^{**} = \R \oplus X^{**}$. Let $v^{**} := \lambda(1,x^{**}) \in V^{**}$ and denote the corresponding element of $A_b(S)$ from \Cref{T:rep bidual o.u.s.} by $\hat{v}^{**}$. Let $\phi = (1,x^*) \in S$. Then $\hat{v}^{**}(\phi) = \lambda + x^{**}(x^*)$, so when determining whether $\hat{v}^{**}$ is upper semi-continuous, we may assume that $\lambda = 0$ and only consider the upper semi-continuity of $x^{**}$ viewed as a function on $(B_{X^*}, w^*)$.

    Suppose $x^{**}$ is upper semi-continuous. Let $(x^*_\alpha)_\alpha$ be a net in $B_{X^*}$ converging to $x^* \in B_{X^*}$. Then $-x^*_\alpha \to -x^*$ and so $\limsup_\alpha x^{**}(-x^*_\alpha) \leq x^{**}(-x^*)$, hence $- \limsup_\alpha x^{**}(-x^*_\alpha) \geq - x^{**}(-x^*) = x^{**}(x^*)$. It follows that
    \[
    \liminf_\alpha x^{**}(x^*_\alpha) = \liminf_\alpha -x^{**}(-x^*_\alpha) = - \limsup_\alpha x^{**}(-x^*_\alpha) \geq x^{**}(x^*),
    \]
    showing that $x^{**}$ is also lower semi-continuous, thus $x^{**}$ is continuous, so $x^{**} \in X$ and therefore $v^{**} \in V$. Hence $A_{usc}(S) = A_{w^*}(S)$ and we conclude that $C_{sc} = C$.
\end{example}
In the next class of examples $C_{sc}$ is almost always different from $C$.
\begin{example}
    Let $X$ be a compact Hausdorff space and let $V := C(X)$ with the pointwise ordering. We denote by $C_{usc}(X)$ the bounded upper semi-continuous functions on $X$. The pure states of $V$ equipped with the $w$*-topology are homeomorphic to $X$, and under this identification, restricting $f \in A_{usc}(S)$ to $X$ yields a map from $A_{usc}(S)$ to $C_{usc}(X)$. Conversely, if $f \in C_{usc}(X)$, \cite[Theorems~II.7.2~and~II.7.6]{Alfsen} shows that there exists a unique $g \in A_{usc}(S)$ such that $g|_X = f$. Hence $A_{usc}(S)$ can be identified with $C_{usc}(X)$.

    If $X$ is finite, then $C(X) = C_{usc}(X)$, and so $C = C_{sc}$. If, however,  $X$ is not finite, then there exists an $x \in X$ such that $\{x\}$ is closed but not open. So, $\mathbf{1}_{\{x\}} \in C_{usc}(X) \setminus C(X)$ and $C \not= C_{sc}$ in that case.
\end{example}

\paragraph{Reverse-Funk Busemann point}

First, observe that for any $z\in C^\circ$ we have 
\[
i_{RF}(z)(x) = \log M(z/x) - \log M(z/u)\qquad(x\in C^\circ),
\]
from which it follows that we may chose $z$ such that $\|z\|_u = 1$. By \Cref{T:rep of o.u.s.}, we can view $z$ as a strictly positive $g \in A_{w*}(S)^+$ with supremum $1$, so that  

\[
i_{RF}(z)(x) = \log M(z/x) = \log \sup_{\varphi \in S}\frac{\varphi(z)}{\varphi(x)} = \log \sup_{\varphi \in S}\frac{g(\varphi)}{\varphi(x)}\qquad(x\in C^\circ).
\]

We recall the description of the reverse-Funk Busemann points (see \cite[Theorem 6.3]{Wa2}). 
\begin{theorem}\label{T:reverse-Funk Busemann}
Let $(V,C,u)$ be a complete order unit space with state space $S$. The Busemann points of the reverse-Funk metric on $C^\circ$ are  precisely the functions of the form 
\begin{equation}\label{RFhoro}
h(x) :=\log \sup_{\phi\in S}\frac{g(\varphi)}{\varphi(x)}\qquad(x\in C^\circ),
\end{equation}
where $g$ is a $w^*$-upper semi-continuous nonnegative affine function on $S$ with supremum 1, that does not correspond to an element of $C^\circ$.
\end{theorem}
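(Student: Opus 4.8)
The statement is \cite[Theorem~6.3]{Wa2}, and the plan is to carry Walsh's argument over to the order-unit-space setting. Throughout I would identify $V$ with $A_{w^*}(S)$ and $V^{**}$ with $A_b(S)$ via \Cref{T:rep of o.u.s.,T:rep bidual o.u.s.}, write $M(f/x):=\sup_{\varphi\in S}f(\varphi)/\varphi(x)$ for $f\in A_b(S)^+$ and $x\in C^\circ$ (so that \eqref{RFhoro} reads $h(x)=\log M(g/x)$ and $i_{RF}(z)(x)=\log M(z/x)-\log M(z/u)$ for $z\in C^\circ$), and use that $z\in C^\circ$ exactly when $\widehat z$ is bounded away from $0$ on the compact set $S$. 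The proof splits into the two inclusions.

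For "$\supseteq$", let $g$ be $w^*$-upper semi-continuous, affine, nonnegative, with $\sup g=1$, not corresponding to an element of $C^\circ$. Applying the upper semi-continuous form of \Cref{C:f lsc is upper lim net} (i.e.\ \Cref{P:lsc ptw lower limit} to $-g$) I would choose a decreasing net $(\widehat{z_\alpha})_\alpha$ of $w^*$-continuous affine functions with $\widehat{z_\alpha}>g$ and $\widehat{z_\alpha}\to g$ pointwise on $S$; since $\widehat{z_\alpha}>0$ on $S$, each $z_\alpha\in C^\circ$. A Dini-type argument on the compact space $S$, together with $\inf_S\varphi(x)>0$ for $x\in C^\circ$, gives $M(z_\alpha/x)\to M(g/x)$ and $M(z_\alpha/u)\to\sup g=1$, so $i_{RF}(z_\alpha)\to h$ pointwise and $h\in\overline{i_{RF}(C^\circ)}$. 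That $h\notin i_{RF}(C^\circ)$ follows because $h$ determines $g$: from $h$ one recovers $\{x\in C^\circ:g\le\widehat x\}=\{x:M(g/x)\le 1\}$, and $g=\inf\{\widehat x:x\in C^\circ,\ g\le\widehat x\}$ by upper semi-continuity, so $h=i_{RF}(w)$ with $w\in C^\circ$ would force $g$ to correspond to a rescaling of $\widehat w$, a contradiction. The delicate remaining step is to show that a suitable cofinal refinement of $(z_\alpha)$ is an \emph{almost geodesic} for $RF$: a merely decreasing net yields only the triangle inequality $\log M(z_{\alpha'}/s)\le\log M(z_\alpha/s)+\log M(z_{\alpha'}/z_\alpha)$, whereas the almost-geodesic property needs the reverse up to $\varepsilon$, and to obtain this one must choose the approximating functions together with the reference point $s$ so that the state (nearly) maximising $\widehat{z_{\alpha'}}/\widehat{z_\alpha}$ asymptotically also (nearly) maximises $\widehat{z_\alpha}/\widehat s$; I would follow Walsh's construction here. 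Given the almost geodesic, $h$ is a Busemann point by definition.

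For "$\subseteq$", let $h$ be a reverse-Funk Busemann point realised by an almost geodesic $(z_\alpha)$ in $C^\circ$ with $i_{RF}(z_\alpha)\to h$, and normalise so $M(z_\alpha/u)=1$. By $w^*$-compactness of $\{f\in A_b(S)^+:\|f\|\le 1\}$ (\Cref{T:rep bidual o.u.s.}) I would pass to a subnet with $\widehat{z_\alpha}\to g$ pointwise on $S$, $g\in A_b(S)^+$ affine. By \Cref{L:almost geod. <-> almost non-dec.} the net $x\mapsto\log M(z_\alpha/x)-\log M(z_\alpha/s)$ is almost non-increasing, and since $\log M(z_\alpha/s)=i_{RF}(z_\alpha)(s)\to h(s)$ this forces $(\widehat{z_\alpha})$ to be \emph{additively almost non-increasing} (for each $\varepsilon>0$ eventually $\widehat{z_{\alpha'}}\le\widehat{z_\alpha}+\varepsilon$ when $\alpha\le\alpha'$); letting $\alpha'\to\infty$ gives $g\le\widehat{z_\alpha}+\varepsilon$ eventually, and taking $\limsup_{\psi\to\varphi}$ (continuity of $\widehat{z_\alpha}$) then $\varepsilon\to 0$ shows $g$ is upper semi-continuous. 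Finally I would verify $h(x)=\log M(g/x)$ for $x\in C^\circ$: "$\ge$" is immediate from $M(z_\alpha/x)\ge\widehat{z_\alpha}(\psi)/\psi(x)$ for each $\psi$; for "$\le$" write $M(z_\alpha/x)=\widehat{z_\alpha}(\varphi_\alpha)/\varphi_\alpha(x)$ at a maximising state $\varphi_\alpha$, pass to a subnet with $\varphi_\alpha\to\varphi_*$, and use the additive almost-monotonicity to obtain $\limsup_\alpha\widehat{z_\alpha}(\varphi_\alpha)\le g(\varphi_*)$, so $e^{h(x)}=\lim_\alpha\widehat{z_\alpha}(\varphi_\alpha)/\varphi_\alpha(x)\le g(\varphi_*)/\varphi_*(x)\le M(g/x)$. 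Then $\sup g=M(g/u)=e^{h(u)}=1$, and $g$ does not correspond to an element of $C^\circ$ since $h$ is a genuine horofunction; thus $h$ has the required form. The main obstacle is the almost-geodesic construction in the "$\supseteq$" direction.
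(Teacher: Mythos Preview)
Your overall structure matches the paper's, but you have misidentified the difficulty in the ``$\supseteq$'' direction. You call the almost-geodesic verification for the decreasing approximants $(z_\alpha)$ ``the delicate remaining step'' and ``the main obstacle'', and you defer it to Walsh's construction. In fact it is immediate from \Cref{L:almost geod. <-> almost non-dec.}: a net $(z_\alpha)$ is an almost geodesic precisely when $(i_{RF}(z_\alpha))$ is almost non-increasing. With $\widehat{z_\alpha}$ decreasing, the map $x\mapsto\log M(z_\alpha/x)=\log\sup_S\widehat{z_\alpha}/\varphi(x)$ is decreasing in $\alpha$ for every $x$, while $\log M(z_\alpha/u)=\log\sup_S\widehat{z_\alpha}$ is decreasing, nonnegative, and converges to $\log\sup_S g=0$. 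Hence $i_{RF}(z_\alpha)(x)$ is a decreasing term minus a nonnegative term tending to $0$, which is almost non-increasing by inspection. No refinement of the approximants and no special reference point $s$ is required; your worry about reversing the triangle inequality evaporates once you pass to the equivalent characterisation.

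In the ``$\subseteq$'' direction your route is close to the paper's but less direct. You pass to a $w^*$-convergent subnet and assert an \emph{additive} almost-monotonicity $\widehat{z_{\alpha'}}\le\widehat{z_\alpha}+\varepsilon$, with the justification ``since $\log M(z_\alpha/s)\to h(s)$ this forces\ldots''. That is not a derivation: the functional inequality $e^\varepsilon M(z_\alpha/x)\ge M(z_\beta/x)$ only yields pointwise control of $\widehat{z_\alpha}$ after you evaluate at $x=z_\alpha$, which gives the \emph{multiplicative} bound $\widehat{z_\beta}\le e^\varepsilon\widehat{z_\alpha}$ (your additive form then follows from $\widehat{z_\alpha}\le 1$). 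The paper does exactly this, and then uses a Dini-type lemma (\Cref{L:Dini u.s.c.}) to conclude simultaneously that the \emph{full} net $(\widehat{z_\alpha})$ converges pointwise to an upper semi-continuous $g$ and that $\sup_S\widehat{z_\alpha}/\varphi(x)\to\sup_S g/\varphi(x)$, avoiding your subnet and maximising-state manoeuvres altogether.
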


The proof of this result relies on the following facts together with \Cref{P:lsc ptw lower limit} and \Cref{C:f lsc is upper lim net}.

\begin{lemma}\label{L: sup less sup g implies f less g}
Let $(V,C,u)$ be a complete  order unit space and let $g$ and $g'$ be $w^*$-upper semi-continuous nonnegative affine functions on $S$ taking values in $[0,\infty)$. If 
\[
\sup_{\phi\in S} \frac{g(\phi)}{\phi(x)}\le \sup_{\phi\in S} \frac{g'(\phi)}{\phi(x)}\mbox{\qquad for all $x\in C^\circ$,}
\]
 then $g\le g'$. 
\end{lemma}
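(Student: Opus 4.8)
The plan is to recover $g'$ pointwise on $S$ as an infimum of $w^*$-continuous affine functions that actually lie in $C^\circ$, and then to feed the hypothesis into the gauge. First I would rephrase the assumption: for any bounded nonnegative affine function $h$ on $S$ and any $x\in C^\circ$ one has $M(h/x)=\inf\{\mu\colon h\le\mu\hat{x}\}=\sup_{\phi\in S}h(\phi)/\phi(x)$ (finite, since $\hat{x}$ is bounded below by a positive constant on the $w^*$-compact set $S$), so the hypothesis says precisely that $M(g/x)\le M(g'/x)$ for every $x\in C^\circ$. Note that $g$ and $g'$, being $w^*$-upper semi-continuous and nonnegative on the compact set $S$, are automatically bounded.

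For the main step, observe that $-g'$ is a bounded $w^*$-lower semi-continuous affine function, so by Proposition~\ref{P:lsc ptw lower limit} — together with Theorem~\ref{T:rep of o.u.s.}, which identifies the $w^*$-continuous affine functions on $S$ with the functions $\hat{y}$ for $y\in V$ — we get
\[
g'(\phi)=\inf\{\hat{y}(\phi)\colon y\in V,\ \hat{y}|_S>g'\}\qquad\text{for all }\phi\in S.
\]
Fix any $y\in V$ with $\hat{y}|_S>g'$. Since $g'\ge 0$, the function $\hat{y}$ is strictly positive on the $w^*$-compact set $S$ and hence attains a strictly positive minimum; as the states determine the order, this gives $y\ge\varepsilon u$ for some $\varepsilon>0$, so $y\in C^\circ$. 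Applying the hypothesis with $x=y$ and using $g'\le\hat{y}$ on $S$, we obtain
\[
M(g/y)\le M(g'/y)\le M(\hat{y}/y)=1.
\]
The set $\{\mu\colon g\le\mu\hat{y}\}$ is upward closed (as $\hat{y}\ge 0$) and closed under limits (pointwise limits preserve $\le$), so it equals $[M(g/y),\infty)$; therefore $M(g/y)\le 1$ forces $g\le\hat{y}$ on $S$. Taking the infimum over all admissible $y$ and using the displayed identity yields $g(\phi)\le g'(\phi)$ for all $\phi\in S$, i.e.\ $g\le g'$.

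The only delicate points are the two appeals to semi-continuity and compactness: that $g'$ is the pointwise infimum of the $w^*$-continuous affine functions strictly above it (Proposition~\ref{P:lsc ptw lower limit}), and that each such dominating function, being $w^*$-continuous and strictly positive on the compact state space, represents an element of the \emph{interior} $C^\circ$ — which is exactly what makes it admissible in the hypothesis. The remaining manipulations with $M$ are routine.
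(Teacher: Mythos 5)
Your proof is correct and follows essentially the same route as the paper: both approximate $g'$ from above by strictly positive $w^*$-continuous affine functions (hence elements of $C^\circ$ via Theorem~\ref{T:rep of o.u.s.}), plug these into the hypothesis to get $g\le\hat{y}$ for each dominating $y$, and then pass to the infimum/limit using Proposition~\ref{P:lsc ptw lower limit} (the paper phrases this via the directed net of Corollary~\ref{C:f lsc is upper lim net}, which is the same device). The verification that each dominating function lands in $C^\circ$, via compactness of $S$ and the fact that states determine the order, matches the paper's use of $g'_\alpha>g'\ge 0$.
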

\begin{proof}
We know from Lemma \ref{L:lsc attains min} that $g'$ attains its maximum on $S$. From Corollary \ref{C:f lsc is upper lim net} we know there exists a decreasing net $(g'_\alpha)_\alpha$ in $A_{w*}(S)$  with $g'_\alpha>g'\geq 0$ for all $\alpha$ and $(g'_\alpha)_\alpha$ converges pointwise to $g'$.  By Theorem  \ref{T:rep of o.u.s.} we know that $g'_\alpha$ corresponds to an $x'_\alpha \in C^\circ$.  So for each $\alpha$ and each $\psi\in S$ we have that 
\[
 \frac{g(\psi)}{g'_\alpha(\psi)} = \frac{g(\psi)}{\psi(x'_\alpha)}\le \sup_{\phi\in S} \frac{g(\phi)}{\phi(x'_\alpha)} \le \sup_{\phi\in S} \frac{g'(\phi)}{\phi(x'_\alpha)}= \sup_{\phi\in S} \frac{g'(\phi)}{g'_\alpha(\phi)}\leq 1,
\]
hence $g(\psi)\leq g'_\alpha(\psi)$ for all $\psi$ and all $\alpha$. Thus, $g(\psi)\leq \lim_\alpha g'_\alpha(\psi) = g'(\psi)$ for all $\psi\in S$.
\end{proof}

The proof of Theorem \ref{T:reverse-Funk Busemann} also uses some facts about upper semi-continuous functions, which are collected in the Appendix.

\begin{proof}[Proof of Theorem \ref{T:reverse-Funk Busemann}]
Suppose that $(y_\alpha)_\alpha$ is an almost geodesic in $C^\circ$ for the reverse-Funk geometry such that $(i_{RF}(y_\alpha))_\alpha$ converges to a Busemann point $h$.  We may scale each $y_\alpha$ such that $M(y_\alpha/u)=1$, as $i_{RF}(\lambda x)=i_{RF}(x)$ for all $\lambda>0$.  Now let $g_\alpha := J(y_\alpha)|_S$ for all $\alpha$, where $J(y_\alpha) \in V^{**}$ is given by $\phi\in V^*\mapsto \phi(y_\alpha)$. Recall that by Lemma~\ref{L:almost geod. <-> almost non-dec.} $(i_{RF}(y_\alpha))_\alpha$ is an almost non-increasing net of functions. 

It follows that for any $\varepsilon>0$ there is an index $\alpha_0$ such that 
\[
e^{\varepsilon}\sup_{\phi\in S}\frac{g_\alpha(\varphi)}{\varphi(x)}\ge\sup_{\phi\in S}\frac{g_\beta(\varphi)}{\varphi(x)}\qquad(\mbox{for all }x\in C^\circ)
\]
for all  $\alpha_0\le \alpha\le \beta$. By evaluating in $y_\alpha$, it follows that $e^\varepsilon g_\alpha(\varphi)\ge g_\beta(\varphi)$ for all $\varphi\in S$ for all  $\alpha_0\le \alpha\le \beta$ and we conclude that $(\log g_\alpha|_S)_\alpha$ is an almost non-increasing net. By Lemma~\ref{L:Dini u.s.c.}(i) the net $(\log g_\alpha)_\alpha$ converges on $S$ to a pointwise limit, and therefore $(g_\alpha)_\alpha$ also converges pointwise on $S$ to a $w^*$-upper semi-continuous limit, say $g$, by Lemma~\ref{L:Dini u.s.c.}(ii). 

Note that  $g$ is affine and nonnegative on $S$, since $g_\alpha$ is positive and affine for all $\alpha$. Using that $(\log g_\alpha|_S)_\alpha$ is almost non-increasing again, for $\varepsilon>0$ there is an index $\alpha'$ such that $\log g_\beta(\varphi)\le\log g_\alpha(\varphi)+\varepsilon$ for all $\alpha'\le \alpha\le \beta$ and all $\varphi\in S$. Note that as $M(y_\alpha/u) =1$, there exists a $\phi_\alpha\in S$ such that $g_\alpha(\phi_\alpha)= \phi_\alpha(y_\alpha) =1$. It follows that $0\le \log g_\alpha(\varphi_\beta)+\varepsilon$ for all $\alpha'\le \alpha\le \beta$. By possibly passing to a convergent subnet, we may assume that $(\varphi_\beta)_\beta$ converges $w^*$ to a $\psi\in S$, hence $0\le \log g_\alpha(\psi)+\varepsilon$ for all $\alpha'\leq \alpha$. Taking the limit for $\alpha$ yields $0\le \log g(\psi)+\varepsilon\le\sup_{\phi\in S}\log g(\phi)+\varepsilon$. As $\varepsilon>0$ was arbitrary, we conclude that $\sup_{\phi\in S} g\ge 1$. But $g$ is the pointwise limit of functions with supremum 1 on $S$, so $\sup_{\phi\in S} g(\phi)=1$.

By Lemma~\ref{L:Dini u.s.c.} we find that $\sup_{\phi\in S}g_\alpha(\varphi)/\varphi(x)$ converges to $\sup_{\phi\in S}g(\varphi)/\varphi(x)$, which means that $i_{RF}(y_\alpha)(x) = RF(x,y_\alpha)-RF(u,y_\alpha)$ converges to $\log\sup_{\phi\in S} g(\varphi)/\varphi(x)$. Thus, 
\[
h(x)=\log\sup_{\phi\in S}\frac{g(\varphi)}{\varphi(x)}\qquad(x\in C^\circ).
\]
As $h$ is a Busemann point, it must be a horofunction, hence $g$ does not correspond to a point in $C^\circ$.

To complete the proof we need to show that each function of the form (\ref{RFhoro}) can be realised as a reverse-Funk Busemann point. For $g$ as in (\ref{RFhoro}) we know by Corollary~\ref{C:f lsc is upper lim net} that there is a decreasing net $(g_\alpha)_\alpha$ of affine $w^*$-continuous functions $g_\alpha\colon V^*\to\mathbb{R}$, with $g<g_\alpha|_S$, such that $(g_\alpha|_S)_\alpha$ converges pointwise to $g$. By Theorem \ref{T:rep of o.u.s.} we know that $g_\alpha|_S=J(y_\alpha)|_S$ for some  $y_\alpha\in C^\circ$ for each $\alpha$, as $g_\alpha(\phi)>g(\phi)\geq 0$ for all $\phi\in S$. 

Let $x\in C^\circ$. Note that the functions $G_\alpha(\varphi) := g_\alpha(\varphi)/\varphi(x)$ are $w^*$-continuous, decreasing in $\alpha$, and converge pointwise to the function $G(\varphi) := g(\varphi)/\varphi(x)$ on $S$. By Lemma~\ref{L:Dini u.s.c.} we have that $\sup_{\phi\in S} G_\alpha(\varphi)$ converges to $\sup_{\phi\in S} G(\varphi)$. Hence $\log \sup_{\phi\in S} G_\alpha(\varphi_\alpha)$ converges to $h(x)$. 
Moreover,  $\log\sup_{\phi\in S}g_\alpha(\varphi)$ converges to $\log\sup_{\phi\in S} g(\varphi)=0$, as $g$ has supremum 1 on $S$. 

We find that 
\[
\log M(y_\alpha/x)-\log M(y_\alpha/u) = \log\sup_{\phi\in S}\frac{\varphi(y_\alpha)}{\varphi(x)}-\log\sup_{\phi\in S}\varphi(y_\alpha) =
\log \sup_{\phi\in S}\frac{g_\alpha(\varphi)}{\varphi(x)}-\log\sup_{\phi\in S} g_\alpha(\varphi),
\]
hence $i_{RF}(y_\alpha)$ converges pointwise to $h$. As $(\log \sup_{\phi\in S} G_\alpha)_\alpha$ is decreasing and $(\log\sup_{\phi\in S}g_\alpha(\varphi))_\alpha$ converges to 0, it follows that $(i_{RF}(y_\alpha))_\alpha$ is an almost non-increasing net of functions, so by Lemma~\ref{L:almost geod. <-> almost non-dec.} the net $(y_\alpha)_\alpha$ is an almost geodesic. 
This implies that $h$ is a Busemann point provided that $h \notin i_{RF}(C^\circ)$. To that end, suppose $h = i_{RF}(z)$ for some $z \in C^\circ$. Then 
\[
 \log\sup_{\phi\in S}\frac{g'(\varphi)}{\varphi(x)} = h(x) =\log\sup_{\phi\in S}\frac{g(\varphi)}{\varphi(x)} \qquad(x\in C^\circ)
\]
for some $g' \in A_{w^*}(S)^+$. Applying \Cref{L: sup less sup g implies f less g} twice now yields $g' = g$, which contradicts the fact that $g$ does not correspond to an element of $C^\circ$. 
\end{proof}

Combining Theorem \ref{T:reverse-Funk Busemann} with Remark \ref{remequiv} gives the following corollary, cf.\,\cite[Corollary 6.5]{Wa2}. 
\begin{corollary}\label{rfparts}
 If $h$ and $h'$ are reverse-Funk Busemann points with $w^*$-upper semi-continuous nonnegative affine functions $g$ and $g'$, respectively, then $h$ and $h'$ are in the same part if and only if there exists $\lambda>0$ such that $g'/\lambda\leq g\leq \lambda g'$.
\end{corollary}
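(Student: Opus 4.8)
The plan is to read off both conditions directly from the explicit formula for reverse-Funk Busemann points in Theorem~\ref{T:reverse-Funk Busemann} together with the description of parts in Remark~\ref{remequiv}. Writing $h$ and $h'$ for the Busemann points associated with $g$ and $g'$, we have
\[
h(x)=\log\sup_{\phi\in S}\frac{g(\phi)}{\phi(x)},\qquad h'(x)=\log\sup_{\phi\in S}\frac{g'(\phi)}{\phi(x)}\qquad(x\in C^\circ),
\]
and by Remark~\ref{remequiv} the points $h$ and $h'$ lie in the same part precisely when $\sup_{x\in C^\circ}|h(x)-h'(x)|<\infty$. So the task reduces to showing that this boundedness is equivalent to the existence of $\lambda>0$ with $g'/\lambda\le g\le\lambda g'$.

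For the easy implication, suppose $g'/\lambda\le g\le\lambda g'$ for some $\lambda>0$. From $g(\phi)\le\lambda g'(\phi)$ for all $\phi\in S$, dividing by $\phi(x)>0$ and taking the supremum over $\phi\in S$ gives $\sup_\phi g(\phi)/\phi(x)\le\lambda\sup_\phi g'(\phi)/\phi(x)$, so after taking logarithms $h(x)-h'(x)\le\log\lambda$ for every $x\in C^\circ$. The inequality $g'\le\lambda g$ gives $h'(x)-h(x)\le\log\lambda$ in the same way, whence $|h(x)-h'(x)|\le\log\lambda$ for all $x$ and $h,h'$ are in the same part.

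For the converse, assume $h$ and $h'$ are in the same part and set $\lambda:=\exp\bigl(\sup_{x\in C^\circ}|h(x)-h'(x)|\bigr)$, which is finite (and at least $1$). From $h(x)\le h'(x)+\log\lambda$ for all $x\in C^\circ$ we obtain
\[
\sup_{\phi\in S}\frac{g(\phi)}{\phi(x)}\le\sup_{\phi\in S}\frac{(\lambda g')(\phi)}{\phi(x)}\qquad(x\in C^\circ).
\]
Since $g$ and $\lambda g'$ are $w^*$-upper semi-continuous nonnegative affine functions on $S$ taking values in $[0,\infty)$ (both are bounded, having finite supremum), Lemma~\ref{L: sup less sup g implies f less g} applies and yields $g\le\lambda g'$. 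Exchanging the roles of $g$ and $g'$ gives $g'\le\lambda g$, i.e. $g'/\lambda\le g$, so $g'/\lambda\le g\le\lambda g'$.

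The whole argument is essentially bookkeeping, the one substantial ingredient being Lemma~\ref{L: sup less sup g implies f less g}, which upgrades the inequality of suprema valid for all $x\in C^\circ$ to a pointwise inequality between the affine functions. I do not expect a genuine obstacle; the only point needing a moment's care is that the scaled function $\lambda g'$ still satisfies the hypotheses of Lemma~\ref{L: sup less sup g implies f less g}, which is clear since $\lambda>0$ and $g'$ is bounded.
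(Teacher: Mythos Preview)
Your proof is correct and follows exactly the approach the paper indicates: the paper simply says the corollary follows by combining Theorem~\ref{T:reverse-Funk Busemann} with Remark~\ref{remequiv}, and you have spelled out precisely that combination, with Lemma~\ref{L: sup less sup g implies f less g} supplying the one nontrivial step in the converse direction.
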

\paragraph{Reverse-Funk singletons}  

The following proposition gives a sufficient condition for a reverse-Funk metric Busemann point to be a singleton. 
\begin{proposition}\label{P:extreme ray implies singleton}
Let $(V,C,u)$ be a complete order unit space. If $h$ in (\ref{RFhoro}) is a Busemann point of the reverse-Funk metric on $C^\circ$ and $g$ is on an extreme ray of $C_{sc}$, then $h$ is a singleton Busemann point.
\end{proposition}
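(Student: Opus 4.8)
The plan is to show that the part of $\mathcal{B}_{RF}(C^\circ)$ containing $h$ reduces to $\{h\}$ by combining the description of parts in Corollary~\ref{rfparts} with the extremality hypothesis on $g$. Concretely, let $h'$ be an arbitrary reverse-Funk Busemann point lying in the same part as $h$. By Theorem~\ref{T:reverse-Funk Busemann}, $h'$ has the form $h'(x)=\log\sup_{\phi\in S} g'(\phi)/\phi(x)$ for some $w^*$-upper semi-continuous nonnegative affine function $g'$ on $S$ with $\sup_S g'=1$, and by Corollary~\ref{rfparts} there is a $\lambda>0$ with $g'/\lambda\le g\le\lambda g'$. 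It then suffices to prove $g'=g$, since by the formula (\ref{RFhoro}) this forces $h'=h$, and hence the part of $h$ is the singleton $\{h\}$.

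To obtain $g'=g$, I would first upgrade the pointwise inequality $g'/\lambda\le g$ to an inequality inside the cone $C_{sc}$: both $g$ and $g'/\lambda$ are nonnegative affine $w^*$-upper semi-continuous, hence lie in $A_{usc}(S)^+\subseteq C_{sc}$; and $g-g'/\lambda$ is a nonnegative bounded affine function which is the difference $g-g'/\lambda$ of the two nonnegative affine $w^*$-upper semi-continuous functions $g$ and $g'/\lambda$, so it belongs to $C_{sc}=(A_{usc}(S)^+-A_{usc}(S)^+)^+$. Thus $0\le g'/\lambda\le g$ in the order of $C_{sc}$. Since $g$ lies on an extreme ray of $C_{sc}$ and $g\ne 0$ (because $\sup_S g=1$), it follows that $g'/\lambda=tg$ for some $t\ge 0$, i.e.\ $g'=sg$ with $s=\lambda t\ge 0$. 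Comparing suprema over $S$ yields $1=\sup_S g'=s\sup_S g=s$, so $g'=g$ and therefore $h'=h$, proving that $h$ is a singleton Busemann point.

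The only step requiring genuine care is the bookkeeping in the second paragraph: one must ensure that the comparison used is a comparison in $C_{sc}$ (and not merely a pointwise one), so that the definition of an extreme vector of $C_{sc}$ applies. This is exactly the place where one exploits that $\lambda g-g'$, although possibly not itself upper semi-continuous, is by construction the difference of two nonnegative upper semi-continuous affine functions and is nonnegative, hence lies in $C_{sc}$. Everything else — identifying $h'$ with a function $g'$ of supremum $1$ via Theorem~\ref{T:reverse-Funk Busemann}, translating "same part" into the two-sided domination $g'/\lambda\le g\le\lambda g'$ via Corollary~\ref{rfparts}, and the scalar normalisation $s=1$ — is immediate from results already in hand.
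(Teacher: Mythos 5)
Your proposal is correct and follows essentially the same route as the paper's proof: identify $h'$ with a function $g'$ via Theorem~\ref{T:reverse-Funk Busemann}, invoke Corollary~\ref{rfparts} to get $g'/\lambda\le g\le\lambda g'$, apply extremality of $g$ in $C_{sc}$, and normalise the suprema. The only difference is that you spell out why $g-g'/\lambda$ lies in $C_{sc}$ (so that extremality applies in the order induced by $C_{sc}$ rather than merely pointwise), a bookkeeping step the paper leaves implicit; this is a welcome clarification, not a deviation.
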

\begin{proof}
Take a reverse-Funk Busemann point $h'$ in the same part as $h$. Then by \Cref{T:reverse-Funk Busemann} it is induced by some function $g' \in C_{sc}$. By Corollary \ref{rfparts}  there exists some $\lambda > 0$ with $g'/\lambda \leq g \leq \lambda g'$. Since $g$ is on an extreme ray, there is some $\mu \geq 0$ with $g' / \lambda = \mu g$ and so $g' = \lambda \mu g$. Since $g$ and $g'$ both have supremum 1 on $S$, we have that $\lambda \mu = 1$, hence $g'=g$. We conclude that $h'=h$. 
\end{proof}

\begin{proposition}\label{P:singleton Busemann is ext}
Let $(V,C,u)$ be a complete order unit space with $C=C_{sc}$. If  $h$ in (\ref{RFhoro}) is a singleton Busemann point of the reverse-Funk metric on $C^\circ$, then $g$ lies on an extreme ray of $C$. 
\end{proposition}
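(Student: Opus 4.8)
The plan is to argue by contraposition: assuming that $g$ does \emph{not} lie on an extreme ray of $C$, I would exhibit a reverse-Funk Busemann point $h'\neq h$ lying in the same part as $h$, so that the part of $h$ contains more than one point and $h$ is not a singleton Busemann point.

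The first step is to use the hypothesis $C=C_{sc}$ to view $g$ as an element of the cone itself. Identifying $V$ with $A_{w^*}(S)$ and $C$ with $A_{w^*}(S)^+$ via \Cref{T:rep of o.u.s.}, the function $g$ is $w^*$-upper semi-continuous, nonnegative and affine, hence $g\in C_{sc}=C$; since it has supremum $1$ it is nonzero, and since it does not correspond to a point of $C^\circ$ it lies in $\partial C$. If $g$ is not an extreme vector of $C$, there is $v\in C$ with $0\le v\le g$ that is not a nonnegative scalar multiple of $g$; in particular $v\neq 0$ and $v\neq g$. Set $g_1:=\tfrac12(g+v)$ and $g_2:=\tfrac12(g-v)$, so that $g_1,g_2\in C$ and $g_1+g_2=g$. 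The point of averaging with $v$, rather than taking $v$ itself, is that this forces $\tfrac12 g\le g_1\le g$, i.e.\ $g_1$ is comparable to $g$ from both sides, while $g_1$ is still not a scalar multiple of $g$ (otherwise $v=2g_1-g$ would be one).

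Next I would normalise and check that $g_1$ induces a genuine Busemann point. Let $\alpha:=\|g_1\|_u=\sup_S g_1\in[\tfrac12,1]$ and $g_1':=g_1/\alpha$; then $g_1'$ is $w^*$-continuous, hence $w^*$-upper semi-continuous, nonnegative and affine with supremum $1$. Moreover $g_1\notin C^\circ$, since otherwise $g=g_1+g_2$ with $g_2\in C$ would give $g\in C^\circ$, contradicting $g\in\partial C$. Hence $g_1'$ does not correspond to a point of $C^\circ$, and by \Cref{T:reverse-Funk Busemann} the function $h'(x):=\log\sup_{\phi\in S} g_1'(\phi)/\phi(x)$ is a reverse-Funk Busemann point. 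I regard ensuring $g_1\in\partial C$ rather than $g_1\in C^\circ$ as the one genuinely delicate point — this is precisely where $g\in\partial C$, and hence $C=C_{sc}$, is used; without it $h'$ could merely be $i_{RF}$ of an interior point.

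Finally I would show $h'\neq h$ and $h'\sim h$. If $h=h'$, then $\sup_\phi g(\phi)/\phi(x)=\sup_\phi g_1'(\phi)/\phi(x)$ for all $x\in C^\circ$, so applying \Cref{L: sup less sup g implies f less g} in both directions gives $g=g_1'$, i.e.\ $g_1=\alpha g$, contradicting that $g_1$ is not a scalar multiple of $g$. For the parts, the inequalities $\tfrac12 g\le g_1\le g$ together with $\tfrac12\le\alpha\le1$ yield $\tfrac12 g_1'\le g\le 2 g_1'$, so $h$ and $h'$ lie in the same part by \Cref{rfparts}. Thus the part of $h$ is not a singleton, and by contraposition, if $h$ is a singleton Busemann point then $g$ lies on an extreme ray of $C$.
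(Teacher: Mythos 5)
Your proof is correct and follows essentially the same strategy as the paper's: both perturb $g$ inside $C$ (the paper via $g'=\mu^{-1}(g_1+2g_2)$ from an arbitrary decomposition $g=g_1+g_2$, you via the normalised $\tfrac12(g+v)$ with $0\le v\le g$), check the perturbation stays in $\partial C$, invoke \Cref{T:reverse-Funk Busemann} to produce a second Busemann point, place it in the same part via a two-sided bound and \Cref{rfparts}, and separate it from $h$ using \Cref{L: sup less sup g implies f less g}. The only differences are the contrapositive phrasing and minor bookkeeping.
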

\begin{proof}
As $C=C_{sc}$,  $g$ is a $w^*$-continuous function on $S$.  Now suppose that $g=g_1+g_2$ for some $g_1,g_2\in C$. Let $\mu =\sup_{\phi\in S} (g_1+2g_2) >0$ and consider $g'=\mu^{-1}(g_1+2g_2)$. We note that $g'$ does not correspond to an element of $C^\circ$. Indeed, if $g'$ is positive on $S$, then $g$ is also positive on $S$, since $g'\le 2\mu^{-1}g$. This would imply that $g$ corresponds to an element of $C^\circ$, which is not possible, as $h$ is a Busemann point. Thus, by Theorem~\ref{T:reverse-Funk Busemann} the function $g'$ yields a Busemann point $h'$. Since $\frac{\mu}{2}g'\le g\le \mu g'$, it follows that
\[
\frac{\mu}{2}\sup_{\phi\in S}\frac{g'(\varphi)}{\varphi(x)}\le\sup_{\phi\in S}\frac{g(\varphi)}{\varphi(x)}\le \mu \sup_{\phi\in S}\frac{g'(\varphi)}{\varphi(x)}\qquad(\mbox{for all $x\in C^\circ$}),
\]
so $\log(\mu/2)\le h(x)-h'(x)\le \log \mu$ for all $x\in C^\circ$. This implies that $\sup_{C^\circ}|h'(x)-h(x)|<\infty$, hence $h$ and $h'$ must lie in the same part. As $h$ was a singleton, we conclude that  $h=h'$. But now Lemma~\ref{L: sup less sup g implies f less g} and Corollary \ref{rfparts} imply that $g=g'$, hence $g_1$ and $g_2$ are linearly dependent. Thus, $g$ lies on an extreme ray of $C$.
\end{proof}
To summarise we have the following result, which provides a partial answer to \cite[Question 6.6]{Wa2}. 
\begin{corollary}\label{c:reverse funk singleton char}
If $(V,C,u)$ is a complete order unit space with $C=C_{sc}$, then the singleton reverse-Funk Busemann points on $C^\circ$ are precisely the functions of the form 
\[
h(x)=\log \sup_{\phi\in S}\frac{\varphi(p)}{\varphi(x)}=\log M(p/x)\qquad(x\in C^\circ),
\]
where $p$ is a normalised extreme vector of $C=C_{sc}$. 
\end{corollary}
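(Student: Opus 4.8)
The plan is to obtain the corollary simply by combining Propositions~\ref{P:extreme ray implies singleton} and~\ref{P:singleton Busemann is ext} with the description of all reverse-Funk Busemann points in Theorem~\ref{T:reverse-Funk Busemann} and the Kadison representation of Theorem~\ref{T:rep of o.u.s.}. Two standing facts I would use throughout: first, since $C=C_{sc}$, any $w^*$-upper semi-continuous nonnegative affine function on $S$ occurring in (\ref{RFhoro}) is in fact $w^*$-continuous (as in the proof of Proposition~\ref{P:singleton Busemann is ext}), hence by Theorem~\ref{T:rep of o.u.s.} corresponds to a unique $p\in C$, and extreme rays of $C$ correspond to extreme rays of $A_{w*}(S)^+$; second, since the states determine the order on $V$, one has $M(p/x)=\sup_{\varphi\in S}\varphi(p)/\varphi(x)$ for $p\in C$ and $x\in C^\circ$, so in particular $\sup_{\varphi\in S}\varphi(p)=M(p/u)=\|p\|_u$, and ``$g$ has supremum $1$ on $S$'' is the same as ``$p$ is normalised''.

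For one inclusion I would start from a singleton reverse-Funk Busemann point $h$. Theorem~\ref{T:reverse-Funk Busemann} writes $h$ in the form (\ref{RFhoro}) for some $w^*$-upper semi-continuous nonnegative affine $g$ with $\sup_S g=1$ that does not correspond to an element of $C^\circ$; writing $g=\hat p$ with $p\in C$ this just says $p\notin C^\circ$. Proposition~\ref{P:singleton Busemann is ext} --- this is where $C=C_{sc}$ enters --- then tells me $g$ lies on an extreme ray of $C$, so $p$ is a positive multiple of an extreme vector of $C$, hence an extreme vector of $C$ itself; it is normalised because $\|p\|_u=\sup_S g=1$; and $h(x)=\log\sup_{\varphi\in S}\varphi(p)/\varphi(x)=\log M(p/x)$, as claimed.

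For the converse I would take a normalised extreme vector $p$ of $C=C_{sc}$, put $g:=\hat p$, and check the hypotheses of Theorem~\ref{T:reverse-Funk Busemann}: $g$ is $w^*$-continuous, nonnegative and affine with $\sup_S g=\|p\|_u=1$, and it lies on an extreme ray of $C=C_{sc}$. The one genuine verification is that $g$ does \emph{not} correspond to a point of $C^\circ$, i.e.\ $p\in\partial C$. Here I would note that (with $\dim V\ge 2$, the only case in which $C^\circ$ carries horofunctions at all) an extreme vector necessarily lies on $\partial C$: if $p\in C^\circ$, choose $w\in V$ linearly independent of $p$ with $p\pm w\in C$, so that $\tfrac{1}{2}(p+w)\in[0,p]$ is not a scalar multiple of $p$, contradicting extremality (equivalently, by Lemma~\ref{L:totally ordered extreme}, $[0,p]$ being totally ordered would force $u\in\R p$). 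With $p\in\partial C$ in hand, Theorem~\ref{T:reverse-Funk Busemann} gives that $h(x):=\log\sup_{\varphi\in S}g(\varphi)/\varphi(x)=\log M(p/x)$ is a reverse-Funk Busemann point, and Proposition~\ref{P:extreme ray implies singleton}, applied with $g$ on an extreme ray of $C_{sc}$, makes it a singleton.

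Since the two propositions already carry the real content, I do not expect any serious obstacle; the only points needing care are bookkeeping --- matching normalisations, freely using $C=C_{sc}$ to identify ``extreme ray of $C$'' with ``extreme ray of $C_{sc}$'' in both directions, and the short argument that a normalised extreme vector lies on $\partial C$ so that (\ref{RFhoro}) produces a horofunction rather than an interior point.
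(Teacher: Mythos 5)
Your proposal is correct and matches the paper's proof, which is literally the one-line combination of Theorem~\ref{T:reverse-Funk Busemann}, Proposition~\ref{P:extreme ray implies singleton} and Proposition~\ref{P:singleton Busemann is ext}. The extra bookkeeping you supply (identifying $w^*$-upper semi-continuous affine functions with elements of $C$ via $C=C_{sc}$, matching normalisations, and checking that an extreme vector lies on $\partial C$ so that (\ref{RFhoro}) is genuinely a horofunction) is exactly what the paper leaves implicit, and it is all sound.
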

\begin{proof}
Combine \Cref{T:reverse-Funk Busemann}, \Cref{P:extreme ray implies singleton}, and \Cref{P:singleton Busemann is ext}.
\end{proof}
\begin{remark} If $C\subsetneq C_{sc}$ it is not clear to us whether the singleton reverse-Funk Busemann points correspond to the extreme rays $\{\lambda g\in C_{sc}\colon \lambda\geq 0\}$ of the cone $C_{sc}$, where  $g$ is a $w^*$-upper semi-continuous, nonnegative, affine functions on $S$ with supremum 1. 
\end{remark}

\paragraph{Funk Busemann points} 
First, observe that for any $z\in C^\circ$ we have 
\[
i_{F}(z)(x) = \log M(x/z) - \log M(u/z)\qquad(x\in C^\circ),
\]
and similar to the case where the reverse-Funk geometry is considered, we may view $z$ as a $g \in A_{w*}(S)^+$ with infimum $1$, so that  

\[
i_{F}(z)(x) = \log \sup_{\varphi \in S}\frac{\varphi(x)}{\varphi(z)} = \log \sup_{\varphi \in S}\frac{\varphi(x)}{g(\varphi)}\qquad(x\in C^\circ).
\]
Note that a lower semi-continuous function can be infinite-valued, and we will use the convention that $1/\infty = 0$.
\begin{lemma}\label{L: sup less sup g^-1 implies g less f}
Let $(V,C,u)$ be a complete  order unit space and let $g$ and $g'$ be $w^*$-lower semi-continuous nonnegative affine functions on $S$ taking values in $(0,\infty]$. If 
\[
\sup_{\phi\in S}\frac{\phi(x)}{g(\phi)}\le \sup_{\phi\in S} \frac{\phi(x)}{g'(\phi)}\mbox{\qquad for all $x\in C^\circ$,}
\]
 then $g'\le g$. 
\end{lemma}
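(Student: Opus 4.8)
The plan is to run the proof of \Cref{L: sup less sup g implies f less g} ``from below'' instead of ``from above''. First I would identify $V$ with $A_{w^*}(S)$ via $x \mapsto \hat{x}$ (\Cref{T:rep of o.u.s.}) and record that under this identification $C^\circ$ is exactly the set of affine $w^*$-continuous functions on $S$ that are strictly positive (equivalently, bounded away from $0$, since $S$ is $w^*$-compact). Since $g'$ is $w^*$-lower semi-continuous and strictly positive on the compact $S$, it is bounded away from $0$, so $c := \inf_S g' > 0$; I fix any $\epsilon \in (0,c)$, so that the constant function $\epsilon \hat{u}$ satisfies $\epsilon\hat{u} < g'$ on $S$.

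Next I would apply \Cref{C:f lsc is upper lim net} to $g'$: the family $\mathcal{C}$ of all $w^*$-continuous affine functions $h$ with $h < g'$ on $S$ is directed upwards and, as a net indexed by itself, converges pointwise on $S$ to $g'$. Fix $\psi \in S$ and a real number $r < g'(\psi)$. By the pointwise convergence there is $h_0 \in \mathcal{C}$ with $h_0(\psi) > r$, and by directedness (applied to $h_0$ and $\epsilon\hat{u}$, both in $\mathcal{C}$) there is $h \in \mathcal{C}$ with $h \ge h_0$ and $h \ge \epsilon\hat{u}$ on $S$. Then $h$ is affine, $w^*$-continuous, $\epsilon \le h < g'$ on $S$, and $h(\psi) > r$, so the corresponding $x \in V$ lies in $C^\circ$. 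Applying the hypothesis to $x$ and using $h(\phi) < g'(\phi)$ for every $\phi$ (so $h(\phi)/g'(\phi) \le 1$, with the convention $1/\infty = 0$) gives
\[
\frac{h(\psi)}{g(\psi)} \le \sup_{\phi \in S} \frac{\phi(x)}{g(\phi)} \le \sup_{\phi \in S} \frac{\phi(x)}{g'(\phi)} = \sup_{\phi \in S} \frac{h(\phi)}{g'(\phi)} \le 1,
\]
whence $r < h(\psi) \le g(\psi)$. Letting $r \uparrow g'(\psi)$ yields $g'(\psi) \le g(\psi)$ (when $g'(\psi) = \infty$ one runs the same argument over all real $r$ and concludes $g(\psi) = \infty$), and since $\psi$ was arbitrary, $g' \le g$.

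The only genuinely delicate point --- and the one place this differs from \Cref{L: sup less sup g implies f less g} --- is the positivity of the test elements. When one approximates the \emph{nonnegative} upper semi-continuous function of \Cref{L: sup less sup g implies f less g} from above, the approximants are automatically positive and land in $C^\circ$; here, approximating the lower semi-continuous $g'$ from below, the net furnished by \Cref{C:f lsc is upper lim net} need not consist of positive functions. The fix is to replace a given $h_0 \in \mathcal{C}$ by a member of $\mathcal{C}$ dominating both $h_0$ and a strictly positive constant, which is available precisely because $g'$ is bounded away from $0$ on $S$, leaving room beneath it for such a constant. Everything else is a routine estimate, so I do not anticipate further obstacles.
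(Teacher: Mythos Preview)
Your proposal is correct and follows essentially the same route as the paper: both approximate $g'$ from below by a directed net of $w^*$-continuous affine functions via \Cref{C:f lsc is upper lim net}, use the strict positivity of $\inf_S g'$ (from \Cref{L:lsc attains min}) to ensure the approximants can be taken strictly positive (hence in $C^\circ$), plug them into the hypothesis to get $g'_\alpha \le g$, and pass to the limit. Your treatment of the positivity issue via directedness against the constant $\epsilon\hat u$ is exactly how the paper justifies taking $g'_\alpha \ge \tfrac12\varepsilon$.
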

\begin{proof}
We know from Lemma \ref{L:lsc attains min} that $g'$ attains its minimum $\varepsilon$ on $S$. From Corollary \ref{C:f lsc is upper lim net} there exists an increasing net $(g'_\alpha)_\alpha$ of functions in $A_{w*}(S)$  with $\frac{1}{2}\varepsilon \le g'_\alpha < g'$ for all $\alpha$ and $(g'_\alpha)_\alpha$ converges pointwise to $g'$.  By Theorem  \ref{T:rep of o.u.s.} we know that $g'_\alpha$ corresponds to an $x'_\alpha \in C^\circ$.  So for each $\alpha$ and each $\psi\in S$ we have that 
\[
 \frac{g'_\alpha(\psi)}{g(\psi)} = \frac{\psi(x'_\alpha)}{g(\psi)}\le \sup_{\phi\in S} \frac{\phi(x'_\alpha)}{g(\phi)} \le \sup_{\phi\in S} \frac{\phi(x'_\alpha)}{g'(\phi)} = \sup_{\phi\in S} \frac{g'_\alpha(\phi)}{g'(\phi)}\leq 1,
\]
hence $g_\alpha'(\psi)\leq g(\psi)$ for all $\psi$ and all $\alpha$. Thus, $g'(\psi) = \lim_\alpha g'_\alpha(\psi) \le g(\psi)$ for all $\psi\in S$.
\end{proof}

Let us now recall  Walsh's characterisation of the Funk Busemann points \cite[Theorem 7.1]{Wa2}.
\begin{theorem}\label{T:Funk Busemann}
Let $(V,C,u)$ be a complete order unit space with state space $S$. The Busemann points of the Funk metric on $C^\circ$ are precisely the functions of the form 
\[
h(x) := \log \sup_{\phi\in S}\frac{\varphi(x)}{g(\varphi)}\qquad(x\in C^\circ),
\]
where $g$ is a $w^*$-lower semi-continuous positive affine function on $S$ with infimum 1, that does not correspond to an element of $C^\circ$.
\end{theorem}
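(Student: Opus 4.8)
The plan is to follow the proof of \Cref{T:reverse-Funk Busemann} almost verbatim, interchanging the roles of numerator and denominator and replacing ``upper semi-continuous'' by ``lower semi-continuous'' throughout, now using \Cref{L: sup less sup g^-1 implies g less f} in place of \Cref{L: sup less sup g implies f less g}. The genuinely new feature is that the limiting function $g$ is allowed to take the value $+\infty$, so one must keep track of the convention $1/\infty=0$ and invoke the lower semi-continuous analogues of the Dini-type results in the Appendix.

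\emph{Funk Busemann points have the stated form.} Let $(y_\alpha)_\alpha$ be an almost geodesic in $C^\circ$ for the Funk metric with $(i_F(y_\alpha))_\alpha$ converging to a Busemann point $h$. Since $i_F(\lambda x)=i_F(x)$ for $\lambda>0$, rescale so that $M(u/y_\alpha)=1$; writing $g_\alpha:=J(y_\alpha)|_S$ via \Cref{T:rep of o.u.s.}, this means $g_\alpha$ is a strictly positive $w^*$-continuous affine function with $\inf_S g_\alpha=1$, and $i_F(y_\alpha)(x)=\log\sup_{\phi\in S}\varphi(x)/g_\alpha(\varphi)$. By \Cref{L:almost geod. <-> almost non-dec.} the net $(i_F(y_\alpha))_\alpha$ is almost non-increasing, so for every $\varepsilon>0$ there is $\alpha_0$ with $M(x/y_\beta)\le e^\varepsilon M(x/y_\alpha)$ for all $x\in C^\circ$ and all $\alpha_0\le\alpha\le\beta$. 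Evaluating at $x=y_\alpha$ gives $M(y_\alpha/y_\beta)\le e^\varepsilon$, i.e.\ $g_\alpha\le e^\varepsilon g_\beta$ on $S$, so $(\log g_\alpha|_S)_\alpha$ is an \emph{almost non-decreasing} net. By the lower semi-continuous analogue of \Cref{L:Dini u.s.c.} the net $(g_\alpha)_\alpha$ converges pointwise on $S$ to a $w^*$-lower semi-continuous affine function $g\colon S\to(0,\infty]$. One checks $\inf_S g=1$: letting $\beta$ and then $\varepsilon$ run to the limit in $g_\alpha\le e^\varepsilon g_\beta$ gives $\inf_S g\ge 1$; conversely, choosing $\varphi_\alpha\in S$ with $g_\alpha(\varphi_\alpha)=1$ (the infimum of a $w^*$-continuous function on the $w^*$-compact $S$ is attained), passing to a convergent subnet $\varphi_\alpha\to\psi$, and using $g_\alpha(\varphi_\beta)\le e^\varepsilon$ for $\alpha_0\le\alpha\le\beta$ together with continuity and $g_\alpha(\psi)\to g(\psi)$, yields $\inf_S g\le g(\psi)\le e^\varepsilon$ for all $\varepsilon>0$. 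Finally, by the Dini-type lemma $\sup_{\phi}\varphi(x)/g_\alpha(\varphi)\to\sup_\phi\varphi(x)/g(\varphi)$ for each $x\in C^\circ$ (a supremum lying in $(0,\|x\|_u]$, hence finite and positive since $\inf_S g=1$ is attained), so $h(x)=\log\sup_\phi\varphi(x)/g(\varphi)$; and $g$ cannot correspond to an element of $C^\circ$, for otherwise $h\in i_F(C^\circ)$, contradicting that $h$ is a horofunction.

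\emph{Every such function is a Funk Busemann point.} Given $g$ as in the statement, \Cref{C:f lsc is upper lim net} provides a family $(g_\alpha)_\alpha$ of $w^*$-continuous affine functions directed upward with $g_\alpha|_S<g$ and $g_\alpha|_S\to g$ pointwise on $S$; since the constant $\tfrac12$ lies in this family, we may restrict to the cofinal subfamily with $g_\alpha\ge\tfrac12$, and then each $g_\alpha|_S=J(y_\alpha)|_S$ for some $y_\alpha\in C^\circ$ by \Cref{T:rep of o.u.s.}. Fix $x\in C^\circ$: the functions $\varphi\mapsto\varphi(x)/g_\alpha(\varphi)$ are $w^*$-continuous, decreasing in $\alpha$, and converge pointwise to $\varphi\mapsto\varphi(x)/g(\varphi)$ (with $1/\infty=0$), so by the Dini-type lemma $\sup_\phi\varphi(x)/g_\alpha(\varphi)\to\sup_\phi\varphi(x)/g(\varphi)$; likewise $\inf_S g_\alpha\to\inf_S g=1$, so $\log M(u/y_\alpha)=-\log\inf_S g_\alpha\to 0$. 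Hence $i_F(y_\alpha)(x)=\log\sup_\phi\varphi(x)/g_\alpha(\varphi)-\log M(u/y_\alpha)\to\log\sup_\phi\varphi(x)/g(\varphi)=h(x)$. Since $(\log\sup_\phi\varphi(x)/g_\alpha(\varphi))_\alpha$ is decreasing and $(\log M(u/y_\alpha))_\alpha$ increases to $0$, the net $(i_F(y_\alpha))_\alpha$ is almost non-increasing, so $(y_\alpha)_\alpha$ is an almost geodesic by \Cref{L:almost geod. <-> almost non-dec.}; thus $h$ is a Busemann point provided $h\notin i_F(C^\circ)$. If $h=i_F(z)$ for some $z\in C^\circ$, then representing $z$ by $g'\in A_{w^*}(S)^+$ with infimum $1$ gives $\sup_\phi\varphi(x)/g'(\varphi)=\sup_\phi\varphi(x)/g(\varphi)$ for all $x\in C^\circ$, and applying \Cref{L: sup less sup g^-1 implies g less f} twice forces $g=g'$, contradicting that $g$ does not correspond to an element of $C^\circ$.

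\emph{Main obstacle.} There is no essentially new idea beyond the reverse-Funk case, but the delicate points are precisely those where semi-continuity forbids a naive exchange of limit and supremum and where $g$ may be $+\infty$-valued: one must use the lower semi-continuous Dini-type results of the Appendix to pass both the suprema $\sup_\phi\varphi(x)/g_\alpha(\varphi)$ and the infima $\inf_S g_\alpha$ to the limit, and check that the convention $1/\infty=0$ keeps every quantity finite so that the limiting horofunction is well-defined. Recognising the ``almost monotone in the cone order'' behaviour of $(y_\alpha)_\alpha$ via the evaluation $x=y_\alpha$ is the step that drives the convergence of $(g_\alpha)_\alpha$, and is the Funk-side counterpart of the corresponding step for the reverse-Funk metric.
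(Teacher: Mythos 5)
Your proof is correct and follows essentially the same route as the paper's: both directions rest on \Cref{C:f lsc is upper lim net}, the Dini-type \Cref{L:Dini u.s.c.} applied to $\pm\log g_\alpha$, the normalisation $M(u/y_\alpha)=1$ to extract an almost non-increasing net $(-\log g_\alpha)_\alpha$, and \Cref{L: sup less sup g^-1 implies g less f} to rule out $h\in i_{F}(C^\circ)$. The only (immaterial) slip is that $\log M(u/y_\alpha)=-\log\inf_S g_\alpha$ decreases to $0$ rather than increases, since $\inf_S g_\alpha$ increases to $1$ from below; as the almost non-increasing property of $(i_F(y_\alpha))_\alpha$ only requires this quantity to converge, the argument is unaffected.
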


\begin{proof}
Let $h$ be defined as above. By Corollary~\ref{C:f lsc is upper lim net} there is an increasing net $(g_\alpha)_\alpha$ of affine $w^*$-continuous functions $g_\alpha\colon V^*\to\mathbb{R}$ such that $(g_\alpha|_S)_\alpha$ converges pointwise to $g$. As $g>0$, the net $(g_\alpha)_\alpha$  may be chosen such that $g_\alpha|_S>0$ by Lemma \ref{L:lsc attains min}. So, for $\alpha$ we have that $g_\alpha|_S=J(x_\alpha)|_S$ for some  $x_\alpha\in C^\circ$ by Theorem \ref{T:rep of o.u.s.}. Let $x\in C^\circ$. Then the functions $G_\alpha(\varphi) := \varphi(x)/g_\alpha(\varphi)$ are $w^*$-continuous, decreasing in $\alpha$, and converge pointwise to the function $G(\varphi) := \varphi(x)/g(\varphi)$ on $S$. By Lemma~\ref{L:Dini u.s.c.} we have that $\sup_{\phi\in S} G_\alpha(\varphi)$ converges to $\sup_{\phi\in S} G(\varphi)$. Hence $\log \sup_{\phi\in S} G_\alpha(\varphi)$ converges to $h(x)$. Using $x =u$ we also have that  $\log\sup_{\phi\in S}g_\alpha(\varphi)^{-1}$ converges to $\log\sup_{\phi\in S} g(\varphi)^{-1}=0$ as $g>0$ has infimum 1 on $S$. 
Since $M(y/z)=\sup_{\phi\in S}\varphi(y)/\varphi(z)$ for $y,z\in C^\circ$, we find that 
\begin{align*}
i_{F}(x_\alpha)(x)&=\log\sup_{\phi\in S}\frac{\varphi(x)}{\varphi(x_\alpha)}-\log\sup_{\phi\in S}\varphi(x_\alpha)^{-1} = \log \sup_{\phi\in S}\frac{\varphi(x)}{g_\alpha(\varphi)}-\log\sup_{\phi\in S} g_\alpha(\varphi)^{-1},
\end{align*}
so $i_{F}(x_\alpha)$ converges pointwise to $h$. 

As $(\log \sup_{\phi\in S} G_\alpha)_\alpha$ is decreasing and  $(\log\sup_{\phi\in S} g_\alpha(\varphi)^{-1})_\alpha$ converges to 0, it follows that $(i_{F}(x_\alpha))_\alpha$ is an almost non-increasing net of functions, so by Lemma~\ref{L:almost geod. <-> almost non-dec.} the net $(x_\alpha)_\alpha$ is an almost geodesic. It follows that $h$ is a Busemann point provided that $h \notin i_{F}(C^\circ)$. Suppose $h = i_{F}(z)$ for some $z \in C^\circ$. Then 
\[
 \log\sup_{\phi\in S}\frac{\varphi(x)}{g'(\varphi)} = h(x) =\log\sup_{\phi\in S}\frac{\varphi(x)}{g(\varphi)} \qquad(x\in C^\circ)
\]
for some strictly positive $g' \in A_{w^*}(S)^+$ with infimum 1. Applying \Cref{L: sup less sup g^-1 implies g less f} twice now yields $g' = g$, which contradicts the fact that $g$ does not correspond to an element of $C^\circ$. 

Conversely, suppose that $(x_\alpha)_\alpha$ is an almost geodesic in $C^\circ$ for the Funk metric such that $(i_{F}(x_\alpha))_\alpha$ converges to a Busemann point $h$. Set $z_\alpha = M(u/x_\alpha)x_\alpha$, so $M(u/z_\alpha)=1$ for all $\alpha$. Note that $i_{F}(z_\alpha)= i_{F}(x_\alpha)$ for all $\alpha$. 
Let $g_\alpha=J(z_\alpha)|_S$ for all $\alpha$. It follows that for any $\varepsilon>0$ there is an index $\alpha_0$ such that 
\[
e^{\varepsilon}\sup_{\phi\in S}\frac{\varphi(x)}{g_\alpha(\varphi)}\ge\sup_{\phi\in S}\frac{\varphi(x)}{g_\beta(\varphi)}\qquad(\mbox{for all }x\in C^\circ)
\]
for $\alpha_0\le \alpha\le \beta$. By evaluating in $z_\alpha$ we also have that 
\begin{equation}\label{phialpha}
e^\varepsilon g_\beta(\varphi)\ge g_\alpha(\varphi)
\end{equation}
for all $\varphi\in S$ for $\alpha_0\le \alpha\le \beta$. 
We conclude that $(-\log g_\alpha|_S)_\alpha$ is an almost non-increasing net. By Lemma~\ref{L:Dini u.s.c.} the net $(-\log g_\alpha)_\alpha$ converges pointwise on $S$ to a $w^*$-upper semi-continuous limit, and therefore $(g_\alpha)_\alpha$ also converges pointwise on $S$ to a $w^*$-lower semi-continuous limit which we will denote by $g$. Furthermore, $g$ is affine  on $S$, since $g_\alpha$  affine for all $\alpha$. Moreover, from (\ref{phialpha}) and the fact that each $g_\alpha$ is positive on $S$ we find that $g$ is positive on $S$. 

Using that $(-\log g_\alpha|_S)_\alpha$ is almost non-increasing again,  we see that for $\varepsilon>0$ there is an index $\alpha'$ such that $-\log g_\beta(\varphi)\le-\log g_\alpha(\varphi)+\varepsilon$ for all $\alpha'\le \alpha\le \beta$ and all $\varphi\in S$. For each $\beta$ we can choose $\varphi_\beta\in S$ such that $g_\beta(\varphi_\beta)=1$, as $M(u/z_\beta)=1$ for all $\beta$. It follows that $0\le -\log g_\alpha(\varphi_\beta)+\varepsilon$ for $\alpha'\le \alpha\le \beta$. By possibly passing to a convergent subnet, we may assume that $(\varphi_\beta)_\beta$ converges to a $w^*$-limit $\psi$ in $S$, and so $0\le -\log g_\alpha(\psi)+\varepsilon$. Taking the limit for $\alpha$ yields $0\le -\log g(\psi)+\varepsilon\le\sup_{\phi\in S}\log (g(\phi)^{-1})+\varepsilon$. As $\varepsilon>0$ was arbitrary, we conclude that $1\leq \sup_{\phi\in S} g(\phi)^{-1}=(\inf_S g(\phi))^{-1}$. But $g$ is the pointwise limit of the functions $g_\beta$ each of which has  infimum 1, as $M(u/z_\beta)=1$ implies that $g_\beta(\phi) \geq 1$ for all $\phi\in S$. Thus,  $\inf_S g(\phi)=1$.

By Lemma~\ref{L:Dini u.s.c.} we find that $\sup_{\phi\in S} \varphi(x)/g_\alpha(\varphi)$ converges to $\sup_{\phi\in S} \varphi(x)/g(\varphi)$, which means that $i_F(x_\alpha)(x)=i_F(z_\alpha)(x)$ converges to $\log\sup_{\phi\in S}  \varphi(x)/g(\varphi)$. Thus, 
\[
h(x)=\log\sup_{\phi\in S} \frac{\varphi(x)}{g(\varphi)}\qquad(x\in C^\circ).
\]
Finally, since $h$ is a Busemann point, it must be a horofunction, so $g$ does not correspond to an element of $C^\circ$.
\end{proof}

\paragraph{Singleton Funk Busemann points} 
The singleton Funk Busemann points correspond to pure states of $S$. Indeed, Walsh proved the following results \cite[Corollary 7.4]{Wa2}.
\begin{proposition}\label{P:Funk singleton Busemann}
Let $(V,C,u)$ be a complete order unit space with state space $S$. If  $h$ is a Busemann point for the Funk metric on $C^\circ$, then $h$ is a singleton if and only if it can be written as $h(x)=\log \psi(x)$ for some pure state $\psi\in S$.
\end{proposition}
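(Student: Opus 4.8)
The plan is to combine Walsh's description of the Funk Busemann points (Theorem~\ref{T:Funk Busemann}) with a perturbation argument. By that theorem every Funk Busemann point is of the form $h(x)=\log\sup_{\phi\in S}\phi(x)/g(\phi)$ for a $w^*$-lower semi-continuous positive affine function $g$ on $S$ with $\inf_S g=1$ that does not correspond to an element of $C^\circ$, and I will show that $h$ is a singleton exactly when $g$ is the function $g_\psi$ given by $g_\psi(\psi)=1$ and $g_\psi(\phi)=\infty$ for $\phi\neq\psi$, for some pure state $\psi$, in which case $h(x)=\log\psi(x)$. Two auxiliary facts will be used throughout. First, a positive $w^*$-continuous affine function on $S$ corresponds to an element of $C^\circ$: its infimum over the $w^*$-compact set $S$ is strictly positive, so the associated $x\in V$ (via Theorem~\ref{T:rep of o.u.s.}) satisfies $x\geq\varepsilon u$ for some $\varepsilon>0$ because states determine the ordering; hence, among positive affine functions, ``not corresponding to an element of $C^\circ$'' coincides with ``not $w^*$-continuous''. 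Second, two Funk Busemann points with functions $g,g'$ lie in the same part if and only if $g/\lambda\leq g'\leq\lambda g$ for some $\lambda>0$; this follows from Remark~\ref{remequiv} and Lemma~\ref{L: sup less sup g^-1 implies g less f}, exactly as Corollary~\ref{rfparts} follows from the reverse-Funk analogues.

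For the implication ``pure state $\Rightarrow$ singleton'', given a pure state $\psi$ I would first verify that $g_\psi$ is affine — extremality of $\psi$ is essential here, since a proper convex combination of two states that are not both equal to $\psi$ is different from $\psi$, where $g_\psi$ then equals $\infty$ — and $w^*$-lower semi-continuous, as its sublevel sets are $\emptyset$, the $w^*$-closed set $\{\psi\}$, or $S$. Since $g_\psi$ has infimum $1$ and is not $w^*$-continuous, Theorem~\ref{T:Funk Busemann} shows that $h_\psi(x):=\log\sup_{\phi}\phi(x)/g_\psi(\phi)=\log\psi(x)$ is a Funk Busemann point. If $h'$ is a Busemann point in the same part, with associated function $g'$, then the part criterion gives $\lambda>0$ with $g_\psi/\lambda\leq g'\leq\lambda g_\psi$; the left-hand inequality forces $g'(\phi)=\infty$ for every $\phi\neq\psi$, and then $\inf_S g'=1$ forces $g'(\psi)=1$, so $g'=g_\psi$ and $h'=h_\psi$. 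Thus $h_\psi$ is a singleton.

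For the converse I would argue contrapositively, perturbing $g$ inside its part in two stages. If $g$ takes a finite value different from $1$, put $g':=\alpha g+(1-\alpha)$ with $\alpha>0$, $\alpha\neq 1$ (adding the constant $1-\alpha$); using $g\geq 1$ one checks that $g'$ is again $w^*$-lower semi-continuous, positive, affine, has infimum $1$, is not $w^*$-continuous, satisfies $g/\lambda\leq g'\leq\lambda g$ with $\lambda=\max\{\alpha,\alpha^{-1}\}$, and differs from $g$ at the point where $g$ is finite and unequal to $1$; hence $h$ is not a singleton. So a singleton forces $g$ to take only the values $1$ and $\infty$; writing $F:=\{\phi\in S\colon g(\phi)=1\}$ we then have $g=1$ on $F$ and $g=\infty$ off $F$. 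Because $g$ is lower semi-continuous and affine, $F$ is a $w^*$-closed face of $S$, nonempty since $g$ attains its infimum (Lemma~\ref{L:lsc attains min}), and $F\neq S$ since $g\not\equiv 1$ (as $g$ does not correspond to $u\in C^\circ$). If $F$ contained two distinct points $\phi_1,\phi_2$, choose $v\in V$ with $\phi_1(v)\neq\phi_2(v)$, let $\mu=\hat v$, and define $g'$ to be $\infty$ off $F$ and $1+\delta(\mu-\inf_F\mu)$ on $F$ for a fixed $\delta>0$; the face property of $F$ makes $g'$ affine on all of $S$, it is lower semi-continuous and positive with infimum $1$, it is not $w^*$-continuous, it differs from $g$ because $\mu$ is non-constant on $F$, and it lies in the same part as $g$ since $g/\lambda\leq g'\leq\lambda g$ with $\lambda=1+\delta(\sup_F\mu-\inf_F\mu)$; hence $h$ is not a singleton. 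Therefore $F=\{\psi\}$ is a one-point face, so $\psi$ is extreme, i.e.\ a pure state, $g=g_\psi$, and $h(x)=\log\psi(x)$ for all $x\in C^\circ$.

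The main obstacle is the converse direction: the delicate point is to manufacture perturbations $g'$ of $g$ that remain within the exact class of functions permitted by Theorem~\ref{T:Funk Busemann} — in particular keeping the infimum pinned at $1$ while staying positive, lower semi-continuous, and not $w^*$-continuous — yet are genuinely different from $g$ and comparable to $g$ up to a multiplicative constant. The two devices that achieve this are the affine combination $g'=\alpha g+(1-\alpha)$, which exploits $g\geq 1$ to preserve both ``$\geq 1$'' and ``infimum $1$'', and the affine ``tilt'' of $g$ along the face $F$; the observation that $F$ is automatically a face of $S$, hence a pure state once it is a single point, is precisely what links the analytic perturbation argument to the desired conclusion.
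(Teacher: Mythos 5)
Your proof is correct. The forward direction and the overall strategy coincide with the paper's: both rest on Walsh's description of the Funk Busemann points (\Cref{T:Funk Busemann}), on the multiplicative comparability criterion for parts obtained from \Cref{remequiv} together with \Cref{L: sup less sup g^-1 implies g less f} (which also gives the injectivity of $g\mapsto h$ that you use implicitly when concluding $h'\neq h$ from $g'\neq g$), and, for the converse, on perturbing $g$ within its part to contradict singleton-ness. The only genuine difference is the choice of perturbation in the converse: the paper uses the single function $g'=\lambda^{-1}(g+J(x)|_S)$ for a suitable $x\in C^\circ$ with $\lambda=\inf_S(g+\hat x)$, and derives the contradiction $\lambda g=g+J(x)|_S$ in one shot (which simultaneously rules out finite values other than $1$ and a non-extreme minimiser), whereas you proceed in two stages, first forcing $g$ to take values only in $\{1,\infty\}$ via $g'=\alpha g+(1-\alpha)$ and then showing that the $w^*$-closed face $F=\{g=1\}$ is a single, hence extreme, point by tilting $g$ along $F$. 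Both perturbations stay within the class permitted by \Cref{T:Funk Busemann} and are comparable to $g$ up to a multiplicative constant, so both arguments are valid; yours makes the geometric role of the face $F$ explicit at the cost of an extra step, while the paper's one-step perturbation is shorter.
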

\begin{proof}
Let $\psi\in S$ be a pure state, and suppose that $h(x)=\log\psi(x)$ for all $x\in C^\circ$. Then the function $g$ on $S$ defined by  
\[
g(\varphi):=\begin{cases}
1&\mbox{if $\varphi=\psi$};\\ \infty&\mbox{otherwise}.
\end{cases}
\]
is affine and $w^*$-lower semi-continuous with infimum 1 that does not correspond to an element of $C^\circ$. Furthermore, we have that 
\[
\log \sup_{\phi\in S}\frac{\varphi(x)}{g(\varphi)}=\log \psi(x)=h(x)\qquad(x\in C^\circ) 
\]
by construction of $g$, hence $h$ is a Busemann point by Theorem \ref{T:Funk Busemann}. 

Suppose $h'$ is a Busemann point that lies in the same part as $h$. By Theorem~\ref{T:Funk Busemann} there is a $w^*$-lower semi-continuous positive affine function $g'$ on $S$ which has infimum 1 such that 
\[
h'(x)=\log \sup_{\phi\in S}\frac{\varphi(x)}{g'(\varphi)}\qquad(x\in C^\circ).
\]

Using Remark \ref{remequiv} we have $\sup_{x\in C^\circ}|h(x)-h'(x)|=\Delta(h',h)$, so that 
\[
\log\sup_{\phi\in S}\frac{\varphi(x)}{g'(\varphi)}-\Delta(h,h')\le\log\sup_{\phi\in S}\frac{\varphi(x)}{g(\varphi)}\le\log\sup_{\phi\in S}\frac{\varphi(x)}{g'(\varphi)}+\Delta(h,h'),
\]
hence
\[
e^{-\Delta(h,h')}\sup_{\phi\in S}\frac{\varphi(x)}{g'(\varphi)}\le\sup_{\phi\in S}\frac{\varphi(x)}{g(\varphi)}\le e^{\Delta(h,h')}\sup_{\phi\in S}\frac{\varphi(x)}{g'(\varphi)}
\]
for all $x\in C^\circ$. By Corollary \ref{C:f lsc is upper lim net} there exists an increasing net $(g'_\alpha)_\alpha$ of positive affine $w^*$-continuous functions $g'_\alpha\colon V^*\to\mathbb{R}$, with $g'_\alpha|_S <g'$ for all $\alpha$, such that $(g'_\alpha|_S)_\alpha$ converges pointwise to $g'$. So, for $\alpha$ we have that $g'_\alpha|_S=J(x_\alpha)|_S$ for some  $x_\alpha\in C^\circ$ by Theorem \ref{T:rep of o.u.s.}. We find that 
\[
\frac{g'_\alpha(\varphi)}{g(\varphi)}\leq \sup_{\phi\in S}\frac{g'_\alpha(\varphi)}{g(\varphi)}\le e^{\Delta(h,h')}\sup_{\phi\in S}\frac{g'_\alpha(\varphi)}{g'(\varphi)} \leq e^{\Delta(h,h')},
\]
hence  $g'(\varphi)\le e^{\Delta(h,h')}g(\varphi)$ for all $\varphi\in S$. Using the other inequality, a similar argument shows that $g(\varphi)\le e^{\Delta(h,h')}g'(\varphi)$ for all $\varphi\in S$. It follows that $g'(\varphi)=\infty$ for all $\varphi\neq\psi$ and $g'(\psi)$ is finite. As $g'$ must have infimum 1, we conclude that $g=g'$, so $h=h'$ and $h$ is a singleton Busemann point. 

Conversely, suppose that $h$ is a singleton Busemann point, hence $h$ is of the form
\[
h(x)=\log \sup_{\phi\in S}\frac{\varphi(x)}{g(\varphi)}\qquad(x\in C^\circ)
\]
for some $w^*$-lower semi-continuous positive affine function $g$ on $S$ which has infimum 1 that does not correspond to an element of $C^\circ$ by Theorem~\ref{T:Funk Busemann}. By Lemma~\ref{L:lsc attains min} $g$ attains its infimum at say $\psi$, and suppose that $g$ has a finite value at another state $\psi'$. There is an element $x\in C^\circ$ of norm 1 such that $\psi(x)\neq\psi'(x)$. Define $\lambda := \inf_S(g(\varphi)+\varphi(x))$, and $g' := \lambda^{-1}(g+J(x)|_S)$. Note that $g'$ is a $w^*$-lower semi-continuous positive affine function on $S$ with infimum 1 that does not correspond to an element of $C^\circ$. Thus, by Theorem~\ref{T:Funk Busemann}, the function 
\[
h'(x) := \log\sup_{\phi\in S}\frac{\varphi(x)}{g'(\varphi)}\qquad(x\in C^\circ)
\]
is a Busemann point. Note that 
\[
\lambda^{-1}g(\varphi)\le g'(\varphi)\le\lambda^{-1}g(\varphi)+\lambda^{-1}\le 2\lambda^{-1}g(\varphi)
\]
for all $\varphi\in S$. Thus $h(x)+\log\frac{1}{2}\lambda\le h'(x)\le h(x)+\log\lambda$ for all $x\in C^\circ$, hence $h$ and $h'$ must lie in the same part by  Remark \ref{remequiv}. Thus, we must have that $h=h'$, so 
\[
\sup_{\phi\in S}\frac{\varphi(x)}{g(\varphi)}= \sup_{\phi\in S}\frac{\varphi(x)}{g'(\varphi)}.
\]
As before, by Corollary \ref{C:f lsc is upper lim net} there exists an increasing net $(g'_\alpha)_\alpha$ of positive affine $w^*$-continuous functions $g'_\alpha\colon V^*\to\mathbb{R}$, with $g'_\alpha|_S <g'$ for all $\alpha$, such that $(g'_\alpha|_S)_\alpha$ converges pointwise to $g'$. So, for $\alpha$ we have that $g'_\alpha|_S=J(x_\alpha)|_S$ for some  $x_\alpha\in C^\circ$ by Theorem \ref{T:rep of o.u.s.}. It now follows that 
\[
\frac{g'_\alpha(\varphi)}{g(\varphi)}\leq \sup_{\phi\in S}\frac{g'_\alpha(\varphi)}{g(\varphi)}=\sup_{\phi\in S}\frac{g'_\alpha(\varphi)}{g'(\varphi)} \leq 1
\]
hence $g'(\varphi)\le g(\varphi)$ for all $\varphi\in S$. Interchanging the roles between $g$ and $g'$ shows that $g=g'$. 

This implies that $\lambda g=g+J(x)|_S$, which is absurd, as $J(x)|_S\neq 0$ and $g$ does not correspond to an element of $C^\circ$. We conclude that there is no other state that is mapped to a finite value by $g$, hence
\[
g(\varphi)=\begin{cases}
1&\mbox{if $\varphi=\psi$};\\ \infty&\mbox{otherwise}.
\end{cases}
\]
Since $g$ is affine, this implies that $\psi$ must be a pure state, and we conclude that $h(x)=\log\psi(x)$ for all $x\in C^\circ$.
\end{proof}

\section{Atoms and gauge-reversing maps}
The main goal of this section is to show how gauge-reversing maps $\Psi \colon C^\circ \to K^\circ$ give rise to a correspondence between extreme vectors of the cones $C$ and $K$, and the pure states of the order unit spaces $V$ and $W$. This will further be used to prove that a reflexive order unit space $V$ is spanned by the extreme vectors of the cone in case a gauge-reversing map exists. 

Recall that a gauge-reversing map $\Psi\colon C^\circ\to K^\circ$ maps extreme half-lines to extreme half-lines, see Corollary \ref{C:extreme lines -> extreme lines}. In fact, more can be said.  
\begin{lemma}\label{L:formula for phi on extreme line}
Let $(V,C,u)$ and $(W,K,e)$ be a order unit spaces and $\Psi\colon C^\circ\to K^\circ$ be a gauge-reversing map. Let $x\in C^\circ$ and suppose that $p\in C$ is an extreme vector with $M(p/x)=1$.  Then the following hold:
\begin{enumerate}[(i)] 
\item  For the extreme half-line $\ell_x^p$ we have that $\ell_x^p=\{x+tp\colon t\in(-1,\infty)\}$. 
\item There exists an extreme vector $q$ of $K$ with $M(q/\Psi(x))=1$ such that 
 \[
\Psi(x+tp)=\Psi(x)-\textstyle{\frac{t}{t+1}}q\mbox{\qquad for all $t\in(-1,\infty)$}.
\] 
In particular, $\Psi$ maps the extreme half-line $\ell_x^p$ bijectively onto the extreme half-line $\ell_{\Psi(x)}^q$.
\end{enumerate}
\end{lemma}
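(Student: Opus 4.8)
The plan is to prove (i) by a direct convexity argument and then to reduce (ii) to a gauge computation inside a single $2$-dimensional subcone. For (i): from $M(p/x)=1$ I would first record that $p\le x$, hence $x-p\in C$, while $x-p\notin C^\circ$ (otherwise $x-(1+\delta)p\in C$ for some $\delta>0$, forcing $M(p/x)\le(1+\delta)^{-1}<1$); similarly $x+tp\notin C$ for $t<-1$, since that relation reads $|t|p\le x$ with $|t|>1$. On the other hand $C^\circ+C\subseteq C^\circ$, so $x+tp\in C^\circ$ for all $t\ge 0$, and for $-1<t<0$ the identity $x+tp=(1+t)x+(-t)(x-p)$ exhibits $x+tp$ as a convex combination with positive weight on the interior point $x$ and the other point in $C$, so $x+tp\in C^\circ$. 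This yields $\ell_x^p=\{x+tp:t>-1\}$.

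For (ii): if $x$ and $p$ are linearly dependent then $M(p/x)=1$ forces $x=p$, so $x$ is simultaneously an extreme vector and an interior point of $C$, which forces $\dim V=1$; that case is immediate, taking $q=\Psi(x)$ and using homogeneity of degree $-1$. Otherwise I would pass to the proper $2$-dimensional subcone $C(x,p)=\Span\{x,p\}\cap C$. Here $p$, being an extreme vector of $C$, is an extreme vector of $C(x,p)$, hence lies on one of its two extreme rays; and $x-p$ lies in $C(x,p)$ but not in $C^\circ$, so, since the relative interior of $C(x,p)$ is contained in $C^\circ$ (because $C(x,p)$ meets $C^\circ$, at $x$), the vector $x-p$ lies in the relative boundary of $C(x,p)$ and therefore on its other extreme ray (it cannot lie on the ray through $p$, as $x,p$ are independent). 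Thus $C(x,p)=\R_{\ge 0}(x-p)+\R_{\ge 0}p$, the linear isomorphism sending $x-p\mapsto(1,0)$ and $p\mapsto(0,1)$ identifies $(C(x,p),x)$ with $(\R^2_+,(1,1))$ and $x+tp$ with $(1,1+t)$, and since the gauge on $C$ restricts to the gauge of the subcone on vectors of $C(x,p)$, we get $M((x+tp)/(x+sp))=\max\{1,(1+t)/(1+s)\}$ for all $t,s>-1$.

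Next, by Corollary~\ref{C:extreme lines -> extreme lines} the set $\Psi(\ell_x^p)$ is an extreme half-line of $K^\circ$; it contains $\Psi(x)$, so it equals $\ell_{\Psi(x)}^{q_0}$ for some extreme vector $q_0$ of $K$, and after replacing $q_0$ by $q:=q_0/M(q_0/\Psi(x))$ I may assume $M(q/\Psi(x))=1$. By (i) applied in $W$, $\ell_{\Psi(x)}^q=\{\Psi(x)+\sigma q:\sigma>-1\}$, and the same $2$-dimensional analysis gives $M((\Psi(x)+aq)/(\Psi(x)+bq))=\max\{1,(1+a)/(1+b)\}$ for $a,b>-1$. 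Since $\Psi$ is a bijection of $C^\circ$ carrying $\ell_x^p$ onto $\ell_{\Psi(x)}^q$, there is a bijection $\sigma\colon(-1,\infty)\to(-1,\infty)$ with $\sigma(0)=0$ and $\Psi(x+tp)=\Psi(x)+\sigma(t)q$ for all $t>-1$. Feeding the two gauge formulas into the gauge-reversing identity $M(v/w)=M(\Psi(w)/\Psi(v))$ with $(v,w)=(x+tp,x)$ for $t>0$ gives $1+t=\max\{1,1/(1+\sigma(t))\}$, hence $\sigma(t)=-t/(t+1)$; with $(v,w)=(x,x+tp)$ for $-1<t<0$ it gives $1/(1+t)=\max\{1,1+\sigma(t)\}$, again $\sigma(t)=-t/(t+1)$. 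This is the asserted formula $\Psi(x+tp)=\Psi(x)-\tfrac{t}{t+1}q$, and since $t\mapsto-t/(t+1)$ is a bijection of $(-1,\infty)$ onto itself, its image is exactly $\{\Psi(x)+\sigma q:\sigma>-1\}=\ell_{\Psi(x)}^q$, so $\Psi$ restricts to a bijection of $\ell_x^p$ onto $\ell_{\Psi(x)}^q$.

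The main obstacle I anticipate is the precise evaluation of the gauge $M((x+tp)/(x+sp))$: everything turns on correctly identifying $x-p$ as the extreme generator of $C(x,p)$ complementary to $p$ — so that $x$ sits at the ``centre'' $(1,1)$ of a standard simplicial cone — together with the compatibility of the gauge on $C$ with that on the subcone; the degenerate linearly-dependent case has to be dispatched separately. Once the two gauge formulas are in hand, the conclusion is a routine substitution into the gauge-reversing identity.
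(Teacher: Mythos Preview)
Your proof is correct and follows essentially the same strategy as the paper's: both arguments reduce to a computation inside the $2$-dimensional subcone $C(x,p)$, after invoking Corollary~\ref{C:extreme lines -> extreme lines} to obtain an extreme vector $q$ of $K$ with $\Psi(x+tp)=\Psi(x)+\sigma(t)q$.

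The execution differs in one respect worth noting. The paper computes the Thompson distance $d_T(x+tp,x)=|\log(t+1)|$ and uses that $\Psi$ is a $d_T$-isometry to obtain $|\log(\sigma(t)+1)|=|\log(t+1)|$; this leaves two candidates, $\sigma(t)=t$ or $\sigma(t)=-t/(t+1)$, and a separate sign argument (via antitonicity of $\Psi$) is needed to select the correct one. You instead identify $C(x,p)$ with $\R^2_+$ and feed the resulting explicit gauge formula $M((x+tp)/(x+sp))=\max\{1,(1+t)/(1+s)\}$ directly into the gauge-reversing identity $M(v/w)=M(\Psi(w)/\Psi(v))$; because this identity is asymmetric in $v,w$, it determines $\sigma(t)$ uniquely without a sign discussion. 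Your route is marginally cleaner for that reason, at the cost of spelling out the identification of $C(x,p)$ with the standard simplicial cone (which the paper leaves implicit in its $d_T$ computation). Your handling of the degenerate $\dim V=1$ case and your verification that $x-p$ lies on the second extreme ray of $C(x,p)$ are both fine.
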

\begin{proof}
Since $M(p/x)=1$ it follows that $x-p\in\partial C$, which yields $\ell_x^p=\{x+tp\colon t\in(-1,\infty)\}$. 

We know that $\Psi(x+tp)=\Psi(x)+s(t)q$ for some extreme vector $q\in K$ and scalar function $s$ by  Corollary \ref{C:extreme lines -> extreme lines}.  We can normalise $q$ so that  $M(q/\Psi(x))=1$. As $\Psi$ is antitone, we see that if $t>0$, then $s(t)<0$. Likewise, if $t<0$, then  $s(t)>0$. 

We claim that $d_T(x+tp,x)=|\log(t+1)|$ for all $t\in(-1,\infty)$. Indeed, for $t>0$ we have $M(x/x+tp)\le 1$, so $d_T(x+tp,x)=\log M(x+tp/x)$. Suppose $\lambda>0$ is such that $x+tp\le\lambda x$. Then $p\le \frac{\lambda-1}{t}x$, so $\frac{\lambda-1}{t}\ge M(p/x)=1$. Thus,  $1+t\le \lambda$, which yields $1+t\le M(x+tp/x)$. As $x+tp\le(t+1)x$, we see that $M(x+tp/x)\le t+1$, hence $d_T(x+tp,x)=\log(t+1)$. Now, if $-1<t<0$, then similarly, we have $M(x+tp/x)\le 1$, so $d_T(x+tp,x)=\log M(x/x+tp)$. Furthermore, if $\lambda>0$ is such that $x\le \lambda(x+tp)$, then $p\le \frac{\lambda-1}{-t\lambda}x$, so $\frac{\lambda-1}{-t\lambda}\ge M(p/x)=1$. Thus, $\frac{1}{t+1}\le \lambda$, which yields $\frac{1}{t+1}\le M(x/x+tp)$. Since we also have $x\le\frac{1}{t+1}(x+tp)$, it follows that $M(x/x+tp)=\frac{1}{t+1}$, hence $d_T(x+tp,x)=-\log(t+1)$.

In the same way it can be shown that $d_T(\Psi(x)+s(t)q,\Psi(x))=|\log(s(t)+1)|$, hence 
\[
|\log(s(t)+1)|=d_T(\Psi(x)+s(t)q,\Psi(x))=d_T(\Psi(x+tp),\Psi(x))=d_T(x+tp,x)=|\log(t+1)|.
\] 
This implies that either $s(t)=t$ or $s(t)=-\frac{t}{t+1}$. Since $s(t)$ and $t$ have opposite signs, we conclude that $s(t)=-\frac{t}{t+1}$ and $\Psi(x+tp)=\Psi(x)-\frac{t}{t+1}q$ as required.
\end{proof}

We call an extreme vector $p$ of the cone $C$ in an order unit space $(V,C,u)$ an \emph{atom} if $M(p/u) = 1$. Furthermore, we say that atoms $p_1, \dots, p_n$ of $C$ are \emph{orthogonal} if $p_1 + \dots + p_n \le u$. 

 Let $(V,C,u)$ and $(W,K,e)$ be complete order unit spaces with $C = C_{sc}$. Suppose there exists a gauge-reversing map $\Psi \colon C^\circ \to K^\circ$ with $\Psi(u) = e$.  Then we can consider the extension of $\Psi$ to the reverse-Funk Busemann points given in (\ref{isomboundary}), i.e., 
 \[
 \Psi(h)(x) = h(\Psi^{-1}(x))  \qquad (x\in K^\circ).
 \]
This extension maps parts onto parts by \Cref{T: d_T isom extends to Busemann}. In particular, it maps singleton reverse-Funk Busemann points bijectively onto singleton Funk Busemann points. As $C=C_{sc}$, we know that the singleton reverse-Funk Busemann points are precisely the functions of the form 
\[
h_p(x) :=\log \sup_{\phi\in S}\frac{\phi(p)}{\phi(x)}= \log M(p/x)\qquad (x \in C^\circ)
\]
where $p$ is an atom of $C$, see Corollary \ref{c:reverse funk singleton char}. So, 
\[
\Psi(h_p)(x)  =    h_p(\Psi^{-1}(x)) \qquad (x\in K^\circ)
\]
is a singleton Funk Busemann point. By \Cref{P:Funk singleton Busemann} there exists a pure state $\psi_p\in S(K)$ such that $\Psi(h_p)(x) = \log \psi_p(x)$ for $x\in K^\circ$. We will denote the induced bijection between the atoms of $C$ and the pure states of $(W,K,e)$ by $\Psi^*$, so 
\[
\Psi^*(p) = \psi_p \qquad ( \mbox{$p$ atom of $C$).}
\]

Using these observations and the previous lemma we get the following result.
\begin{theorem}\label{T:bijection atoms pure states}
    Let $(V,C,u)$ be a complete order unit space such that $C = C_{sc}$. If there is a complete order unit space $(W,K,e)$ and a gauge-reversing map $\Psi \colon C^\circ \to K^\circ$ with $\Psi(u) = e$, then 
    \begin{itemize}
        \item[$(i)$] There is a bijection between the atoms of $C$ and the atoms of $K$.
        \item[$(ii)$] The map $\Psi^*$ is a bijection between the atoms of $C$ and the pure states of $(W,K,e)$.
   \end{itemize}
Moreover, if  $p$ is an atom of $C$ and $\psi_p = \Psi^*(p)\in S(K)$, then \[
M(p/x) = \psi_p(\Psi(x))\mbox{\qquad  for all $x \in C^\circ$}\]
and $\psi_p(q)=1$,  where $q$ is the unique atom of $K$ satisfying $\Psi(u+\lambda p) = e - \frac{\lambda}{\lambda+1}q$. 
\end{theorem}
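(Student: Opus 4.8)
The plan is to deduce $(ii)$ essentially from the discussion preceding the statement, then to peel off the two ``moreover'' identities, and finally to obtain $(i)$ from \Cref{L:formula for phi on extreme line}.

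First I would record that $\Psi$ is a surjective isometry from $(C^\circ, RF)$ onto $(K^\circ, F)$: the gauge-reversing identity gives $RF(\Psi(x),\Psi(y)) = \log M(\Psi(y)/\Psi(x)) = \log M(x/y) = F(x,y)$. By \Cref{T: d_T isom extends to Busemann} its boundary extension is then a detour-metric isometry from $\mathcal{B}_{RF}(C^\circ)$ onto $\mathcal{B}_{F}(K^\circ)$, hence matches singleton parts with singleton parts. Because $C = C_{sc}$, \Cref{c:reverse funk singleton char} identifies the singleton reverse-Funk Busemann points of $C^\circ$ with the functions $h_p(x) = \log M(p/x)$, $p$ an atom of $C$, while \Cref{P:Funk singleton Busemann} identifies the singleton Funk Busemann points of $K^\circ$ with $x \mapsto \log \psi(x)$, $\psi$ a pure state of $W$. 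This is exactly what makes $\Psi^*\colon p \mapsto \psi_p$ a bijection from the atoms of $C$ onto the pure states of $W$, proving $(ii)$. Evaluating $\Psi(h_p)(z) = \log \psi_p(z)$ at $z = \Psi(x)$ and using $\Psi(h_p)(\Psi(x)) = h_p(\Psi^{-1}(\Psi(x))) = h_p(x) = \log M(p/x)$ then yields, after exponentiating, the first ``moreover'' identity $M(p/x) = \psi_p(\Psi(x))$ for all $x \in C^\circ$.

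For $(i)$ I would fix an atom $p$ of $C$. Since $M(p/u) = 1$, \Cref{L:formula for phi on extreme line} applied with $x = u$ (and $\Psi(u) = e$) yields a unique atom $q$ of $K$ such that $\Psi(u + tp) = e - \tfrac{t}{t+1}q$ for $t \in (-1,\infty)$; take this as the definition of $p \mapsto q$. Injectivity is immediate from the injectivity of $\Psi$. For surjectivity I would run the same lemma for the gauge-reversing map $\Psi^{-1}\colon K^\circ \to C^\circ$ at $e$: an atom $q$ of $K$ produces an atom $p'$ of $C$ with $\Psi^{-1}(e + sq) = u - \tfrac{s}{s+1}p'$ for $s \in (-1,\infty)$, and putting $t = -s/(s+1)$, which again runs over $(-1,\infty)$ and satisfies $s = -t/(t+1)$, rearranges this to $\Psi(u + tp') = e - \tfrac{t}{t+1}q$, so $p' \mapsto q$. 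This proves $(i)$.

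Finally, for $\psi_p(q) = 1$ I would evaluate the identity $M(p/y) = \psi_p(\Psi(y))$ along $y = u + \lambda p$: the right-hand side equals $\psi_p\bigl(e - \tfrac{\lambda}{\lambda+1}q\bigr) = 1 - \tfrac{\lambda}{\lambda+1}\psi_p(q)$ since $\psi_p$ is a state, while the left-hand side satisfies $M(p/(u + \lambda p)) \le M(p/(\lambda p)) = 1/\lambda$ because $u + \lambda p \ge \lambda p$ and $M(p/\cdot)$ is antitone; letting $\lambda \to \infty$ forces $1 - \psi_p(q) = 0$. (Alternatively one computes $M(p/(u+\lambda p)) = 1/(\lambda+1)$ directly from $M(p/u) = 1$ and cancels.) There is no single hard step once the horofunction machinery of the previous sections is available; the point needing the most care is matching the two singleton-Busemann-point characterisations across the boundary extension---reverse-Funk on the $C^\circ$ side, where the hypothesis $C = C_{sc}$ enters, and Funk on the $K^\circ$ side---and noticing that $(i)$ must be handled through \Cref{L:formula for phi on extreme line} rather than by applying $(ii)$ to $\Psi^{-1}$, since $K = K_{sc}$ is not among the hypotheses.
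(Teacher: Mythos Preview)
Your argument is correct and follows essentially the same approach as the paper: $(ii)$ and the first identity come from the boundary extension of $\Psi$ matching singleton reverse-Funk Busemann points (via $C=C_{sc}$) with singleton Funk Busemann points, and $(i)$ comes from \Cref{L:formula for phi on extreme line} at $x=u$. The only cosmetic difference is in the computation of $\psi_p(q)=1$: you evaluate $M(p/(u+\lambda p))=\psi_p\bigl(e-\tfrac{\lambda}{\lambda+1}q\bigr)$ and let $\lambda\to\infty$, whereas the paper evaluates along $p_n=\tfrac{1}{n}u+(1-\tfrac{1}{n})p$ and uses $M(p/p_n)=1$; both are limits along the same extreme half-line.
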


\begin{proof}
    Taking $x=u$ in \Cref{L:formula for phi on extreme line} the identity, $\Psi(u+\lambda p) = \Psi(u) -\frac{\lambda}{\lambda +1}q$ for $\lambda>-1$, yields a bijection between the atoms of $C$ and the atoms of $K$. Statement $(ii)$ follows directly from the discussion in the paragraph preceding the theorem. As $\log \psi_p(w)=\Psi(h_p)(w) =  h_p(\Psi^{-1}(w)) = \log M(p/\Psi^{-1}(w))$ for all $w\in K^\circ$, we find that $M(p/x) = \psi_p(\Psi(x))$   for all $x \in C^\circ$. Finally, to see that $\psi_p(q)=1$ note that for $p_n=\frac{1}{n}u+(1-\frac{1}{n})p$ with $n\geq 1$, we have $\Psi(p_n)= ne+(1-n)q$ by \Cref{L:formula for phi on extreme line}, and $M(p/p_n)=1$ for all $n\ge 1$. It now follows that $
    \psi_p(q)=\lim_{n\to \infty}\psi_p(\Psi(p_n)) = \lim _{n \to \infty}M(p/p_n)=1$. 
\end{proof}

The following theorem precisely describes how linear combinations of orthogonal atoms relate to the interior of the cone, the order unit norm, and how they correspond to orthogonality via a gauge-reversing map.

\begin{theorem}\label{T:equivalent conditions atoms}
    Let $(V,C,u)$ be a complete order unit space such that $C = C_{sc}$, and suppose that there is a complete order unit space $(W,K,e)$ with $K = K_{sc}$ and a gauge-reversing map $\Psi \colon C^\circ \to K^\circ$ with $\Psi(u) = e$. Let $p_1, \dots, p_n$ be orthogonal atoms of $C$ and let $q_1, \dots, q_n$ be the corresponding atoms of $K$ according to \Cref{L:formula for phi on extreme line} applied with $x=u$. Then the following hold:

    \begin{itemize}
        \item[$(i)$] For all $\lambda_1, \dots,\lambda_n \in \R$ we have 
        \[
        u + \sum_{k=1}^n\lambda_k p_k \in C^\circ\ \mbox{ if and only if }\ \lambda_k > -1\ \mbox{ for all } k=1,\dots,n.
        \] 
        \item[$(ii)$]  For all $\lambda_1, \dots,\lambda_n > -1$ we have 
        \[
        \Psi\Bigl(u + \sum_{k=1}^n\lambda_k p_k\Bigr) = e - \sum_{k=1}^n \frac{\lambda_k}{\lambda_k+1}q_k.
        \] 
        \item[$(iii)$] The atoms $q_1, \dots, q_n$ are orthogonal.
        \item[$(iv)$] The atoms $p_1, \dots, p_n$ are linearly independent.
        \item[$(v)$] For all $\lambda_1, \dots,\lambda_n \in \R$ we have 
        \[
        \left\| \sum_{k=1}^n \lambda_k p_k\right\|_u = \max_{1 \le k \le n} |\lambda_k|.
        \]
        \item[$(vi)$] 
        The following are equivalent:
         \begin{itemize}
             \item[$(a)$] $p_1 + \dots + p_n = u$;
             \item[$(b)$] $p_1 + \dots + p_n \in C^\circ$;
             \item[$(c)$] $q_1 + \dots + q_n = e$;
             \item[$(d)$] $q_1 + \dots + q_n \in K^\circ$.
         \end{itemize}
    \end{itemize}
\end{theorem}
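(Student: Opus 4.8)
My plan is to establish the six assertions roughly in the order $(i),(v),(iv)$, then $(ii)$, then $(iii)$, and finally $(vi)$. The engine behind almost everything is the existence, for each $j$, of a state $\phi_j\in S$ with $\phi_j(p_j)=1$ and $\phi_j(p_k)=0$ for $k\neq j$: since $M(p_j/u)=1$ and this supremum is attained on the $w^*$-compact state space, there is $\phi_j$ with $\phi_j(p_j)=1$, and then $p_1+\dots+p_n\le u$ forces the remaining values to vanish. Evaluating against the $\phi_j$, together with the estimate $-(\max_k|\lambda_k|)u\le\sum_k\lambda_k p_k\le(\max_k|\lambda_k|)\sum_k p_k\le(\max_k|\lambda_k|)u$, yields $(v)$ at once, and $(iv)$ follows from $(v)$. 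For $(i)$ the ``only if'' direction is $\phi_j$ applied to an interior point, giving $1+\lambda_j>0$; for the ``if'' direction I would write $u+\sum_k\lambda_k p_k=\mu u+(1-\mu)\bigl(u+\sum_k\tfrac{\lambda_k}{1-\mu}p_k\bigr)$ with $\mu>0$ small enough that $\tfrac{\lambda_k}{1-\mu}\ge -1$, note that the bracket equals $(u-\sum_k p_k)+\sum_k(1+\tfrac{\lambda_k}{1-\mu})p_k\in C$, and conclude that the whole sum lies in $C^\circ+C=C^\circ$.

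The heart of the theorem is $(ii)$, which I would prove by induction on $n$; the case $n=1$ is Lemma~\ref{L:formula for phi on extreme line}$(ii)$ applied with $x=u$. For $n\ge 2$, fix $\lambda_1,\dots,\lambda_n>-1$ and set $y:=u+\sum_k\lambda_k p_k$, which lies in $C^\circ$ by $(i)$. Put $x_1:=u+\sum_{k\ge 2}\lambda_k p_k\in C^\circ$; using $\phi_1$ and $x_1-p_1=(u-\sum_k p_k)+\sum_{k\ge 2}(1+\lambda_k)p_k\in C$ one checks $M(p_1/x_1)=1$, so Lemma~\ref{L:formula for phi on extreme line} gives $\Psi(y)=\Psi(x_1)-\tfrac{\lambda_1}{\lambda_1+1}B_1$ with $B_1$ an extreme vector of $K$ depending only on $\lambda_2,\dots,\lambda_n$, while the induction hypothesis identifies $\Psi(x_1)=e-\sum_{k\ge 2}\tfrac{\lambda_k}{\lambda_k+1}q_k$. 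Performing the same slicing along $p_2$ produces $B_2$ depending only on $\lambda_1,\lambda_3,\dots,\lambda_n$, and comparing the two expressions for $\Psi(y)$ gives
\[
\tfrac{\lambda_1}{\lambda_1+1}(q_1-B_1)=\tfrac{\lambda_2}{\lambda_2+1}(q_2-B_2).
\]
Since $B_1$ does not depend on $\lambda_1$ and $B_2$ does not depend on $\lambda_2$, a short separation-of-variables argument forces $q_1-B_1=\tfrac{\lambda_2}{\lambda_2+1}w$ and $q_2-B_2=\tfrac{\lambda_1}{\lambda_1+1}w$ for a single vector $w$ independent of $\lambda_1,\lambda_2$ (and possibly depending on $\lambda_3,\dots,\lambda_n$).

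The crux is to show $w=0$. For all admissible parameter values $B_1=q_1-\tfrac{\lambda_2}{\lambda_2+1}w$ and $B_2=q_2-\tfrac{\lambda_1}{\lambda_1+1}w$ are extreme vectors of $K$, and as $\lambda_1,\lambda_2$ range over $(-1,\infty)$ the coefficients sweep out the interval $(-\infty,1)$, so both $q_1-sw$ and $q_2-sw$ are extreme vectors of $K$ for all $s\in(-\infty,1)$. Now an extreme vector $r$ of a cone cannot be a convex combination $c_1a+c_3b$ with $c_1,c_3>0$ and $a,b\in K$ on distinct rays, since $0\le c_1a\le r$ and extremality of $r$ force $a,b\in\R_+ r$; applying this to three points $s_1<s_2<s_3$ of the segment $\{q_j-sw\}$ makes $q_j-s_1w,q_j-s_2w,q_j-s_3w$ pairwise proportional, which is impossible unless $w$ is a scalar multiple of $q_j$. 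Thus $w$ is a scalar multiple of $q_1$ and of $q_2$; but $q_1\neq q_2$ are distinct atoms of $K$ (they are atoms by Lemma~\ref{L:formula for phi on extreme line}, and $q_1=q_2$ would force $p_1=p_2$ by injectivity of the atom correspondence in Theorem~\ref{T:bijection atoms pure states}, contradicting $p_1+p_2\le u$), so $q_1$ and $q_2$ are not proportional and hence $w=0$. Therefore $B_2=q_2$ and $\Psi(y)=e-\sum_k\tfrac{\lambda_k}{\lambda_k+1}q_k$, completing the induction. This ``the error vector vanishes'' step, together with making the separation-of-variables bookkeeping precise, is where I expect the real work to lie.

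Finally, $(iii)$ follows from $(ii)$ by a limiting argument: by $(i)$ and $(ii)$, $\Psi(u+t\sum_k p_k)=e-\tfrac{t}{t+1}\sum_k q_k\in K^\circ$ for every $t>0$, and since $K$ is norm-closed, letting $t\to\infty$ gives $e-\sum_k q_k\in K$, i.e.\ $q_1+\dots+q_n\le e$. For $(vi)$ I would run the cycle $(a)\Rightarrow(b)\Rightarrow(c)\Rightarrow(d)\Rightarrow(a)$: the implications $(a)\Rightarrow(b)$ and $(c)\Rightarrow(d)$ are trivial; for $(b)\Rightarrow(c)$, with $p:=\sum_k p_k\in C^\circ$, homogeneity of degree $-1$ and $(ii)$ give $\Psi\bigl(p+\tfrac1t u\bigr)=t\bigl(e-\sum_k q_k\bigr)+\tfrac{t}{t+1}\sum_k q_k$ for $t>0$, and since the left side converges to $\Psi(p)$ as $t\to\infty$ (gauge-reversing maps are $d_T$-isometries, hence norm-continuous on $C^\circ$) while the right side contains the term $t(e-\sum_k q_k)$, this forces $e-\sum_k q_k=0$; and $(d)\Rightarrow(a)$ is the same argument applied to the gauge-reversing map $\Psi^{-1}\colon K^\circ\to C^\circ$, whose associated atoms under Lemma~\ref{L:formula for phi on extreme line} are exactly $p_1,\dots,p_n$ and for which $(ii)$ is available since $q_1,\dots,q_n$ are orthogonal by $(iii)$ and $K=K_{sc}$.
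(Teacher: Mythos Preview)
Your proposal is correct, and for parts $(iii)$ and $(vi)$ it is essentially the paper's argument (the same limiting computations from $(ii)$, organised slightly differently). For $(i)$, $(iv)$, $(v)$ you use the ``dual'' states $\phi_j$ with $\phi_j(p_j)=1$, $\phi_j(p_k)=0$ (which exist because $M(p_j/u)=\sup_{\phi\in S}\phi(p_j)$ is attained on the $w^*$-compact state space), whereas the paper argues directly from order inequalities; both routes are short and unproblematic.

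The genuine divergence is in $(ii)$. The paper's inductive step identifies the new extreme vector at each stage via the bijection between atoms and pure states from Theorem~\ref{T:bijection atoms pure states}: it produces pure states $\phi$ of $V$ and $\psi$ of $W$ with $M(q_{m+1}/\cdot)=\phi(\Psi^{-1}(\cdot))$ and $M(p_{m+1}/\cdot)=\psi(\Psi(\cdot))$, and uses these to compute $M(p_{m+1}/y)=1$ and to pin down $q=q_{m+1}$. This route leans on the horofunction machinery and on both hypotheses $C=C_{sc}$ and $K=K_{sc}$. Your route instead slices $y$ along two different atoms $p_1,p_2$, matches the two resulting expressions via the induction hypothesis, and reduces the discrepancy to a single vector $w$ for which $q_1-sw$ and $q_2-sw$ are extreme for every $s\in(-\infty,1)$; the convexity observation that a nontrivial convex combination of cone elements on distinct rays is never extreme then forces $w\in\R q_1\cap\R q_2=\{0\}$. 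This is strictly more elementary: it uses only Lemma~\ref{L:formula for phi on extreme line} and the definition of an extreme vector, and in fact does not need $C=C_{sc}$ or $K=K_{sc}$ at all. (Your single appeal to Theorem~\ref{T:bijection atoms pure states}, for $q_1\neq q_2$, is avoidable: if $q_1=q_2$ then $\Psi(u+tp_1)=\Psi(u+tp_2)$ for all $t>-1$, whence $p_1=p_2$ by injectivity of $\Psi$.) The price is the separation-of-variables bookkeeping, which you have handled correctly; the gain is a proof of $(ii)$ that is independent of the semi-continuity hypotheses.
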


\begin{proof}
    $(i)$: It follows from the fact that $M(p_1/u) = 1$ that if $u + \lambda_1 p_1 \in C^\circ$, then $\lambda_1 > -1$. Suppose that for some $n > 1$ we have $u + \sum_{k=1}^{n}\lambda_kp_k \in C^\circ$ and, without loss of generality, assume that $\lambda_n \le -1$. Then there exists $\eps > 0$ such that $u + \sum_{k=1}^{n-1}\lambda_kp_k \ge (1 + \eps)p_{n}$. Define $\lambda := \max\{0, \lambda_1, \dots, \lambda_{n-1}\}$ and $P := p_1 + \dots + p_{n-1}$. It follows that $P \le u - p_{n}$ and 
    \[
    (1 + \lambda)u - \lambda p_{n} \ge u + \lambda P \ge u + \sum_{k=1}^{n-1} \lambda_k p_k \ge (1+ \eps) p_{n}. 
    \]
    Hence $(1 + \lambda) u \ge (1+ \lambda + \eps)p_{n}$, and therefore $1 = M(p_{n}/u) \le \frac{1 + \lambda}{1 + \lambda + \eps} < 1$, which is absurd. We conclude that $\lambda_{k} > -1$ for all $k = 1, \dots ,n$. 
    
    Conversely, suppose that $\lambda_1, \dots, \lambda_n > -1$. Let $\lambda := \min_{1 \le k \le n}\lambda_k > -1$. If $\lambda \ge 0$, then $u + \sum_{k=1}^{m}\lambda_kp_k \ge u$ and if $-1 < \lambda < 0$, then 
    \[
    u + \sum_{k=1}^{m}\lambda_kp_k \ge u + \lambda \sum_{k=1}^n p_k \ge (1 + \lambda)u.
    \]

    $(ii)$: We will prove this statement by induction. The case where $m=1$ follows from \Cref{L:formula for phi on extreme line}. Suppose that 
    \[
    \Psi\Bigl(u + \sum_{k=1}^m\lambda_k p_k\Bigr) = e - \sum_{k=1}^m \frac{\lambda_k}{\lambda_k+1}q_k
    \]
    for some $1\le m < n$. Let $\lambda_1, \dots, \lambda_{m+1} > -1$ and define $y := u + \sum_{k=1}^m \lambda_k p_k$. Then by \Cref{T:bijection atoms pure states} there is a pure state $\phi$ of $V$ such that $M(q_{m+1}/x) = \phi(\Psi^{-1}(x))$ for all $x \in K^\circ$. 
    We will show that $\phi(p_{m+1}) = 1$. First note that $M(q_{m+1}/2e-q_{m+1}) = 1$, so 
    \begin{align*}
    1 &= M(q_{m+1}/2e - q_{m+1}) = {\textstyle\frac{1}{2}}M(q_{m+1}/e-{\textstyle\frac{1}{2}}q_{m+1}) = {\textstyle\frac{1}{2}}M(q_{m+1}/\Psi(u+p_{m+1})) \\& = {\textstyle\frac{1}{2}}\phi(u+p_{m+1}) = \textstyle{\frac{1}{2}} + \textstyle{\frac{1}{2}}\phi(p_{m+1}).
     \end{align*}
    In particular, it follows from $p_k \le u - p_{m+1}$ for all $k \neq m+1$ that $0 \le \phi(p_k) \le \phi(u - p_{m+1}) = 0$, and therefore $\phi(p_{k}) = 0$ for all $k \neq m+1$. The the fact that $\phi(p_{m+1}) = 1$ also implies
    \[
    M(p_{m+1}/y+\lambda_{m+1}p_{m+1}) = \frac{1}{\lambda_{m+1} +1}.
    \]
    
    Next, we will show that $M(p_{m+1}/y) = 1$. Indeed, if $\lambda > 0$ is such that $p_{m+1} \le \lambda y$, then we see that $1 = \phi(p_{m+1}) \le \lambda \phi(y) = \lambda$, showing that $M(p_{m+1}/y) \ge 1$. On the other hand we have
    \[
    u + \sum_{k=1}^m \lambda_k p_k \ge u - \sum_{k=1}^m p_k \ge p_{m+1} + \sum_{k=1}^m p_k - \sum_{k=1}^m p_k = p_{m+1},
    \]
    which yields $M(p_{m+1}/y) \le 1$. 
    
    By \Cref{L:formula for phi on extreme line} it now follows that there is an extreme vector $q$ of $K$ such that for $\lambda_{m+1} > -1$ we have 
    \[
    \Psi(y + \lambda_{m+1}p_{m+1}) = \Psi(y) - \frac{\lambda_{m+1}}{\lambda_{m+1}+1}q.
    \] To finish the induction step, we must show that $q = q_{m+1}$. Again, by \Cref{T:bijection atoms pure states} there is a pure state $\psi$ of $W$ such that $M(p_{m+1}/x) = \psi(\Psi(x))$ for all $x \in C^\circ$. From
    \[
    \frac{1}{\lambda_{m+1}+1} = M(p_{m+1}/y + \lambda_{m+1}p_{m+1}) = 1 - \sum_{k=1}^m\frac{\lambda_k}{\lambda_k + 1}\psi(q_k) - \frac{\lambda_{m+1}}{\lambda_{m+1} + 1}\psi(q)
    \]
    and the fact that $\lambda_1, \dots, \lambda_{m+1} > -1$ were chosen arbitrarily, we conclude that $\psi(q_k) = 0$ for all $k = 1, \dots, m$ and $\psi(q) = 1$. By \Cref{T:bijection atoms pure states} the atom $q$ corresponds to an atom $p$ of $C$. For $\lambda > -1$ we have
    \[
    M(p_{m+1}/u - {\textstyle\frac{\lambda}{\lambda +1}}p) = M(p_{m+1}/\Psi^{-1}(e + \lambda q)) = \psi(e + \lambda q) = 1+ \lambda,
    \]
    which implies that $p_{m+1} \le (1+\lambda)u - \lambda p$ for all $\lambda > -1$. Letting $\lambda \downarrow -1$ shows that $p_{m+1} \le p$ and as both are atoms, it follow that $p_{m+1} = p$. We conclude that $q = q_{m+1}$ by \Cref{T:bijection atoms pure states}.

    $(iii)$: By part $(ii)$ it follows that for all $\lambda > -1$
    \[
    \Psi\Bigl(u + \lambda \sum_{k=1}^n p_k\Bigr) = e - \frac{\lambda}{\lambda +1}\sum_{k=1}^n q_k \in K^\circ,
    \]
    so letting $\lambda \to \infty$ yields $q_1 + \dots + q_n \le e$, so that $q_1, \dots, q_n$ are orthogonal.

    $(iv)$: Suppose $\lambda_1 p_1 + \dots + \lambda_n p_n = 0$. Then for all $\mu \in \R$ we have that 
    \[
    u + \sum_{k=1}^n\mu\lambda_k p_k = u \in C^\circ
    \]
    so that $\mu\lambda_k > -1$ for all $k = 1, \dots, n$ by part $(i)$. This forces $\lambda_k = 0$ for all $k = 1, \dots, n$.

    $(v)$: Let $\lambda_1, \dots, \lambda_n \in \R$ be arbitrary but fixed. If $\lambda > 0$ is such that $-\lambda u \le \sum_{k=1}^n \lambda_k p_k \le \lambda u$, then  for $\eps > 0$ it follows that 
    \[
    (\lambda + \eps)u \pm \sum_{k=1}^n \lambda_k p_k \in C^\circ.
    \]
    By part $(i)$, we must have that $\pm \lambda_k < \lambda + \eps$, so that $\pm \lambda_k \le \lambda$ for all $k = 1, \dots, n$ by letting $\eps \downarrow 0$. Hence $\max_{1 \le k \le n}|\lambda_k| \le \lambda$ and $\max_{1 \le k \le n}|\lambda_k| \le \|\sum_{k=1}^n \lambda_k p_k\|_u$. On the other hand, for $\lambda := \max_{1 \le k \le n}|\lambda_k|$, we have that $\lambda u \le \sum_{k=1}^n \lambda_k p_k \le \lambda u$ as $p_1, \dots, p_n$ are orthogonal, thus $\|\sum_{k=1}^n \lambda_k p_k\|_u \le \max_{1 \le k \le n}|\lambda_k|$.

    $(vi)$: The fact that $(a)$ implies $(b)$ is clear. Define $P := p_1 + \dots + p_n \in C^\circ$ and $Q := q_1 + \dots +q_n$, then there exists $\eps > 0$ such that $\eps u \le P$ and for all $\lambda > 0$ it follows that $u + \eps \lambda u \le u + \lambda P$. Hence 
    \[
    e-\frac{\lambda}{\lambda +1}Q = \Psi(u + \lambda P) \le \Psi((1+\eps\lambda)u) = \frac{1}{\eps\lambda +1}e,
    \]
    so letting $\lambda \to \infty$ yields $e \le Q$. Since $q_1, \dots, q_n$ are orthogonal by part $(iii)$, we also  have $Q\le e$ showing that $(b)$ implies $(c)$. The fact that $(c)$ implies $(d)$ is again clear. If $Q \in K^\circ$, then there exists $\eps > 0$ such that $\eps e \le Q$ and applying the same argument for proving that $(b)$ implies $(c)$ but now for $\Psi^{-1}$ yields $Q = e$. Note that $\Psi(u + P) = e - \frac{1}{2}Q = \frac{1}{2}e = \Psi(2u)$ by part $(ii)$, so $P = u$ as $\Psi$ is injective, showing that $(d)$ implies $(a)$.
 \end{proof}

We say that in an order unit space $(V,C,u)$ two vectors $x, y \in C^\circ$ are \emph{connected by a path of finite piecewise extreme directions} if there are atoms $p_1, \dots, p_n$ of $C$  and $x = x_0, x_1, \dots, x_n = y$ for some $n \ge 1$ such that $x_k \in C^\circ \cap \{x_{k-1} + tp_k \colon t \in \R\}\setminus\{x_{k-1}\}$ for all $k=1, \dots, n$.  

\begin{lemma}\label{L:char span ext(C) = V}
    Let $(V,C,u)$ be an order unit space. Then $V$ is the span of the atoms of $C$ if and only if for every two $x, y \in C^\circ$ there is a path of finite piecewise extreme directions connecting $x$ and $y$.
\end{lemma}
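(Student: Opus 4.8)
The plan is to prove the two implications separately, the crucial structural fact in both being that $C^\circ$ is open and convex for the order unit norm. Throughout one may assume $V\neq\{0\}$, the zero space being of no interest.

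For the implication ``$\Leftarrow$'', suppose every pair of points of $C^\circ$ is joined by a path of finite piecewise extreme directions, and let $v\in V$. If $v=0$ it lies in the span of the atoms trivially, so assume $v\neq 0$. Since $u\in C^\circ$ and $C^\circ$ is open, there is an $\eps>0$ with $u+\eps v\in C^\circ$, and by hypothesis there are atoms $p_1,\dots,p_n$ of $C$ and points $u=x_0,x_1,\dots,x_n=u+\eps v$ with $x_k-x_{k-1}=t_kp_k$ for some $t_k\neq 0$. Telescoping gives $\eps v=\sum_{k=1}^n t_kp_k$, so $v=\sum_{k=1}^n(t_k/\eps)p_k$ belongs to the span of the atoms of $C$. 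Hence $V$ equals that span, the reverse inclusion being obvious.

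For the implication ``$\Rightarrow$'', suppose $V$ is spanned by the atoms of $C$ and let $x,y\in C^\circ$. If $x=y$, pick any atom $p$ of $C$ and a small $t\neq 0$ with $x+tp\in C^\circ$; then $x,\,x+tp,\,x$ is the required path, so assume $x\neq y$. Write $y-x=\sum_{k=1}^n c_kp_k$ with $p_1,\dots,p_n$ atoms of $C$ and $c_k\in\R$. The naive path $x,\,x+c_1p_1,\,x+c_1p_1+c_2p_2,\dots$ need not remain in $C^\circ$, so I will interpolate along the straight segment instead. Put $z(s):=x+s(y-x)$ for $s\in[0,1]$. Since $C^\circ$ is convex, $z([0,1])\subseteq C^\circ$, and since $z([0,1])$ is compact and $C^\circ$ is open there is a $\delta>0$ such that every point of $V$ within $\delta$ (in the order unit norm) of $z([0,1])$ lies in $C^\circ$. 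Choose a partition $0=s_0<s_1<\dots<s_m=1$ fine enough that $(s_j-s_{j-1})\sum_{k=1}^n|c_k|\,\|p_k\|_u<\delta$ for every $j$. For each $j$, traverse the segment from $z(s_{j-1})$ to $z(s_j)=z(s_{j-1})+(s_j-s_{j-1})\sum_{k=1}^n c_kp_k$ in $n$ steps, adding $(s_j-s_{j-1})c_kp_k$ at the $k$-th step and omitting the step whenever $c_k=0$. Every point so produced differs from $z(s_{j-1})\in z([0,1])$ by at most $(s_j-s_{j-1})\sum_{k=1}^n|c_k|\,\|p_k\|_u<\delta$, hence lies in $C^\circ$, and each step is a nonzero scalar multiple of an atom of $C$. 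Concatenating these steps over $j=1,\dots,m$ produces a path of finite piecewise extreme directions from $x$ to $y$.

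The only subtle point, and the step I would treat most carefully, is keeping the interpolating path inside $C^\circ$ in the ``$\Rightarrow$'' direction: compactness of the straight segment $z([0,1])$ together with openness of $C^\circ$ supplies a uniform $\delta$-tube around the segment contained in $C^\circ$, and a sufficiently fine partition guarantees that all the auxiliary points created by breaking each short sub-segment into atom directions remain inside this tube. Everything else is routine bookkeeping.
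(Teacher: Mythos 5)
Your proof is correct, and the backward implication (telescoping $u+\eps v$ along the path) is the same as the paper's. For the forward implication you take a genuinely different route. The paper avoids all topology: it writes $y-x=\sum_k\lambda_kp_k$ with the nonzero coefficients arranged in \emph{decreasing} order, so that every partial sum $x_k=x+\sum_{i\le k}\lambda_ip_i$ either dominates $x$ (while the coefficients encountered so far are positive) or dominates $y$ (once they turn negative); since $C^\circ+C\subseteq C^\circ$, each $x_k$ then lies in $C^\circ$ automatically, and the naive one-pass path already works. Your argument instead runs the naive path in many small installments along the segment $[x,y]$, using convexity of $C^\circ$, compactness of the segment, and openness of $C^\circ$ to get a uniform $\delta$-tube that absorbs the detours; this is correct (the intermediate points after the $k$-th sub-step sit within $(s_j-s_{j-1})\sum_i|c_i|\|p_i\|_u<\delta$ of $z(s_{j-1})$, and each sub-step is a nonzero multiple of an atom). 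The trade-off: the paper's reordering trick is shorter, purely order-theoretic, and produces a path with only $n$ legs, whereas your subdivision argument needs the norm topology and produces up to $mn$ legs, but requires no cleverness in choosing the decomposition and would survive in settings where one cannot reorder (e.g.\ if the directions had to be traversed in a prescribed order). Your explicit treatment of the degenerate case $x=y$ is a minor point the paper glosses over.
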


\begin{proof}
    If $V$ is the span of the atoms of $C$, then for any two distinct $x, y \in C^\circ$ we can write $y - x = \sum_{i=1}^n \lambda_i p_i$ for some $n \ge 1$, where $p_1, \dots, p_n$ are linearly independent atoms of $C$ and none of the $\lambda_k$ are zero and in decreasing order. 
    Let $I := \{i \colon \lambda_i > 0\}$ and $J := \{i \colon \lambda_i < 0\}$. Define $x_0 := x$ and $x_k := x + \sum_{i=1}^k\lambda_i p_i$ for $k = 1, \dots, n$. It follows that $x_k \in C^\circ$ for all $k$. Indeed, if $k \in I$, then $x_k \ge x$, and if $k \in J$, then $x_k \ge y$. Hence $x_k \in C^\circ \cap \{x_{k-1} + tp_k \colon t \in \R\}\setminus\{x_{k-1}\}$ for all $k = 1, \dots , n$, so $x$ and $y$ are connected by a path of finite piecewise extreme directions.

    Conversely, let $x \in V\setminus\{0\}$. Then there is an $\eps > 0$ such that $u \neq u + \eps x \in C^\circ$. Since $u$ and $u+\eps x$ are connected by a path of finite extreme directions, there are atoms $p_1, \dots, p_n$ such that $\eps x = \sum_{k=1}^n \lambda_k p_k$, showing that $V$ is spanned by the atoms of $C$. 
\end{proof}

\begin{corollary}\label{C: W is span of atoms}
    Let $(V,C,u)$ be an order unit space such that $V$ is the span of the atoms of $C$. If there exists an order unit space $(W,K,e)$ and a gauge-reversing map $\Psi \colon C^\circ \to K^\circ$, then $W$ is the span of the atoms of $K$. 
\end{corollary}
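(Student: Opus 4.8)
The plan is to reduce the statement to the path characterisation of \Cref{L:char span ext(C) = V} and to transport paths through $\Psi$ using \Cref{L:formula for phi on extreme line}. By \Cref{L:char span ext(C) = V} applied to $(W,K,e)$, it suffices to show that any two points $x',y'\in K^\circ$ are connected by a path of finite piecewise extreme directions.

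First I would pull the two points back: set $x:=\Psi^{-1}(x')$ and $y:=\Psi^{-1}(y')$ in $C^\circ$ (recall that a gauge-reversing map is by definition a bijection). Since $V$ is the span of the atoms of $C$, \Cref{L:char span ext(C) = V} provides atoms $p_1,\dots,p_n$ of $C$ and points $x=x_0,x_1,\dots,x_n=y$ with $x_k\in C^\circ\cap\{x_{k-1}+tp_k\colon t\in\R\}\setminus\{x_{k-1}\}$ for every $k=1,\dots,n$.

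Next I would push this path forward through $\Psi$. Fix $k$. Rescaling $p_k$ by the positive scalar $M(p_k/x_{k-1})$ does not change the line it spans, so we may assume $M(p_k/x_{k-1})=1$ and apply \Cref{L:formula for phi on extreme line} with $x=x_{k-1}$: there is an extreme vector $q_k$ of $K$ such that $\Psi$ maps the extreme half-line $\ell_{x_{k-1}}^{p_k}$ bijectively onto the extreme half-line $\ell_{\Psi(x_{k-1})}^{q_k}$. Since $x_k$ lies on the line $\{x_{k-1}+tp_k\colon t\in\R\}$ and in $C^\circ$, it lies in $\ell_{x_{k-1}}^{p_k}$, hence $\Psi(x_k)\in\ell_{\Psi(x_{k-1})}^{q_k}\subseteq\{\Psi(x_{k-1})+tq_k\colon t\in\R\}$; moreover $\Psi(x_k)\in K^\circ$ and $\Psi(x_k)\neq\Psi(x_{k-1})$ because $\Psi$ is injective. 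Rescaling each $q_k$ so that $M(q_k/e)=1$ turns it into an atom of $K$ without altering the line it spans. Thus $x'=\Psi(x_0),\Psi(x_1),\dots,\Psi(x_n)=y'$ is a path of finite piecewise extreme directions in $K^\circ$, with atoms $q_1,\dots,q_n$ of $K$, and \Cref{L:char span ext(C) = V} applied to $(W,K,e)$ yields that $W$ is the span of the atoms of $K$.

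The argument is essentially bookkeeping once \Cref{L:formula for phi on extreme line} and \Cref{L:char span ext(C) = V} are in hand; the only points requiring a little care are the two rescalings -- of $p_k$ so that the hypothesis $M(p_k/x_{k-1})=1$ of \Cref{L:formula for phi on extreme line} is met, and of $q_k$ so that it becomes a normalised atom -- together with the observation that neither rescaling changes the straight line through the relevant points, so that the structural condition in the definition of a path of finite piecewise extreme directions is preserved.
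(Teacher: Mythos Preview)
Your proof is correct and follows essentially the same approach as the paper's own proof, which simply states that \Cref{L:formula for phi on extreme line} and bijectivity of $\Psi$ imply that distinct $\Psi(x),\Psi(y)$ are connected by a path of finite piecewise extreme directions if and only if $x,y$ are, and then invokes \Cref{L:char span ext(C) = V}. You have unpacked exactly this argument, including the two harmless rescalings needed to match the normalisation hypotheses.
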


\begin{proof}
    It follows from \Cref{L:formula for phi on extreme line} and the fact that $\Psi$ is a bijection that distinct $\Psi(x), \Psi(y) \in K^\circ$ are connected by a path of finite extreme directions if and only if $x$ and $y$ are. By \Cref{L:char span ext(C) = V} all distinct $x, y \in C^\circ$ are connected by a path of finite extreme directions, hence $W$ is spanned by the atoms of $K$.
\end{proof}

It turns out that a reflexive order unit space is spanned by the atoms of the cone in the presence of a gauge-reversing map. The existence of a gauge-reversing map is essential, as illustrated by the examples given below.

\begin{theorem}\label{T:reflexive frames}
    Let $(V,C,u)$ and $(W,K,e)$ be order unit spaces such that $V$ is reflexive. If $\Psi \colon C^\circ \to K^\circ$ is a gauge-reversing map, then $C^\circ$ is contained in the positive span of the atoms of $C$, so in particular we have that $V$ is spanned by the atoms of $C$.
\end{theorem}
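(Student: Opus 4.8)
The plan is to build, for each $x\in C^\circ$, a finite spectral-type decomposition into atoms by repeatedly ``peeling off a largest atom''.

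\textbf{Reductions and finiteness.} First I would replace the order unit $e$ of $W$ by $\Psi(u)$; this leaves every gauge $M(\cdot/\cdot)$ unchanged, so we may assume $\Psi(u)=e$. Since $V$ is reflexive it is complete and $C=C^{**}=C_{sc}$, and in fact $W$ is automatically complete: $(C^\circ,d_T)$ is complete because $V$ is a Banach space, $\Psi$ is a $d_T$-isometry onto $K^\circ$, hence $(K^\circ,d_T)$ is complete; passing to the completion $\widetilde W$ and using that on $\widetilde K^\circ$ the $d_T$-topology equals the norm topology while $K^\circ$ is norm-dense in $\widetilde K^\circ$ forces $K^\circ=\widetilde K^\circ$, so $W=\widetilde K^\circ-\widetilde K^\circ=\widetilde W$. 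Thus \Cref{T:bijection atoms pure states} applies. Next I would record a finiteness fact: if there were infinitely many pairwise orthogonal atoms $p_1,p_2,\dots$ of $C$, then by \Cref{T:equivalent conditions atoms}$(v)$ the map $(\lambda_k)_k\mapsto\sum_k\lambda_k p_k$ (the partial sums are Cauchy in $V$, hence convergent) would be an isometric embedding of $c_0$ into $V$, contradicting reflexivity. So there is a finite bound $N$ on the cardinality of orthogonal families of atoms of $C$.

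\textbf{A largest atom and the induction.} Fix $x\in C^\circ$ and put $y:=\Psi(x)$. By \Cref{T:bijection atoms pure states}, $M(p/x)=\psi_p(y)$ for every atom $p$, where $p\mapsto\psi_p$ is a bijection onto the pure states of $W$. The $w^*$-continuous affine function $\hat y$ attains its minimum over the $w^*$-compact convex state space $S(W)$ at an extreme point $\psi_{p_1}$, and that minimum equals $M(e/y)^{-1}=M(\Psi(u)/\Psi(x))^{-1}=M(x/u)^{-1}=\|x\|_u^{-1}$. Hence $M(p_1/x)=\|x\|_u^{-1}$, so $\|x\|_u p_1\le x$ with $\|x\|_u$ the largest admissible coefficient and $x_1:=x-\|x\|_u p_1\in\partial C$. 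If $x_1=0$ we are done; otherwise I would recurse on the face $F$ of $C$ generated by $x_1$: one checks that $(\Span F,F,x_1)$ is again an order unit space, reflexive because $\Span F$ is a closed subspace of $V$, with atoms exactly the atoms of $C$ lying in $F$; that $\Psi$ restricts to a gauge-reversing map from $F^\circ$ onto the relative interior of the corresponding face of $K$ (again complete, again with $C_{sc}=C$ by reflexivity); and that passing from $C$ to $F$ strictly decreases the maximal size of an orthogonal family of atoms. Iterating and using the bound $N$, the process terminates and writes $x$ as a finite positive combination of atoms of $C$. Then $C^\circ$ lies in the positive span of the atoms of $C$, and since $V=C^\circ-C^\circ$, $V$ is spanned by the atoms of $C$.

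\textbf{Main obstacle.} The routine parts are the reductions, the $c_0$-argument, and the identification of $p_1$ via \Cref{T:bijection atoms pure states}. The real difficulty is the inductive step in infinite dimensions: verifying that $x_1$ lies in the norm-interior of $F$, that $\Psi$ genuinely restricts to a gauge-reversing bijection between $F^\circ$ and the relevant face of $K^\circ$, and --- the crux --- that the rank (supremal size of an orthogonal atom family) drops by at least one on passing to the complementary face of an atom. The rigidity that makes this possible should come from \Cref{L:formula for phi on extreme line}: the extreme half-line $\ell_x^{p_1}$ is mapped by $\Psi$ to an explicit extreme half-line in $K^\circ$, which pins down the image of the face of $x_1$ and lets one transport orthogonality statements between $C$ and $K$. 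Reflexivity of $V$ is essential in two places --- to bound the number of peeling steps, and to keep $C_{sc}=C$ on every face so that \Cref{T:bijection atoms pure states} remains available throughout the induction.
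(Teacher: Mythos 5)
Your preliminary steps are sound: normalising so that $\Psi(u)=e$, checking completeness of $W$ before invoking \Cref{T:bijection atoms pure states} (a point the paper's own proof glosses over), the $c_0$-argument excluding infinite orthogonal families, and the identification of an atom $p_1$ with $M(p_1/x)=\|x\|_u^{-1}$ (hence $\|x\|_u p_1\le x$ and $x-\|x\|_u p_1\in\partial C$) via Krein--Milman applied to the minimising face of $\widehat{\Psi(x)}$ on $S(W)$. But the inductive step, which you yourself identify as the crux, does not work as set up. First and most seriously, $\Psi$ is defined only on $C^\circ$, whereas the face $F$ generated by $x_1=x-\|x\|_u p_1$ lies entirely in $\partial C$; there is no map to ``restrict'' to $F^\circ$, and manufacturing an induced gauge-reversing map on a proper face would require extending $\Psi$ to boundary faces, which is nowhere justified. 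Second, $(\Span F, F, x_1)$ carries the order unit norm $\|\cdot\|_{x_1}$, which dominates but need not be equivalent to the restriction of $\|\cdot\|_u$, and $\Span F$ need not be closed in $V$; so neither reflexivity nor completeness passes to the face in the way you assert, and both are needed to keep \Cref{T:bijection atoms pure states} available. Third, the uniform bound $N$ does not follow from the nonexistence of an infinite orthogonal family (arbitrarily large orthogonal families need not be nested), and the claim that this bound drops on passing to the complementary face is precisely the step you leave unproved.

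The paper's proof never leaves $C^\circ$ and never touches faces of $C$. It shows that any orthogonal family $p_1,\dots,p_n$ of atoms with $\sum_k p_k<u$ can be enlarged: by \Cref{T:equivalent conditions atoms}$(vi)$ the corresponding sum $\sum_k q_k$ of atoms of $K$ lies in $\partial K$, so Krein--Milman applied to the $w^*$-closed face $\{\phi\in S(W)\colon \phi(\sum_k q_k)=0\}$ produces a pure state whose corresponding atom $p$ satisfies $M(p/u+\lambda\sum_k p_k)=1$ for all $\lambda>-1$, whence $p+\sum_k p_k\le u$. A single such increasing chain cannot be infinite by the $c_0$-argument, so $u$ is a finite sum of orthogonal atoms; running the identical argument with an arbitrary $v\in C^\circ$ as the order unit writes $v$ as a finite sum of extreme vectors. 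This change of order unit is the device that replaces your face recursion, and is the natural way to repair your ``peeling'' strategy: rather than descending into the face of $x_1$, restart the whole argument on $C^\circ$ with $x$ itself as the order unit.
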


\begin{proof}
    By \Cref{T:bijection atoms pure states} there is a bijection between the pure states of $W$ and the atoms of $C$ as $C = C_{sc}$, since $V$ is reflexive. Hence, $C$ contains atoms. Next, we will show that any orthogonal set of atoms $\{p_1, \dots, p_n\}$ of $C$ such that $p_1 + \dots + p_n < u$ can be extended to a larger orthogonal set. Let $\{q_1, \dots, q_n\}$ be the corresponding set of atoms in $K$ by \Cref{L:formula for phi on extreme line}. By \Cref{T:equivalent conditions atoms}$(vi)$ it follows that $Q := q_1 + \dots + q_n \in \partial K$. Hence, by the Krein-Milman theorem, there is a pure state $\psi$ of $W$ such that $\psi(Q) = 0$, as $\{\phi \in S(W) \colon \phi(Q) = 0\}$ is a non-empty $w$*-closed face. By \Cref{T:bijection atoms pure states} there is an atom $p$ of $C$ such that $M(p/x) = \psi(\Psi(x))$ for all $x \in C^\circ$. For all $\lambda > -1$ it follows from \Cref{T:equivalent conditions atoms} that 
    \[
    M\Bigl(p/u + \lambda\sum_{k=1}^np_k\Bigr) = \psi\Bigl(e-\frac{\lambda}{\lambda + 1}\sum_{k=1}^n q_k\Bigr) = 1 - \frac{\lambda}{\lambda + 1}\sum_{k=1}^n \psi(q_k) = 1
    \]
    as $\psi(q_k) = 0$ for all $k =1 , \dots, n$. Hence $p \le u + \lambda \sum_{k=1}^n p_k$ and letting $\lambda \downarrow -1$, we see that $p + \sum_{k=1}^n p_k \le u$. This shows that we can extend such an orthogonal set of atoms with one more atom as long as they do not sum to $u$. This procedure must terminate after finitely many steps, since otherwise $E := \mathrm{Span}\{p_1, p_2, \dots\}$ with the order unit norm would be isometrically isomorphic to $c_{00}$, the sequences that are eventually zero, equipped with the maximum norm by \Cref{T:equivalent conditions atoms}. Since $V$ is complete this means that there would be an isometric copy of the closure of $c_{00}$ in $V$, which equals $c_0$, the sequences with limit $0$. This space is not reflexive for the maximum norm, which contradicts the fact that $V$ is reflexive. We find that there are finitely many orthogonal atoms such that $p_1 + \dots + p_N = u$. If we now consider a different order unit $v$ of $V$, then we would find finitely many orthogonal atoms for the norm induced by $v$ that sum to $v$, hence $C^\circ$ is contained in the positive span of the atoms of $C$.      
\end{proof}

The following example shows that there exists a reflexive Banach space that is not spanned by the extreme vectors of the unit ball. This construction is inspired by the \href{https://mathoverflow.net/q/488570}{answer}\footnote{https://mathoverflow.net/q/488570} to a question on MathOverflow provided by Mikael de la Salle. 

\begin{example}\label{E:construction of X}
    For each $n \ge 1$ let $X_n$ be a reflexive Banach space for which there exists a unit vector $w_n$ such that $w_n$ is not a linear combination of $n-1$ extreme vectors of the unit ball. For example $X_n:= (\R^n,\|\cdot\|_1)$, where $\|\cdot\|_1$ denotes the $\ell_1$-norm, and then consider $w_n := (\frac{1}{n}, \dots, \frac{1}{n})$. Consider the $\ell_2$-direct sum $X := \ell_2-\bigoplus_{n=1}^\infty X_n$. Note that $X$ is reflexive by \cite[Ch. IV, Section 5.8]{Schaefer}. If $x = (x_n) \in X$ is an extreme point of the unit ball, then each $x_n$ is a scalar multiple of an extreme point in the unit ball of $X_n$. Indeed, if $\|x_n\|^{-1}x_n$ is not an extreme point of the unit ball in $X_n$, then there are distinct $y_n, z_n$ in the unit ball of $X_n$ such that $\|x_n\|^{-1}x_n = \frac{1}{2}(y_n + z_n)$, so $x_n = \frac{1}{2}\|x_n\|(y_n + z_n)$ and the vectors $y := x - (x_n + \|x_n\|y_n) e_n$ and $z := x - (x_n + \|x_n\|z_n)e_n$ are still both in the unit ball of $X$, for which we have $x = \frac{1}{2}(y+z)$. Consider the vector $w := (\frac{1}{n}w_n)_n$, which defines a vector in $X$. Let $x = \sum_{k=1}^n \lambda_k x_k$ be a linear combination of extreme vectors of the unit ball of $X$. Then for any $m > n$, the component of $x$ in $X_m$ is a linear combination of at most $n$ extreme vectors of the unit ball in $X_n$. Hence $w$ is not contained in the linear span of extreme vectors of the unit ball of $X$.   
 \end{example}

Using this reflexive Banach space $X$, we can construct a reflexive order unit space that is not spanned by the atoms of the cone.

 \begin{example}
    Let $X$ be as in \Cref{E:construction of X} and consider the space $V := \R \oplus X$ as in \Cref{E:spin C=C_sc}. We will show that the order unit norm on $V$ coincides with the $\ell_1$ norm on $\R \oplus X$. Indeed, 
    \begin{align*}
    \norm{(\lambda, x)} &= \inf_\mu \{ (-\mu,0) \leq (\lambda,x) \leq (\mu,0) \} = \inf_\mu \{ (\mu + \lambda,x) \geq 0 \mbox{ and } (\mu - \lambda, -x) \geq 0 \} \\
    &= \inf_\mu \{ \mu + \lambda \geq \norm{x} \mbox{ and } \mu - \lambda \geq \norm{x} \} = \inf_\mu \{ \mu \geq \norm{x} \pm \lambda \} \\
    &= \inf_\mu \{ \mu \geq \norm{x} + |\lambda| \} = \norm{x} + |\lambda|. 
\end{align*}
    This shows that $V = \R\oplus_1 X$ is reflexive. The atoms of the cone $C$ are of the form $\frac{1}{2}(1,x)$ where $x$ is an extreme vector of the unit ball of $X$. It follows that $(0,w)$, where $w$ is as in the previous example, cannot be written as a linear combination of atoms of $C$. Hence $V$ is a reflexive order unit space that is not spanned by the atoms of $C$.
\end{example}

The example below shows that there is an order unit space that is spanned by the atoms of the cone, but is itself not reflexive.

\begin{example}
    Suppose that $X$ is a non-reflexive and separable Banach space. Then $X$ can be renormed with an equivalent strictly convex norm by \cite[Theorem~9]{Clar}. Consider the space $V := \R \oplus X$ as in \Cref{E:spin C=C_sc}. Then $C = C_{sc}$ and since the renormed unit ball of $X$ is strictly convex, the space $V$ is spanned by the atoms of $C$. However, note that $V$ is not reflexive. 
\end{example}

\section{Homogeneity of the cone} \label{sec:hom}
The main goal of this section is to show that if there exists a gauge-reversing map $\Psi\colon C^\circ\to K^\circ$, then $C^\circ$ is a homogeneous cone, i.e., $\mathrm{Aut}(C)=\{T\in \mathrm{GL}(V)\colon T(C) = C\}$ acts transitively on $C^\circ$. Using $\Psi^{-1}$ we also get that $K^\circ$ is homogeneous. \Cref{L:formula for phi on extreme line} allows us to analyse directional derivatives of gauge-reversing maps. 

\begin{proposition}\label{P:derivative antitone map on extremes}
Let $\Psi\colon C^\circ\to K^\circ$ be a gauge-reversing map, $x\in C^\circ$, and $r_1,\ldots,r_n$ be  linearly independent extreme vectors of $C$ with  $M(r_k/x)=1$ for $k=1,\ldots,n$. If $q_1,\ldots,q_n$ are extreme vectors of $K$ such that $\Psi$ maps the extreme half-line $\ell_x^{r_k}$ onto the extreme half-line $\ell_{\Psi(x)}^{q_k}$  and $M(q_k/\Psi(x))=1$  for $k=1,\ldots,n$, then 
\[
\lim_{t\to 0}\frac{1}{t}\left(\Psi\Bigl(x+t\sum_{k=1}^n\lambda_k r_k\Bigr)-\Psi(x)\right)=-\sum_{k=1}^n\lambda_k q_k.
\]
\end{proposition}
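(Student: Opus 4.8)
The plan is to induct on $n$, using \Cref{L:formula for phi on extreme line} to peel off one extreme direction at a time, and the continuity of $\Psi$ (gauge-reversing maps are $d_T$-isometries, and on $C^\circ$ the $d_T$-topology coincides with the norm topology) together with the joint continuity of the gauge (\Cref{L:M function is continuous}) to control the resulting error terms. The base case $n=1$ is immediate from \Cref{L:formula for phi on extreme line}: since $M(r_1/x)=1$ we have $\Psi(x+t\lambda_1 r_1)=\Psi(x)-\frac{t\lambda_1}{t\lambda_1+1}q_1$, hence $\frac1t\bigl(\Psi(x+t\lambda_1 r_1)-\Psi(x)\bigr)=-\frac{\lambda_1}{t\lambda_1+1}q_1\to-\lambda_1 q_1$ as $t\to 0$.

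\textbf{Inductive step.} Assume the statement for $n-1$ directions. Put $v:=\sum_{k=1}^n\lambda_k r_k$, $v':=\sum_{k=1}^{n-1}\lambda_k r_k$, and $y_t:=x+tv'$, which lies in $C^\circ$ for $|t|$ small since $C^\circ$ is open; note $x+tv=y_t+t\lambda_n r_n$. Set $\mu(t):=M(r_n/y_t)>0$, so that $r_n/\mu(t)$ is an extreme vector of $C$ with $M\bigl((r_n/\mu(t))/y_t\bigr)=1$, and $\mu(t)\to M(r_n/x)=1$ by \Cref{L:M function is continuous}. Applying \Cref{L:formula for phi on extreme line} at $y_t$ to the extreme vector $r_n/\mu(t)$ produces an extreme vector $\hat q(t)$ of $K$ with $M(\hat q(t)/\Psi(y_t))=1$ and
\[
\Psi(y_t+s r_n)=\Psi(y_t)-\frac{s\mu(t)}{s\mu(t)+1}\,\hat q(t)\qquad\text{whenever } s\mu(t)>-1 .
\]
Taking $s=t\lambda_n$ (allowed once $|t|$ is small enough that $t\lambda_n\mu(t)>-1$), subtracting $\Psi(x)$ and dividing by $t$ yields
\[
\frac1t\bigl(\Psi(x+tv)-\Psi(x)\bigr)=\frac1t\bigl(\Psi(x+tv')-\Psi(x)\bigr)-\frac{\lambda_n\mu(t)}{t\lambda_n\mu(t)+1}\,\hat q(t).
\]

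\textbf{Passing to the limit.} As $t\to 0$ the first term on the right converges to $-\sum_{k=1}^{n-1}\lambda_k q_k$ by the inductive hypothesis, applied to the linearly independent extreme vectors $r_1,\dots,r_{n-1}$ at $x$, whose associated extreme vectors of $K$ are $q_1,\dots,q_{n-1}$ by hypothesis. For the second term, evaluating the displayed identity at $s=1$ gives $\hat q(t)=\frac{\mu(t)+1}{\mu(t)}\bigl(\Psi(y_t)-\Psi(y_t+r_n)\bigr)$; since $y_t\to x$, $\Psi$ is continuous, and $\Psi(x+r_n)=\Psi(x)-\frac12 q_n$ by \Cref{L:formula for phi on extreme line}, we obtain $\hat q(t)\to q_n$, and therefore $\frac{\lambda_n\mu(t)}{t\lambda_n\mu(t)+1}\hat q(t)\to\lambda_n q_n$. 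Hence the right-hand side tends to $-\sum_{k=1}^n\lambda_k q_k$, and the case $t\to 0^-$ is handled in exactly the same way. The point needing the most care is precisely this identification $\hat q(t)\to q_n$: the extreme direction supplied by \Cref{L:formula for phi on extreme line} at the moving basepoint $y_t$ is not in an obvious way continuous in $t$, so rather than attempting to pass to a limit of half-lines one recovers $\hat q(t)$ from a single value of the map and appeals to continuity of $\Psi$ at $x$ and at $x+r_n$.
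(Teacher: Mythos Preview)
Your proof is correct and follows essentially the same approach as the paper: induct on $n$, peel off the last direction by applying \Cref{L:formula for phi on extreme line} at the moving basepoint $y_t=x+t\sum_{k<n}\lambda_k r_k$, use \Cref{L:M function is continuous} to see $\mu(t)\to 1$, and recover $\hat q(t)$ from a single value of $\Psi$ so that continuity of $\Psi$ (as a $d_T$-isometry) gives $\hat q(t)\to q_n$. The paper packages the same computation through the auxiliary function $F_\theta(t)=\frac{\theta t}{\theta t+1}$, but the argument is otherwise identical.
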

\begin{proof}
We argue by induction on $n$. For $\theta\neq 0$ define the function $F_\theta\colon t \mapsto\frac{\theta t}{\theta t +1}$ for $t \in \mathbb{R}$ with  $|t|$ sufficiently small. It follows that $\Psi(x+t\lambda_1 r_1)=\Psi(x)-F_{\lambda_1}(t)q_1$ by Lemma~\ref{L:formula for phi on extreme line} for sufficiently small $|t|$, hence 
\[
\lim_{t\to 0}{\textstyle\frac{1}{t}}\left(\Psi(x+t\lambda_1r_i)-\Psi(x)\right)=\lim_{t\to 0}-{\textstyle\frac{1}{t}}F_{\lambda_1}(t)q_1=-\lambda_1 q_1,
\]
proving the base case. 

Suppose that for some $m\ge 1$ we have
\[
\lim_{t\to 0}\frac{1}{t}\left(\Psi\Bigl(x+t\sum_{k=1}^m\lambda_k r_k\Bigr)-\Psi(x)\right)=-\sum_{k=1}^m\lambda_k q_k.
\]
If $|t|$ is sufficiently small, then by Lemma \ref{L:formula for phi on extreme line}
\begin{eqnarray*}
\Psi\Bigl(x+t\sum_{k=1}^{m+1}\lambda_k r_k\Bigr) &=& \Psi\Bigl(\Bigl(x+t\sum_{k=1}^{m}\lambda_k r_k\Bigr)+t\lambda_{m+1}r_{m+1}\Bigr)\\
 & =& \Psi\Bigl(x+t\sum_{k=1}^{m}\lambda_k r_k\Bigr)-F_{M_{m+1}(t)}(t\lambda_{m+1})q_{m+1}(t),
\end{eqnarray*}
where $M_{m+1}(t)=M(r_{m+1}/x+t\sum_{k=1}^m\lambda_k r_k)$ and $q_{m+1}(t)$ is the extreme vector of $K$ given by
\[
-q_{m+1}(t)=F_{M_{m+1}(t)}(1)^{-1}\left(\Psi\Bigl(x+t\sum_{k=1}^m\lambda_k r_k+r_{m+1}\Bigr)-\Psi\Bigl(x+t\sum_{k=1}^m\lambda_k r_k\Bigr)\right).
\]
By Lemma~\ref{L:M function is continuous}, $M(r_{m+1}/x+t\sum_{k=1}^m\lambda_k r_k)\to M(r_{m+1}/x)=1$ as $t\to 0$, hence $F_{M_{m+1}(t)}(1)^{-1}\to 2$ as $t\to 0$. As the order unit norm topology coincides with the $d_T$-metric topology on $C^\circ$ and $K^\circ$ (see e.g.,\cite{LNBook}), we find  that $-q_{m+1}(t)\to -q_{m+1}$ as $t\to 0$. Again by Lemma~\ref{L:M function is continuous}, we have $\frac{1}{t}F_{M_{m+1}(t)}(t\lambda_{m+1})\to \lambda_{m+1}$ as $t\to 0$, hence by applying the induction hypothesis we conclude that
\begin{align*}
\lim_{t\to 0}\frac{1}{t}&\left(\Psi\Bigl(x+t\sum_{k=1}^{m+1}\lambda_k r_k\Bigr)-\Psi(x)\right)\\&=\lim_{t\to 0}\frac{1}{t}\left(\Psi\Bigl(x+t\sum_{k=1}^{m}\lambda_k r_k\Bigr)-\Psi(x)\right)-\lim_{t\to 0}{\textstyle\frac{1}{t}}F_{M_{m+1}(t)}(t\lambda_{m+1})q_{m+1}(t)\\&=-\sum_{k=1}^m\lambda_k q_k-\lambda_{m+1}q_{m+1},
\end{align*}
which completes the induction step.
\end{proof}

From Proposition~\ref{P:derivative antitone map on extremes} we see that if $(V,C,u)$ is a complete  order unit space that is spanned by the atoms of $C$, then $\Psi\colon C^\circ\to K^\circ$ is Gateaux differentiable at every $x\in C^\circ$. In this case we will denote the Gateaux derivative at $x$ by $D\Psi(x)\colon V\to W$. Using that $W$ is spanned by the atoms of $K$ by \Cref{C: W is span of atoms}, one can show in the same way that $\Psi^{-1}$ is Gateaux differentiable at any $x \in K^\circ$.

\begin{lemma}\label{L:Gateaux derivative is an automorphism}
Suppose that $(V,C,u)$ and $(W,K,e)$ are complete order unit spaces such that $V$ is spanned by the atoms of $C$. If $\Psi\colon C^\circ\to K^\circ$ is a gauge-reversing map, then for $x\in C^\circ$ we have that   $-D\Psi(x)\colon V\to W$ is a homogeneous of degree 1 order-isomorphism with inverse $-D\Psi^{-1}(\Psi(x))$.
\end{lemma}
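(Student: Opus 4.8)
The plan is to show that $L := -D\Psi(x)$ is a well-defined linear map, that it is an order-isomorphism, that it is homogeneous of degree $1$, and that its inverse is $-D\Psi^{-1}(\Psi(x))$.

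First I would establish that $L$ is linear. By Proposition~\ref{P:derivative antitone map on extremes}, for any finite collection of linearly independent extreme vectors $r_1,\dots,r_n$ of $C$ with $M(r_k/x)=1$, the directional derivative of $\Psi$ at $x$ in the direction $\sum_k \lambda_k r_k$ equals $-\sum_k \lambda_k q_k$, where $q_k$ are the corresponding normalised extreme vectors of $K$. Since $V$ is spanned by the atoms of $C$, and scaling an atom $p$ to $r := p/M(p/x)$ gives an extreme vector with $M(r/x)=1$, every vector $v\in V$ can be written as a (finite) linear combination $v=\sum_k \lambda_k r_k$ of such normalised extreme vectors. One has to check that the value $-\sum_k \lambda_k q_k$ does not depend on the chosen representation of $v$: if $\sum_k\lambda_k r_k=\sum_j\mu_j s_j$ are two such representations, apply Proposition~\ref{P:derivative antitone map on extremes} to the combined (not necessarily independent, but one can pass to a maximal independent subset) family; in both cases the limit $\lim_{t\to0}\frac1t(\Psi(x+tv)-\Psi(x))$ is the same object, so the two sums agree. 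This simultaneously shows $L$ is well-defined, homogeneous of degree $1$ (clear from the limit definition, since $\Psi(x+t(\lambda v))-\Psi(x)$ gives $-\lambda Lv$ after rescaling $t$), and additive (the Proposition computes the derivative of a sum of directions as the sum of the derivatives), hence linear.

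Next I would run the identical argument for $\Psi^{-1}$ at the point $\Psi(x)$: by Corollary~\ref{C: W is span of atoms} $W$ is spanned by the atoms of $K$, so $-D\Psi^{-1}(\Psi(x))=:M$ is a well-defined homogeneous degree $1$ linear map $W\to V$. To see $M = L^{-1}$, I would use Lemma~\ref{L:formula for phi on extreme line}: on each extreme half-line $\ell_x^r$ with $M(r/x)=1$, $\Psi$ acts by $x+tr\mapsto \Psi(x)-\frac{t}{t+1}q$, and conversely $\Psi^{-1}$ sends $\Psi(x)+sq = \Psi(x)-\frac{(-s)}{(-s)+1}\cdot(-q)\cdots$ — more cleanly, $\Psi^{-1}$ maps $\ell_{\Psi(x)}^q$ back onto $\ell_x^r$ with the inverse reparametrisation, so $D\Psi^{-1}(\Psi(x))$ sends $-q\mapsto -r$, i.e.\ $M(q_k)=r_k$ for each matched pair while $L(r_k)=q_k$. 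Since the $r_k$ (resp.\ $q_k$) span $V$ (resp.\ $W$) and both maps are linear, $M\circ L=\mathrm{Id}_V$ and $L\circ M=\mathrm{Id}_W$. In particular $L$ is a linear bijection.

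Finally, I would verify that $L$ is bi-positive, which with bijectivity gives that it is an order-isomorphism. For $v\in C$, pick $t>0$ small enough that $x+tv\in C^\circ$ with $x+tv\le x$ — wait, rather: since $\Psi$ is antitone, for $v\in C^\circ$ and small $t>0$ we have $x+tv\ge x$, hence $\Psi(x+tv)\le\Psi(x)$, so $\Psi(x)-\Psi(x+tv)\in K$; dividing by $t$ and letting $t\downarrow0$, using that $K$ is closed, gives $Lv=\lim_{t\downarrow0}\frac1t(\Psi(x)-\Psi(x+tv))\in K$. By continuity/density this extends to all $v\in C$, so $L(C)\subseteq K$. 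The same argument applied to $\Psi^{-1}$ shows $M(K)\subseteq C$, i.e.\ $L^{-1}(K)\subseteq C$, so $L(C)=K$ and $L$ is an order-isomorphism. I expect the main obstacle to be the well-definedness/additivity of $L$: one must argue carefully that the Gateaux derivative in a direction $v$ admits a representation via finitely many extreme directions and that Proposition~\ref{P:derivative antitone map on extremes} applies uniformly, handling the fact that a given $v$ has many such representations and that the chosen extreme vectors must be linearly independent to invoke the Proposition (which is dealt with by passing to a maximal linearly independent subfamily and expressing the rest in terms of it).
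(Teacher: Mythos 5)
Your proposal is correct and follows essentially the same route as the paper: homogeneity and positivity are read off from the difference quotients (using antitonicity of $\Psi$ and closedness of the cone), and invertibility comes from matching the normalised extreme vectors $r_k$ of $C$ with the corresponding $q_k$ of $K$ via Proposition~\ref{P:derivative antitone map on extremes} and Lemma~\ref{L:formula for phi on extreme line}. The only differences are cosmetic: you verify positivity as $-D\Psi(x)(C)\subseteq K$ where the paper checks monotonicity of the difference quotient in the direction variable and passes to the limit, and you spell out the well-definedness and linearity of the Gateaux derivative, which the paper treats as immediate from Proposition~\ref{P:derivative antitone map on extremes}.
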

\begin{proof}
We start by proving that $-D\Psi(x)$ is homogeneous of degree 1 and order-preserving. Let $y,z\in V$ and $\lambda\neq  0$. Then
\begin{align*}
-D\Psi(x)(\lambda y)&=-\lim_{t\to 0}\frac{\Psi(x+\lambda ty)-\Psi(x)}{t}=-\lambda\lim_{t\to 0}\frac{\Psi(x+\lambda ty)-\Psi(x)}{\lambda t}=-\lambda D\Psi(x)(y).
\end{align*}
If $y\le z$, then we have for any $t\neq 0$ sufficiently small  that 
\[
\frac{\Psi(x+tz)-\Psi(x)}{t}\le\frac{\Psi(x+ty)-\Psi(x)}{t}, 
\]  
as $\Psi$ is antitone. Thus,  $D\Psi(x)(z)\le D\Psi(x)(y)$, hence $-D\Psi(x)(y)\le-D\Psi(x)(z)$. 

By interchanging the roles of $\Psi$ and $\Psi^{-1}$, one can show in the same way that $\Psi^{-1}$ is Gateaux differentiable at any $x \in K^\circ$ so that $-D\Psi^{-1}(\Psi(x))$ is also homogeneous of degree 1 and order-preserving.

Next, we prove that $-D\Psi(x)$ is bijective with inverse $-D\Psi^{-1}(\Psi(x))$. Let $y=\sum_{k=1}^n\lambda_k r_k$ where $r_1,\ldots,r_n$ are linearly independent extreme vectors of $C$ such that $M(r_k/x)=1$ for all $k=1,\ldots,n$. By Corollary~\ref{C:extreme lines -> extreme lines} and Lemma~\ref{L:formula for phi on extreme line} there are extreme vectors $q_k$ of $K$ such that $M(q_k/\Psi(x))=1$ and
\[
\Psi(x+tr_k)=\Psi(x)-\frac{t}{t+1}q_k\qquad\mbox{for all $ t\in(-1,\infty)$ and $k=1,\ldots,n$}.
\]
 So, 
\[
D\Psi(x)(r_k)=\lim_{t\to 0}\frac{\Psi(x+tr_k)-\Psi(x)}{t}=\lim_{t\to 0}\frac{-1}{t+1}q_k=-q_k
\]
and 
\begin{align*}
D\Psi^{-1}(\Psi(x))(-D\Psi(x)(r_k))=D\Psi^{-1}(\Psi(x))(q_k)=\lim_{t\to 0}\frac{\Psi^{-1}(\Psi(x)+tq_k)-x}{t}=\lim_{t\to 0}\frac{-1}{t+1}r_k=-r_k.
\end{align*}
Using the identity for the Gateaux derivative for gauge-reversing maps in Proposition~\ref{P:derivative antitone map on extremes} now yields
\[
-D\Psi^{-1}(\Psi(x))(-D\Psi(x)(y))=-D\Psi^{-1}(\Psi(x))\left(\sum_{k=1}^n\lambda_k q_k\right)=\sum_{k=1}^n\lambda_k r_k=y.
\]
By interchanging the roles of $\Psi$ and $\Psi^{-1}$, we also have that $-D\Psi(x)(-D\Psi^{-1}(\Psi(x))(y))=y$,  hence $-D\Psi(x)\colon V\to W$ is a bijection with inverse $-D\Psi^{-1}(\Psi(x))$.  Thus, $-D\Psi(x)\colon V\to W$ is an order-isomorphism.
\end{proof}

Given a gauge-reversing map  $\Psi\colon C^\circ\to K^\circ$, we define the map $S_x\colon C^\circ\to C^\circ$ by 
\begin{equation}\label{sym}
S_x(y)=-D\Psi^{-1}(\Psi(x))(\Psi(y))\mbox{\qquad for $y\in C^\circ$}.
\end{equation}
We shall prove that $S_x$ is a $d_T$-symmetry for each $x\in C^\circ$, for which we need the following lemma.

\begin{lemma}\label{L:DS_x=-Id} Suppose that $(V,C,u)$ and $(W,K,e)$ are complete order unit spaces such that $V$ is spanned by the atoms of $C$. If $\Psi\colon C^\circ\to K^\circ$ is a gauge-reversing map, then for each $x\in C^\circ$ the map $S_x$ satisfies:
\begin{enumerate}[(i)]
\item $S_x(x+\lambda r)=x-\frac{\lambda}{\lambda+1}r$ for any extreme vector $r\in C$ with $M(r/x)=1$ and $\lambda > -1$.
\item $S_x$ is a gauge-reversing map.
\item $DS_x(x)=-\mathrm{Id}_V$.
\end{enumerate}
\end{lemma}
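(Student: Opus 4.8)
The plan is to prove $(i)$, $(ii)$, $(iii)$ in that order, with the central object being the linear map $T:=-D\Psi^{-1}(\Psi(x))\colon W\to V$. By \Cref{P:derivative antitone map on extremes} (whose explicit formula shows $D\Psi^{-1}(\Psi(x))$ is linear on the span of the atoms of $K$, which is all of $W$ by \Cref{C: W is span of atoms}) together with \Cref{L:Gateaux derivative is an automorphism}, the map $T$ is a linear order-isomorphism of $W$ onto $V$ with inverse $-D\Psi(x)$. Being a positive linear map between order unit spaces it is bounded, it restricts to a bijection $K^\circ\to C^\circ$, and from $a\le\mu b\iff Ta\le\mu Tb$ it is gauge-preserving, i.e.\ $M(Ta/Tb)=M(a/b)$ for $a,b\in K^\circ$.

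For $(i)$, fix an extreme vector $r$ of $C$ with $M(r/x)=1$. By \Cref{L:formula for phi on extreme line} there is an extreme vector $q$ of $K$ with $M(q/\Psi(x))=1$ and $\Psi(x+tr)=\Psi(x)-\tfrac{t}{t+1}q$ for $t\in(-1,\infty)$; inverting this parametrisation on the half-line gives $\Psi^{-1}(\Psi(x)+sq)=x-\tfrac{s}{s+1}r$ for $s\in(-1,\infty)$, and differentiating at $s=0$ yields $T(q)=r$. Differentiating the homogeneity identity $\Psi^{-1}(s\Psi(x))=s^{-1}x$ at $s=1$ gives $T(\Psi(x))=x$, so in particular $S_x(x)=x$. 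Hence, for $\lambda>-1$, linearity of $T$ gives
\[
S_x(x+\lambda r)=T\bigl(\Psi(x+\lambda r)\bigr)=T\bigl(\Psi(x)\bigr)-\tfrac{\lambda}{\lambda+1}T(q)=x-\tfrac{\lambda}{\lambda+1}r.
\]
For $(ii)$, since $\Psi\colon C^\circ\to K^\circ$ is gauge-reversing and $T\colon K^\circ\to C^\circ$ is a gauge-preserving bijection, the composition $S_x=T\circ\Psi$ is a bijection of $C^\circ$ satisfying $M(S_x(w)/S_x(v))=M(\Psi(w)/\Psi(v))=M(v/w)$ for all $v,w\in C^\circ$, so $S_x$ is gauge-reversing.

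For $(iii)$, continuity and linearity of $T$ give, for every $v\in V$, $DS_x(x)(v)=\lim_{t\to0}\tfrac1t\bigl(T(\Psi(x+tv))-T(\Psi(x))\bigr)=T\bigl(D\Psi(x)(v)\bigr)$, i.e.\ $DS_x(x)=T\circ D\Psi(x)=-D\Psi^{-1}(\Psi(x))\circ D\Psi(x)$; since $\bigl(-D\Psi^{-1}(\Psi(x))\bigr)\circ\bigl(-D\Psi(x)\bigr)=\mathrm{Id}_V$ by \Cref{L:Gateaux derivative is an automorphism}, we obtain $DS_x(x)=-\mathrm{Id}_V$. Alternatively, $(iii)$ follows from $(i)$ and $(ii)$: by $(i)$ the gauge-reversing map $S_x$ fixes $x$ and maps each extreme half-line $\ell_x^r$ onto itself, so \Cref{P:derivative antitone map on extremes} applied to $S_x$ gives $DS_x(x)(r)=-r$ for every extreme vector $r$ with $M(r/x)=1$, and such vectors span $V$. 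I do not anticipate a genuine obstacle in this lemma; the only points requiring care are verifying that $-D\Psi^{-1}(\Psi(x))$ is linear and bounded (so that passing the limit through $T$ and the gauge-preservation argument are legitimate) and keeping the sign bookkeeping straight in $(-A)\circ(-B)=\mathrm{Id}$.
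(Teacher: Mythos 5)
Your proof is correct. Parts $(ii)$ and $(iii)$ (in its second, alternative form) coincide with the paper's argument: the paper proves $(ii)$ exactly by noting that $-D\Psi^{-1}(\Psi(x))$ is a homogeneous degree $1$ order-isomorphism, hence gauge-preserving, and proves $(iii)$ by applying \Cref{P:derivative antitone map on extremes} to the gauge-reversing map $S_x$ together with $(i)$. The only real divergence is in $(i)$: the paper evaluates $\lim_{t\to 0}\frac{1}{t}\bigl(\Psi^{-1}(\Psi(x)+t\Psi(x+\lambda r))-x\bigr)$ by a direct computation using the homogeneity of degree $-1$ of $\Psi^{-1}$ and the inverted parametrisation of the extreme half-line, and thereby never needs to know that $D\Psi^{-1}(\Psi(x))$ is additive; you instead push the linearity of $T=-D\Psi^{-1}(\Psi(x))$ through the identity $\Psi(x+\lambda r)=\Psi(x)-\frac{\lambda}{\lambda+1}q$ and only compute the two derivatives $T(q)=r$ and $T(\Psi(x))=x$ (both of which also appear, in the first case verbatim, in the paper's proof of \Cref{L:Gateaux derivative is an automorphism}). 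Your route is slightly cleaner but does require justifying additivity of the Gateaux derivative on all of $W$, which \Cref{L:Gateaux derivative is an automorphism} does not literally assert (it only claims a homogeneous degree-$1$ order-isomorphism); this is indeed available from the explicit formula of \Cref{P:derivative antitone map on extremes} applied to $\Psi^{-1}$, after reducing any two elements of $W$ to combinations of a common linearly independent family of suitably normalised extreme vectors, or alternatively from Sch\"affer's theorem applied to the gauge-preserving restriction $K^\circ\to C^\circ$. So there is no gap, only a point you correctly flag as needing care; what the paper's more computational treatment of $(i)$ buys is independence from that linearity discussion.
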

\begin{proof}
Let $r\in C$ be an extreme vector such that $M(r/x)=1$. By Lemma~\ref{L:formula for phi on extreme line} we have that 
\[
\Psi(x+\lambda r)=\Psi(x)-\frac{\lambda}{\lambda+1}q 
\]
for some extreme vector $q\in K$ with $M(q/\Psi(x))=1$ and all $\lambda > -1$. Note this also implies that we have the identity 
\[
\Psi^{-1}(\Psi(x)+\mu q)=\Psi^{-1}(\Psi(x-\textstyle{\frac{\mu}{\mu+1}}r))=x-\textstyle{\frac{\mu}{\mu+1}}r
\]
for all $\mu > -1$. Hence for $\lambda > -1$ and sufficiently small $|t|$, we have
\begin{eqnarray*}
\Psi^{-1}\bigl(\Psi(x)+t\Psi(x+\lambda r)\bigr)-x&= &\Psi^{-1}\bigl(\Psi(x)+t(\Psi(x)-\textstyle{\frac{\lambda}{\lambda+1}}q)\bigr)-x\\
&= & \textstyle{\frac{1}{t+1}}\Psi^{-1}\bigl( \Psi(x)-\textstyle{\frac{t\lambda}{(t+1)(\lambda+1)}}q \bigr)-x\\
&= & \textstyle{\frac{1}{t+1}}\bigl( x+\textstyle{\frac{t\lambda}{(t+1)(\lambda+1)-t\lambda}}r\bigr)-x\\
&= &\textstyle{-\frac{t}{t+1}}x+\textstyle{\frac{t\lambda}{(t+1)^2(\lambda+1)-t(t+1)\lambda}}r,
\end{eqnarray*}
and so, it follows that
\[
S_x(x+\lambda r)=-\lim_{t\to 0}\frac{\Psi^{-1}\bigl(\Psi(x)+t\Psi(x+\lambda r)\bigr)-x}{t}=\lim_{t\to 0}\left(\textstyle{\frac{1}{t+1}}x-\textstyle{\frac{\lambda}{(t+1)^2(\lambda+1)-t(t+1)\lambda}}r\right)=x-\textstyle{\frac{\lambda}{\lambda+1}}r.
\]
This proves statement $(i)$.

For statement $(ii)$, note that $-D\Psi^{-1}(\Psi(x))\colon K^\circ\to C^\circ$ is a homogeneous degree 1 order-isomorphism by Lemma \ref{L:Gateaux derivative is an automorphism}, hence $-D\Psi^{-1}(\Psi(x))$ is gauge-preserving. This implies that $S_x\colon C^\circ\to C^\circ$ is gauge-reversing. 

To prove the final assertion let $y\in V$. We can write $y=\sum_{k=1}^n\lambda_k r_k$ where the $r_k$'s are  extreme vectors of $C$ such that $M(r_k/x)=1$ for all $k$ and $\lambda_k\in\mathbb{R}$. Since $S_x$ is a gauge-reversing map, Proposition~\ref{P:derivative antitone map on extremes} together with statement $(i)$ imply that 
\[
DS_x(x)(y)=\lim_{t\to 0}\frac{S_x(x+ty)-S_x(x)}{t}=-\sum_{k=1}^n\lambda_k r_k=-y,
\]
which proves statement $(iii)$.
\end{proof}
We can now show that $S_x$ is a $d_T$-symmetry. 
\begin{theorem}\label{T:symmetry from antitone}
Suppose that $(V,C,u)$ and $(W,K,e)$ are complete order unit spaces such that $V$ is spanned by the atoms of $C$. If $\Psi\colon C^\circ\to K^\circ$ is a gauge-reversing map, then for each $x\in C^\circ$ the map $S_x\colon C^\circ\to C^\circ$ is a $d_T$-symmetry with unique fixed point $x$. Moreover, $(C^\circ,d_T)$ and $(K^\circ,d_T)$ are Finsler symmetric spaces.
\end{theorem}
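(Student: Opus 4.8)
The plan is to verify the defining properties of a $d_T$-symmetry for the map $S_x$ of \eqref{sym}, and then transfer the statement to $K^\circ$. By \Cref{L:DS_x=-Id}(ii) the map $S_x$ is gauge-reversing, hence a $d_T$-isometry, and it is a bijection of $C^\circ$ because $\Psi$ is a bijection and $-D\Psi^{-1}(\Psi(x))\colon W\to V$ is a bijection by \Cref{L:Gateaux derivative is an automorphism}. Taking $\lambda=0$ in \Cref{L:DS_x=-Id}(i) (applied with any extreme $r$ with $M(r/x)=1$, which exists since $V$ is spanned by the atoms of $C$) gives $S_x(x)=x$, so $x$ is a fixed point. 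It remains to show $S_x\circ S_x=\mathrm{Id}$ and that $x$ is the \emph{only} fixed point.

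For the involution property, note that $S_x\circ S_x\colon C^\circ\to C^\circ$ is gauge-preserving, being the composition of two gauge-reversing maps, hence a homogeneous degree~$1$ order-isomorphism; by Sch\"affer's theorem \cite[Theorem~B]{Schaf} it is the restriction of a linear map $T\in\mathrm{Aut}(C)$. On the other hand, for any extreme vector $r$ of $C$ with $M(r/x)=1$, \Cref{L:DS_x=-Id}(i) gives $S_x(x+\lambda r)=x-\tfrac{\lambda}{\lambda+1}r$ for $\lambda>-1$, and applying it once more to $x+\mu r$ with $\mu=-\tfrac{\lambda}{\lambda+1}>-1$ yields
\[
S_x\circ S_x(x+\lambda r)=S_x\bigl(x-\tfrac{\lambda}{\lambda+1}r\bigr)=x+\lambda r .
\]
Together with $S_x(x)=x$ and the linearity of $T$ this forces $Tr=r$. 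Since the extreme vectors $r$ with $M(r/x)=1$ are scalar multiples of atoms of $C$ and $V$ is spanned by these atoms, such $r$ span $V$, so $T=\mathrm{Id}_V$ and $S_x\circ S_x=\mathrm{Id}$ on $C^\circ$. Thus $S_x$ is a surjective $d_T$-isometry with $S_x\circ S_x=\mathrm{Id}$ fixing $x$.

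The uniqueness of the fixed point is the heart of the matter, and I expect it to be the main obstacle. Suppose $S_x(w)=w$ with $w\neq x$. If $x$ and $w$ are linearly dependent, homogeneity of degree $-1$ forces $S_x(\lambda x)=\lambda^{-1}x$, so $w=\lambda x$ with $\lambda=\lambda^{-1}$, i.e.\ $w=x$, a contradiction; hence $x$ and $w$ are linearly independent, and the gauge-reversing identity $M(w/x)=M(x/S_x(w))=M(x/w)$ shows, via \Cref{L:unique geod in 2d cones}, that there is a (type~I) geodesic of $(C(x,w)^\circ,d_T)$ joining $x$ and $w$. The idea is then to exploit $DS_x(x)=-\mathrm{Id}_V$ from \Cref{L:DS_x=-Id}(iii): the $d_T$-isometry $S_x$ carries a geodesic emanating from $x$ towards $w$ to a geodesic from $x$ to $w$ whose initial direction is the negative of the original one; pushing this through — directly when the geodesic joining $x$ and $w$ in $(C^\circ,d_T)$ is unique, and otherwise by iterating an $S_x$-equivariant midpoint construction on $[x,w]$ so as to produce fixed points $w_n\to x$ with $w_n\neq x$ — would yield a nonzero $v\in V$ with $DS_x(x)v=v$, which is impossible. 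The delicate point is precisely that the Thompson metric need not be uniquely geodesic, so the argument must be organised to remain within the regime where the Gateaux derivative of $S_x$ at $x$ can be invoked.

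Finally, since the displayed construction produces a $d_T$-symmetry $S_x$ at every $x\in C^\circ$, the space $(C^\circ,d_T)$ is a symmetric Finsler space for the Finsler metric $F(w,x):=\|x\|_w$. For $(K^\circ,d_T)$, observe that $\Psi^{-1}\colon K^\circ\to C^\circ$ is again gauge-reversing by \Cref{L:antitone M-functions} and that $W$ is spanned by the atoms of $K$ by \Cref{C: W is span of atoms}; applying what has already been proved with the roles of $(V,C,u)$ and $(W,K,e)$ and of $\Psi$ and $\Psi^{-1}$ interchanged gives a $d_T$-symmetry at each point of $K^\circ$, so $(K^\circ,d_T)$ is a symmetric Finsler space as well.
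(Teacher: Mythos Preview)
Your treatment of the involution property and of $x$ being a fixed point is essentially the same as the paper's (you compute $S_x^2(x+\lambda r)=x+\lambda r$ directly, the paper does the same via $T_x(x+r)=x+r$ and linearity; these are equivalent).

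The genuine gap is the uniqueness of the fixed point. You correctly identify that this is the heart of the matter and that the non-uniqueness of Thompson geodesics obstructs the geodesic-based argument, but you do not actually close the gap: neither the ``unique geodesic'' case nor the ``iterated midpoint'' construction is carried out, and both face real difficulties since type~I geodesics through $x$ and $w$ are not in general unique in $(C^\circ,d_T)$, nor are midpoints. The paper bypasses geodesics entirely with a short convexity argument. If $w\neq x$ is fixed by $S_x$, consider the open Thompson ball
\[
B=\{z\in C^\circ\colon d_T(z,w)<d_T(x,w)\},
\]
which is convex and norm-open, with $S_x(B)\subseteq B$ since $S_x$ is a $d_T$-isometry fixing $w$, and with $x\in\partial B$. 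Hahn--Banach gives a continuous linear functional $\psi$ with $\psi(x)=\alpha$ and $\psi(z)>\alpha$ for all $z\in B$. For small $t>0$ the point $x+t(w-x)$ lies in $B$ (by convexity of $B$, since $w\in B$), so $S_x(x+t(w-x))\in B$ and hence $\psi(S_x(x+t(w-x)))>\alpha$; this forces $D(\psi\circ S_x)(x)(w-x)\ge 0$. On the other hand, \Cref{L:DS_x=-Id}(iii) and linearity of $\psi$ give $D(\psi\circ S_x)(x)(w-x)=\psi(-(w-x))=\alpha-\psi(w)<0$, a contradiction. This argument uses only convexity of Thompson balls and $DS_x(x)=-\mathrm{Id}_V$, and avoids the geodesic-uniqueness issues you flagged.
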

\begin{proof} 
Taking $\lambda=0$ in Lemma~\ref{L:DS_x=-Id}$(i)$ yields that $x$ is a fixed point of $S_x$. Moreover, we also know from  Lemma~\ref{L:DS_x=-Id} that $S_x$ is gauge-reversing, and hence a $d_T$-isometry. 

To show that $S^2_x$ is the identity map on $V$, we first note that $S^2_x$ is a gauge-preserving map.  It follows from \cite[Theorem~B]{Schaf} that there exists $T_x\in \mathrm{Aut}(C) = \{T\in \mathrm{GL}(V)\colon T(C) = C\}$ such that $T_x= S^2_x$ on $C^\circ$. Note that $T_x x = x$ and by Lemma~\ref{L:DS_x=-Id}$(i)$ we have $T_x(x+r) = S_x^2(x+r) = x+r$ for $r\in C$ an extreme vector with $M(r/x)=1$, hence, since $T_x$ is affine, it leaves the line through $x$ and $x+r$ invariant. So $T_x(x + \lambda r) = x + \lambda r$ for all $\lambda \in \R$. Let $y\in C^\circ$ and write $y=\sum_{k=1}^n\lambda_k r_k$ where  $r_k$ is an extreme vector of $C$ with $M(r_k/x) =1$  for all $k$. Then  
\[
nx + y = \sum_{k=1}^n(x+\lambda_k r_k) = \sum_{k=1}^nT_x(x + \lambda_k r_k) = \sum_{k=1}^n T_x x + T_x \left( \sum_{k=1}^n \lambda_k r_k \right) = nx+S^2_x(y),
\]
hence $S^2_x(y)=y$. 

To show that $x$ is the unique fixed point of $S_x$ assume by way of contradiction that  $y\neq x$  is a fixed point of $S_x$. Then $y$ must be linearly independent from $x$, as $S_x$ is homogeneous of degree $-1$. Consider the open $d_T$-ball with centre $y$,  
\[
B=\{z\in C^\circ\colon d_T(z,y)<d_T(x,y)\}.
\]
It is known  that $B$ is open for the order unit norm and convex, see \cite[Corollary 2.5.6 and Lemma 2.6.2]{LNBook}. Note that $S_x(B)\subseteq B$, since $y$ is a fixed point of $S_x$, and that $x$ is in the boundary of $B$. The Hahn-Banach separation theorem implies the existence of a continuous linear functional $\psi$ on $V$ and $\alpha\geq 0$ such that $\psi(x)=\alpha$ and $\psi(z)>\alpha$ for all $z\in B$. The continuity of $\psi$ implies that $\psi\circ S_x$ is Gateaux differentiable. Now, if we compute the derivative of $\psi\circ S_x$ at $x$ in the direction of $y-x$, we find that
\[
D(\psi\circ S_x)(x)(y-x)=\lim_{t\to 0}\frac{\psi(S_x(x+t(y-x)))-\psi(S_x(x))}{t}=\lim_{t\downarrow 0}\frac{\psi(S_x(x+t(y-x))) -\alpha}{t}\ge 0.
\]
However, if we use Lemma~\ref{L:DS_x=-Id}$(iii)$ and the continuity of $\psi$ again, we also have 
\[
D(\psi\circ S_x)(x)(y-x)=\psi(DS_x(x)(y-x))=\psi(x-y)<0, 
\]
which is absurd. We conclude that $S_x$ is a $d_T$-symmetry with unique fixed point $x$.

The final assertion is a direct consequence of the first one. 
\end{proof}
\begin{remark}
We like to point out that in \cite{Chu2} Chu studied Finsler symmetric cones that are linearly homogeneous and in which the symmetries are bi-analytic maps on $C^\circ$. He showed that such cones correspond to cones in JB-algebras. The smoothness assumptions on the symmetries and the fact that the cone is linearly homogeneous allow one to use methods from Lie theory that cannot be used here, as no smoothness assumptions are made on the gauge-reversing map. 
\end{remark}

In the remainder of this section we show that the $d_T$-symmetries, $S_x\colon C^\circ\to C^\circ$ for $x\in C^\circ$, can be used to show that the cone $C$ is homogeneous. To begin we note that if $x,y\in C^\circ$, then $S_y\circ S_x\colon C^\circ\to C^\circ$ is a homogeneous degree 1 order-isomorphism, so by \cite[Theorem~B]{Schaf} there exists a linear map $T\in \mathrm{Aut}(C)$ such that $T= S_y\circ S_x$ on $C^\circ$. The idea is to use compositions of these linear automorphisms of $C$ to show that $C^\circ$ is homogeneous. 

\begin{lemma}\label{L:x->x+r by automorphism}
Suppose that $(V,C,u)$ and $(W,K,e)$ are complete order unit spaces such that $V$ is spanned by the atoms of $C$, and let $\Psi\colon C^\circ\to K^\circ$ be a gauge-reversing map. For each $x\in C^\circ$, each extreme vector $r\in C$ with $M(r/x)=1$, and each $\lambda > -1$, there exists a linear map $T\in\mathrm{Aut}(C)$ such that $T x = x+\lambda r$.
\end{lemma}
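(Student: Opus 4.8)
The plan is to obtain $T$ as a composition of two of the $d_T$-symmetries furnished by Theorem~\ref{T:symmetry from antitone}. Since every $S_z\colon C^\circ\to C^\circ$ is gauge-reversing, the composition $S_y\circ S_x$ is gauge-preserving, hence a homogeneous of degree $1$ order-isomorphism of $C^\circ$ onto itself; by Sch\"affer's theorem \cite[Theorem~B]{Schaf} it is the restriction of a (unique) linear map $T\in\mathrm{Aut}(C)$. The one remaining degree of freedom is the choice of the second centre $y$, and I would take it on the extreme half-line $\ell_x^r$, say $y:=x+sr$ with $s>-1$ to be fixed at the end. The crucial simplification is that $S_x$ fixes its centre, so $Tx=S_y(S_x(x))=S_y(x)$, and $S_y(x)$ again lies on $\ell_x^r$ and is computable from Lemma~\ref{L:DS_x=-Id}(i).

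Carrying this out, one first notes that $M(r/x)=1$ gives $x-r\in\partial C$, so by Lemma~\ref{L:formula for phi on extreme line}(i) the half-line $\ell_x^r$ equals $\{x+tr\colon t>-1\}$ and lies in $C^\circ$; in particular $y=x+sr\in C^\circ$. A one-line gauge computation gives $M(r/y)=(1+s)^{-1}$, so $\tilde r:=(1+s)r$ is an extreme vector of $C$ with $M(\tilde r/y)=1$. Writing $x=y+\nu\tilde r$ with $\nu=-s/(1+s)>-1$ and applying Lemma~\ref{L:DS_x=-Id}(i) \emph{at the point $y$} yields, using $\nu/(\nu+1)=-s$,
\[
Tx \;=\; S_y(x) \;=\; y-\frac{\nu}{\nu+1}\,\tilde r \;=\; y+s\,\tilde r \;=\; x+(s^2+2s)\,r .
\]
It then remains only to solve $s^2+2s=\lambda$ with $s>-1$; the choice $s:=\sqrt{1+\lambda}-1$ works and is admissible precisely because $\lambda>-1$, and with it $Tx=x+\lambda r$. (The boundary case $\lambda=0$ is consistent: then $s=0$, $y=x$, and $T=S_x^2=\mathrm{Id}_V$ by the proof of Theorem~\ref{T:symmetry from antitone}.)

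I do not anticipate a genuine obstacle: all of the hard work---that $S_z$ is a well-defined gauge-reversing map with the stated action on extreme half-lines, and that gauge-preserving bijections between interiors of cones in order unit spaces are linear---is already available. The only point requiring care is the rescaling $r\mapsto\tilde r=(1+s)r$ needed before Lemma~\ref{L:DS_x=-Id}(i) can be invoked at $y$ rather than at $x$, together with the sign-sensitive arithmetic for $\nu$; mishandling the normalisation would replace the desired translation by the M\"obius map $t\mapsto -t/(t+1)$ and the computation would not close.
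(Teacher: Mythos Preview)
Your proof is correct and follows essentially the same approach as the paper: compose two $d_T$-symmetries centred on the extreme half-line $\ell_x^r$, compute $S_{x+\mu r}(x)=x+(2\mu+\mu^2)r$ via Lemma~\ref{L:DS_x=-Id}(i) after the rescaling $r\mapsto (1+\mu)r$, and solve $\mu=\sqrt{1+\lambda}-1$. The only cosmetic difference is that you take $T=S_{x+sr}\circ S_x$ (exploiting that $S_x$ fixes the source), whereas the paper takes $T=S_{x+\lambda r}\circ S_{x+\mu r}$ (exploiting that $S_{x+\lambda r}$ fixes the target); in both cases the nontrivial computation is the same evaluation of $S_{x+\mu r}(x)$.
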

\begin{proof} 
We will use Lemma \ref{L:DS_x=-Id}(i) and find a $\mu > -1$ such that $S_{x+\mu r}(x) = x+\lambda r$. So, $S_{x+\lambda r}\circ S_{x+\mu r}$ maps $x$ to $x+\lambda r$. It then follows from \cite[Theorem~B]{Schaf} that there exists a linear map $T\in \mathrm{Aut}(C)$ such that $T= S_{x+\lambda r}\circ S_{x+\mu r}$ on $C^\circ$, since the composition is a homogeneous degree 1 order-isomorphism.

Let $\mu = \sqrt{\lambda+1} -1$. It is easy to check that $M(r/x+\mu r) = \frac{1}{1+\mu}$, as $M(r/x) =1$. 
Set $r' = (1+\mu)r$, so $M(r'/x+\mu r) =1$. Now note that by Lemma \ref{L:DS_x=-Id}(i) we have
\[
S_{x+\mu r}(x) = S_{x +\mu r}\Bigl(x+\mu r -\frac{\mu}{1+\mu}r'\Bigr) = x+\mu r +\mu r' = x+(2\mu +\mu^2)r = x+\lambda r, 
\]
which completes the proof.
\end{proof}
The previous lemma has the following consequence.
\begin{corollary}\label{C:x->x+r by automorphism}
Suppose that $(V,C,u)$ and $(W,K,e)$ are complete order unit spaces such that $V$ is spanned by the atoms of $C$, and let $\Psi\colon C^\circ\to K^\circ$ be a gauge-reversing map. If $x,y \in C^\circ$ are such that $y - x = \lambda r$ for some $r \in \mathrm{ext}(C)$, then there exists a linear map $T\in\mathrm{Aut}(C)$ such that $T x = y$.
\end{corollary}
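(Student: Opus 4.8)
The plan is to deduce this immediately from Lemma~\ref{L:x->x+r by automorphism}; the only task is to put the hypothesis $y-x=\lambda r$ into the normalized form required there, namely as a multiple of a direction $r'$ with $M(r'/x)=1$ and with coefficient lying in $(-1,\infty)$.

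First I would observe that since $r\in\mathrm{ext}(C)$ is a non-zero element of $C$ and $x\in C^\circ$, the gauge $M(r/x)$ is strictly positive: if $M(r/x)=0$ then $r\le\varepsilon x$ for every $\varepsilon>0$, which forces $r\le 0$ by the Archimedean property and hence $r=0$, a contradiction. Then I would set $r':=r/M(r/x)$, which is again an extreme vector of $C$ (a positive scalar multiple of an extreme vector is extreme, directly from the definition) and satisfies $M(r'/x)=1$. Putting $\mu:=\lambda\,M(r/x)$, we have $y=x+\mu r'$.

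The remaining point is to check $\mu>-1$. Here I would use that $y\in C^\circ$ lies on the affine line $\{x+tr':t\in\mathbb{R}\}$, so $y\in\ell_x^{r'}$, and by Lemma~\ref{L:formula for phi on extreme line}(i) we have $\ell_x^{r'}=\{x+tr':t\in(-1,\infty)\}$; hence $\mu\in(-1,\infty)$. Applying Lemma~\ref{L:x->x+r by automorphism} to $x$, the extreme vector $r'$, and $\mu$ then produces a linear map $T\in\mathrm{Aut}(C)$ with $Tx=x+\mu r'=y$, as desired. There is essentially no obstacle beyond this bookkeeping: all of the content — producing the automorphism once the direction and coefficient are normalized — is carried by Lemma~\ref{L:x->x+r by automorphism}, and the normalization uses only that $x\in C^\circ$, that $r$ is a non-zero extreme vector, and that $y\in C^\circ$. (The degenerate case $\lambda=0$, i.e.\ $y=x$, is covered automatically, since then $\mu=0>-1$ and $Tx=x$.)
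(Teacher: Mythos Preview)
Your proof is correct and follows essentially the same route as the paper: normalize the extreme vector by dividing by $M(r/x)$ so that the new vector has gauge $1$ with respect to $x$, check that the resulting coefficient lies in $(-1,\infty)$, and invoke Lemma~\ref{L:x->x+r by automorphism}. The only cosmetic difference is that the paper argues $\mu>-1$ directly from $x-r'\in\partial C$ and $y\in C^\circ$, whereas you cite Lemma~\ref{L:formula for phi on extreme line}(i) for the description of $\ell_x^{r'}$; these amount to the same thing.
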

\begin{proof}
 If we write $\lambda r = \lambda M(r/x)M(r/x)^{-1}r := \lambda M(r/x) p$, then $M(p/x) = 1$. Therefore, we must have $\lambda M(r/x) > -1$, as $x-p \in \partial C$ and $x+\lambda M(r/x)p =y \in C^\circ$. Applying Lemma~\ref{L:x->x+r by automorphism} to  $y=x + \lambda M(r/x) p\in C^\circ$ shows that there is a linear map $T \in \mathrm{Aut}(C)$ such that $T x = y$. 
\end{proof}
It now follows immediately from \Cref{C:x->x+r by automorphism} and \Cref{L:char span ext(C) = V} that $C^\circ$ is homogeneous.
\begin{theorem}\label{T:homogeneous cone}
Suppose that $(V,C,u)$ and $(W,K,e)$ are complete order unit spaces such that $V$ is spanned by the atoms of $C$. If there exists a gauge-reversing map $\Psi\colon C^\circ\to K^\circ$, then $C^\circ$ is homogeneous.
\end{theorem}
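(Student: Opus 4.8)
The plan is to chain together the linear automorphisms produced by \Cref{C:x->x+r by automorphism} along a path of finite piecewise extreme directions, which exists by the hypothesis that $V$ is spanned by the atoms of $C$. Concretely, I would first fix arbitrary $x,y\in C^\circ$ and invoke \Cref{L:char span ext(C) = V}: since $V$ is the span of the atoms of $C$, there are atoms $p_1,\ldots,p_n$ of $C$ and points $x=x_0,x_1,\ldots,x_n=y$ in $C^\circ$ with $x_k\in C^\circ\cap\{x_{k-1}+tp_k\colon t\in\R\}\setminus\{x_{k-1}\}$ for each $k=1,\ldots,n$. In particular $x_k-x_{k-1}=\lambda_k p_k$ for some nonzero scalar $\lambda_k$, and $p_k\in\mathrm{ext}(C)$.

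Next I would apply \Cref{C:x->x+r by automorphism} to each consecutive pair: for every $k$ there is a linear map $T_k\in\mathrm{Aut}(C)$ with $T_k x_{k-1}=x_k$. Composing, $T:=T_n\circ T_{n-1}\circ\cdots\circ T_1\in\mathrm{Aut}(C)$ satisfies $Tx=x_n=y$. Since $x,y\in C^\circ$ were arbitrary, $\mathrm{Aut}(C)$ acts transitively on $C^\circ$, which is precisely the statement that $C^\circ$ is homogeneous. (One should note that \Cref{C:x->x+r by automorphism} relies on the existence of the gauge-reversing map $\Psi$, which is available by hypothesis, and that $\mathrm{Aut}(C)$ is a group under composition so the composite lies in it.)

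I do not expect a genuine obstacle here: all the real work has already been done in building the $d_T$-symmetries $S_x$, showing in \Cref{L:x->x+r by automorphism} that $x\mapsto x+\lambda r$ is realised by an element of $\mathrm{Aut}(C)$ via a suitable composition $S_{x+\lambda r}\circ S_{x+\mu r}$ and Sch\"affer's linearity theorem, and in \Cref{L:char span ext(C) = V} characterising the spanning condition in terms of connectivity by extreme directions. The only point requiring minor care is the bookkeeping that each $x_k$ genuinely lies in $C^\circ$ (guaranteed by the path condition) so that \Cref{C:x->x+r by automorphism} applies at every step; after that the conclusion is immediate. Applying the same argument with $\Psi^{-1}$ in place of $\Psi$ and $K$ in place of $C$ — using \Cref{C: W is span of atoms} to see that $W$ is spanned by the atoms of $K$ — also yields that $K^\circ$ is homogeneous, should that be wanted.
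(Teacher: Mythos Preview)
Your proposal is correct and is exactly the argument the paper gives: it states that the result follows immediately from \Cref{C:x->x+r by automorphism} and \Cref{L:char span ext(C) = V}, which is precisely the chaining you spell out. Your additional remark about $K^\circ$ being homogeneous via $\Psi^{-1}$ and \Cref{C: W is span of atoms} is also in line with the paper's comment at the start of the section.
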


\section{Self-duality of the cone} \label{sec:sym}
In this section we show that if  there exists a gauge-reversing map $\Psi\colon C^\circ\to K^\circ$, then  $C$ is self-dual with respect to an inner-product on $V$. Here the assumption that $C= C_{sc}$ will play a crucial role. The key idea is to show that the atoms of $C$ correspond exactly to the pure states of $S$ in that case. 
\begin{proposition}\label{P:points of smoothness}
Suppose that $(V,C,u)$ and $(W,K,e)$ are complete order unit spaces such that $V$ is spanned by the atoms of $C$ and $C=C_{sc}$. If $\Psi\colon C^\circ\to K^\circ$ is a gauge-reversing map, then for every atom $p$ of $C$ there is a unique pure state $\psi_p$ such that $\psi_p(p)=1$. Moreover, \begin{equation}\label{E:M and psi}
    \psi_p(x)=M(p/S_u(x))\qquad(x \in \Ci), 
\end{equation}
where $S_u\colon C^\circ\to C^\circ$ is the symmetry at $u$ given by (\ref{sym}). In particular, $u-p$ is a point of smoothness of $C$. Furthermore, for each pure state $\xi$ of $(V,C,u)$ there exists an atom $p$ of $C$ such that $\xi(p)=1$, i.e., $\xi=\psi_p$. 
\end{proposition}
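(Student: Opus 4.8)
The plan is to obtain the whole statement from Theorem~\ref{T:bijection atoms pure states}, but applied to the symmetry $S_u\colon C^\circ\to C^\circ$ of \eqref{sym} rather than to $\Psi$ itself; the key realisation is that $S_u$ is a gauge-reversing map whose target cone lives in the \emph{same} space $V$ and which fixes $u$, so it is in a position to see the pure states of $(V,C,u)$. Since $V$ is spanned by the atoms of $C$, the map $S_u$ is well defined, and by Theorem~\ref{T:symmetry from antitone} together with Lemma~\ref{L:DS_x=-Id} it is a gauge-reversing $d_T$-symmetry at $u$: in particular $S_u(u)=u$, $S_u^2=\mathrm{Id}$ on $C^\circ$, and $S_u(u+\lambda r)=u-\tfrac{\lambda}{\lambda+1}r$ for every atom $r$ of $C$ and every $\lambda>-1$. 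I would then invoke Theorem~\ref{T:bijection atoms pure states} with $(W,K,e)=(V,C,u)$ and the gauge-reversing map $S_u$ (this is where $C=C_{sc}$ is used): it produces a bijection $S_u^*$ from the atoms of $C$ onto the pure states of $(V,C,u)$, and, writing $\psi_p:=S_u^*(p)$, the identities $M(p/x)=\psi_p(S_u(x))$ for all $x\in C^\circ$ and $\psi_p(q)=1$ for the unique atom $q$ with $S_u(u+\lambda p)=u-\tfrac{\lambda}{\lambda+1}q$. Comparing the latter with Lemma~\ref{L:DS_x=-Id}(i) forces $q=p$, so $\psi_p(p)=1$; substituting $S_u(x)$ for $x$ in $M(p/x)=\psi_p(S_u(x))$ and using $S_u^2=\mathrm{Id}$ gives \eqref{E:M and psi}; and the surjectivity of $S_u^*$ gives the last assertion at once, since any pure state $\xi$ of $(V,C,u)$ equals some $\psi_p$ and then $\xi(p)=\psi_p(p)=1$.

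The step I expect to carry the real content is the \emph{uniqueness} of $\psi_p$, and here is the argument I would run. Let $\phi\in S$ be an arbitrary state with $\phi(p)=1$. For $x\in C^\circ$ we have $S_u(x)\in C^\circ$, hence $\phi(S_u(x))>0$, and the relation $p\le\mu S_u(x)$ applied to $\phi$ forces $\mu\ge 1/\phi(S_u(x))$; thus $M(p/S_u(x))\ge 1/\phi(S_u(x))$, which together with \eqref{E:M and psi} yields
\[
\psi_p(x)\,\phi(S_u(x))\ \ge\ 1\qquad(x\in C^\circ).
\]
I would then specialise this to $x=u+\lambda r$ with $r$ an atom of $C$ and $\lambda>-1$ (such points lie in $C^\circ$ by Lemma~\ref{L:formula for phi on extreme line}(i)): since $S_u(u+\lambda r)=u-\tfrac{\lambda}{\lambda+1}r$ and $\psi_p$, $\phi$ are linear with $\psi_p(u)=\phi(u)=1$, the inequality becomes
\[
\bigl(1+\lambda\,\psi_p(r)\bigr)\Bigl(1-\tfrac{\lambda}{\lambda+1}\,\phi(r)\Bigr)\ \ge\ 1\qquad(\lambda>-1).
\]
The left-hand side is a differentiable function of $\lambda$ equal to $1$ at $\lambda=0$, so $\lambda=0$ is an interior minimiser and its derivative there — which is $\psi_p(r)-\phi(r)$ — must vanish. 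Hence $\psi_p$ and $\phi$ agree on every atom of $C$, and as the atoms span $V$, linearity forces $\phi=\psi_p$. This shows $\psi_p$ is the unique state, a fortiori the unique pure state, with value $1$ at $p$.

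Once uniqueness is established, the smoothness statement is immediate: from $M(p/u)=1$ we get $u-p\in\partial C$, and since $u\in C^\circ$ every nonzero positive functional supporting $C$ at $u-p$ can be rescaled to a state $\xi$, for which $\xi(u-p)=0$ means exactly $\xi(p)=1$; by uniqueness the only such $\xi$ is $\psi_p$, so $C$ has a single supporting hyperplane at $u-p$, i.e.\ $u-p$ is a point of smoothness of $C$.
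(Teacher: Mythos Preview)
Your proof is correct. The existence of $\psi_p$, the identity \eqref{E:M and psi}, and the ``furthermore'' clause are obtained exactly as in the paper, by applying Theorem~\ref{T:bijection atoms pure states} and Lemma~\ref{L:DS_x=-Id} to the gauge-reversing map $S_u$ in place of $\Psi$.

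Where your argument diverges from the paper is in the uniqueness step. The paper argues via the horofunction machinery: given a pure state $\xi$ with $\xi(p)=1$, it passes to the singleton Funk Busemann point $\log\xi$, pushes it through $S_u$ to a singleton reverse-Funk Busemann point $\log M(q/\cdot)$ for some atom $q$, and then shows $q=p$ by testing against $p_n=\tfrac{1}{n}u+(1-\tfrac{1}{n})p$. Your route is entirely elementary: from $\phi(p)=1$ and $p\le M(p/S_u(x))S_u(x)$ you extract the inequality $\psi_p(x)\,\phi(S_u(x))\ge 1$, specialise to $x=u+\lambda r$, and read off $\psi_p(r)=\phi(r)$ from the first-order condition at the interior minimum $\lambda=0$. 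This avoids Propositions~\ref{P:Funk singleton Busemann} and~\ref{P:singleton Busemann is ext} altogether, and it actually proves more: your $\phi$ is an arbitrary \emph{state} with $\phi(p)=1$, not just a pure one, so the smoothness of $u-p$ follows directly without the implicit Krein--Milman step the paper needs to pass from ``unique pure supporting state'' to ``unique supporting state''. The paper's approach, on the other hand, keeps the argument inside the Busemann-point framework already set up, which has a certain thematic coherence with the rest of the paper.
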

\begin{proof} 
Applying \Cref{T:bijection atoms pure states} taking $S_u$ for $\Psi$ and \Cref{L:DS_x=-Id}, we get a pure state $\psi_p$ of $(V,C,u)$ such that $\psi_p(p)=1$. Moreover, 
$\psi_p(x)=M(p/S_u(x))$ for $x\in C^\circ$. 

To show uniqueness, suppose that $\xi$ is a pure state of $(V,C,u)$ with $\xi(p)=1$. Then $g(x) := \log \xi(x)$ is a singleton Funk Busemann point by Proposition \ref{P:Funk singleton Busemann}. It follows  that  $S_u(g)$ is a singleton reverse-Funk Busemann point by \Cref{L:DS_x=-Id}$(ii)$ and \Cref{T: d_T isom extends to Busemann}, which must be of the form $S_u(g)(x)=\log M(q/x)$ for some atom $q$ of $C$ by Proposition \ref{P:singleton Busemann is ext}. 
It follows from (\ref{isomboundary}) that
\[
\log M(q/x)=S_u(g)(x)=g(S_u(x))=\log \xi(S_u(x)) \mbox{\qquad for all $x\in C^\circ$}.
\]
In particular,  for $p_n=\frac{1}{n}u +(1-\frac{1}{n})p$ we find that $M(q/p_n) =\xi(S_u(p_n)) = \xi (nu +(1-n)p)= 1$ by \Cref{L:DS_x=-Id}$(i)$.  Thus, $q \le p_n$ for all $n$, which implies $q=p$, as $p_n\to p$.  So, $\xi(x) =M(p/S_u(x)) =\psi_p(x)$ for all $x\in C^\circ$, hence $\xi=\psi_p$ and $\psi_p$ is the unique pure state such that $\psi_p(p)=1$. This implies that $u-p$ is a point of smoothness, since $\psi_p$ is the unique supporting state at $u-p$. 

Finally, if $\rho$ is a pure state, then $f(x):=\log \rho(x)$ is a singleton Funk Busemann point by Proposition \ref{P:Funk singleton Busemann}. Reasoning as before, $h(x):=S_u(f)(x)= f(S_u(x))$ is a singleton reverse-Funk Busemann point by \Cref{L:DS_x=-Id}$(ii)$ and \Cref{T: d_T isom extends to Busemann}, which is of the form $h(x)=\log M(p/x)$ for some atom $p$ of $C$ by Proposition \ref{P:singleton Busemann is ext}. As $M(p/x) =\psi_p(S_u(x))$ and $h(x) = \log \rho(S_u(x))$, we see that $\rho$ and $\psi_p$ coincide on $C^\circ$, hence $\rho=\psi_p$, which completes the proof.    
\end{proof} 

 The $d_T$-symmetry $S_u$ at $u$ satisfies the following property for orthogonal atoms of $C$. 

\begin{lemma}\label{L:orthogonal inverse}
Suppose that $(V,C,u)$ and $(W,K,e)$ are complete order unit spaces such that $V$ is spanned by the atoms of $C$ and $C = C_{sc}$. If $\Psi\colon C^\circ\to K^\circ$ is a gauge-reversing map, then for orthogonal atoms $p_1, \dots, p_n$ of $C$ the map $S_u$ given by (\ref{sym}) satisfies
    $$S_u\left(u+\sum_{k=1}^n \lambda_k p_k\right) = u - \sum_{k=1}^n \frac{\lambda_k}{\lambda_k+1} p_k$$ for $u+\sum_{k=1}^n \lambda_k p_k\in C^\circ$.
\end{lemma}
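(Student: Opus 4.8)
The plan is to observe that the map $S_u\colon C^\circ\to C^\circ$ defined in (\ref{sym}) is itself a gauge-reversing map that fixes the order unit $u$, and then to read off the assertion as the special case of \Cref{T:equivalent conditions atoms}$(ii)$ in which $\Psi$ is replaced by $S_u$ and the target order unit space $(W,K,e)$ is replaced by $(V,C,u)$ itself.

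First I would record the two facts that make this substitution legitimate: by \Cref{L:DS_x=-Id}$(ii)$ the map $S_u$ is gauge-reversing, and by \Cref{L:DS_x=-Id}$(i)$ with $\lambda=0$ we have $S_u(u)=u$. Since we are assuming $C=C_{sc}$, the hypotheses of \Cref{T:equivalent conditions atoms} are satisfied with the gauge-reversing map $S_u\colon C^\circ\to C^\circ$ and the codomain order unit space $(V,C,u)$, which trivially also satisfies $C=C_{sc}$.

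The only point that needs checking is the identification of the atoms of $C$ that "correspond" to $p_1,\dots,p_n$ in the sense of \Cref{T:equivalent conditions atoms}, i.e.\ via \Cref{L:formula for phi on extreme line} applied with $x=u$. For each $k$, since $p_k$ is an atom we have $M(p_k/u)=1$, so \Cref{L:formula for phi on extreme line}$(ii)$ yields a unique atom $q_k$ of $C$ with
\[
S_u(u+tp_k)=S_u(u)-\frac{t}{t+1}q_k=u-\frac{t}{t+1}q_k\qquad\text{for all }t\in(-1,\infty),
\]
the uniqueness coming from evaluation at, say, $t=1$. On the other hand, \Cref{L:DS_x=-Id}$(i)$ gives $S_u(u+tp_k)=u-\frac{t}{t+1}p_k$ for all $t>-1$. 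Comparing the two expressions forces $q_k=p_k$ for every $k$.

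With this identification in hand, \Cref{T:equivalent conditions atoms}$(ii)$ applied to $S_u$ gives exactly
\[
S_u\Bigl(u+\sum_{k=1}^n\lambda_k p_k\Bigr)=u-\sum_{k=1}^n\frac{\lambda_k}{\lambda_k+1}p_k
\]
for all $\lambda_1,\dots,\lambda_n>-1$, while \Cref{T:equivalent conditions atoms}$(i)$ shows that the hypothesis $u+\sum_{k=1}^n\lambda_k p_k\in C^\circ$ is precisely equivalent to $\lambda_k>-1$ for every $k$; this completes the argument. I do not expect a genuine obstacle here: the substantive inductive content has already been carried out in \Cref{T:equivalent conditions atoms}, so the only thing to be careful about is not re-deriving that induction but instead invoking the theorem once the self-map structure of $S_u$ is recognised, the sole verification being the identity $q_k=p_k$ via \Cref{L:DS_x=-Id}$(i)$.
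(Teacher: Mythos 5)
Your proposal is correct and follows essentially the same route as the paper: both invoke \Cref{T:equivalent conditions atoms}$(i)$ and $(ii)$ with $\Psi$ replaced by the gauge-reversing self-map $S_u$ and then use \Cref{L:DS_x=-Id}$(i)$ to identify the corresponding atoms $q_k$ with $p_k$. Your write-up is only slightly more explicit about verifying the hypotheses ($S_u(u)=u$, the codomain being $(V,C,u)$ with $C=C_{sc}$), which is a welcome clarification but not a different argument.
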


\begin{proof} 
We know from \Cref{T:equivalent conditions atoms}$(i)$ and $(ii)$ that 
$$S_u\left(u+\sum_{k=1}^n \lambda_k p_k\right) = u - \sum_{k=1}^n \frac{\lambda_k}{\lambda_k+1} q_k$$ 
for $u+\sum_{k=1}^n \lambda_k p_k\in C^\circ$, where the atoms $q_k$ of $C$ are the ones from \Cref{L:formula for phi on extreme line} with $\Psi=S_u$ and $x=u$.  It now follows from \Cref{L:DS_x=-Id} that $q_k=p_k$ for all $k$. 
\end{proof}

\begin{proposition}\label{P:pairing of extremes} Suppose that $(V,C,u)$ and $(W,K,e)$ are complete order unit spaces such that $V$ is spanned by the atoms of $C$ and $C=C_{sc}$. If $\Psi\colon C^\circ\to K^\circ$ is a gauge-reversing map, and if $p,q$ are atoms of $C$ with corresponding unique pure states $\psi_p,\psi_q$ such that $\psi_p(p)=1$ and $\psi_q(q)=1$, then $\psi_p(q)=\psi_q(p)$. 
\end{proposition}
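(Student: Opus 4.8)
The plan is to realize \emph{both} $\psi_p(q)$ and $\psi_q(p)$ as the single limit $\lim_{c\to1^-}(1-c)\,M(q/(u-cp))$, which forces them to be equal. One of the two required identities is a direct computation with the symmetry $S_u$ at $u$: for $c\in(0,1)$ set $\lambda:=c/(1-c)>0$; since $p$ is an atom we have $M(p/u)=1$, so Lemma~\ref{L:DS_x=-Id}$(i)$ gives $S_u(u+\lambda p)=u-\tfrac{\lambda}{\lambda+1}p=u-cp$, which lies in $C^\circ$ (it is the convex combination $(1-c)u+c(u-p)$). Using \eqref{E:M and psi} of Proposition~\ref{P:points of smoothness} applied to the atom $q$, together with linearity of $\psi_q$,
\[
M(q/(u-cp))=M\bigl(q/S_u(u+\lambda p)\bigr)=\psi_q(u+\lambda p)=1+\lambda\,\psi_q(p),
\]
so that $(1-c)\,M(q/(u-cp))=(1-c)+c\,\psi_q(p)\to\psi_q(p)$ as $c\to1^-$.

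For the other identity I would pass to the state space. Since $u-cp\in C^\circ$ is an order unit and the states determine the ordering of $V$, we have $M(q/(u-cp))=\sup_{\phi\in S}\phi(q)/(1-c\phi(p))$, hence $(1-c)\,M(q/(u-cp))=\sup_{\phi\in S}(1-c)\phi(q)/(1-c\phi(p))$. Taking $\phi=\psi_p$, for which $\psi_p(p)=1$, the supremand equals $\psi_p(q)$, giving the lower bound $(1-c)\,M(q/(u-cp))\ge\psi_p(q)$, uniformly in $c$. For the matching upper bound one uses that, by Proposition~\ref{P:points of smoothness}, $\psi_p$ is the \emph{unique} state of $V$ with value $1$ at $p$ (equivalently, $u-p$ is a point of smoothness of $C$). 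Combined with $w^*$-compactness of $S$ this yields, for every $\varepsilon>0$, a $\delta>0$ with $\phi(q)<\psi_p(q)+\varepsilon$ whenever $\phi\in S$ satisfies $\phi(p)\ge1-\delta$; otherwise a $w^*$-cluster point of states $\phi_n$ with $\phi_n(p)\to1$ and $\phi_n(q)\ge\psi_p(q)+\varepsilon$ would be a state with value $1$ at $p$, hence equal to $\psi_p$, yet with value at least $\psi_p(q)+\varepsilon$ at $q$. Splitting the supremum according to whether $\phi(p)\ge1-\delta$ (where $1-c\phi(p)\ge1-c$, so the supremand is at most $\phi(q)<\psi_p(q)+\varepsilon$) or $\phi(p)<1-\delta$ (where $1-c\phi(p)>c\delta$, so the supremand is less than $(1-c)/(c\delta)$) gives $(1-c)\,M(q/(u-cp))\le\max\{\psi_p(q)+\varepsilon,(1-c)/(c\delta)\}$. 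Letting $c\to1^-$ and then $\varepsilon\to0$ shows $(1-c)\,M(q/(u-cp))\to\psi_p(q)$, and comparing with the first identity yields $\psi_p(q)=\psi_q(p)$.

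The step I expect to be the main obstacle is this last one, namely showing that the supremum defining $(1-c)M(q/(u-cp))$ concentrates at the state $\psi_p$ as $c\to1^-$. This is where the standing hypotheses of the section (which provide the bijection between atoms and pure states and the smoothness of $C$ at $u-p$) are genuinely needed: the uniqueness of the supporting state $\psi_p$ is exactly what rules out the limit being strictly larger than $\psi_p(q)$.
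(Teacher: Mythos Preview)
Your argument is correct and is genuinely different from the paper's. Both proofs start from \eqref{E:M and psi} of Proposition~\ref{P:points of smoothness}, but then diverge. The paper works with the double array $M(p_n/S_u(q_m))$ for $p_n=\tfrac{1}{n}u+(1-\tfrac{1}{n})p$ and $q_m=\tfrac{1}{m}u+(1-\tfrac{1}{m})q$, identifies the two iterated limits with $\psi_p(q)$ and $\psi_q(p)$ using the gauge-reversing identity $M(p_n/S_u(q_m))=M(q_m/S_u(p_n))$, and then proves an explicit order-theoretic estimate $0\le M(p_n/S_u(q_m))-M(p/S_u(q_m))\le\varepsilon(1-\psi_p(q))$ for $1/n<\varepsilon$, which gives uniform convergence in $m$ and allows the limits to be interchanged. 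You instead collapse the problem to the single limit $\lim_{c\to1^-}(1-c)M(q/(u-cp))$, evaluate it once via $S_u$ and \eqref{E:M and psi} to get $\psi_q(p)$, and once via the state-space formula $M(q/(u-cp))=\sup_{\phi\in S}\phi(q)/(1-c\phi(p))$ together with a $w^*$-compactness/concentration argument that pins the supremum at $\psi_p$ and yields $\psi_p(q)$. Your route makes the role of the smoothness of $C$ at $u-p$ (equivalently, the uniqueness of the supporting state at $p$, proved in Proposition~\ref{P:points of smoothness}) completely transparent: it is precisely what forces the supremum to localise. The paper's route avoids the compactness argument by giving a hands-on uniform bound. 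Both are clean; yours is perhaps more conceptual, the paper's slightly more self-contained since it does not appeal again to the smoothness statement.
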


\begin{proof}
Let $\psi_p$ and $\psi_q$ be the unique pure states such that $\psi_p(p)=1$ and $\psi_q(q)=1$ from \Cref{P:points of smoothness}. Define $p_n=\frac{1}{n}u+(1-\frac{1}{n})p$ and and $q_m=\frac{1}{m}u+(1-\frac{1}{m})q$ for $m,n\geq 1$. Then $M(p_n/x)\to M(p/x)$ as $n\to\infty$ for all $x\in C^\circ$ by Lemma~\ref{L:M function is continuous}. Analogously, we have $M(q_m/x)\to M(q/x)$ as $m\to\infty$ for all $x\in C^\circ$. Furthermore, using \eqref{E:M and psi}, we obtain
\[
\lim_{m\to\infty}\lim_{n\to\infty}M(p_n/S_u(q_m))=\lim_{m\to\infty}M(p/S_u(q_m))=\lim_{m\to\infty}\psi_p(q_m)=\psi_p(q)
\]
and similarly
\[
\lim_{n\to\infty}\lim_{m\to\infty}M(p_n/S_u(q_m))=\lim_{n\to\infty}\lim_{m\to\infty}M(q_m/S_u(p_n))=\lim_{n\to\infty}\psi_q(p_n)=\psi_q(p).
\]

Let $g_n(m)  = M(p_n/S_u(q_m))$ and $\lim_n g_n =g$, where 
$g(m)= M(p/S_u(q_m))$.  \\
\\
We claim that $\lim_n g_n= g$ is uniform in $m$. To see this, let $\eps>0$. As $p_n\geq p$, we know that $M(p_n/S_u(q_m)) \geq M(p/S_u(q_m)) = g(m)$. Now take $\frac{1}{n} < \eps$ and using that $(\eps-\frac{1}{n})p \le (\eps-\frac{1}{n})u$,
\begin{align*}
{\textstyle\frac{1}{n}u + (1-\frac{1}{n}})p &\le \eps u +(1-\eps)p \\
&\leq    \eps u +(1-\eps)g(m)S_u(q_m) \\
&\leq \eps S_u(q_m) +(1-\eps)g(m)S_u(q_m) \\ 
&=    [\eps(1-g(m)) + g(m)]S_u(q_m) \\
&\leq  [\eps(1-\psi_p(q)) +g(m)]S_u(q_m), 
\end{align*}
as $g(m) = M(p/S_u(q_m))  =\psi_p(q_m)\geq \psi_p(q)$. \\
\\
Thus, $M(p_n/S_u(q_m)) - g(m) \leq \eps(1-\psi_p(q))$ for all $m$. 

It follows that the limits over $n$ and $m$ for $M(p_n/S_u(q_m))$ are interchangeable and converge to the same value by \cite[Theorem~9.16]{Apostol}, hence  $\psi_p(q)=\psi_q(p)$. 
\end{proof}

For an atom $p$ of $C$, we define the linear form on $V$ by 
\begin{equation}\label{B(r,y)} B(p,y):=\psi_p(y) \qquad (y\in V).
\end{equation} 
We obtain a bilinear form on $V$ after further extending $B$ to all of $V$ in the first argument, that is, for $x,y\in V$ and $x=\sum_{k=1}^n\lambda_k p_k$ where $p_1, \dots, p_n$ are atoms of $C$, we put $B(x,y):=\sum_{k=1}^n\lambda_k B(p_k,y)$. Suppose that $x=\sum_{k=1}^n\lambda_k p_k=\sum_{i=1}^m\mu_i q_i$ where $q_1, \dots, q_m$ are atoms of $C$. Now, 
by also writing $y=\sum_{j=1}^l\eta_jr_j$ as a linear combination of atoms of $C$, it follows from Proposition~\ref{P:pairing of extremes} that
\[
B(x,y)=\sum_{k=1}^n\lambda_k B(p_k,y)=\sum_{k=1}^n\lambda_k\sum_{j=1}^l\eta_jB(p_k,r_j)=\sum_{k=1}^n\lambda_k\sum_{j=1}^l\eta_jB(r_j,p_k)=\sum_{j=1}^l\eta_jB(r_j,x),
\]
and similarly, 
\[
\sum_{i=1}^m\mu_iB(q_i,y)=\sum_{j=1}^l\eta_j B(r_j,x).
\]
Thus, $B$ is a well-defined symmetric bilinear form on $V$. It should be clear that for any $x \in C$ the map $B(x,\cdot)$ is positive, and for such bilinear forms we can prove the following more general identities, which hold for positive bilinear maps on partially ordered vector spaces. 

\begin{lemma}\label{L: properties B}
Let $X,Y,Z$ be partially ordered vector spaces, and $B \colon X\times Y \to Z$ be a positive bilinear map. If $-v \le x\le v$ and $-w \le y \le w$, then $-B(v,w) \le B(x,y) \le B(v,w)$. Moreover, if $(X,X_+,v)$ and $(Y,Y_+,w)$ are order unit spaces, and $Z$ is a normed vector lattice (e.g.\ $\R$), then $B$ is bounded with $\norm{B} = \norm{B(v,w)}_Z$.
\end{lemma}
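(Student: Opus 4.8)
The plan is to prove the two claims in order. For the first claim, fix $v$ and $w$ with $-v \le x \le v$ and $-w \le y \le w$; I want to conclude $-B(v,w) \le B(x,y) \le B(v,w)$. The key observation is that $v-x \ge 0$ and $v+x \ge 0$, and similarly $w-y \ge 0$ and $w+y \ge 0$, so by positivity of $B$ in each argument separately all four products $B(v\pm x, w\pm y)$ are $\ge 0$ in $Z$. Expanding by bilinearity,
\[
0 \le B(v+x, w+y) = B(v,w) + B(v,y) + B(x,w) + B(x,y),
\]
\[
0 \le B(v-x, w-y) = B(v,w) - B(v,y) - B(x,w) + B(x,y).
\]
Adding these gives $0 \le 2B(v,w) + 2B(x,y)$, i.e.\ $-B(v,w) \le B(x,y)$. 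For the other inequality I use instead $B(v+x,w-y) \ge 0$ and $B(v-x,w+y) \ge 0$, whose sum is $2B(v,w) - 2B(x,y) \ge 0$, giving $B(x,y) \le B(v,w)$. This is an elementary ``polarization''-style argument and should be the easy part.

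For the second claim, suppose now $(X,X_+,v)$ and $(Y,Y_+,w)$ are order unit spaces with designated order units $v,w$, and $Z$ is a normed vector lattice. Given $x \in X$ and $y \in Y$ with $\|x\|_v \le 1$ and $\|y\|_w \le 1$: by definition of the order unit norm, for every $\varepsilon>0$ we have $-(1+\varepsilon)v \le x \le (1+\varepsilon)v$ and $-(1+\varepsilon)w \le y \le (1+\varepsilon)w$, so the first claim yields $-(1+\varepsilon)^2 B(v,w) \le B(x,y) \le (1+\varepsilon)^2 B(v,w)$. Since $Z$ is a vector lattice, $-z \le B(x,y) \le z$ for $z := (1+\varepsilon)^2 B(v,w)$ implies $|B(x,y)| \le z$, hence $\|B(x,y)\|_Z \le (1+\varepsilon)^2 \|B(v,w)\|_Z$; letting $\varepsilon \downarrow 0$ gives $\|B(x,y)\|_Z \le \|B(v,w)\|_Z$. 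Taking the supremum over $\|x\|_v, \|y\|_w \le 1$ shows $B$ is bounded with $\|B\| \le \|B(v,w)\|_Z$. The reverse inequality is immediate since $\|v\|_v = \|w\|_w = 1$, so $\|B\| \ge \|B(v,w)\|_Z$; therefore $\|B\| = \|B(v,w)\|_Z$.

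The only genuine subtlety, and the step I would flag as the main point requiring care, is the passage from ``$-z \le B(x,y) \le z$ in $Z$'' to ``$\|B(x,y)\|_Z \le \|z\|_Z$''. This is exactly where the normed-vector-lattice hypothesis on $Z$ is used: the order interval $[-z,z]$ in a normed vector lattice has norm-radius $\|z\|_Z$, because $|a| \le |z|$ implies $\|a\|_Z \le \|z\|_Z$ by monotonicity of the lattice norm, and here $|z| = z$ since $z = (1+\varepsilon)^2 B(v,w) \ge 0$ (positivity of $B$). For the case $Z = \R$ this is just the statement that $-z \le t \le z$ forces $|t| \le z$, so no lattice machinery is needed there. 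Everywhere else the argument is purely formal manipulation with bilinearity, positivity, and the definition of the order unit norm, so no further obstacles are expected.
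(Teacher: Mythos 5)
Your proposal is correct and follows essentially the same route as the paper: the same four positivity inequalities $B(v\pm x, w\pm y)\ge 0$ paired and summed to get $-B(v,w)\le B(x,y)\le B(v,w)$, followed by monotonicity of the lattice norm and evaluation at $(v,w)$ for the norm identity. Your extra $\varepsilon$-step relating the unit ball of the order unit norm to the order interval $[-v,v]$ is a careful touch the paper elides (it is automatic for Archimedean cones), but it does not change the argument.
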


\begin{proof}
The inequalities follow from the fact that $v\pm x, w\pm y\ge 0$, 
\[
0\le B(v-x,w+y)+B(v+x,w-y) = 2B(v,w) - 2B(x,y)
\]
together with 
\[
0\le B(v+x,w+y)+B(v-x,w-y) = 2B(v,w) + 2B(x,y).
\]
Now suppose that $v$ is an order unit of $X$, $w$ is an order unit of $Y$, and $Z$ is a normed vector lattice. If $-v \le x\le v$ and $-w \le y \le w$, then by the above, $|B(x,y)| \leq B(v,w) = |B(v,w)|$ and so $\norm{B(x,y)}_Z \leq \norm{B(v,w)}_Z$. Taking the supremum over all such $x$ and $y$ yields $\norm{B} \leq \norm{B(v,w)}_Z$; combined with $\norm{B} \geq \norm{B(v,w)}_Z$ we obtain the desired conclusion.
\end{proof}

\begin{theorem}\label{T:frames and rank}
    Suppose that $(V,C,u)$ and $(W,K,e)$ are complete order unit spaces such that $V$ is spanned by the atoms of $C$ and $C=C_{sc}$. If $\Psi\colon C^\circ\to K^\circ$ is a gauge-reversing map with $\Psi(u) = e$, and $B$ is the symmetric bilinear form on $V$ as defined above, then for atoms $p_1, \dots, p_n$ of $C$ the following statements are equivalent:
\begin{itemize}
    \item[$(i)$] $B(p_k,p_l)=0$ whenever $k \neq l$;
    \item[$(ii)$] $p_1, \dots, p_n$ are orthogonal.
\end{itemize}
Moreover, a set of orthogonal atoms $p_1, \dots, p_n$ sums to $u$ if and only if it has cardinality $N := B(u,u)$. Furthermore, every collection of orthogonal atoms can be extended to such a maximal set.
\end{theorem}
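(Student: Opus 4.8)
The plan is to carry out the whole argument with the symmetry $S_u\colon C^\circ\to C^\circ$ at $u$ from \eqref{sym} playing the role of the gauge-reversing map: by \Cref{L:DS_x=-Id} it is gauge-reversing and fixes $u$, so — since $C=C_{sc}$ — every earlier result applies with $\Psi$, $W$, $K$, $e$ replaced by $S_u$, $V$, $C$, $u$. Two facts will be used repeatedly. First, by \Cref{P:points of smoothness} the pure state $\psi_p$ associated with an atom $p$ satisfies $\psi_p(x)=M(p/S_u(x))$ for $x\in C^\circ$, equivalently $M(p/x)=\psi_p(S_u(x))$. Second, by \Cref{L:orthogonal inverse} (using \Cref{L:DS_x=-Id}, which identifies the atom of $C$ that $S_u$ assigns to $p$ as $p$ itself) one has $S_u\bigl(u+\sum_k\lambda_k p_k\bigr)=u-\sum_k\frac{\lambda_k}{\lambda_k+1}p_k$ whenever $p_1,\dots,p_n$ are orthogonal atoms and $u+\sum_k\lambda_k p_k\in C^\circ$.

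For the equivalence of $(i)$ and $(ii)$, the implication $(ii)\Rightarrow(i)$ is immediate: if $p_1,\dots,p_n$ are orthogonal then $p_l\le u-p_k$ for $k\neq l$, so $0\le B(p_k,p_l)=\psi_{p_k}(p_l)\le\psi_{p_k}(u-p_k)=0$. For $(i)\Rightarrow(ii)$ I would induct on $n$, the case $n=1$ being trivial. Assuming $(i)$ for $p_1,\dots,p_n$, the inductive hypothesis applied to the sub-collection $p_1,\dots,p_{n-1}$ gives $P:=p_1+\dots+p_{n-1}\le u$, and $\psi_{p_n}(P)=\sum_{k<n}B(p_n,p_k)=0$. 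For $0<\eps<1$ we have $u-\eps P\in C^\circ$ and $S_u(u-\eps P)=u+\frac{\eps}{1-\eps}P$, hence
\[
M\bigl(p_n/(u-\eps P)\bigr)=\psi_{p_n}\bigl(S_u(u-\eps P)\bigr)=\psi_{p_n}\Bigl(u+\tfrac{\eps}{1-\eps}P\Bigr)=1+\tfrac{\eps}{1-\eps}\psi_{p_n}(P)=1,
\]
so that $p_n\le u-\eps P$ for every $\eps\in(0,1)$; letting $\eps\uparrow1$ and using that $C$ is closed yields $p_n+P\le u$, i.e.\ $p_1,\dots,p_n$ are orthogonal.

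The key point — which substitutes for the reflexivity hypothesis of \Cref{T:reflexive frames} — is a uniform bound on the size of an orthogonal family of atoms. If $p_1,\dots,p_n$ are orthogonal, then $0\le\sum_k p_k\le u$, so \Cref{L: properties B} gives $B\bigl(\sum_k p_k,\sum_k p_k\bigr)\le B(u,u)$; by the equivalence just proved and $\psi_{p_k}(p_k)=1$, the left-hand side equals $\sum_{k,l}B(p_k,p_l)=n$, so $n\le B(u,u)<\infty$. Alongside this I would record the extension step, essentially as in the proof of \Cref{T:reflexive frames} but expressed through $S_u$: if $p_1,\dots,p_n$ are orthogonal with $\sum_k p_k\neq u$, then \Cref{T:equivalent conditions atoms}$(vi)$ gives $\sum_k p_k\in\partial C$, so $\{\xi\in S(V)\colon\xi(\sum_k p_k)=0\}$ is a non-empty $w^*$-closed face of $S(V)$ and by Krein--Milman contains a pure state, which by \Cref{P:points of smoothness} is $\psi_p$ for some atom $p$ of $C$. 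Then for $\lambda>-1$,
\[
M\Bigl(p/\bigl(u+\lambda\textstyle\sum_k p_k\bigr)\Bigr)=\psi_p\Bigl(u-\tfrac{\lambda}{\lambda+1}\textstyle\sum_k p_k\Bigr)=1-\tfrac{\lambda}{\lambda+1}\psi_p\bigl(\textstyle\sum_k p_k\bigr)=1,
\]
so $p\le u+\lambda\sum_k p_k$, and letting $\lambda\downarrow-1$ gives $p+\sum_k p_k\le u$; moreover $p\neq p_j$ for all $j$, since $p=p_j$ would force $2p_j\le u$, contradicting $M(p_j/u)=1$.

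Finally I would assemble the last two assertions. Starting from any collection of orthogonal atoms — necessarily finite by the bound — and repeatedly applying the extension step, the procedure must terminate since the cardinality never exceeds $B(u,u)$, and the terminal family $F$ satisfies $\sum_{p\in F}p=u$; this proves the extension statement (and, starting from a single atom, produces at least one such family, so $N:=B(u,u)$ is a positive integer). If orthogonal atoms $p_1,\dots,p_n$ sum to $u$ then $B(u,u)=\sum_{k,l}B(p_k,p_l)=n$, whence $n=N$; conversely, orthogonal atoms $p_1,\dots,p_n$ with $n=N$ admit no proper extension (that would give an orthogonal family of size $N+1>B(u,u)$), so by the extension step $\sum_k p_k=u$. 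I expect the main obstacle to be isolating and proving the cardinality bound $n\le B(u,u)$ from positivity and boundedness of $B$; the finiteness of orthogonal families, the termination of the extension procedure, and the rank identity $N=B(u,u)$ then follow, with the remaining computations being variations on those already performed for \Cref{T:equivalent conditions atoms} and \Cref{T:reflexive frames}.
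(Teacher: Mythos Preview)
Your proof is correct and follows essentially the same route as the paper. The differences are organizational rather than substantive: you work throughout with the symmetry $S_u$ (so that the ``$q_k$'' atoms coincide with the $p_k$), whereas the paper carries out the extension step via the original map $\Psi$ and the atoms $q_k$ in $K$; and you isolate the bound $n\le B(u,u)$ directly from \Cref{L: properties B}, while the paper obtains it as a by-product of the extension step (``not summing to $u$ implies $n+1\le N$''). Your computation $M(p_n/(u-\varepsilon P))=\psi_{p_n}(u+\tfrac{\varepsilon}{1-\varepsilon}P)$ is the same identity the paper uses with the parametrisation $P_m=u-(1-\tfrac{1}{m})P$.
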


\begin{proof}

    $(i) \Rightarrow (ii)$: Let $\emptyset \neq I \subsetneq \{1, \dots, n\}$ be such that $\sum_{I} p_i \le u$. Let $j \in \{1, \dots, n\} \setminus I$. Note that $\sum_I p_i \neq u$ as otherwise the fact that $B(p_j, p_i) = 0$ for all $i \in I$ would imply
    \[
    1 = \psi_{p_j}(u) = B(p_j,u) = \sum_IB(p_j, p_i) = 0.
    \]
Define $P := \sum_{I}p_i \neq u$, so $u-P>0$, and further, for $m \ge 1$ define 
\[
P_m:={\textstyle\frac{1}{m}u+(1-\frac{1}{m})(u-P) = u - (1-\frac{1}{m})P}. 
\]
It follows from \eqref{E:M and psi}, \Cref{L:orthogonal inverse} and \Cref{P:pairing of extremes} that
\begin{align*}
M(p_j/P_m) &= \psi_{p_j}(S_u(P_m)) = \psi_{p_j}\left(u + \frac{m-1}{2m-1}P\right) = 1 + \frac{m-1}{2m-1}\psi_{p_j}(P) \\&= 1 + \frac{m-1}{2m-1}\sum_{i \in I}\psi_{p_j}(p_i) = 1 + \frac{m-1}{2m-1}\sum_{i \in I}\psi_{p_i}(p_j) = 1,
\end{align*}
so $p_j\leq P_m$ for all $m$, hence $p_j \le u-P$. It follows that $p_1, \dots, p_n$ are orthogonal.

$(ii) \Rightarrow (i)$: If $p_1 + \dots + p_n \le u$, then for $k \neq l$ it follows from $p_k \le u - \sum_{i \neq k} p_i\le u - p_l$, that $p_k \in \ker\psi_{p_l}$. Hence $B(p_k,p_l) = 0$.

For the second part of the theorem, suppose $p_1, \ldots, p_n$ are orthogonal atoms. If these atoms sum to $u$, then $N = B(u,u) = \sum_{k=1}^n B(u,p_k) = n$. If $p_1+ \dots+ p_n <u$, then let $P' := \sum_{k=1}^n p_k$, and let $q_1, \dots, q_n$ the corresponding atoms of $K$  as in \Cref{T:equivalent conditions atoms}. By \Cref{T:equivalent conditions atoms}$(vi)$ it follows that $Q := q_1 + \dots + q_n \in \partial K$, hence the $w$*-closed face $\{\phi \in S(W) \colon \phi(Q) = 0\}$ of  $S(W)$ is non-empty. So, the Krein-Milman theorem implies that there exists a pure state $\psi$ of $W$ such that $\psi(Q) = 0$. Let $p$ be an atom of $C$ such that $M(p/x) = \psi(\Psi(x))$ for all $x \in C^\circ$ by \Cref{T:bijection atoms pure states}. Then for any $\lambda > -1$ it follows from \Cref{T:equivalent conditions atoms} that 
\[
M\Bigl(p/u + \lambda P'\Bigr) = \psi(\Psi(u + \lambda P')) = \psi\Bigl(e - \frac{\lambda}{\lambda + 1}Q\Bigr) = 1,
\]
so $p \le u + \lambda P'$ and letting $\lambda \downarrow -1$ we find that $p + p_1 + \dots + p_n \le u$ and so the atoms $p, p_1, \dots, p_n$ are orthogonal. Therefore
$$N = B(u,u)\ge \sum_{k=1}^n B(u,p_k) + B(u,p) = n + 1 > n,$$
which shows the second part of the theorem.
\end{proof}

Given an order unit space $(V,C,u)$ as in \Cref{T:frames and rank}, the integer $N = B(u,u)$ will be called the \emph{rank} of $V$, and a set of orthogonal atoms $\{p_1,\dots,p_N\}$ will be called a \emph{frame}, so $u=p_1+\cdots+p_N$.

Note that the linear map $\hat{u}\colon V \to \R$ defined by $\hat{u}(x):=B(x,u)$ is a continuous linear functional by \Cref{L: properties B}. If $x \in C$ is such that $\hat{u}(x) = 0$, then for any pure state $\phi$ of $V$ it follows from \Cref{P:points of smoothness} that $\phi = \psi_p$ for some atom $p$ of $C$. By \Cref{T:frames and rank} we can extend $p$ to a frame $\{p, p_1, \dots, p_{N-1}\}$, and hence 
\[
0 \le \psi_p(x) = B(x,p) \le B(x,p) + \sum_{k=1}^{N-1}B(x,p_k) = B(x,u) = \hat{u}(x) = 0,
\]
so $x = 0$ by the Krein-Milman theorem and the fact that the states determine the ordering on $V$ by \cite[Lemma~1.18]{Alfsen}. Hence $\hat{u}$ is a strictly positive functional. Let 
\[
\Sigma:= \{x \in C\colon \hat{u}(x)=1\}
\]
be the corresponding {\em base} of $C$. 

\begin{lemma}\label{L: inequality in base}
Suppose that $(V,C,u)$ and $(W,K,e)$ are complete order unit spaces such that $V$ is spanned by the atoms of $C$ and $C=C_{sc}$. If $\Psi\colon C^\circ\to K^\circ$ is a gauge-reversing map and $B$ is a bilinear form on $V$ as above and the rank of $V$ is $N$, then for all $x \in \Sigma$ we have that $B(x,x)\ge \frac{1}{N}$. 
\end{lemma}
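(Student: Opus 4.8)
The plan is to turn the inequality, by means of a frame, into a positive semi‑definiteness statement about $B$, which is where the real content lies.

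\textbf{Reduction to the interior.} Since $\tfrac1N u\in\Sigma\cap C^\circ$ (indeed $\hat u(\tfrac1N u)=\tfrac1N B(u,u)=1$) and $\Sigma$ is convex, for $x\in\Sigma$ the points $x_t:=(1-t)x+\tfrac tN u$ lie in $\Sigma\cap C^\circ$ for $t\in(0,1]$ and converge in norm to $x$. As $B$ is bounded by \Cref{L: properties B}, the map $y\mapsto B(y,y)$ is continuous and $\{B(\cdot,\cdot)\ge\tfrac1N\}$ is closed, so it is enough to treat $x\in\Sigma\cap C^\circ$. Note that $B(\tfrac1N u,\tfrac1N u)=\tfrac1N$, so this is the extremal case.

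\textbf{Reduction via a frame.} Fix a frame $\{p_1,\dots,p_N\}$, so $u=\sum_k p_k$, and by \Cref{T:frames and rank} together with $\psi_{p_k}(p_k)=1$ we have $B(p_k,p_l)=\delta_{kl}$ and $B(u,u)=N$. Put $a_k:=B(x,p_k)=\psi_{p_k}(x)\ge0$; then $\sum_k a_k=B(x,u)=1$. Setting $y:=\sum_k a_k p_k$ one computes $B(x,y)=\sum_k a_k^2=B(y,y)$, so
\[
B(x,x)-\sum_{k=1}^N a_k^2=B(x-y,\,x-y).
\]
Since $\sum_k a_k^2\ge\tfrac1N\bigl(\sum_k a_k\bigr)^2=\tfrac1N$ by the power–mean inequality, the lemma reduces to $B(x-y,x-y)\ge0$. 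Furthermore $B$ is block‑diagonal for the decomposition $V=\mathrm{Span}\{p_1,\dots,p_N\}\oplus\bigcap_k\ker\psi_{p_k}$ (because $B(p_k,z)=\psi_{p_k}(z)=0$ for $z$ in the second summand), the first block being positive definite; so the whole matter is whether $B$ is positive semi‑definite on $\bigcap_k\ker\psi_{p_k}$, which contains $x-y$.

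\textbf{The obstacle and a proposed route.} This semi‑definiteness is the crux, and it genuinely needs more than the positivity of $B$ on $C$: for fixed $x$ the parabola $s\mapsto B(x-su,x-su)=B(x,x)-2s\hat u(x)+s^2N$ is $\ge0$ for $s$ large ($su-x\in C$) and for $s$ small ($x-su\in C^\circ$), but an upward parabola nonnegative on two half‑lines can still dip below $0$ in between, so that observation alone is circular. One can squeeze partial information from the symmetry $S_u$: summing $p_k\le\psi_{p_k}(w)S_u(w)=M(p_k/S_u(w))S_u(w)$ over the frame gives $u\le\hat u(w)S_u(w)$, hence $w\ge\hat u(S_u(w))^{-1}u$ for all $w\in C^\circ$ by involutivity and homogeneity of $S_u$ (\Cref{P:points of smoothness}, \Cref{L:DS_x=-Id}); but this yields only $B(w,w)\ge\hat u(w)/\hat u(S_u(w))$, which is too weak. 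The route I would therefore pursue is to produce a spectral decomposition of $x$: an orthogonal family of atoms $q_1,\dots,q_N$ and scalars $\mu_k\ge0$ with $x=\sum_k\mu_k q_k$, obtained by exploiting the homogeneity of $C^\circ$ (\Cref{T:homogeneous cone}), \Cref{L:orthogonal inverse} and the action of $S_u$ on spans of frames. Once such a decomposition is available, $B(x,x)=\sum_k\mu_k^2\ge\tfrac1N\bigl(\sum_k\mu_k\bigr)^2=\tfrac1N$ at once; securing this decomposition (equivalently, establishing that $B$ is positive semi‑definite) is the step I expect to be the main difficulty.
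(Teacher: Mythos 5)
Your two reductions are correct as far as they go: the passage to $\Sigma\cap C^\circ$ by continuity of $B$, and the identity $B(x,x)=\sum_k a_k^2+B(x-y,x-y)$ with $\sum_k a_k^2\ge\frac1N$ both check out. But the proof is not complete: everything hinges on $B(x-y,x-y)\ge0$, which you correctly identify as the crux and then only sketch a route towards (an orthogonal "spectral" decomposition of $x$). That route is not carried out, and it asks for more than the lemma needs; worse, in the paper's logical order positive semi-definiteness of $B$ is a \emph{consequence} of this lemma (it is used to prove $B\ge0$ on $\ker\hat u$ and then on all of $V$), so reducing the lemma to a semi-definiteness statement points the argument in the wrong direction.

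The idea you are missing is that orthogonality is irrelevant here. By \Cref{T:homogeneous cone} there is $T\in\Aut(C)$ with $Tu=x$; applying $T$ to a frame $u=q_1+\dots+q_N$ gives $x=\sum_{k=1}^N Tq_k=\sum_{k=1}^N\lambda_k p_k$ with $\lambda_k>0$ and $p_1,\dots,p_N$ atoms (a linear automorphism of $C$ preserves extreme rays), which are in general \emph{not} orthogonal. But $B(p_i,p_j)=\psi_{p_i}(p_j)\ge0$ for \emph{any} two atoms, since $\psi_{p_i}$ is a state and $p_j\in C$, while $B(p_k,p_k)=\psi_{p_k}(p_k)=1$. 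Hence
\[
B(x,x)=\sum_{i,j=1}^N\lambda_i\lambda_jB(p_i,p_j)\ \ge\ \sum_{k=1}^N\lambda_k^2\ \ge\ \frac1N\Bigl(\sum_{k=1}^N\lambda_k\Bigr)^2=\frac1N,
\]
because $\sum_k\lambda_k=\sum_k\lambda_k\psi_{p_k}(u)=B(x,u)=\hat u(x)=1$ for $x\in\Sigma$. Discarding the nonnegative cross terms does exactly the work you were hoping to extract from positive semi-definiteness, and no orthogonal decomposition is required. As written, your argument has a genuine gap at its final and essential step.
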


\begin{proof}
If $x \in \Sigma \cap C^\circ$, then by \Cref{T:homogeneous cone} there is a linear map $T \in \mathrm{Aut}(C)$ such that $Tu=x$. Let $\{q_1,\dots,q_N\}$ be a frame. Then $x=Tu = \sum_{k=1}^N Tq_k =:\sum_{k=1}^N \lambda_k p_k$, where each $\lambda_k>0$ and $p_1, \dots, p_N$ are atoms of $C$. Furthermore, $1=B(x,u) = \sum_{k=1}^N\lambda_k\psi_{p_k}(u) =\sum_{k=1}^n\lambda_k$.   

It follows that
\begin{equation}\label{E:B(x,x)inker u}
B(x,x) = \sum_{i,j=1}^N\lambda_i\lambda_jB(p_i,p_j) \ge \sum_{k=1}^N \lambda_k^2B(p_k,p_k) = \sum_{k=1}^N\lambda_k^2 \ge \frac{1}{N},
\end{equation}
by the Cauchy-Schwarz inequality, as 
\[
1 = \left(\sum_{k=1}^N\lambda_k \right)^2 \le \sum_{k=1}^N\lambda^2_k\sum_{k=1}^N 1 = N\sum_{k=1}^N \lambda_k^2.
\]
As $B$ is continuous by \Cref{L: properties B}, this inequality also holds for general $x \in \Sigma$.
\end{proof}

We will use the strictly positive functional $\hat{u}$ to show that $B$ is positive semi-definite on $V$ by first proving that $B$ is positive semi-definite on $\ker\hat{u}$ and then representing any element of $V$ as a linear combination of an element in $\ker\hat{u}$ and $u$.

\begin{lemma}\label{L: B is pos semi-def on ker u}
Suppose that $(V,C,u)$ and $(W,K,e)$ are complete order unit spaces such that $V$ is spanned by the atoms of $C$ and $C=C_{sc}$. If $\Psi\colon C^\circ\to K^\circ$ is a gauge-reversing map and $B$ is a bilinear form on $V$ as above, then $B$ is positive semi-definite on $\ker\hat{u}$. 
\end{lemma}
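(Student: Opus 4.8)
The plan is to show that $B(x,x)\ge 0$ for every $x\in\ker\hat u$ by a scaling/perturbation argument that pushes $x$ into the interior of the cone, where the estimate of \Cref{L: inequality in base} applies. Fix $x\in\ker\hat u$. Since $u\in C^\circ$, for all sufficiently small $|t|$ we have $u+tx\in C^\circ$, and moreover $\hat u(u+tx)=\hat u(u)+t\hat u(x)=\hat u(u)=N$ by the definition of the rank and the fact that $x\in\ker\hat u$. Hence $\tfrac1N(u+tx)\in\Sigma\cap C^\circ$ for all small $|t|$.

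Applying \Cref{L: inequality in base} to $\tfrac1N(u+tx)\in\Sigma$ gives
\[
B\!\left(\tfrac1N(u+tx),\tfrac1N(u+tx)\right)\ge \tfrac1N,
\]
that is, $B(u+tx,u+tx)\ge N$. Expanding the left-hand side using bilinearity and symmetry of $B$ yields
\[
B(u,u)+2t\,B(u,x)+t^2B(x,x)\ge N.
\]
Now $B(u,u)=N$ and $B(u,x)=\hat u(x)=0$, so this reduces to $t^2B(x,x)\ge 0$ for all sufficiently small $|t|\neq 0$, and therefore $B(x,x)\ge 0$. This holds for every $x\in\ker\hat u$, so $B$ is positive semi-definite on $\ker\hat u$.

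I expect the only subtlety to be the bookkeeping around $\hat u$: one must invoke that $\hat u$ is a strictly positive continuous functional (established just before \Cref{L: inequality in base}) so that $\hat u(u)=B(u,u)=N>0$ is genuinely the rank, and that $u\in C^\circ$ guarantees an open interval of $t$ for which $u+tx$ stays in $C^\circ$ — the latter because $C^\circ$ is open in the order unit norm topology and $\hat u$ is norm-continuous by \Cref{L: properties B}, though in fact for the argument one only needs membership in $C^\circ$ for small $t$, which follows directly from $u\in C^\circ$. Everything else is a one-line expansion of a quadratic in $t$, so there is no real obstacle here; the lemma is essentially a corollary of \Cref{L: inequality in base}.
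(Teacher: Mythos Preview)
Your proof is correct and follows essentially the same route as the paper. Both arguments translate $x\in\ker\hat u$ to an element of $\Sigma\cap C^\circ$ and invoke \Cref{L: inequality in base}; your parametrisation $\tfrac1N(u+tx)$ is just the paper's $\mu(x+Mu)$ with $t=1/M$ (indeed $\mu M=1/N$ forces $\mu(x+Mu)=\tfrac1N(u+x/M)$), and the quadratic expansion is identical.
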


\begin{proof}
Let $N$ be the rank of $V$ and let $x \in \ker\hat{u}$. There is an $M>0$ such that $x+Mu \in C^\circ$, and so there also is a $\mu>0$ so that $\mu(x+Mu) \in \Sigma$. Hence, it follows that $\mu(x+Mu)-\frac{1}{N}u \in \ker\hat{u}$ and so $\mu M=\frac{1}{N}$, and we can write 
\begin{equation}\label{E:x in ker u}
x = \mu^{-1}(\mu(x+Mu)-{\textstyle\frac{1}{N}}u).
\end{equation}
That is, every element of $\ker \hat{u}$ is a scalar multiple of the difference of an element in $\Sigma$ and $\frac{1}{N}u$.  Now, it follows from \Cref{L: inequality in base} that
\begin{align*}
\mu^2B(x,x)&=B(\mu(x+Mu)-{\textstyle\frac{1}{N}}u,\mu(x+Mu)-{\textstyle\frac{1}{N}}u) \\&= B(\mu(x+Mu),\mu(x+Mu))-2\frac{1}{N}B(\mu(x+Mu),u)+\frac{1}{N^2}B(u,u) \\&
\ge \frac{1}{N}-2\mu M+\frac{1}{N} =0. \qedhere
\end{align*}
\end{proof}

We can use the fact that $B$ is positive semi-definite on $\ker\hat{u}$ to show that $B$ defines an inner product on $V$.

\begin{proposition}\label{P: B is inner prod}
Suppose that $(V,C,u)$ and $(W,K,e)$ are complete order unit spaces such that $V$ is spanned by the atoms of $C$ and $C=C_{sc}$. If $\Psi\colon C^\circ\to K^\circ$ is a gauge-reversing map and $B$ is the bilinear form on $V$ as above, then $B$ is an inner product on $V$. 
\end{proposition}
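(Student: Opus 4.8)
The plan is to assemble the pieces already in place: $B$ is a well-defined symmetric bilinear form on $V$, it is bounded by \Cref{L: properties B}, it is positive semi-definite on $\ker\hat{u}$ by \Cref{L: B is pos semi-def on ker u}, and $\hat{u} = B(\cdot,u)$ is a strictly positive linear functional with $\hat{u}(u) = B(u,u) = N$, the rank of $V$. Symmetry and bilinearity of $B$ are therefore already settled, so it only remains to prove that $B$ is positive definite on all of $V$.

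First I would upgrade semi-definiteness from $\ker\hat{u}$ to all of $V$. Given $x \in V$, put $x_0 := x - \tfrac{\hat{u}(x)}{N}u$; then $\hat{u}(x_0) = \hat{u}(x) - \tfrac{\hat{u}(x)}{N}\hat{u}(u) = 0$, so $x_0 \in \ker\hat{u}$ and $x = x_0 + \tfrac{\hat{u}(x)}{N}u$. Expanding $B(x,x)$ and using $B(x_0,u) = \hat{u}(x_0) = 0$ together with $B(u,u) = N$ gives
\[
B(x,x) = B(x_0,x_0) + \frac{\hat{u}(x)^2}{N},
\]
and since $B(x_0,x_0) \ge 0$ by \Cref{L: B is pos semi-def on ker u}, we obtain $B(x,x) \ge 0$. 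Thus $B$ is a positive semi-definite symmetric bilinear form on $V$.

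Next I would prove definiteness. As $B$ is positive semi-definite on $V$, it satisfies the Cauchy--Schwarz inequality $B(x,y)^2 \le B(x,x)B(y,y)$ for all $x,y \in V$. Hence if $B(x,x) = 0$, then $B(x,y) = 0$ for every $y \in V$; in particular, for every atom $p$ of $C$ the definition (\ref{B(r,y)}) together with the symmetry of $B$ yields $\psi_p(x) = B(p,x) = B(x,p) = 0$. By \Cref{P:points of smoothness} every pure state of $(V,C,u)$ equals $\psi_p$ for some atom $p$ of $C$, so $x$ is annihilated by all pure states; since $S$ is the $w^*$-closure of the convex hull of its pure states, $\phi(x) = 0$ for all $\phi \in S$, and as the states determine the norm on $V$ by \cite[Lemma~1.18]{Alfsen} we conclude $x = 0$. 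Therefore $B$ is positive definite, and hence an inner product on $V$.

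The only step beyond routine bookkeeping is the passage from semi-definite to definite, and the crux there is \Cref{P:points of smoothness}: it allows one to translate vanishing of $B(x,\cdot)$ on the atoms of $C$ into vanishing of $x$ against every pure state, which then forces $x=0$.
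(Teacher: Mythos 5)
Your proof is correct, and while the positive semi-definiteness step is essentially the paper's own argument (decompose $V=\ker\hat{u}\oplus\R u$ and invoke \Cref{L: B is pos semi-def on ker u}), your treatment of definiteness takes a genuinely different and shorter route. The paper argues directly: given $x\in\ker\hat{u}$ with $B(x,x)=0$, it writes $\mu(x+Mu)\in\Sigma\cap C^\circ$, uses homogeneity of the cone to express this point as $\sum_k\lambda_k p_k$ with $\sum_k\lambda_k=1$, and then forces equality throughout the chain of inequalities in \eqref{E:B(x,x)inker u}; the equality case of the scalar Cauchy--Schwarz inequality gives $\lambda_k=1/N$ and $B(p_k,p_l)=0$ for $k\neq l$, so by \Cref{T:frames and rank} the $p_k$ form a frame and $\mu(x+Mu)=\frac1N u$, whence $x=0$. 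You instead apply the abstract Cauchy--Schwarz inequality for positive semi-definite symmetric forms to conclude $B(x,\cdot)\equiv 0$, then use that $B(x,p)=\psi_p(x)$ for every atom $p$, that every pure state is of the form $\psi_p$ by \Cref{P:points of smoothness}, and that the states determine the norm (Krein--Milman plus \cite[Lemma~1.18]{Alfsen}) to get $x=0$. Your route avoids any further appeal to \Cref{T:homogeneous cone}, \Cref{L: inequality in base}, and the frame analysis in \Cref{T:frames and rank} at this stage, at the cost of leaning harder on the surjectivity statement in \Cref{P:points of smoothness}; the paper's version, by contrast, is self-contained within the combinatorics of frames and makes the equality case explicit, which it then has no further use for. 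Both are valid under the stated hypotheses.
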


\begin{proof}
Let $N$ be the rank of $V$, let $x \in V$, and write $x=y+\lambda u$ where $y \in\ker\hat{u}$. Then it follows from \Cref{L: B is pos semi-def on ker u} that
\begin{align*}
    B(x,x)&=B(y,y)+2\lambda B(y,u)+\lambda^2B(u,u)=B(y,y)+\lambda^2B(u,u)\ge 0,
\end{align*}
and $B$ is positive semi-definite on $V$. Suppose that $x \in V$ is such that $B(x,x)=0$. Then we see that $\lambda=0$ and $x \in \ker\hat{u}$. As in (\ref{E:x in ker u}),  write $x=\mu^{-1}(\mu(x+Mu)-\frac{1}{N}u)$, so  $\mu(x+Mu) \in \Sigma\cap C^\circ$ and $\mu M= 1/N$.  From \Cref{L: inequality in base} we now have that 
\[
0=B(x,x)\geq \mu^{-2}\left(B(\mu(x+Mu),\mu(x+Mu))-\frac{1}{N}\right)\geq 0,
\]
so $B(\mu(x+Mu),\mu(x+Mu))=\frac{1}{N}$. 
Reasoning as in the proof of \Cref{L: inequality in base} there exists a linear map $T\in \mathrm{Aut}(C)$ such that $Tu = \mu(x+Mu)$, so we can write $\mu(x+Mu)=\sum_{k=1}^N\lambda_k p_k$, where $\lambda_k> 0$ with  $\sum_{k=1}^N\lambda_k = 1$ and $p_1, \dots, p_N$ are atoms of $C$. Now each inequality in \eqref{E:B(x,x)inker u} is an equality, and so $\lambda_k=\frac{1}{N}$ for all $1 \le k \le N$, hence $B(p_k,p_l) =0$ for all $k\neq l$. It now follows from  \Cref{T:frames and rank} that $\{p_1, \dots, p_N\}$ is a frame. Therefore $\frac{1}{N}u = \mu(x+Mu) = \mu x+\frac{1}{N}u$, so $x=0$ and $B$ is an inner product on $V$, which completes the proof. 
\end{proof}

Before we show that $C$ is self-dual, we prove that the order unit norm topology and the inner product norm topology on $V$ are the same.

\begin{lemma}\label{L: norm and B top are equal}
Suppose that $(V,C,u)$ and $(W,K,e)$ are complete order unit spaces such that $V$ is spanned by the atoms of $C$ and $C=C_{sc}$. If $\Psi\colon C^\circ\to K^\circ$ is a gauge-reversing map and $B$ is a bilinear form on $V$ as above, then the order unit norm and the inner product norm are equivalent. 
\end{lemma}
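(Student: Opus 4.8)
The plan is to show that the identity map $(V,\|\cdot\|_u)\to(V,\|\cdot\|_B)$, where $\|x\|_B:=B(x,x)^{1/2}$, is bounded with bounded inverse. One direction is easy and essentially already packaged for us: since $B$ is a positive bilinear form and $-\|x\|_u u\le x\le \|x\|_u u$, \Cref{L: properties B} (applied with $v=w=\|x\|_u u$ and $Z=\R$) gives $|B(x,x)|\le \|x\|_u^2 B(u,u)=N\|x\|_u^2$, hence $\|x\|_B\le \sqrt{N}\,\|x\|_u$. So the inner product norm topology is coarser than the order unit norm topology, and it remains to prove the reverse inequality $\|x\|_u\le c\,\|x\|_B$ for some constant $c>0$.

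For the reverse direction I would argue via states. Fix $x\in V$; by \cite[Lemma~1.18]{Alfsen} we have $\|x\|_u=\sup_{\phi\in S}|\phi(x)|$, and since $S$ is the $w^*$-closure of the convex hull of its pure states, $\|x\|_u=\sup\{|\phi(x)|:\phi\text{ pure state}\}$. By \Cref{P:points of smoothness} every pure state is of the form $\psi_p=B(p,\cdot)$ for some atom $p$ of $C$, and by \Cref{T:frames and rank} each such $p$ can be completed to a frame $\{p=p_1,p_2,\dots,p_N\}$ with $\sum_k p_k=u$ and $B(p_k,p_l)=\delta_{kl}\,B(p_k,p_k)=\delta_{kl}$ (the value $1$ because $\psi_{p_k}(p_k)=1$). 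Thus $\{p_1,\dots,p_N\}$ is an orthonormal set for $B$. Writing $x=\sum_k B(p_k,x)p_k + x'$ with $x'$ the $B$-orthogonal complement (legitimate since $B$ is an inner product by \Cref{P: B is inner prod}), Bessel's inequality gives $|\psi_p(x)|=|B(p_1,x)|\le \|x\|_B$. Taking the supremum over all atoms $p$ yields $\|x\|_u\le \|x\|_B$, and combined with the first inequality this gives $\|x\|_B\le\sqrt N\|x\|_u$ and $\|x\|_u\le\|x\|_B$, proving equivalence.

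The main obstacle is the step "each atom sits in a $B$-orthonormal frame, so $|B(p,x)|\le\|x\|_B$ by Bessel." This requires knowing both that $B$ is genuinely an inner product (so Bessel/Pythagoras are available) — which is \Cref{P: B is inner prod} — and that every pure state arises from an atom and that atoms extend to frames — which are \Cref{P:points of smoothness} and \Cref{T:frames and rank}. One subtlety to be careful about is that $V$ need not be complete for $\|\cdot\|_B$, so I should phrase the decomposition $x=\sum_k B(p_k,x)p_k+x'$ as a genuine finite sum (the frame has $N<\infty$ elements) plus a remainder in the finite-dimensional subspace's $B$-orthocomplement, rather than invoking any Hilbert space projection; finiteness of $N$ makes this purely linear-algebraic and avoids completeness issues. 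With that observed, the argument is routine.

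\begin{proof}
Write $\|x\|_B := B(x,x)^{1/2}$, which is a norm by \Cref{P: B is inner prod}. Since $-\|x\|_u u \le x \le \|x\|_u u$, applying \Cref{L: properties B} with $v=w=\|x\|_u u$ and $Z = \R$ gives $B(x,x) \le B(\|x\|_u u, \|x\|_u u) = N\|x\|_u^2$, where $N = B(u,u)$ is the rank of $V$. Hence $\|x\|_B \le \sqrt{N}\,\|x\|_u$ for all $x \in V$.

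For the reverse inequality, fix $x \in V$. By \cite[Lemma~1.18]{Alfsen} and the Krein-Milman theorem, $\|x\|_u = \sup\{|\phi(x)| \colon \phi \text{ a pure state of } V\}$. Let $\phi$ be a pure state. By \Cref{P:points of smoothness} there is an atom $p$ of $C$ with $\phi = \psi_p$. By \Cref{T:frames and rank}, $p$ extends to a frame $\{p_1, \dots, p_N\}$ with $p_1 = p$; since these atoms are orthogonal, $B(p_k,p_l) = 0$ for $k \neq l$ by \Cref{T:frames and rank}, and $B(p_k,p_k) = \psi_{p_k}(p_k) = 1$. Thus $\{p_1,\dots,p_N\}$ is a $B$-orthonormal set in $V$. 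Set $y := \sum_{k=1}^N B(p_k,x)p_k$ and $x' := x - y$. Then $B(p_k, x') = 0$ for all $k$, so $B(y,x') = 0$ and hence $B(x,x) = B(y,y) + B(x',x') \ge B(y,y) = \sum_{k=1}^N B(p_k,x)^2 \ge B(p_1,x)^2 = \psi_p(x)^2$. Therefore $|\phi(x)| = |\psi_p(x)| \le \|x\|_B$. Taking the supremum over all pure states $\phi$ yields $\|x\|_u \le \|x\|_B$.

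Combining the two inequalities, $\|x\|_u \le \|x\|_B \le \sqrt{N}\,\|x\|_u$ for all $x \in V$, so the order unit norm and the inner product norm are equivalent.
\end{proof}
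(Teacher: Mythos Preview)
Your proof is correct and follows essentially the same approach as the paper's: both use \Cref{L: properties B} for the bound $\|x\|_B\le\sqrt{N}\,\|x\|_u$, and both obtain $\|x\|_u\le\|x\|_B$ by noting that pure states are norming and every pure state is $\psi_p=B(p,\cdot)$ for an atom $p$. The only difference is that the paper bounds $|B(p,x)|\le\|x\|_B$ directly by Cauchy--Schwarz (since $B(p,p)=\psi_p(p)=1$), whereas you extend $p$ to a frame and invoke Bessel's inequality; your route is correct but slightly more elaborate than necessary.
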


\begin{proof}
Let $N$ be the rank of $V$. For $x\in V$ we have $-\|x\|_uu\le x\le \|x\|_uu$, so $-N\|x\|_u^2\le B(x,x)\le N\|x\|_u^2$ by \Cref{L: properties B}. Hence $\sqrt{B(x,x)}\le\sqrt{N}\|x\|_u$. On the other hand, since the pure states are norming on $V$ by the Krein-Milman theorem and \cite[Lemma~1.18]{Alfsen}, and all pure states are of the form $\psi_p = B(\cdot,p)$ for some atom $p$ of $C$ by \Cref{P:points of smoothness}, it follows from the Cauchy-Schwarz inequality that 
\[
|B(x,p)|\le \sqrt{B(x,x)}\sqrt{B(p,p)}=\sqrt{B(x,x)}.
\] 
Hence, $\|x\|_u = \sup\{|B(x,p)| \colon \mbox{$p$ is an atom of $C$}\}\le \sqrt{B(x,x)}$. 
\end{proof}

\begin{theorem}\label{T: cone is self-dual}
Suppose that $(V,C,u)$ and $(W,K,e)$ are complete order unit spaces such that $V$ is spanned by the atoms of $C$ and $C=C_{sc}$. If $\Psi\colon C^\circ\to K^\circ$ is a gauge-reversing map and $B$ is a bilinear form on $V$ as above, then $C$ is self-dual with respect to $B$. 
\end{theorem}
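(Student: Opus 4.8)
The goal is to show that, with respect to the inner product $B$ constructed above, the cone $C$ coincides with its dual cone $C^* = \{x \in V : B(x,y) \ge 0 \text{ for all } y \in C\}$ (identifying $V$ with $V^*$ via $B$). One inclusion, $C \subseteq C^*$, is essentially immediate: for $x, y \in C$ we have $B(x,y) \ge 0$ since $B(p, \cdot)$ is a positive functional for every atom $p$ (being a positive scalar multiple of, in fact equal to, the pure state $\psi_p$), and $x$ is a nonnegative combination of atoms because $C^\circ$ lies in the positive span of the atoms by \Cref{T:reflexive frames}—wait, we only have $V$ spanned by atoms, not that $C^\circ$ is in their positive span; so instead I would argue that $B(x, \cdot)$ is positive for every $x \in C$ directly, since $B(x,y) = \lim$ of values of states on elements of $C$, or more simply: $B$ restricted to $C \times C$ is nonnegative because it is the extension of $(p,y) \mapsto \psi_p(y)$ and one writes $x \in C$ as a limit/combination handled via continuity of $B$ (\Cref{L: properties B}) and density. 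Let me instead phrase the easy inclusion as: $\hat x := B(x, \cdot)$ is a positive linear functional for each $x \in C$, hence $x \in C^*$; this is clear for atoms and extends to all of $C$ by continuity together with the fact that the states separating $C$ are exactly the $\psi_p$.

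For the reverse inclusion $C^* \subseteq C$, suppose $x \in V \setminus C$; I must produce $y \in C$ with $B(x,y) < 0$. Since $C$ is closed (in the order unit norm, hence in the equivalent $B$-norm topology by \Cref{L: norm and B top are equal}) and convex, the Hahn–Banach separation theorem gives a continuous linear functional $f$ on $(V, B)$ and a constant $\alpha$ with $f(x) < \alpha \le f(z)$ for all $z \in C$; since $C$ is a cone, $\alpha \le 0$ and $f \ge 0$ on $C$, so $f(x) < 0$. By the Riesz representation theorem for the Hilbert space $(V, B)$, there is $y \in V$ with $f(\cdot) = B(y, \cdot)$, and $f \ge 0$ on $C$ means precisely $y \in C^*$. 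The crux is therefore to show that $C^* \subseteq C$, i.e.\ that the representing vector $y$ of a positive functional actually lies in $C$. So the real content is: \emph{every $B$-continuous positive linear functional on $V$ is represented by an element of $C$.}

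To establish that, I would decompose $y \in C^*$ along a frame. Fix a frame $\{p_1, \dots, p_N\}$, so $u = p_1 + \cdots + p_N$ and $B(p_k, p_l) = \delta_{kl}$ by \Cref{T:frames and rank}. The key point is that $B$ pairs atoms with pure states, and every pure state is $\psi_p = B(p, \cdot)$ for some atom $p$ by \Cref{P:points of smoothness}; conversely the functional $B(y, \cdot)$ being positive means $B(y,p) = \psi_p(y) \ge 0$ for every atom $p$, i.e.\ $\psi(y) \ge 0$ for every pure state $\psi$, which by the Krein–Milman theorem and the fact that states determine the ordering (\cite[Lemma~1.18]{Alfsen}) forces $y \ge 0$, that is $y \in C$. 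This is the cleanest route: \emph{$B(y, \cdot) \ge 0$ on $C$ $\iff$ $\psi(y) \ge 0$ for all pure states $\psi$ $\iff$ $y \in C$}, using \Cref{P:points of smoothness} for the first equivalence and Krein–Milman plus the order-determining property of states for the second. Combined with the easy inclusion $C \subseteq C^*$ and the separation argument, this yields $C = C^*$.

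I expect the main obstacle to be making the first equivalence fully rigorous: I need that the collection $\{\psi_p : p \text{ an atom of } C\}$ is \emph{all} of the pure states (which is exactly the final assertion of \Cref{P:points of smoothness}), and that testing positivity of $B(y,\cdot)$ against atoms suffices to test it against all of $C$—this uses that $V$ is spanned by atoms so that atoms are enough to detect nonnegativity of a linear functional, but one must be careful that an element of $C$ need not be a \emph{nonnegative} combination of atoms in general; the way around this is not to expand elements of $C$ in atoms but to use that $\psi \ge 0$ on all pure states already implies $\psi \ge 0$ on $C$ via Krein–Milman applied in $V^*$ (the pure states are $w^*$-dense-in-extreme-points of the state space $S$, and $S$ spans $C^*$). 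I would spell that out carefully, invoking Theorem~\ref{T:rep of o.u.s.} to view $V$ concretely as affine functions on $S$ if needed.
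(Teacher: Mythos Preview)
Your ``cleanest route'' is exactly the paper's proof: if $B(y,\cdot)\ge 0$ on $C$ then in particular $\psi_p(y)=B(p,y)=B(y,p)\ge 0$ for every atom $p$, and since every pure state is some $\psi_p$ by \Cref{P:points of smoothness}, Krein--Milman together with \cite[Lemma~1.18]{Alfsen} gives $y\in C$; the inclusion $C\subseteq C^*$ was already recorded just before \Cref{L: properties B}. The Hahn--Banach separation is an unnecessary detour (as you yourself realised, it reduces to the same statement), and your final worry is misplaced: for $C^*\subseteq C$ you only need the trivial implication that positivity of $B(y,\cdot)$ on all of $C$ restricts to positivity on atoms, not the converse.
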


\begin{proof}
If $x \in C$, then $B(\cdot,x)$ is a positive linear functional that is continuous by \Cref{L: properties B}. On the other hand, if $\psi$ is a positive continuous linear functional, then there is an $x \in V$ such that $\psi=B(\cdot,x)$, and since every pure state on $V$ is of the form $\psi_p$ for an atom $p$ of $C$ by \Cref{P:points of smoothness}, it follows that $0\leq \psi(p) = B(p,x) =\psi_p(x)$. By the Krein-Milman theorem and \cite[Lemma~1.18]{Alfsen}, the pure states also determine the ordering on $V$, so that $x \in C$, showing that $C$ is self-dual with respect to $B$.
\end{proof}

\section{Main result}
Collecting the results we can now prove the main theorem. 
\begin{theorem}\label{T:char of JH-algebras}
If $(V,C,u)$ is a complete order unit space such that $V$ is spanned by the atoms of $C$ and $C=C_{sc}$, then the following statements are equivalent:
\begin{itemize}
    \item[$(i)$] There exists a  complete order unit space $(W,K,e)$ and a gauge-reversing map $\Psi\colon \Ci\to K^\circ$.
    \item[$(ii)$] There exists a $d_T$-symmetry $S_x\colon C^\circ\to C^\circ$ at some $x\in C^\circ$.
    \item[$(iii)$] $(C^\circ,d_T)$ is a symmetric Finsler space.
    \item[$(iv)$] There exists an inner product $\langle\cdot,\cdot\rangle$ on $V$ making $V$ a Hilbert space and $C^\circ$ a symmetric cone.
    \item[$(v)$] There exists an inner product $\langle\cdot,\cdot\rangle$ on $V$ and a Jordan product $\bullet$ on $V$ such that $(V,\bullet,\langle\cdot,\cdot\rangle)$ is a JH-algebra with unit $u$ and $C$ equals the set of squares.
\end{itemize}
In particular, the equivalences hold if $(V,C,u)$ is a reflexive order unit space.
\end{theorem}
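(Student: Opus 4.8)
The plan is to prove the cycle of implications $(i)\Rightarrow(iv)\Rightarrow(v)\Rightarrow(iii)\Rightarrow(ii)\Rightarrow(i)$ under the standing hypotheses that $V$ is complete, spanned by the atoms of $C$, and satisfies $C=C_{sc}$; the reflexive addendum is then deduced separately.

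For $(i)\Rightarrow(iv)$ I would first reduce to the normalisation $\Psi(u)=e$: any point of $K^\circ$ is an order unit and induces an equivalent (hence still complete) order unit norm, while $K$, $K^\circ$ and the property of being gauge-reversing are independent of the chosen order unit, so we may replace $e$ by $\Psi(u)$. With this in place the heavy lifting is already done in Sections~\ref{sec:hom}--\ref{sec:sym}: \Cref{T:homogeneous cone} shows $\mathrm{Aut}(C)$ acts transitively on $C^\circ$; \Cref{P: B is inner prod} shows the symmetric bilinear form $B$ constructed from the atom/pure-state correspondence is an inner product on $V$; \Cref{L: norm and B top are equal} shows the $B$-norm is equivalent to $\|\cdot\|_u$, so $(V,B)$ is a Hilbert space since $V$ is complete; and \Cref{T: cone is self-dual} shows $C$ is self-dual with respect to $B$. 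As every element of $\mathrm{Aut}(C)$ is an order-automorphism it is $\|\cdot\|_u$-continuous, hence $B$-bounded, so $C^\circ$ is a homogeneous, self-dual — that is, symmetric — cone in the Hilbert space $(V,B)$, giving $(iv)$.

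The remaining implications are quick. For $(iv)\Rightarrow(v)$ I would invoke Chu's theorem \cite[Theorem~3.1]{chu1}: a Hilbert space with a symmetric cone is a unital JH-algebra whose cone of squares is the given cone, and one is free to take $u$ as the unit. For $(v)\Rightarrow(iii)$, \Cref{T: JH-algebra is JB-algebra} makes $(V,\|\cdot\|_u)$ a JB-algebra, and the interior of the cone of squares of a JB-algebra is a symmetric Finsler space for $d_T$, with symmetry $y\mapsto Q_x(y^{-1})$ at $x$, as recorded after the definition of a symmetric Finsler space. The implication $(iii)\Rightarrow(ii)$ is immediate, and $(ii)\Rightarrow(i)$ follows from \Cref{T:d_T-symmetry implies order antimorphism}: a $d_T$-symmetry $S_x\colon C^\circ\to C^\circ$ is gauge-reversing, so one may take $(W,K,e)=(V,C,u)$ and $\Psi=S_x$.

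Finally, if $V$ is reflexive then $C=C^{**}=C_{sc}$ automatically, so the only standing hypothesis that could fail is that $V$ be spanned by the atoms of $C$. If it holds, the equivalence just established applies. If it fails, I would argue that $(i)$--$(v)$ are then all false, hence trivially equivalent: $(i)$ fails by \Cref{T:reflexive frames}; $(ii)$ and $(iii)$ would each imply $(i)$ (using \Cref{T:d_T-symmetry implies order antimorphism} together with $(iii)\Rightarrow(ii)$), so they fail; and $(iv)$, hence also $(v)$, would make $V$ a unital JH-algebra, which by \Cref{T:char JH-algebras as reflexive JB} is a finite direct sum of finite type $I$ JBW-factors — each spanned by the atoms of its cone of squares — contradicting the failure of the hypothesis. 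Thus the only real subtlety is this addendum: one must not overlook that ``spanned by the atoms of $C$'' is a genuine assumption, and dispatching the case where it fails requires \Cref{T:reflexive frames}.
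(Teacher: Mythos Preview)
Your proof is correct and uses the same key ingredients as the paper (\Cref{T:homogeneous cone}, \Cref{T: cone is self-dual}, \Cref{L: norm and B top are equal}, \Cref{T:d_T-symmetry implies order antimorphism}, Chu's theorem, and \Cref{T: JH-algebra is JB-algebra}). The organisation differs: you run a single cycle $(i)\Rightarrow(iv)\Rightarrow(v)\Rightarrow(iii)\Rightarrow(ii)\Rightarrow(i)$, whereas the paper uses a hub-and-spoke scheme, first establishing that \emph{each} of $(ii)$--$(v)$ implies $(i)$ under the sole assumption of completeness (no span-by-atoms, no $C=C_{sc}$), and then that $(i)$ implies everything else under the standing hypotheses.

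The pay-off of the paper's arrangement shows in the reflexive addendum. Since $(ii)$--$(v)\Rightarrow(i)$ are hypothesis-free, one only needs to verify that $(i)$ plus reflexivity forces the standing hypotheses; \Cref{T:reflexive frames} gives span-by-atoms, and reflexivity gives $C=C_{sc}$, so the first case applies directly. Your case split (``either $V$ is spanned by atoms or it is not'') works, but the second branch is more laboured than necessary: you invoke \Cref{T:char JH-algebras as reflexive JB} and the (unstated, though true) fact that finite type~$I$ JBW-factors are spanned by their minimal projections to rule out $(iv)$/$(v)$, when in fact your own chain $(iv)\Rightarrow(v)\Rightarrow(iii)\Rightarrow(ii)\Rightarrow(i)$ uses none of the standing hypotheses, so $(iv)$ and $(v)$ fail simply because $(i)$ does. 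Recognising this would let you drop the second case entirely, recovering the paper's cleaner argument.
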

\begin{proof}
We first show that all statements imply $(i)$ if $(V,C,u)$ is a complete order unit space. We know that if $S_x \colon \Ci\to \Ci$ is a $d_T$-symmetry at $x\in C^\circ$, then it is gauge-reversing by \Cref{T:d_T-symmetry implies order antimorphism}, hence $(ii)$, and also $(iii)$, imply $(i)$.  On the other hand, if there exists an inner product $\langle\cdot,\cdot\rangle$ on $V$ for which is it is a Hilbert space such that $C^\circ$ is a symmetric cone, then it follows from \cite[Theorem~3.1]{chu1} that $V$ is a JH-algebra with cone of squares $C$, unit $u$, and the inner product satisfies $\ip{x \bullet y}{z} = \ip{y}{x \bullet z}$ for all $x,y,z \in V$. By \Cref{T: JH-algebra is JB-algebra}, it follows that $V$ must be a JB-algebra. But if $V$ is a JB-algebra with unit $u$, then the map $x \mapsto x^{-1}$ is clearly homogeneous of degree $-1$ and is antitone by \cite[Lemma~1.31]{AS2} and the functional calculus \cite[Proposition~1.21]{AS2}. Hence, taking the inverse defines a gauge-reversing map on $\Ci$, showing that $(iv)$, and hence also $(v)$, imply $(i)$. 

To show that all statements are equivalent in the case where $V$ is spanned by the atoms of $C$ and $C=C_{sc}$, we note first that it follows from \Cref{T:symmetry from antitone} that $(i)$ implies $(ii)$ and $(iii)$.  Moreover, if $(W,K,e)$ is a complete order unit space and $\Psi\colon \Ci\to K^\circ$ is a gauge-reversing map, then the cone $C$ is homogeneous by \Cref{T:homogeneous cone} and self-dual by \Cref{T: cone is self-dual}. As $V$ is complete for the order unit norm, it is a Hilbert space for the inner product as well by \Cref{L: norm and B top are equal}. Thus, $(i)$ implies $(iv)$.  We already mentioned that $(iv)$ implies $(v)$ by \cite[Theorem~3.1]{chu1}.  

It remains to show that $(i)$ implies each of the other statements when $(V,C,u)$ is a reflexive order unit space. In that case $(V,C,u)$ complete, as it is reflexive. Moreover, if there exists a complete order unit space $(W,K,e)$ and a gauge-reversing map $\Psi \colon C^\circ \to K^\circ$, then $V$ is spanned by the atoms of $C$ by \Cref{T:reflexive frames}, and $C=C_{sc}$, as  $(V,C,u)$ is reflexive. So, the result follows from the previous case.
\end{proof}

We believe that the JB-algebras are precisely the complete order unit spaces $(V,C,u)$ for which there exists an order unit space $(W,K,e)$  and a gauge-reversing map $\Psi\colon \Ci\to K^\circ$. This would provide an elegant characterisation of JB-algebras in purely order theoretic terms. 

\section{Appendix: Semi-continuous functions} 
Recall that for a topological space $X$ a function $f\colon X\to[-\infty,\infty]$ is said to be \emph{lower semi-continuous at $x$} if for every $\lambda<f(x)$ there exists a neighbourhood $U$ of $x$ such that $\lambda<f(y)$ for all $y\in U$. The function $f$ is said to be \emph{lower semi-continuous} if it is lower semi-continuous at every $x\in X$. Note that $f$ is lower semi-continuous if and only if $\{x\in X\colon f(x)>\lambda\}$ is open for all $\lambda\in\mathbb{R}$. Similarly, a function $f\colon X\to[-\infty,\infty]$ is said to be \emph{upper semi-continuous at $x$} if for every $\lambda>f(x)$ there exists a neighbourhood $U$ of $x$ such that $\lambda>f(y)$ for all $y\in U$. The function $f$ is called \emph{upper semi-continuous} if it is upper semi-continuous at all $x\in X$, and $f$ is upper semi-continuous if and only if $\{x\in X\colon f(x)<\lambda\}$ is open for all $\lambda\in\mathbb{R}$. Furthermore, it follows that $f$ is upper semi-continuous if and only if $-f$ is lower semi-continuous. 

Note that the definition of lower semi-continuity for a function $f\colon X\to[-\infty,\infty]$ is equivalent to the following statement. For any $x\in X$ and any $\varepsilon>0$ there is an open neighbourhood $O$ of $x$ such that $f(y)\ge f(x)-\varepsilon$ for all $y\in O$. For an upper semi-continuous function the statement is similar with the inequality reversed.

\begin{lemma}\label{L:lsc liminf}
Let $X$ be a topological space. Then $f\colon X\to[-\infty,\infty]$ is lower semi-continuous if and only if for all $x\in X$ and $x_i\to x$ we have $\liminf_if(x_i)\ge f(x)$.
\end{lemma}

\begin{proof}
Suppose $f$ is lower semi-continuous and let $x\in X$. If $(x_i)_i$ is a net in $X$ such that  $x_i\to x$, then there are two cases to check; $f(x)=-\infty$ and $f(x)\in(-\infty,\infty]$. In the case where $f(x)=-\infty$, we clearly have that $\liminf_if(x_i)\ge f(x)$. If $f(x)>-\infty$, let $\lambda<f(x)$. Since $O:=\{y\in X\colon f(y)>\lambda\}$ is open and contains $x$, there is an index $j$ such that $x_i\in O$ whenever $i\ge j$. Hence $\inf_{i\ge j}f(x_i)\ge\lambda$, so $\liminf_i f(x_i)\ge\lambda$. As $\lambda<f(x)$ was arbitrary, it follows that $\liminf_if(x_i)\ge f(x)$.

Conversely, suppose that $f$ is not lower semi-continuous, then there is a $\lambda\in\mathbb{R}$ such that $O_\lambda:=\{y\in X\colon f(y)>\lambda\}$ is not open. Hence there is $x\in O_\lambda$ and a net $(x_i)_i$ in $X$ satisfying $x_i\to x$ and $x_i\notin O_\lambda$. But then $\liminf_i f(x_i)\le\lambda<f(x)$. 
\end{proof}

\begin{lemma}\label{L: char function lsc}
Let $X$ be a topological space. For $Y\subseteq X$ the characteristic function $\chi_Y$ is lower semi-continuous if and only if $Y$ is open.
\end{lemma}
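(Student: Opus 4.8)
The plan is to use the standard sublevel/superlevel-set characterisation of semi-continuity that was recalled just before the lemma, namely that $f\colon X\to[-\infty,\infty]$ is lower semi-continuous if and only if $\{x\in X\colon f(x)>\lambda\}$ is open for every $\lambda\in\mathbb{R}$. So the whole argument reduces to computing these superlevel sets for $f=\chi_Y$.

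First I would observe that for $\lambda\in\mathbb{R}$ the set $\{x\in X\colon \chi_Y(x)>\lambda\}$ takes only three values: it equals $X$ when $\lambda<0$, it equals $Y$ when $0\le\lambda<1$, and it equals $\emptyset$ when $\lambda\ge 1$. Both $X$ and $\emptyset$ are open, so the family of superlevel sets consists entirely of open sets precisely when $Y$ itself is open. Combining this with the characterisation quoted above yields that $\chi_Y$ is lower semi-continuous if and only if $Y$ is open.

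Alternatively, and equivalently, one could argue directly from the definition: if $Y$ is open, then for $x\in Y$ any $\lambda<\chi_Y(x)=1$ satisfies $\lambda<1=\chi_Y(y)$ for all $y$ in the open neighbourhood $Y$ of $x$, while for $x\notin Y$ we have $\chi_Y(x)=0$ and any $\lambda<0$ satisfies $\lambda<\chi_Y(y)$ for all $y\in X$; conversely, if $\chi_Y$ is lower semi-continuous, apply the definition at a point $x\in Y$ with $\lambda=\tfrac12$ to produce a neighbourhood of $x$ contained in $Y$, showing $Y$ is open. There is no real obstacle here; the only thing to be a little careful about is not to forget the trivial cases $\lambda<0$ and $\lambda\ge1$ (equivalently, that $X$ and $\emptyset$ are always open), so that the reduction to ``$Y$ open'' is genuinely an equivalence and not merely an implication.
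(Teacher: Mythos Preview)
Your proposal is correct and essentially identical to the paper's proof: the paper also computes the three superlevel sets $\{x:\chi_Y(x)>\lambda\}$ as $X$, $Y$, or $\emptyset$ depending on $\lambda$, and uses the superlevel-set characterisation of lower semi-continuity. The only cosmetic difference is that the paper handles the forward implication separately by noting $Y=\{x:\chi_Y(x)>0\}$, whereas you do both directions at once via the case split.
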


\begin{proof}
If $\chi_{Y}$ is lower semi-continuous, then $Y=\{x\in X\colon \chi_{Y}(x)>0\}$ is open. Conversely, if $Y$ is open, then 
\[
\{x\in X\colon \chi_{Y}(x)>\lambda\}=
\begin{cases}
X & \mbox{if $\lambda<0$};\\ Y & \mbox{if $0\le\lambda<1$};\\ \emptyset & \mbox{if $\lambda\ge 1$}.
\end{cases}
\]
Hence $\chi_{Y}$ is lower semi-continuous.
\end{proof}

\begin{lemma}\label{L:lsc attains min}
Let $X$ be a compact topological space and let $f\colon X\to[-\infty,\infty]$ be a lower semi-continuous function. Then $f$ attains its minimum.
\end{lemma}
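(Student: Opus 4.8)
The plan is to exhibit the minimiser as a point lying in all the closed sublevel sets of $f$, using compactness through the finite intersection property. We may assume $X$ is non-empty, since otherwise there is nothing to prove. Set $m:=\inf_{x\in X}f(x)\in[-\infty,\infty)$, and for each real number $\lambda$ consider the sublevel set $F_\lambda:=\{x\in X\colon f(x)\le\lambda\}$. Since $f$ is lower semi-continuous, the set $\{x\in X\colon f(x)>\lambda\}$ is open, so each $F_\lambda$ is closed in $X$.

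Next I would note that for every real $\lambda>m$ the set $F_\lambda$ is non-empty, by the definition of the infimum. The family $\{F_\lambda\colon \lambda>m\}$ is totally ordered by inclusion (if $\lambda\le\lambda'$ then $F_\lambda\subseteq F_{\lambda'}$), so any finite subfamily has non-empty intersection, namely its smallest member; that is, the family has the finite intersection property. Compactness of $X$ then yields $\bigcap_{\lambda>m}F_\lambda\neq\emptyset$. Choosing $x_0$ in this intersection, we have $f(x_0)\le\lambda$ for every real $\lambda>m$, which forces $f(x_0)\le m$; since also $f(x_0)\ge m$ by the choice of $m$, we conclude $f(x_0)=m$, so $f$ attains its minimum at $x_0$. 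This argument also covers the case $m=-\infty$, in which $f(x_0)=-\infty$.

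There is essentially no hard step here; the only points requiring a little care are using the \emph{closed} sublevel sets $F_\lambda$ rather than the open strict ones, and letting $\lambda$ range over all reals above $m$ so that the reasoning remains valid when $m=-\infty$. Alternatively, one could argue with nets: pick $x_n\in X$ with $f(x_n)\to m$, pass to a subnet converging to some $x\in X$ by compactness, and combine $f(x)\le\liminf f$ along the subnet (Lemma~\ref{L:lsc liminf}) with $f(x)\ge m$ to obtain $f(x)=m$.
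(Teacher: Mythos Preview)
Your proof is correct and takes a somewhat different route from the paper. The paper first disposes of the case where $f$ takes the value $-\infty$ at some point, then uses the open cover $\{f>-n\}_{n\ge 1}$ together with compactness to show that $f$ is bounded below, and finally extracts a minimiser by choosing a net $(x_i)$ with $f(x_i)\to\inf f$, passing to a convergent subnet, and invoking Lemma~\ref{L:lsc liminf}. Your argument via the finite intersection property of the closed sublevel sets is more direct: it needs no case distinction, no preliminary boundedness step, and no appeal to Lemma~\ref{L:lsc liminf}. The alternative net argument you sketch at the end is essentially the paper's approach. One tiny quibble: your assertion $m\in[-\infty,\infty)$ tacitly excludes the degenerate case $f\equiv+\infty$, but there every point of $X$ is a minimiser, so nothing is lost.
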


\begin{proof}
If there is a $x\in X$ such that $f(x)=-\infty$, then clearly $f$ attains its minimum. Suppose that $f(x)\in(-\infty,\infty]$ for all $x\in X$. Define for every $n\ge 1$, the open sets 
\[
O_n:=\{y\in X\colon f(y)>-n\}. 
\]
Since $O_n\subseteq O_{n+1}$ for all $n\ge 1$ and $\bigcup_{n\ge 1}O_n$ is an open cover of $X$, the compactness of $X$ implies that $X=O_m$ for some $m\ge 1$. Let $\alpha:=\inf_X f(x)$, and let $(x_i)_i$ be a net such that $f(x_i)\to\alpha$. Let $(x_j)_j$ be a subnet converging to some $x$, and note that Lemma~\ref{L:lsc liminf} implies $f(x)\le\liminf_jf(x_j)=\alpha$. Hence $f(x)=\alpha$ and $f$ attains its minimum.
\end{proof}

\begin{lemma}\label{L:product lsc+pos=lsc}
Let $X$ be a topological space. If $f,g\colon X\to[0,\infty)$ are upper semi-continuous, then so is the product $fg$.
\end{lemma}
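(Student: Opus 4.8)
The statement to prove is: if $X$ is a topological space and $f,g\colon X\to[0,\infty)$ are upper semi-continuous, then the product $fg$ is upper semi-continuous.

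The plan is to work directly from the pointwise definition of upper semi-continuity recalled in the appendix, namely that $h$ is upper semi-continuous at $x$ if and only if for every $\varepsilon>0$ there is an open neighbourhood $O$ of $x$ with $h(y)\le h(x)+\varepsilon$ for all $y\in O$. Fix $x\in X$ and $\varepsilon>0$; I want to produce a neighbourhood on which $fg$ does not exceed $f(x)g(x)+\varepsilon$. The elementary algebraic identity I would use is $f(y)g(y)-f(x)g(x) = (f(y)-f(x))g(y) + f(x)(g(y)-g(x))$, but since I want an honest bound I will instead control $f(y)\le f(x)+\delta$ and $g(y)\le g(x)+\delta$ simultaneously on a common neighbourhood $O = O_f\cap O_g$, where $O_f,O_g$ come from the upper semi-continuity of $f$ and $g$ at $x$ respectively. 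On $O$ we then get, using $f,g\ge 0$, that
\[
f(y)g(y)\le (f(x)+\delta)(g(x)+\delta) = f(x)g(x) + \delta\bigl(f(x)+g(x)\bigr) + \delta^2.
\]
Choosing $\delta>0$ small enough that $\delta(f(x)+g(x)) + \delta^2 \le \varepsilon$ — for instance $\delta = \min\{1,\ \varepsilon/(f(x)+g(x)+1)\}$ — yields $f(y)g(y)\le f(x)g(x)+\varepsilon$ for all $y\in O$, which is exactly upper semi-continuity of $fg$ at $x$. Since $x$ was arbitrary, $fg$ is upper semi-continuous.

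There is essentially no obstacle here: the only points that need a word of care are that $f$ and $g$ take values in $[0,\infty)$, so the products and the inequalities $f(y)g(y)\le(f(x)+\delta)(g(x)+\delta)$ are legitimate (nonnegativity is what lets us multiply the two one-sided bounds), and that the finiteness of $f(x)$ and $g(x)$ makes the choice of $\delta$ meaningful. I would also remark in passing that the same argument shows the sum of two upper semi-continuous functions is upper semi-continuous, and that nonnegativity cannot be dropped for the product. I expect the whole proof to be three or four lines.

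\begin{proof}
Fix $x\in X$ and $\varepsilon>0$. Set $\delta := \min\{1,\ \varepsilon/(f(x)+g(x)+1)\}>0$. By upper semi-continuity of $f$ and $g$ at $x$, there are open neighbourhoods $O_f$ and $O_g$ of $x$ such that $f(y)\le f(x)+\delta$ for all $y\in O_f$ and $g(y)\le g(x)+\delta$ for all $y\in O_g$. Let $O := O_f\cap O_g$, which is an open neighbourhood of $x$. Since $f$ and $g$ are nonnegative, for every $y\in O$ we have
\[
f(y)g(y)\le (f(x)+\delta)(g(x)+\delta) = f(x)g(x) + \delta\bigl(f(x)+g(x)\bigr) + \delta^2 \le f(x)g(x) + \delta\bigl(f(x)+g(x)+1\bigr)\le f(x)g(x)+\varepsilon,
\]
using $\delta\le 1$ in the second inequality and the choice of $\delta$ in the last. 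As $x\in X$ and $\varepsilon>0$ were arbitrary, $fg$ is upper semi-continuous.
\end{proof}
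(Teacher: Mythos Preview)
Your proof is correct and follows essentially the same approach as the paper: fix $x$ and $\varepsilon$, choose $\delta>0$ with $\delta(f(x)+g(x))+\delta^2<\varepsilon$, intersect the two neighbourhoods coming from upper semi-continuity of $f$ and $g$, and multiply the one-sided bounds using nonnegativity. The only cosmetic difference is that you give an explicit formula for $\delta$ whereas the paper just asserts its existence.
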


\begin{proof}
Let $x\in X$ and $\varepsilon>0$. Define $\delta>0$ such that $\delta(f(x)+g(x))+\delta^2<\varepsilon$. Then there are open neighbourhoods $O$ and $V$ of $x$ such that $f(y)<f(x)+\delta$ whenever $y\in O$ and $g(y)<g(x)+\delta$ whenever $y\in V$. Hence $f(y)g(y)<f(x)g(x)+\varepsilon$ whenever $y\in O\cap V$.
\end{proof}

\begin{lemma}\label{L:Dini u.s.c.}
Let $X$ be topological space and let $(f_\alpha)_\alpha$ be an almost non-increasing net of real-valued functions on $X$. 
\begin{itemize}
    \item[$(i)$] Then $f_\alpha$ converges pointwise to a function $f\colon X\to [-\infty,\infty)$.
    \item[$(ii)$] If $f_\alpha$ is upper semi-continuous for all $\alpha$, then so is the pointwise limit $f$.
    \item[$(iii)$] If $X$ is compact and $f_\alpha$ is upper semi-continuous for all $\alpha$, then $\sup_X f_\alpha$ converges to $\sup_X f$.
\end{itemize}
\end{lemma}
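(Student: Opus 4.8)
The plan is to prove the three assertions in turn. Part $(i)$ is a verbatim repetition of an argument already used in the proof of \Cref{L:almost geod. <-> almost non-dec.}, and parts $(ii)$, $(iii)$ will follow from $(i)$ together with the elementary facts \Cref{L:lsc liminf} and \Cref{L:lsc attains min} about semi-continuous functions. For $(i)$: fix $x\in X$; given $\varepsilon>0$ pick $\beta$ with $f_\alpha(x)\ge f_{\alpha'}(x)-\varepsilon$ whenever $\beta\le\alpha\le\alpha'$. Then $f_\alpha(x)\ge\sup_{\alpha'\ge\alpha}f_{\alpha'}(x)-\varepsilon$ for $\alpha\ge\beta$, so $\liminf_\alpha f_\alpha(x)\ge\limsup_\alpha f_\alpha(x)-\varepsilon$, and letting $\varepsilon\downarrow 0$ shows that $(f_\alpha(x))_\alpha$ converges in $[-\infty,\infty]$; taking $\alpha=\beta$ in the same inequality gives $f_{\alpha'}(x)\le f_\beta(x)+\varepsilon$ for $\alpha'\ge\beta$, so the limit lies in $[-\infty,\infty)$. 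Denote it $f(x)$.

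For $(ii)$ I would first record the following consequence of the almost non-increasing property: for each $\varepsilon>0$ there is an index $\beta_0$ with $f_{\alpha'}\le f_\alpha+\varepsilon$ pointwise for all $\beta_0\le\alpha\le\alpha'$, and letting $\alpha'$ run to infinity yields $f\le f_\alpha+\varepsilon$ on $X$ for every $\alpha\ge\beta_0$. Now fix $x\in X$ and $\lambda>f(x)$, choose $\varepsilon>0$ small enough that $f(x)<\lambda-2\varepsilon$ (any $\varepsilon$ works if $f(x)=-\infty$), take the corresponding $\beta_0$, and pick $\alpha\ge\beta_0$ with $f_\alpha(x)<\lambda-2\varepsilon$ (possible since $f_\alpha(x)\to f(x)$). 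Since $f_\alpha$ is upper semi-continuous at $x$, there is an open $O\ni x$ with $f_\alpha(y)<f_\alpha(x)+\varepsilon<\lambda-\varepsilon$ for $y\in O$, hence $f(y)\le f_\alpha(y)+\varepsilon<\lambda$ for all $y\in O$. As $\lambda>f(x)$ was arbitrary, $f$ is upper semi-continuous.

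For $(iii)$, observe first that by compactness of $X$ and \Cref{L:lsc attains min} (applied to $-f_\alpha$, respectively $-f$) every $f_\alpha$ and $f$ attains its supremum on $X$. From $\sup_X f_\alpha\ge f_\alpha(x)\to f(x)$ for every $x\in X$ we get $\liminf_\alpha\sup_X f_\alpha\ge\sup_X f$. For the opposite inequality, suppose $\limsup_\alpha\sup_X f_\alpha>\sup_X f$ and fix $\lambda$ strictly in between; the indices $\alpha$ with $\sup_X f_\alpha>\lambda$ form a cofinal set, so along a subnet we may choose maximizers $x_\alpha$ with $f_\alpha(x_\alpha)=\sup_X f_\alpha>\lambda$, and by compactness we may pass to a further subnet with $x_\alpha\to x^*\in X$. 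Now fix an arbitrary $\delta>0$ and let $\beta_0$ be such that $f_{\alpha'}\le f_\alpha+\delta$ for $\beta_0\le\alpha\le\alpha'$; for each fixed $\alpha\ge\beta_0$ and all subnet indices $\alpha'\ge\alpha$ (eventually) we have $f_\alpha(x_{\alpha'})\ge f_{\alpha'}(x_{\alpha'})-\delta>\lambda-\delta$, so upper semi-continuity of $f_\alpha$ at $x^*$ together with \Cref{L:lsc liminf} gives $f_\alpha(x^*)\ge\limsup_{\alpha'}f_\alpha(x_{\alpha'})\ge\lambda-\delta$. Letting $\alpha\to\infty$ yields $f(x^*)\ge\lambda-\delta$, and since $\delta>0$ was arbitrary, $\sup_X f\ge f(x^*)\ge\lambda$, a contradiction. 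Hence $\limsup_\alpha\sup_X f_\alpha\le\sup_X f$, and together with the lower bound $\sup_X f_\alpha\to\sup_X f$. The main obstacle is exactly this Dini-type step: because the net is only almost non-increasing rather than genuinely decreasing, the classical argument via a nested family of nonempty closed level sets $\{f_\alpha\ge\lambda\}$ is unavailable, so one must instead pass to a subnet of maximizers, extract a convergent subnet, and transfer the estimate $\lambda-\delta$ to the limit point $x^*$ through the upper semi-continuity of each individual $f_\alpha$, carefully separating the role of the fixed tolerance $\delta$ in the almost-non-increasing condition from that of the running index $\alpha$.
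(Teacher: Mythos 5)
Your proposal is correct and follows essentially the same route as the paper: part $(i)$ is the same $\liminf$/$\limsup$ computation, part $(ii)$ rests on the same two facts (that $f\le f_\alpha+\varepsilon$ eventually, plus upper semi-continuity of the individual $f_\alpha$, merely phrased with neighbourhoods instead of nets), and part $(iii)$ uses the same device of extracting a convergent subnet of maximisers and transferring the bound to the limit point via the almost non-increasing property and \Cref{L:lsc liminf}. The only difference is organisational: the paper shows every subnet of $(\sup_X f_\alpha)_\alpha$ has a further subnet converging to $\sup_X f$, while you split the claim into a $\liminf$ bound and a contradiction argument for the $\limsup$ — these are equivalent.
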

\begin{proof}
Let $x\in X$ and let $\varepsilon>0$. Then there is an index $\beta$ such that $f_\alpha\ge f_{\alpha'}-\varepsilon$ whenever $\beta\le\alpha\le\alpha'$. It follows that for $\alpha\ge \beta$ we have $f_\alpha(x)\ge \sup_{\alpha\le \alpha'}f_{\alpha'}(x)-\varepsilon$. Hence
 \[
 \liminf_\alpha f_\alpha(x)\ge \inf_{\beta\le\alpha}f_\alpha(x)\ge\limsup_\alpha f_\alpha(x)-\varepsilon,
 \]
so $\liminf_\alpha f_\alpha(x)=\limsup_\alpha f_\alpha(x)$ and therefore the net $(f_\alpha)_\alpha$ converges pointwise to a function $f$. 

Suppose $f_\alpha$ is upper semi-continuous for all $\alpha$. Let $(x_\beta)_\beta$ be a net in $X$ that converges to $x$ in $X$. Then by Lemma~\ref{L:lsc liminf} we have that $\limsup_\beta f_\alpha(x_\beta)\le f_\alpha(x)$ for each $\alpha$. Let $\varepsilon>0$. Since $(f_\alpha)_\alpha$ is almost non-increasing and $f_\alpha$ converges pointwise to $f$, there is an index $\alpha_0$ such that $f\le f_{\alpha}+\varepsilon$ and $f_{\alpha}(x)\le f(x)+\varepsilon$ whenever $\alpha\ge\alpha_0$. Now for all $\alpha\geq \alpha_0$,  
\[
\limsup_\beta f(x_\beta)\le \limsup_\beta f_{\alpha}(x_\beta)+\varepsilon\le f_{\alpha'}(x)+\varepsilon\le f(x)+2\varepsilon,
\]
which shows that $f$ is upper semi-continuous, since $\varepsilon>0$ was arbitrary. 

If $X$ is compact, then for each $\alpha$ the function $f_\alpha$ attains its maximum at say $x_\alpha$ by Lemma~\ref{L:lsc attains min}. Let $(f_\beta(x_\beta))_\beta$ be a subnet of $(f_\alpha(x_\alpha))_\alpha$. Since $X$ is compact $(x_\beta)_\beta$ has a convergent subnet that converges to say $x$ in $X$, and since $(f_\alpha)_\alpha$ is almost non-increasing, there is an index $\beta_0$ such that 
\[
f_\beta(x)\le f_\beta(x_\beta)\le f_{\beta_0}(x_{\beta_0})+1
\]
whenever $\beta_0\le\beta$. Hence we may pass to a subnet $(f_\gamma(x_\gamma))_\gamma$ such that $(x_\gamma)_\gamma$ converges to $x$ and $(f_\gamma(x_\gamma))_\gamma$ converges to say $\lambda$. Let $\varepsilon>0$. Then there is an index $\gamma_0$ such that $f_{\gamma'}(x_{\gamma'})\le f_\gamma(x_{\gamma'})+\varepsilon$ whenever $\gamma_0\le\gamma\le\gamma'$. Taking the upper limit for $\gamma\le\gamma'$ yields $\lambda\le f_\gamma(x)+\varepsilon$ by Lemma~\ref{L:lsc liminf} and subsequently taking the limit for $\gamma$ yields $\lambda\le f(x)+\varepsilon\le\sup_X f+\varepsilon$. As $\varepsilon>0$ was arbitrary, we have $\lambda\le\sup_X f$. Conversely, for any $y\in X$, it follows that 
\[
f(y)=\lim_\gamma f_\gamma(y)\le \lim_\gamma f_\gamma(x_\gamma)=\lambda,
\]
so $\sup_X f\le\lambda$. We have shown that every subnet of $(f_\alpha(x_\alpha))_\alpha$ has a subnet that converges to $\sup_X f$, hence $\sup_X f_\alpha$ converges to $\sup_X f$.
\end{proof}

\paragraph{Acknowledgement} We would like to thank Samuel Tiersma for his suggestions and ideas for the proof of \Cref{T:bijection atoms pure states} and \Cref{T:reflexive frames}, and Cormac Walsh for his comments on an earlier version of the paper.


\end{document}